\theoremstyle{plain}
\newtheorem{thm}{Theorem}[section]
\newtheorem{coroll}[thm]{Corollary}
\newtheorem{defn}[thm]{Definition}
\newtheorem{lemma}[thm]{Lemma}
\newtheorem{example}[thm]{Example}
\newtheorem{prop}[thm]{Proposition}
\newtheorem{remark}[thm]{Remark}
\newtheorem{theorem}{Theorem}
\DeclareMathOperator{\Bun}{Bun}
\DeclareMathOperator{\Aff}{Aff}
\DeclareMathOperator{\Spec}{Spec}
\DeclareMathOperator{\Sym}{Sym}
\DeclareMathOperator{\Iso}{Iso}
\DeclareMathOperator{\Red}{Red}
\DeclareMathOperator{\rk}{rk}
\DeclareMathOperator{\res}{res}
\DeclareMathOperator{\Forget}{Forget}
\DeclareMathOperator{\Gr}{Gr}
\DeclareMathOperator{\Hom}{Hom}
\DeclareMathOperator{\Coh}{Coh}
\DeclareMathOperator{\Sch}{Sch}
\DeclareMathOperator{\Filt}{Filt}
\DeclareMathOperator{\Rees}{Rees}
\DeclareMathOperator{\Pic}{Pic}
\DeclareMathOperator{\Sect}{Sect}
\DeclareMathOperator{\Gal}{Gal}
\DeclareMathOperator{\wgt}{wt}
\DeclareMathOperator{\Frac}{Frac}
\DeclareMathOperator{\RelGies}{RelGies}
\DeclareMathOperator{\GL}{GL}
\DeclareMathOperator{\SL}{SL}
\DeclareMathOperator{\SO}{SO}
\DeclareMathOperator{\Int}{Int}
\newcommand{\bseries}[1]{ [\hspace{-0,5mm}[ {#1} ]\hspace{-0,5mm}] } 
\newcommand{\pseries}[1]{ (\hspace{-0,7mm}( {#1} )\hspace{-0,7mm}) }
\newcommand{\colim@}[2]{%
  \vtop{\m@th\ialign{##\cr
    \hfil$#1\operator@font colim$\hfil\cr
    \noalign{\nointerlineskip\kern1.5\ex@}#2\cr
    \noalign{\nointerlineskip\kern-\ex@}\cr}}%
}
\newcommand{\colim}{%
  \mathop{\mathpalette\colim@{\rightarrowfill@\textstyle}}\nmlimits@
}
\DeclareRobustCommand
\tikzset{
  symbol/.style={
    draw=none,
    every to/.append style={
      edge node={node [sloped, allow upside down, auto=false]{$#1$}}}
  }
}
\tikzset{
  column sep/.code=\def\pgfmatrixcolumnsep{\pgf@matrix@xscale*(#1)},
  row sep/.code   =\def\pgfmatrixrowsep{\pgf@matrix@yscale*(#1)},
  matrix xscale/.code=%
    \pgfmathsetmacro\pgf@matrix@xscale{\pgf@matrix@xscale*(#1)},
  matrix yscale/.code=%
    \pgfmathsetmacro\pgf@matrix@yscale{\pgf@matrix@yscale*(#1)},
  matrix scale/.style={/tikz/matrix xscale={#1},/tikz/matrix yscale={#1}}}
\def\pgf@matrix@xscale{1}
\def\pgf@matrix@yscale{1}
\title{The moduli stack of principal $\rho$-sheaves and Gieseker-Harder-Narasimhan filtrations}
\author[T. L. G\'omez]{Tom\'as L. G\'omez}
\address{Instituto de Ciencias Matem\'aticas (CSIC-UAM-UC3M-UCM),
Nicol\'as Cabrera 15, Campus Cantoblanco UAM, 28049 Madrid, Spain}
\email{tomas.gomez@icmat.es}
\author[A. Fernandez Herrero]{Andres Fernandez Herrero}
\address{Mathematics Hall MC 4404, 2990 Broadway,
New York, N.Y. 10027, USA.}
\email{af3358@columbia.edu}
\author[A. Zamora]{Alfonso Zamora}
\address{Departamento de Matem\'atica Aplicada a las TIC, ETSI Inform\'aticos, Universidad Polit\'ecnica de Madrid, Campus de Montegancedo,
28660 Madrid, Spain}
\email{alfonso.zamora@upm.es}
\keywords{Moduli spaces, principal bundles, stacks, $\Theta$-stratifications, Harder-Narasimhan filtration, Gieseker stability, principal $\rho$-sheaves}
\subjclass[2020]{14D20, 14D23, 14F06, 14J60, 14L24}
\begin{document}

\maketitle
\pagestyle{plain}
\begin{abstract}
Let $X$ be a smooth projective variety and let $G$ be a connected reductive group, both defined over a field of characteristic $0$. Given a faithful representation $\rho$ of $G$ into a product of general linear groups, we define a moduli stack of principal $\rho$-sheaves that compactifies the stack of $G$-bundles on $X$. We apply the theory developed in \cite{halpernleistner2021structure, alper2019existence,torsion-freepaper} to construct a moduli space of Gieseker semistable principal $\rho$-sheaves. This provides an intrinsic stack-theoretic construction of the moduli space of semistable singular principal bundles \cite{schmitt.singular, glss.singular.char}. 

Our second main result is the definition of a schematic Gieseker-Harder-Narasimhan filtration for $\rho$-sheaves, which induces a stratification of the stack by locally closed substacks. This filtration for a general reductive group $G$ is a refinement of the canonical slope parabolic reduction previously considered at the level of points in \cite{anchouche-hassan-biswas} and as a stratification of the stack in \cite{nitsuregurjar2, gurjar2020hardernarasimhan}. In an appendix, we apply the same techniques to define Gieseker-Harder-Narasimhan filtrations in arbitrary characteristic and show that they induce a stratification of the stack by radicial morphisms.
\end{abstract}
\tableofcontents

\begin{section}{Introduction}
Fix a smooth projective variety $X$ and a connected reductive group $G$ defined over a field of characteristic $0$ (see Appendix \ref{appendix: positive char} for positive characteristic). When $\dim X=1$, Ramanathan \cite{ramanathan-stable} constructed a projective moduli space of semistable
principal $G$-bundles on $X$ using Geometric Invariant Theory (GIT). The stability condition in this case is analogous to the notion of slope stability for a vector bundle (introduced by Mumford), in the sense that it involves the degree and the rank of certain vector bundles.

If $\dim X>1$, it is necessary to consider also degenerations of principal $G$-bundles in order to construct a proper moduli space. These degenerations are called principal $\rho$-sheaves, where $\rho$ is a choice of representation of $G$. A principal $\rho$-sheaf is a triple $(P,\mathcal{E},\psi)$ consisting of a torsion-free sheaf $\mathcal{E}$ on $X$, a principal $G$-bundle $P$ defined on the open set $U$ where $\mathcal{E}$ is locally free, and an isomorphism $\psi$ between the restriction $\mathcal{E}|_{U}$ and the vector bundle associated to $P$ using the representation $\rho$ (see Definition \ref{defn: rho sheaf} and Proposition \ref{prop: relation scheme and G-reductions}). More generally, we allow $\rho$ to be a faithful homomorphism from $G$ into a product of general linear groups (see Definition \ref{defn:principalsheaf2}, Definition \ref{defn:principalsheaf3}, and Section \ref{section: generalization})\footnote{If we want our definition of stability to coincide with Ramanathan's Definition \ref{defn: ramanathan rational} in the case of curves, then we have to require that the representation $\rho$ is central (Definition \ref{defn: central representation}).}.
These $\rho$-sheaves have also been called in the literature ``singular principal bundles" by Schmitt \cite{schmitt.singular, glss.singular.char} (in the case when $\rho$ is valued in a special linear group), and ``principal $G$-sheaves" by G\'omez-Sols \cite{gomezsols.principalsheaves} (when $\rho$ is the adjoint representation, not injective in general).  We show that there is an algebraic stack $\text{Bun}_{\rho}(X)$ that parametrizes principal $\rho$-sheaves on $X$. 

When going from $\dim X=1$ to $\dim X>1$, an additional consideration needs to be introduced in order to construct the moduli space. Instead of using slope semistability, we have to use a notion where the degree is substituted by the Hilbert polynomial. This is Gieseker semistability for torsion-free sheaves. An analogous notion of semistability was defined in \cite{gomezsols.principalsheaves}  and \cite{glss.singular.char} for principal $\rho$-sheaves. Using the methods of GIT, G\'omez-Sols \cite{gomezsols.principalsheaves}, Schmitt \cite{schmitt.singular} and G\'omez-Langer-Schmitt-Sols \cite{glss.singular.char} constructed moduli spaces of semistable $\rho$-sheaves. We remark that the moduli space obtained will in general depend on the choice of representation $\rho$.
The notion of semistability for principal $\rho$-sheaves is given in Definition \ref{defn: Gieseker semistable}.

Alper, Halpern-Leistner and Heinloth \cite{alper2019existence} have recently introduced an alternative approach to GIT for the construction of moduli spaces. Their approach is intrinsic to the algebraic stack associated to the moduli problem of interest. Building on the criteria of \cite{alper2019existence}, Halpern-Leistner \cite[\S5]{halpernleistner2021structure} and Halpern-Leistner, Herrero and Jones \cite{torsion-freepaper} developed tools to construct moduli spaces of semistable loci inside of algebraic stacks. In this article we employ their techniques instead of the GIT methods that were previously used in the literature.

In this direction, our first main result in this article is an intrinsic-stack theoretic construction of the moduli space of semistable $\rho$-sheaves, where $\rho$ is any faithful representation of $G$ into a product of general linear groups. In order to obtain a quasicompact moduli space, it is helpful to fix a tuple $P^{\bullet}=(P^1, P^2, 
\ldots, P^b)$ of rational polynomials $P^i \in \mathbb{Q}[n]$ 
and restricts to the locus of $\rho$-sheaves where the $i^{th}$ underlying sheaf has Hilbert polynomial $P^i$. We state the result here in the case when the representation $\rho$ is central (Definition \ref{defn: central representation})\footnote{A more general modified statement is proven in the body of the document, see Theorem \ref{thm: main theorem products gen linear groups}.}:
\begin{theorem} \label{thm: main theorem 1 introduction}
Fix a central faithful representation $\rho: G \hookrightarrow \prod_{i} \GL_{n_i}$. Then
\begin{enumerate}[(a)]
    \item The moduli stack $\text{Bun}_{\rho}(X)$ of $\rho$-sheaves admits a $\Theta$-stratification induced by the polynomial numerical invariant $\nu$ defined in Subsection \ref{subsection: numerical invariant}.
    \item (Semistable reduction) The semistable locus $\text{Bun}_{\rho}(X)^{ss}$ satisfies the existence part of the valuative criterion for properness.
    \item For any choice of tuple of rational polynomials $P^{\bullet}$, the open substack $\text{Bun}_{\rho}(X)^{ss, P^{\bullet}}$ of $\rho$-sheaves with tuple $P^{\bullet}$ as Hilbert polynomials admits a proper good moduli space (in the sense of \cite{alper-good-moduli}).
\end{enumerate}
\end{theorem}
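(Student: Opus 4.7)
The plan is to deduce all three parts from the general existence criteria of \cite{halpernleistner2021structure} and \cite{alper2019existence}, once appropriate concrete descriptions of the stack have been established. The first step is to verify that $\text{Bun}_{\rho}(X)$ is algebraic and locally of finite presentation with affine diagonal, and to identify maps from $\Theta = [\mathbb{A}^1/\mathbb{G}_m]$, and from the $S$-completeness test stack, into $\text{Bun}_{\rho}(X)$ with concrete moduli data: weighted filtrations of the underlying torsion-free sheaves together with a compatible parabolic reduction of the principal bundle on the locally free locus, encoded via the Rees construction. This concrete description is the geometric bridge that allows the abstract stack-theoretic criteria to be translated into statements about $\rho$-sheaves.

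For part (a), I would verify the hypotheses required in \cite{halpernleistner2021structure} for the polynomial numerical invariant $\nu$ of Subsection \ref{subsection: numerical invariant} to induce a $\Theta$-stratification: strict $\Theta$-monotonicity, $S$-monotonicity, HN-boundedness, and existence and uniqueness of an optimal destabilizing filtration at each point. Strict $\Theta$-monotonicity is established by an explicit computation on $\Theta^2$-families of $\rho$-sheaves, adapted from the torsion-free sheaf case but taking into account the weighted filtrations of the principal bundle. HN-boundedness reduces to boundedness of potentially destabilizing subsheaves of bounded slope together with boundedness of reductions of $P$ to parabolic subgroups whose instability is bounded. Existence and uniqueness of the maximizing filtration is where the Gieseker-Harder-Narasimhan theorem of the paper enters: it refines, at the Gieseker level, the canonical slope parabolic reduction of \cite{anchouche-hassan-biswas, nitsuregurjar2, gurjar2020hardernarasimhan}.

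For parts (b) and (c), after fixing the tuple $P^{\bullet}$, quasi-compactness of $\text{Bun}_{\rho}(X)^{ss, P^{\bullet}}$ follows from boundedness results already available in \cite{schmitt.singular, glss.singular.char}. I would then invoke the existence theorem of \cite{alper2019existence}: the remaining hypotheses are $\Theta$-reductivity and $S$-completeness of $\text{Bun}_{\rho}(X)^{ss, P^{\bullet}}$, together with existence of limits along DVRs. Both $\Theta$-reductivity and $S$-completeness reduce to extending, over a DVR, certain filtrations and pairs of $\rho$-sheaves across the closed point; these are checked by saturation arguments for torsion-free sheaves combined with unique extension of the principal $G$-bundle across a codimension-one locus on a regular scheme, using that $G$ is reductive in characteristic zero. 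Semistable reduction in part (b) is then obtained by a Langton-style algorithm on $\rho$-sheaves: one iteratively replaces an unstable special fiber by an elementary modification that strictly decreases the HN polygon, and discreteness of the numerical invariants forces termination at a semistable object.

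The main obstacle I anticipate is the verification of strict $\Theta$-monotonicity of $\nu$ together with existence and uniqueness of the HN-maximizing filtration in the $\rho$-sheaf setting. For ordinary torsion-free sheaves these are classical, but for $\rho$-sheaves they require careful control of weighted filtrations compatible with the representation $\rho$, and of the Gieseker polynomials of their associated gradeds (which must themselves carry compatible $\rho$-structures). Once this Gieseker-Harder-Narasimhan theorem is in place, the remaining application of the Halpern-Leistner and Alper-Halpern-Leistner-Heinloth machinery to obtain (a), (b) and (c) is essentially formal.
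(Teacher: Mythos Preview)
Your overall strategy of invoking the machinery of \cite{halpernleistner2021structure} and \cite{alper2019existence} is correct, but there is a genuine circularity in your plan for part (a). You write that ``existence and uniqueness of the maximizing filtration is where the Gieseker--Harder--Narasimhan theorem of the paper enters.'' In the paper the logic runs the other way: the cited criterion (\cite[Thm.~B]{halpernleistner2021structure}, restated in \cite[Thm.~2.26]{torsion-freepaper}) takes as input only strict $\Theta$- and $S$-monotonicity together with HN boundedness, and \emph{outputs} existence and uniqueness of the $\nu$-maximizer. The leading term HN filtration is then a consequence of the $\Theta$-stratification, and the GHN filtration of Section~\ref{section: GHN filtrations} is obtained only afterwards by iterating this on the centers of the strata. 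Treating GHN as an input would be circular.

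The two substantive verifications are also carried out differently than you sketch. Monotonicity is not done by a direct $\Theta^2$ computation but via a higher-dimensional affine Grassmannian for $\rho$-sheaves (Subsection~\ref{subsection: affine grassmannians}): one fills the missing point by taking a scheme-theoretic closure inside a projective stratum $\Gr^{\leq N,P}_{X_Y,D,\mathcal{F},\sigma}$ and then uses asymptotic ampleness of $L_n^{\vee}$ there to compare weights at $0$ and $\infty$. HN boundedness (Proposition~\ref{prop: HN boundedness}) is an optimization argument on the cone $\mathcal{C}_\lambda$ of dominant coweights, replacing an arbitrary destabilizer by one whose graded pieces have uniformly bounded slopes; it is not a direct boundedness statement for parabolic reductions. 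For (b) and (c), the paper does not run a Langton algorithm on $\rho$-sheaves nor check $\Theta$-reductivity and $S$-completeness by hand. Instead it proves the existence part of the valuative criterion for the \emph{entire} stack $\Bun_\rho(X)$ (Theorem~\ref{thm: valuative criterion for properness rho sheaves}) by reducing to extending a $G$-bundle over the generic point of the special fiber and then invoking a Bruhat--Tits argument (Proposition~\ref{prop: extension after ramified cover}) after a purely ramified base-change; semistable reduction and the good moduli space then follow formally from the already-established monotonicity via \cite[\S6]{alper2019existence} and \cite[Thm.~2.26]{torsion-freepaper}. Your Langton-style plan would have to confront exactly this extension problem for the $G$-bundle part, which you do not address.
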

Theorem \ref{thm: main theorem 1 introduction} (a) is proven in subsection \ref{subsection: theta stratification} (Theorem \ref{thm: theta stratification}), (b) is proven in subsection \ref{section: properness} (Theorem \ref{thm: semistable reduction}) and (c) is proven in subsection \ref{subsection: boundedness of the semistable locus} (Theorem \ref{thm: moduli space single general linear}).

If the representation $\rho$ is valued in a single special linear group (resp. $\rho$ is the adjoint representation), then we recover the moduli space constructed in \cite{schmitt.singular, glss.singular.char} (resp. \cite{gomezsols.principalsheaves}).

Another important aspect of the theory of semistable vector bundles on a curve is the existence of a canonical filtration for any unstable vector bundle. This is called the Harder-Narasimhan filtration, and it is such that the associated graded object is a direct sum of semistable sheaves. This is helpful because unstable vector bundles are then constructed as extensions of semistable bundles, thus allowing us to do recursive arguments to study properties of the moduli space, as done by Harder-Narasimhan \cite{hn-cohomology-moduli} and Atiyah-Bott \cite{atiyah-bott}. More generally, for principal $G$-bundles on a curve Behrend \cite{behrend-canonical-bundles} and Biswas-Holla \cite{biswas-holla-hnreduction} constructed the Harder-Narasimhan reduction to a parabolic subgroup $P\subset G$. An important property is that the associated principal $L$-bundle is semistable, where $L=G/P$ is the Levi factor of $P$.

For torsion free sheaves over $X$ with $\dim X>1$, we can construct two different Harder-Narasimhan filtrations, depending on the stability condition we use, slope or Gieseker stability. 
They are respectively called slope-Harder-Narasimhan and Gieseker-Harder-Narasimhan.

For principal $\rho$-sheaves over $X$ with $\dim X>1$ we only find in the literature the slope version of the Harder-Narasimhan filtration, constructed at the level of points by Anchouche-Azad-Biswas \cite{anchouche-hassan-biswas} and in families by Gurjar-Nitsure \cite{nitsuregurjar2, gurjar2020hardernarasimhan}. Biswas and Zamora \cite{biswas-zamora-ghn-principal} showed that the notion of Gieseker filtration is not well-behaved with respect to change of group. The strategy of considering the canonical filtration of the adjoint vector bundle does not yield in general a well-defined notion of Gieseker-Harder-Narasimhan reduction in higher dimensions. Therefore a new approach needs to be introduced.

Our second main result (proven in section \ref{section: GHN filtrations} (Theorem \ref{thm: relative Gieseker filtrations})) is the definition of a Gieseker-Harder-Narasimhan parabolic reduction in the context of $\rho$-sheaves.
\begin{theorem} \label{thm: main theorem 2 introduction}
Every principal $\rho$-sheaf admits a uniquely defined multi-weighted filtration called the Gieseker-Harder-Narasimhan filtration (abbreviated GHN filtration), see Definition \ref{defn: GHN filtration}. 

This filtration is schematic, in the sense that the universal stacks for relative GHN filtrations induce a locally closed stratification of $\text{Bun}_{\rho}(X)$.

If the group $G$ is split, then the GHN filtration comes from a multi-weighted parabolic reduction defined over a big open subset (cf. Definition \ref{defn: big open subset}). The associated Levi $\rho_{\vec{\lambda}}$-sheaf is semistable.
\end{theorem}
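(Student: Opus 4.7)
The plan is to deduce the GHN filtration directly from the $\Theta$-stratification provided by Theorem \ref{thm: main theorem 1 introduction}(a), in the paradigm of Halpern-Leistner \cite{halpernleistner2021structure}. Recall that a $\Theta$-stratification on an algebraic stack $\mathcal{X}$ canonically assigns, to each unstable point $x \in \mathcal{X}$, a distinguished filtration (that is, a morphism $\Theta \to \mathcal{X}$ whose generic fiber is $x$) maximizing the chosen numerical invariant, and this is precisely what will play the role of the HN filtration. The first step is therefore to match the datum of a morphism $\Theta^r \to \text{Bun}_{\rho}(X)$ with a multi-weighted filtration of a $\rho$-sheaf: a $\Theta$-point corresponds to a $\mathbb{G}_m$-action, hence a weighted filtration of each underlying torsion-free sheaf $\mathcal{E}^i$ by saturated subsheaves, compatible with $\psi$ on the locally free locus; an $r$-tuple of such independent filtrations with integer weights then matches a $\Theta^r$-point.

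With this dictionary in place, I would define the GHN filtration of an unstable $\rho$-sheaf to be the unique optimal destabilizer produced by the $\Theta$-stratification. Existence and uniqueness are then automatic from the definition of $\Theta$-stratification applied to the numerical invariant $\nu$ of Subsection \ref{subsection: numerical invariant}. For the ``schematic'' statement, I would identify each HN-type stratum with an open and closed component of the mapping stack parametrizing relative multi-weighted filtrations of the prescribed HN type, and invoke the general fact that such a component maps by a locally closed immersion onto the corresponding stratum of $\text{Bun}_{\rho}(X)$; this is part of the formalism of \cite{halpernleistner2021structure} once $\nu$ is known to satisfy the HN axioms (which is already implicit in the proof of Theorem \ref{thm: main theorem 1 introduction}).

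Assuming next that $G$ is split, I would upgrade the GHN filtration to a multi-weighted parabolic reduction on a big open subset $U \subseteq X$ where every $\mathcal{E}^i|_U$ is locally free. On $U$ the $\rho$-sheaf restricts to an honest principal $G$-bundle $P|_U$, and a collection of compatible filtrations of its associated $\GL_{n_i}$-bundles is equivalent, via a cocharacter $\vec{\lambda} \colon \mathbb{G}_m^r \to G$, to a reduction of $P|_U$ to the parabolic $P_{\vec{\lambda}} \subseteq G$ determined by $\vec{\lambda}$, following the standard dictionary used in \cite{anchouche-hassan-biswas}. The associated Levi $\rho_{\vec{\lambda}}$-sheaf is then obtained by passing to graded pieces on $U$ and extending the resulting torsion-free sheaves by reflexive hull across the small complement $X \setminus U$. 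Semistability of this Levi object follows from the general principle of $\Theta$-stratifications that the ``center'' of the optimal filtration — i.e.\ the limit point at $0 \in \Theta^r$ — lies in the semistable locus of the relevant fixed-point stack, which in our situation is identified with $\text{Bun}_{\rho_{\vec{\lambda}}}(X)$ equipped with its induced polynomial numerical invariant.

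The main obstacle I anticipate is not the $\Theta$-stratification formalism, which is largely black-boxed, but the third step: translating multi-weighted filtrations of $\rho$-sheaves into honest parabolic reductions when the underlying sheaves are only torsion-free rather than locally free. Filtrations by saturated subsheaves make sense globally on $X$, but the correspondence with $P_{\vec{\lambda}}$-reductions genuinely only holds on the locally free locus, which is exactly why the statement is phrased in terms of a big open subset. Checking that the reflexive extension across $X \setminus U$ is well defined and lands back inside $\text{Bun}_{\rho_{\vec{\lambda}}}(X)$ will require care with the compatibility between reflexive hull and the representation $\rho_{\vec{\lambda}}$, but this is the same type of extension argument already needed in the construction of $\text{Bun}_{\rho}(X)$.
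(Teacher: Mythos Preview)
Your proposal has a genuine gap: you conflate the single optimal $\Theta$-filtration (what the paper calls the \emph{leading term HN filtration}) with the GHN filtration, and these are not the same object. The $\Theta$-stratification of Theorem~\ref{thm: main theorem 1 introduction}(a) is built from maps $\Theta \to \Bun_{\rho}(X)$, i.e.\ $\mathbb{Z}$-weighted filtrations attached to a \emph{single} cocharacter; it does not directly produce $\Theta^r$-points or multi-weighted filtrations. More critically, the assertion that ``the center of the optimal filtration lies in the semistable locus of the relevant fixed-point stack'' is false here. For the polynomial numerical invariant $\nu$, the associated graded of the $\nu$-maximizer is typically \emph{not} Gieseker semistable as a $\rho_{\lambda_1}$-sheaf (the paper states this explicitly in the introduction and illustrates it in Subsection~\ref{example: torsion-free sheaves}). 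This is the essential phenomenon distinguishing the Gieseker setting from the slope setting on curves, and your argument does not account for it.

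The paper's construction is instead \emph{iterative}: take the leading term HN filtration to get $(\lambda_1, \mathcal{G}_{\lambda_1})$ and its associated Levi $\rho_{\lambda_1}$-sheaf; if that Levi sheaf is unstable, apply the leading term HN filtration again inside $\Bun_{\rho_{\lambda_1}}(X)$ to get $\lambda_2$; lift this to a $2$-weighted parabolic reduction of the original $\rho$-sheaf via \cite[Lem.~2.1]{biswas-holla-hnreduction}; and repeat. The process terminates because each step strictly refines the direct-sum decomposition of $V^{\bullet}$. The resulting filtration is a lexicographic $\mathbb{Z}^q$-filtration (Definition~\ref{defn: lexicographic filtration}), not a tuple of independent $\mathbb{Z}$-filtrations as you describe; the nesting of the cocharacters $\lambda_i$ is essential. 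Semistability of the final Levi sheaf is then a tautology (the process stops exactly when it holds), and the schematic stratification of Theorem~\ref{thm: relative Gieseker filtrations} is obtained by iterating the leading-term stratification on the centers, not by a single appeal to the $\Theta$-formalism.
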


If the representation $\rho$ is central, then our GHN filtration refines the canonical slope reduction constructed in \cite{anchouche-hassan-biswas}. In this case, the resulting stratification of the stack $\text{Bun}_{\rho}(X)$ refines the canonical slope stratification on the stack of $G$-bundles that was recently constructed in \cite{nitsuregurjar2}.

Our main tools originate in some recent work on the theory of stacks. Alper, Halpern-Leistner and Heinloth \cite{alper2019existence} have recently developed a theory which produces moduli spaces for Artin stacks, generalizing results of Keel-Mori on Deligne-Mumford stacks. This can be combined with the theory of $\Theta$-stability and $\Theta$-stratifications by Halpern-Leistner \cite{halpernleistner2021structure} to produce a powerful stack-theoretic approach to the construction of moduli spaces and canonical filtrations (see \cite[\S 5]{halpernleistner2021structure} and \cite[\S 2.5]{torsion-freepaper} for more details). Halpern-Leistner defines in \cite{halpernleistner2021structure} a notion of filtration of an object in an Artin stack. Given certain cohomology classes on the stack, one can define a numerical invariant on filtrations. An object in the stack is called semistable if the numerical function is non-positive for all filtrations. If the numerical invariant satisfies certain properties, then the open substack of semistable objects admits a \emph{good moduli space} (as defined in \cite{alper-good-moduli}). This is an intrinsic way of constructing the moduli space, in the sense that we do not need to choose a parameter space nor the action of a group.

Here are the main steps involved in this general approach:
\begin{enumerate}[(Step 1)]
    \item Define a (polynomial) numerical invariant $\nu$ on the stack (Subsection \ref{subsection: numerical invariant}).
    \item Prove that the numerical invariant $\nu$ satisfies some monotonicity properties (Subsection \ref{subsection: monotonicity properties of the numerical invariant}).
    \item Show that $\nu$ satisfies the HN boundedness condition (Subsection \ref{subsection: hn boundedness}).
    \item Prove boundedness of the semistable locus (Subsection \ref{subsection: boundedness of the semistable locus}).
\end{enumerate}

In this article we carry out this program for the moduli stack
of principal $\rho$-sheaves, thus obtaining the intrinsic stack-theoretic construction of the moduli space described in Theorem \ref{thm: main theorem 1 introduction}. Furthermore, this theory provides a $\Theta$-stratification of $\text{Bun}_{\rho}(X)$, which yields a canonical filtration (called the leading term HN filtration) for each unstable object. We apply this construction iteratively to the centers of the $\Theta$-stratification to define the GHN reduction from Theorem \ref{thm: main theorem 2 introduction}.

In Appendix \ref{appendix: positive char} we explain how to apply the same techniques to define the GHN filtration for $\rho$-sheaves in arbitrary characteristic (see \Cref{thm: weak theta stratification positive characteristic}).

\begin{subsection}{Stack-theoretic approach vs. GIT}
We end our introduction with a discussion comparing the stack theoretic approach in \cite{halpernleistner2021structure, alper2019existence} with the usual construction of moduli spaces using Geometric Invariant
Theory. We hope that this can provide some intuition for those readers that are more familiar with the GIT approach.

The construction of the moduli space of torsion free sheaves 
(and principal $\rho$-sheaves) 
follows the usual application of GIT. We first construct a scheme $R$ parametrizing all objects together
with an extra data (in this case an isomorphism between a fixed vector space $W$
and the space of sections $H^0(\mathcal{E}(n))$ for a chosen $n\gg 0$ and
the sheaf $\mathcal{E}$ associated to
each object). The group $\SL(W)$ acts on the scheme $R$. We
choose an ample line bundle on $R$ together with a linearization of the action of $\SL(W)$. Mumford's GIT shows that there is an open subset $R^{ss}\subset R$ which admits a \emph{good quotient} [Ses72], and this is the moduli space of semistable torsion-free sheaves we are looking for.

To identify the subset $R^{ss}$ of semistable points we use the
Hilbert-Mumford criterion. For each point $p \in R$ and for each
one-parameter subgroup $\lambda:\mathbb{G}_m \to \SL(W)$ 
we consider the weight $\mu(\lambda,p)$ of the
action of the one-parameter subgroup $
\lambda$ on the fiber of the limiting
point $\lim_{t\to 0} \lambda(t)\cdot p$. The point $p$ is semistable if and only if $\mu(\lambda,p)\leq 0$ for all one-parameter subgroups $\lambda$. We remark that, in the usual applications of GIT to the construction of moduli spaces involving sheaves, the one-parameter subgroup $\lambda$ produces a filtration of the sheaf, and the Hilbert-Mumford criterion becomes equivalent to checking certain
numerical condition over all filtrations.

In \cite{halpernleistner2021structure},
the one-parameter subgroups appearing in the Hilbert-Mumford criterion
of GIT are replaced by \emph{filtered objects}. For any algebraic stack $\mathcal{M}$, a filtration of a
point  $p:\Spec k \to \mathcal{M}$ is a morphism 
\begin{equation}
  \label{eq:filt}
f:\Theta:=[\mathbb{A}^1/\mathbb{G}_m]\to \mathcal{M}  
\end{equation}
and an identification
$f(1) \simeq p$. 
Given $f$, there is a group homomorphism from $\mathbb{G}_m$ 
to the automorphism group of the
object $f(0)$ in $\mathcal{M}$, which can be thought of as the graded object
associated to the filtration $f$. 

For understanding these notions it is useful to let $\mathcal{M}$ be the moduli
stack of torsion free sheaves on a projective scheme $X$. 
By the Rees
construction (cf. Proposition \ref{prop: filtrations of torsion free sheaves}), a decreasing $\mathbb{Z}$-indexed filtration 
\[0 \subset \cdots \subset \mathcal{E}_{i+1} \subset \mathcal{E}_i \subset \cdots \subset \mathcal{E}\]
of a torsion free sheaf $\mathcal{E}$ on $X$ determines an $\mathbb{A}^1$-family $\widetilde{\mathcal{E}}$ of torsion free sheaves on $X$. We have $\widetilde{\mathcal{E}}|_{X_t}\cong \mathcal{E}$ for all $t\neq 0$, and $\widetilde{\mathcal{E}}|_{X_0}\cong \oplus
\mathcal{E}_i/\mathcal{E}_{i+1}$. 
The group $\mathbb{G}_m$ has a natural action on the
graded object $\widetilde{\mathcal{E}}|_{X_0}$, with weight $i$ on each
piece $\mathcal{E}_i/\mathcal{E}_{i+1}$.
The family $\widetilde{\mathcal{E}}$ gives a morphism $\mathbb{A}^1\to \mathcal{M}$ and the standard action
of $\mathbb{G}_m$ on $\mathbb{A}^1$ lifts to $\widetilde{\mathcal{E}}$. Hence this morphism descends to
a map from the quotient stack $[\mathbb{A}^1/\mathbb{G}_m]$, 
providing a filtration $f$ as in \eqref{eq:filt}.
One can define a function $\nu$ which assigns a real number for each filtration $f$ as
in \eqref{eq:filt}. This is done in a similar way to GIT: we choose
a line bundle $L$  on $\mathcal{M}$, and  define $\nu(f)$ to be the weight
of the action of $\mathbb{G}_m$ on the fiber of $L|_{f(0)}$.

In practice this approach has an important advantage: in the GIT case
we are looking at the action of $\mathbb{G}_m$ on $\SL(W)$. This produces a
filtration on the vector space $W\cong H^0(\mathcal{E}(n))$, which in turn produces a
filtration on $\mathcal{E}$. 
To show that the numerical condition of the Hilbert-Mumford criterion given by $\mu(\lambda,p)$ matches the stability condition on Hilbert polynomials is an involved calculation. 
On the other hand, if we follow the intrinsic approach of \cite{halpernleistner2021structure}, we
are directly looking at a filtration on $\mathcal{E}$ and the action of $\mathbb{G}_m$
on the associated graded sheaf. This makes it easier to obtain the 
stability condition.

In the case of principal $G$-bundles the advantage of the new approach
is even more
clear. In the GIT setup we are looking at all filtrations of $W\cong
H^0(\mathcal{E}(n))$, and it is not easy to show that it is enough to look
at filtrations coming from 1-parameter subgroups of $G$. In contrast, in the
theory of \cite{halpernleistner2021structure, heinloth2018hilbertmumford} we only look at these subgroups from
the very beginning.

Both GIT and \cite{halpernleistner2021structure} can be used to produce filtrations
of unstable objects. We recall how this works in GIT. Let $p\in R$ be an unstable point.
Among all the 1-parameter subgroups
$\lambda$ of $\SL(W)$ with $\mu(\lambda,p)>0$, we can find one for which this weight is 
maximal (after some normalization). This defines the Kempf filtration (cf. \cite{kempf-filtration}) of $p$. This filtration depends on the choices that were made in the
GIT construction (the number $n$, the parameter space $R$ and
the linearization). For torsion free sheaves it can
be shown that it is independent of the choice of $n$ for $n\gg 0$, and
it coincides with the Gieseker-Harder-Narasimhan
filtration \cite{gomez-sols-zamora, zamora-tesis}. Unfortunately, the computations become too involved in the case of principal $G$-bundles, so we do not obtain a Gieseker-Harder-Narasimhan filtration following this approach \cite{zamora-rank2-tensors}.

In the theory in \cite{halpernleistner2021structure}, given an unstable object $p\in \mathcal{M}(k)$ we can also find a filtration $f$ with $f(1) \simeq p$ such that $\nu(f)$ is maximal. This program was carried out for $\Lambda$-modules and pairs in
\cite{torsion-freepaper}. It should be pointed out that the filtration
$f$ which maximizes $\nu$ does not have the property that the graded pieces of the filtration are semistable. But if we apply this procedure inductively we obtain the Gieseker-Harder-Narasimhan filtration.

In this article we show that a similar story applies in the case of principal $\rho$-sheaves. We use this to define what we
call the Gieseker-Harder-Narasimhan reduction of an unstable
$\rho$-sheaf.
\end{subsection}

\textbf{Acknowledgments:} This project was started during the workshop ``Moduli problems beyond geometric invariant theory" at the American Institute of Mathematics. We would like to thank the American Institute of Mathematics and the organizers that made the workshop possible. We would like to thank V. Balaji, Federico Fuentes, Daniel Halpern-Leistner and Jochen Heinloth for helpful remarks. We would also like to thank an anonymous referee for thoughtful comments on the manuscript.

This work is supported
by grants CEX2019-000904-S and PID2019-108936GB-C21 (funded by MCIN/AEI/ 10.13039/501100011033), and NSF grants DMS-1454893 and DMS-2001071.
\end{section}

\begin{section}{Preliminaries} \label{section: preliminaries}
\begin{subsection}{Notation} We will work over a field $k$ of characteristic $0$. We denote by $\Aff_k$ the category of affine schemes over $k$. More generally, for any $k$-scheme $T$ we let $\Aff_T$ be the category of (absolutely) affine schemes equipped with a morphism to $T$. We write $\Sch_k$ and $\Sch_T$ to denote the category of all schemes over $k$ and $T$ respectively. Unless otherwise stated, all schemes will be understood to belong to $\Sch_{k}$. An undecorated product of schemes (e.g. $X \times S$) is understood to be a fiber product over $k$. We might sometimes write $X_{S}$ instead of $X \times S$.

All of the sheaves that we consider in this paper are quasicoherent. Whenever we write ``$\mathcal{O}_Y$-module" we mean a quasicoherent $\mathcal{O}_Y$-module. In order to ease notation for pullbacks, we use the following convention. Whenever we have a map of schemes $f: Y \rightarrow T$ and a sheaf $\mathcal{G}$ on $T$, we will write $\mathcal{G}|_{Y}$ to denote the pullback $f^{*} \mathcal{G}$, if the morphism $f$ is clear from context.

We fix a smooth projective geometrically connected variety $X$ over $k$. We denote by $d$ the dimension of $X$. We also choose once and for all an ample line bundle $\mathcal{O}(1)$ on $X$. For any coherent sheaf $\mathcal{F}$ on $X$, we denote by $P_{\mathcal{F}}$ the Hilbert polynomial of $\mathcal{F}$ with respect to the line bundle $\mathcal{O}(1)$. If $\mathcal{F}$ is a torsion-free sheaf on $X$, then this is a polynomial of degree $d$ in the variable $n$ which can be written in the form $P_{\mathcal{F}}(n) = \sum_{i = 0}^{d} \frac{a_i}{i!} n^i$
for some sequence of rational numbers $a_i$ (see \cite[Lemma 1.2.1]{huybrechts.lehn}). We define the reduced Hilbert polynomial to be $\overline{p}_{\mathcal{F}} \vcentcolon = \frac{1}{a_{d}} P_{\mathcal{F}}$. For every $0 \leq i \leq d-1$, we set $\widehat{\mu}_i(\mathcal{F}) \vcentcolon = \frac{a_{i}}{a_d}$. We call this the $i^{th}$ slope of $\mathcal{F}$. The number $\widehat{\mu}_{d-1}(\mathcal{F}) = \frac{a_{d-1}}{a_d}$ is simply called the slope of $\mathcal{F}$.

We define an ordering for polynomials where, for any two $p_1, p_2 \in \mathbb{R}[n]$,  we write $p_1 \geq p_2$ if $p_1(n) \geq p_2(n)$ for $n\gg0$.

Fix a reductive linear algebraic group $G$ over $k$. We write $k[G]$ to denote the coordinate ring of $G$, which has the structure of a commutative Hopf algebra. We will denote by $\Bun_{G}(X)$ the moduli stack of principal $G$-bundles on $X$. This is an algebraic stack that has affine diagonal and is locally of finite type over $k$ (by \cite[Thm. 1.2]{hall-rydh-tannaka} taking $Z=X$ and $X= BG$).

Fix a $k$-vector space $V$ of dimension $r$. Let us denote by $\GL(V)$ the group of linear automorphisms of $V$. We choose once and for all a faithful representation $\rho: G \rightarrow \GL(V)$. In this paper we sometimes use the following notion which already appeared in the introduction.

\begin{defn}
  \label{defn: central representation}
We say that a homomorphism of algebraic groups $\rho:G_1\to G_2$ is central if the image of
the identity component of the center of $G_1$ is in the center of $G_2$.
\end{defn}

We denote by $\SL(V)$ the closed algebraic subgroup of $\GL(V)$ consisting of linear automorphisms $g$ of $V$ such that the $r^{th}$ exterior power $\wedge^rg$ induces the identity on $\bigwedge^{r}V$.

Set $\Theta$ to be the quotient stack $[\mathbb{A}^1_k/ \, \mathbb{G}_m]$. Here by convention we always equip $\mathbb{A}^1_k = \Spec(k[t])$ with the $\mathbb{G}_m$-action that gives the coordinate function $t$ weight $-1$.
\end{subsection}

\begin{subsection}{Big open sets and torsion-free sheaves} \label{subsection: big open and torsion-free}

\begin{defn} \label{defn: big open subset} Let $S$ be a $k$-scheme. An open subscheme $U \subset X_S$ is said to be big (relative to $S$) if for all points $s \in S$ the complement $X_s \setminus U_s$ of the fiber $U_s$ is a closed subset of codimension at least 2 in $X_{s}$.
\end{defn}

\begin{lemma} \label{lemma: lemma on very big open subsets}
Let $S$ be a $k$-scheme. Let $j: U \hookrightarrow X_S$ be a big open subset in $X_S$. Then the following are satisfied
\begin{enumerate}[(a)]
   \item For all points $x \in X_S \setminus U$, the local ring $\mathcal{O}_{X_S, x}$ has depth at least 2.
    \item $U$ is scheme theoretically dense in $X_S$.
    \item Let $p: Y \rightarrow X_S$ be an affine morphism of finite type. Any section of $p$ defined over $U$ extends uniquely to a section defined over the whole $X_S$.
\end{enumerate}
\end{lemma}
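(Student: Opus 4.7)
The plan is to prove (a) as the substantive depth computation, and then derive (b) and (c) from it via standard local cohomology.

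For (a), I would exploit that $X_S \to S$ is smooth, being the base change of the smooth morphism $X \to \Spec k$. Fix $x \in X_S \setminus U$ with image $s \in S$, and consider the flat local homomorphism $\mathcal{O}_{S,s} \to \mathcal{O}_{X_S,x}$ with closed fiber $\mathcal{O}_{X_s,\bar{x}}$. The additivity of depth in flat local extensions gives
\[
\operatorname{depth} \mathcal{O}_{X_S,x} \;=\; \operatorname{depth} \mathcal{O}_{S,s} \;+\; \operatorname{depth} \mathcal{O}_{X_s,\bar{x}}.
\]
Because $X$ is smooth and geometrically connected over a field of characteristic $0$, it is in fact geometrically integral, so $X_s$ is an integral smooth scheme over $k(s)$ and $\mathcal{O}_{X_s,\bar{x}}$ is regular. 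Its depth therefore equals its Krull dimension, which is the codimension of $\overline{\{\bar{x}\}}$ in $X_s$. Since $\bar{x}$ lies in some irreducible component of $X_s \setminus U_s$ of codimension at least $2$ by hypothesis, this dimension is $\geq 2$, finishing (a).

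For (b), scheme-theoretic density of $U$ in $X_S$ is equivalent to the injectivity of the unit map $\mathcal{O}_{X_S} \to j_* \mathcal{O}_U$. Its kernel is the local cohomology sheaf $\mathcal{H}^0_Z(\mathcal{O}_{X_S})$ with $Z = X_S \setminus U$, which vanishes as soon as the depth of $\mathcal{O}_{X_S}$ along $Z$ is at least $1$ --- a direct consequence of (a).

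For (c), uniqueness follows from (b) combined with the fact that $p$ is affine (hence separated), so any two sections agreeing on a scheme-theoretically dense open must coincide. For existence I would work Zariski-locally on $X_S$, reducing to the case $X_S = \Spec A$ and $Y = \Spec B$ with $B$ a finitely generated $A$-algebra. A section of $p$ over $U$ is then an $A$-algebra map $B \to \Gamma(U,\mathcal{O}_{X_S})$, and the vanishing of $\mathcal{H}^0_Z(\mathcal{O}_{X_S})$ and $\mathcal{H}^1_Z(\mathcal{O}_{X_S})$ from (a), together with the local cohomology exact sequence, produces an isomorphism $A = \Gamma(X_S,\mathcal{O}_{X_S}) \xrightarrow{\sim} \Gamma(U,\mathcal{O}_{X_S})$. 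The given map then lifts uniquely to an $A$-algebra map $B \to A$, yielding a section over $X_S$, and uniqueness from (b) ensures these local extensions glue. The main technical input is thus the additive depth formula at step (a); once that is in place, (b) and (c) reduce to standard applications of local cohomology.
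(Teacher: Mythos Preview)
Your proof is correct and follows the same overall architecture as the paper: establish the depth bound in (a), then deduce (b) and (c) from the resulting isomorphism $\mathcal{O}_{X_S} \xrightarrow{\sim} j_*\mathcal{O}_U$. The differences are in packaging. For (a), you invoke the additive depth formula for flat local extensions, whereas the paper proceeds more concretely: it lifts a length-two regular sequence from the fiber $\mathcal{O}_{X_s,x}$ and checks directly (via \cite[\href{https://stacks.math.columbia.edu/tag/00ME}{Tag 00ME}]{stacks-project}, applied twice) that the lift remains regular in $\mathcal{O}_{X_S,x}$. Your formula is of course proved by exactly this kind of lifting, so the content is the same; the paper's hands-on version has the mild advantage of sidestepping any worry about how the additivity statement is formulated when $S$ is not locally Noetherian. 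For (c), your argument is in fact a bit cleaner: you observe that an $A$-algebra map $B \to \Gamma(U,\mathcal{O}) \cong A$ is already a section over $\Spec A$. The paper instead chooses a closed embedding $Y_V \hookrightarrow \mathbb{A}^n_V$, extends the resulting $n$ coordinate functions using $\mathcal{O}_V \xrightarrow{\sim} j_*\mathcal{O}_{U\cap V}$, and then uses scheme-theoretic density to force the extended section to land in $Y_V$. Both routes rest on the same isomorphism of structure sheaves, so neither buys anything essential over the other.
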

\begin{proof}
\begin{enumerate}[(a)]
    \item Let $x \in X_S \setminus U$ be a point with image $s$ in $S$. By assumption the point $x$ has codimension at least 2 in $X_s$. Since $X_s$ is smooth, this means that $\mathcal{O}_{X_s, x}$ has depth at least $2$. So there exists a regular sequence $\overline{f}_1, \overline{f}_2$ of length $2$ in $\mathcal{O}_{X_s, x}$. Lift this sequence to a pair of elements $f_1, f_2 \in \mathcal{O}_{X_S, x}$. Since $\mathcal{O}_{X_S, x}$ is flat over $\mathcal{O}_{S,s}$, we can apply \cite[\href{https://stacks.math.columbia.edu/tag/00ME}{Tag 00ME}]{stacks-project} twice to conclude that $f_1, f_2$ is a regular sequence. First we apply \cite[\href{https://stacks.math.columbia.edu/tag/00ME}{Tag 00ME}]{stacks-project} with $N = M = \mathcal{O}_{X_S,x}$ and $u = f_1$. Then we apply this result a second time with $N = M = \mathcal{O}_{X_S,x}/(f_1)$ and $u = f_2$.
    \item By definition, the scheme theoretic closure of $U$ is cut out by the ideal sheaf of $\mathcal{O}_{X_S}$ that locally consists of sections whose restriction to $U$ vanishes. In other words, it is cut out by the kernel of the unit $\mathcal{O}_{X_S} \rightarrow j_* j^*\mathcal{O}_{X_S}$. By part $(a)$ every point outside $U$ has depth at least $2$. Therefore the unit map $\mathcal{O}_{X_S} \rightarrow j_* j^*\mathcal{O}_{X_S}$ is an isomorphism by \cite[\href{https://stacks.math.columbia.edu/tag/0E9I}{Tag 0E9I}]{stacks-project}. We conclude that the scheme theoretic closure is cut out by the $0$ ideal, i.e. it is the whole $X_S$.
    \item Uniqueness of the extension follows because $U$ is scheme theoretically dense in $X_S$ and the morphism $Y \rightarrow X_S$ is separated. It remains to show existence. Since extensions are unique, any set of local extensions will glue. Therefore we can work Zariski locally. Let $V$ be an affine open subset of $X_S$. Since $Y$ is of finite type over $X_S$, the scheme $Y_V$ can be viewed as a closed subscheme of $\mathbb{A}^n_{V}$ for some $n$. Since $U \cap V$ is scheme theoretically dense inside $V$, any extension of the section to $\mathbb{A}^n_{V}$ will automatically factor through the closed subscheme $Y_{V}$. It therefore suffices to extend the composition $p: U \cap V \rightarrow Y_{U \cap V} \hookrightarrow \mathbb{A}^n_{U \cap V}$ to a map $\widetilde{p}: V \rightarrow \mathbb{A}^n_{V}$. This amounts to taking a section of $\mathcal{O}_{U\cap V}^{\oplus n}$ and extending to a section of $\mathcal{O}_{V}^{\oplus n}$. This in turn follows from \cite[\href{https://stacks.math.columbia.edu/tag/0E9I}{Tag 0E9I}]{stacks-project} applied to the sheaf $\mathcal{O}_{V}^{\oplus n}$.
\end{enumerate}
\end{proof}

For the following definition, recall that the rank of a torsion-free sheaf on an integral scheme is the dimension of the generic fiber.
\begin{defn} Let $S$ be a $k$-scheme. Let $\mathcal{F}$ be a $\mathcal{O}_{X_S}$-module. We say that $\mathcal{F}$ is a relative torsion-free sheaf on $X_{S}$ if it is $S$-flat, finitely presented, and for all points $s \in S$ the fiber $\mathcal{F}|_{X_s}$ is torsion-free. We say that $\mathcal{F}$ has rank $h$ if all of the fibers $\mathcal{F}|_{X_s}$ have rank $h$.
\end{defn}

From now on we will drop the adjective ``relative" whenever it is clear from context.

\begin{lemma}
Let $\mathcal{F}$ be a torsion-free sheaf on $X_S$. Then, there exists a big open subset $U \subset X_S$ such that $\mathcal{F}|_{U}$ is locally-free.
\end{lemma}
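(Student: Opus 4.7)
The plan is to take $U \subset X_S$ to be the maximal open locus on which $\mathcal{F}$ is locally free, and verify that $U$ meets the bigness condition. Since $\mathcal{F}$ is finitely presented, the locus where its stalks are free is open in $X_S$ (e.g.\ by a Fitting ideal argument, or equivalently via the standard fact that a finitely presented module which is flat at a point is free in a neighborhood of that point). Thus $U$ is an open subscheme of $X_S$, and the task reduces to showing that for every $s \in S$ the closed set $X_s \setminus U_s$ has codimension at least $2$ inside $X_s$.

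To attack this it is convenient first to identify $U_s = U \cap X_s$ with the locally free locus of the fiber $\mathcal{F}|_{X_s}$. The inclusion $U_s \subset \{x \in X_s : (\mathcal{F}|_{X_s})_x \text{ is free}\}$ is immediate by base change. For the converse, pick a point $x \in X_s$ where $(\mathcal{F}|_{X_s})_x$ is free of rank $h$, and lift a basis to a morphism of $\mathcal{O}_{X_S,x}$-modules $\varphi : \mathcal{O}_{X_S,x}^{\oplus h} \to \mathcal{F}_x$. By construction $\varphi$ becomes an isomorphism modulo $\mathfrak{m}_s \mathcal{O}_{X_S,x}$; using that $\mathcal{F}$ is $S$-flat, the local flatness criterion (a version of Nakayama's lemma for flat modules) upgrades this to $\varphi$ itself being an isomorphism, so $x \in U$.

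Given this identification, it remains to show that on each smooth fiber $X_s$ the non-locally-free locus of the torsion-free coherent sheaf $\mathcal{F}|_{X_s}$ has codimension at least $2$. At any codimension-$1$ point $\eta \in X_s$ the local ring $\mathcal{O}_{X_s,\eta}$ is a discrete valuation ring, and a finitely generated torsion-free module over a PID is free; hence $\mathcal{F}|_{X_s}$ is free at $\eta$. Combined with the openness of the free locus, this forces every codimension-$1$ point of $X_s$ to lie in $U_s$, which is precisely the desired bigness of $U$.

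The main obstacle is the reduction carried out in the middle step, where one must match the absolute locally-free locus in $X_S$ with the fiberwise one in $X_s$; this is where the hypothesis that $\mathcal{F}$ is $S$-flat is essential. The remaining ingredient, namely that a torsion-free coherent sheaf on a smooth variety is locally free away from codimension $2$, is classical commutative algebra on the smooth fibers.
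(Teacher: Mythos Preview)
Your proof is correct and follows essentially the same route as the paper: take $U$ to be the maximal locally-free locus, identify $U_s$ with the locally-free locus of $\mathcal{F}|_{X_s}$, and then use that torsion-free sheaves on smooth varieties are free at codimension-$1$ points. The only difference is cosmetic: the paper invokes \cite[\href{https://stacks.math.columbia.edu/tag/0CZR}{Tag 0CZR}]{stacks-project} for the identification in the middle step, whereas you spell out the Nakayama/flatness argument directly.
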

\begin{proof}
Let $U$ denote the (maximal) open subset of $X_S$ where $\mathcal{F}$ is locally-free. Choose $s \in S$. Since $\mathcal{F}$ is $S$-flat and finitely presented, we can apply \cite[\href{https://stacks.math.columbia.edu/tag/0CZR}{Tag 0CZR}]{stacks-project} to conclude that $U \cap X_s$ coincides with the set of points of $X_s$ where the fiber $\mathcal{F}|_{X_s}$ is locally-free. Since $X_s$ is smooth, all points of codimension $1$ are discrete valuation rings. Therefore the torsion-free sheaf $\mathcal{F}|_{X_s}$ is automatically free at all codimension $\leq 1$ points. We conclude that the closed complement of $U \cap X_s$ has codimension at least $2$, as desired.
\end{proof}

Next we recall the notion of determinant for torsion-free sheaves, as in \cite[5.6]{kobayashi-vectorbundles}. Let $Y$ be a $k$-scheme and $\mathcal{G}$ be a finitely presented $\mathcal{O}_Y$-module. Let $h$ be a nonnegative integer. There is a natural action of the symmetric group $S_h$ of $h$-letters on the $h$-fold tensor $\mathcal{G}^{\otimes h}$, given by permuting the indexes. Let $\text{sign}: S_h \rightarrow \{1, -1\}$ denote the sign character.

\begin{defn} \label{defn: wedge of sheaves}
Let $Y$ and $\mathcal{G}$ be as in the paragraph above. We denote by $\mathcal{G}_{Alt, h}$ the $\mathcal{O}_Y$-submodule of $\mathcal{G}^{\otimes h}$ that is locally generated over each affine open by sections of the form $\sigma \cdot (x_1 \otimes x_2 \otimes \cdots \otimes x_h) - \text{sign}(\sigma) \, x_1 \otimes x_2 \otimes \cdots \otimes x_h$ with $\sigma \in S_h$. We denote by $\bigwedge^{h} \mathcal{G}$ the quotient sheaf $\mathcal{G}^{\otimes h} / \, \mathcal{G}_{\text{Alt, h}}$.
\end{defn}

Note that $\bigwedge^h \mathcal{G}$ is a finitely presented $\mathcal{O}_Y$-sheaf by definition. The formation of $\bigwedge^h \mathcal{G}$ is compatible with arbitrary base-change, because taking pullbacks is right exact and commutes with tensor products.

\begin{defn} \label{defn: determinant}
Let $S$ be a $k$-scheme and let $\mathcal{F}$ be a torsion-free sheaf of rank $h$ on $X_S$. We define $\det(\mathcal{F}) \vcentcolon = \left( \bigwedge^h\mathcal{F}\right)^{\vee \vee}$.
\end{defn}

For all morphisms of $k$-schemes $T \rightarrow S$ there is a canonical morphism $\det(\mathcal{F})|_{X_T} \rightarrow \det\left(\mathcal{F}|_{X_T}\right)$. It turns out that this is an isomorphism, as shown in the following lemma.
\begin{lemma} \label{lemma: properties of determinant}
Let $S$ be a $k$-scheme and let $\mathcal{F}$ be a torsion-free sheaf on $X_S$ such that the rank of the fibers is constant. Then:
\begin{enumerate}[(a)]
    \item Let $j: U \hookrightarrow X_S$ be a big open subscheme such that $\mathcal{F}$ is locally-free. Then $\det(\mathcal{F}) = j_* \left(\det(\mathcal{F}|_{U}) \right)$, where $\det(\mathcal{F}|_{U})$ on the right hand side denotes the usual determinant of a locally-free sheaf.
    \item $\det(\mathcal{F})$ is a line bundle on $X_S$.
    \item For all morphisms of $k$-schemes $T \rightarrow S$, the canonical morphism $\det(\mathcal{F})|_{X_T} \rightarrow \det\left(\mathcal{F}|_{X_T}\right)$ is an isomorphism.

\end{enumerate}
\end{lemma}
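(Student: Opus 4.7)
The plan is to prove (a) first, and then to derive (c) and (b) from it using fiberwise smoothness and cohomology-and-base-change.

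For (a), the key observation is that since $\mathcal{F}|_U$ is locally free of rank $h$, the sheaf $\bigwedge^h(\mathcal{F}|_U)$ is already a line bundle — in particular, reflexive — so $\det(\mathcal{F}|_U)=\bigwedge^h(\mathcal{F}|_U)$. I would consider the canonical map
\[\bigwedge^h \mathcal{F}\longrightarrow j_* j^*\!\bigwedge^h \mathcal{F}= j_*\det(\mathcal{F}|_U)\]
and claim that it induces an isomorphism $(\bigwedge^h\mathcal{F})^{\vee\vee}\xrightarrow{\sim} j_*\det(\mathcal{F}|_U)$. The main input is Lemma \ref{lemma: lemma on very big open subsets}(a): every point of $X_S\setminus U$ has depth $\geq 2$ in $\mathcal{O}_{X_S}$. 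Using a local finite presentation of a coherent $\mathcal{G}$ together with the same \cite[\href{https://stacks.math.columbia.edu/tag/0E9I}{Tag 0E9I}]{stacks-project} argument used in Lemma \ref{lemma: lemma on very big open subsets}(b), one shows that the $S_2$-type sheaf $\mathcal{H}om(\mathcal{G},\mathcal{O}_{X_S})$ satisfies $\mathcal{H}om(\mathcal{G},\mathcal{O}_{X_S})\simeq j_*j^*\mathcal{H}om(\mathcal{G},\mathcal{O}_{X_S})$. Iterating this twice for $\mathcal{G}=\bigwedge^h\mathcal{F}$ and $\mathcal{G}=(\bigwedge^h\mathcal{F})^\vee$ yields $(\bigwedge^h\mathcal{F})^{\vee\vee}\simeq j_*j^*(\bigwedge^h\mathcal{F})^{\vee\vee}=j_*\det(\mathcal{F}|_U)$, which is (a).

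For (c), given $T\to S$, set $U_T:=U\times_S T\subset X_T$; this is still a big open (fiberwise codimensions are preserved) on which $\mathcal{F}|_{X_T}$ is locally free. On $U_T$, since $\bigwedge^h$ is compatible with base change for locally free sheaves, there is a canonical isomorphism $\det(\mathcal{F})|_{U_T}\simeq \det(\mathcal{F}|_{X_T})|_{U_T}$. Applying (a) to $\mathcal{F}|_{X_T}$ over $T$ identifies $\det(\mathcal{F}|_{X_T})=j_{T,*}\det(\mathcal{F}|_{U_T})$. The remaining task is to identify $\det(\mathcal{F})|_{X_T}$ with the same pushforward, which amounts to showing that the formation of $j_*\det(\mathcal{F}|_U)$ commutes with pullback along $X_T\to X_S$. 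I would verify this via cohomology-and-base-change, using that $\det(\mathcal{F}|_U)$ is $S$-flat and that $R^ij_*\det(\mathcal{F}|_U)=0$ for $i\geq 1$ — the latter following from the fiberwise depth $\geq 2$ condition via the local cohomology identification $R^ij_*\simeq \mathcal{H}^{i+1}_{X_S\setminus U}$.

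For (b), specializing (c) to $T=\Spec(k(s))$ identifies $\det(\mathcal{F})|_{X_s}\simeq\det(\mathcal{F}|_{X_s})$, which is a rank-$1$ reflexive sheaf on the smooth, hence locally factorial, variety $X_s$, and therefore a line bundle. Combined with the $S$-flatness of $\det(\mathcal{F})=j_*\det(\mathcal{F}|_U)$ (already established in the course of proving (c)), the fiberwise criterion for local freeness then implies that $\det(\mathcal{F})$ is a line bundle on $X_S$. The main obstacle is the base-change step inside (c): verifying that $j_*\det(\mathcal{F}|_U)$ commutes with arbitrary pullback. The essential ingredients are the relative Hartogs-type vanishing coming from the bigness of $U$ and a careful application of cohomology-and-base-change to the relatively non-proper open immersion $j$.
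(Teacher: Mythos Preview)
Your argument for (a) is correct and matches the paper's: both use the depth $\geq 2$ condition from Lemma~\ref{lemma: lemma on very big open subsets}(a) together with \cite[\href{https://stacks.math.columbia.edu/tag/0E9I}{Tag 0E9I}]{stacks-project} to identify the double dual with the pushforward from $U$.

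The gap is in your treatment of (c), and hence of (b). Your claimed vanishing $R^i j_*\det(\mathcal{F}|_U)=0$ for $i\geq 1$ does \emph{not} follow from depth $\geq 2$. Under the identification $R^i j_*(j^*\mathcal{G})\cong \mathcal{H}^{i+1}_{Z}(\mathcal{G})$, depth $\geq 2$ only kills $\mathcal{H}^0_Z$ and $\mathcal{H}^1_Z$; it says nothing about $\mathcal{H}^2_Z$, which controls $R^1 j_*$. Concretely, already for $S=\Spec(k)$ and $X$ a smooth surface with $Z$ a closed point, one has $R^1 j_*\mathcal{O}_U\cong H^2_{\mathfrak m}(\mathcal{O}_{X,x})\neq 0$. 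So the input you feed into cohomology-and-base-change is false, and the base-change step for the non-proper $j$ cannot be completed this way. Since your (b) relies on (c), it inherits the gap: without base-change you cannot identify $\det(\mathcal{F})|_{X_s}$ with $\det(\mathcal{F}|_{X_s})$, nor establish $S$-flatness of $j_*\det(\mathcal{F}|_U)$.

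The paper sidesteps this entirely by introducing the determinant $\mathrm{d}(\mathcal{F})$ built from a finite locally free resolution (as in \cite[pp.~36--37]{huybrechts.lehn}). This $\mathrm{d}(\mathcal{F})$ is a line bundle by construction and is manifestly compatible with base-change (pull back the resolution). Since $\mathrm{d}(\mathcal{F})$ agrees with $\det(\mathcal{F})$ on $U$, the same $j_*$ argument as in (a), now applied to the line bundle $\mathrm{d}(\mathcal{F})$, gives $\mathrm{d}(\mathcal{F})\cong j_*\det(\mathcal{F}|_U)\cong\det(\mathcal{F})$, yielding (b) and (c) at once. The missing idea in your proposal is precisely this auxiliary resolution-based determinant.
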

\begin{proof}
\begin{enumerate}[(a)]
    \item By construction, $\det(\mathcal{F})|_{U}$ coincides with the usual determinant of $\mathcal{F}|_{U}$ as a locally-free sheaf. Therefore it suffices to show that the unit $\det(\mathcal{F}) \rightarrow j_* \left(\det(\mathcal{F})|_{U} \right)$ is an isomorphism. By \cite[\href{https://stacks.math.columbia.edu/tag/0E9I}{Tag 0E9I}]{stacks-project} it suffices to check that the sheaf $\det(\mathcal{F}) = \Hom\left(\left( \bigwedge^h\mathcal{F}\right)^{\vee}, \mathcal{O}_{X_S}\right)$ has depth at least $2$ at all points in the complement of $U$. Notice that this follows from $(2)$ in \cite[\href{https://stacks.math.columbia.edu/tag/0AV5}{Tag 0AV5}]{stacks-project}, since $\mathcal{O}_{X_S}$ has depth at least $2$ at all points in the complement of $U$ by Lemma \ref{lemma: lemma on very big open subsets}.
    \item One can define a notion of determinant $\text{d}(\mathcal{F})$ using a finite resolution by locally-free sheaves, as in \cite[pg. 36,37]{huybrechts.lehn}. By definition, $\text{d}(\mathcal{F})$ is a line bundle that agrees with $\det(\mathcal{F})$ on the open subset $U$. By the same proof as in part $(a)$ applied to the line bundle $\text{d}(\mathcal{F})$, it follows that $\text{d}(\mathcal{F}) \cong j_* (\det(\mathcal{F}|_{U})$. We conclude that $\det(\mathcal{F}) = j_* \left(\det(\mathcal{F}|_{U}) \right) = \text{d}(\mathcal{F})$ is a line bundle.
    \item This follows because the notion of determinant $\text{d}(\mathcal{F})$ defined in \cite[pg. 36,37]{huybrechts.lehn} using a fixed choice of a resolution can be seen to be compatible with base-change, by pulling back the resolution.

\end{enumerate}
\end{proof}

Recall that $r$ is the dimension of the vector space $V$ that we use to define the faithful representation of the group $G$. The following is an open substack of the algebraic stack of coherent sheaves on $X$ as in \cite[4.6.2]{lmb-champsalgebriques}.
\begin{defn} \label{defn: stack of torsion free sheaves with trivialization}
We define $\Coh_r^{tf}(X)$ to be the pseudofunctor from $\left(\Aff_{k}\right)^{op}$ into groupoids defined as follows. For any affine scheme $T \in \Aff_{k}$, we set
\begin{gather*} \Coh_r^{tf}(X)\, (T) \; = \; \left[ \begin{matrix} \; \; \text{groupoid of $T$-flat relative torsion-free sheaves $\mathcal{F}$ of rank $r$ on $X_{T}$}  \; \; \end{matrix} \right]\end{gather*}
\end{defn}

\begin{prop} \label{prop: algebraicity of SLtf}
$\Coh_r^{tf}(X)$ is an algebraic stack with affine diagonal and locally of finite type over $k$.
\end{prop}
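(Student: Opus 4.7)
The plan is to realize $\Coh_r^{tf}(X)$ as an open substack of the stack $\Coh(X)$ of all coherent sheaves on $X$, and then transfer the desired properties from this ambient stack.

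First I would invoke \cite[4.6.2]{lmb-champsalgebriques}, which shows that $\Coh(X)$ is an algebraic stack locally of finite type over $k$. Its diagonal is affine: given two $T$-flat coherent sheaves $\mathcal{F}, \mathcal{G}$ on $X_T$, the functor $\Hom_T(\mathcal{F},\mathcal{G})$ is representable by a linear (in particular affine) $T$-scheme, by Grothendieck's construction which uses the properness of $X$ and the $T$-flatness of $\mathcal{F}$. The isomorphism functor $\Iso_T(\mathcal{F},\mathcal{G})$ then sits inside $\Hom_T(\mathcal{F},\mathcal{G}) \times_T \Hom_T(\mathcal{G},\mathcal{F})$ as the closed subscheme cut out by the conditions that the two compositions equal the respective identities, so it too is affine over $T$.

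The second step is to show that the inclusion $\Coh_r^{tf}(X) \hookrightarrow \Coh(X)$ is an open immersion. Given a $T$-family $\mathcal{F}$ corresponding to a map $T \to \Coh(X)$, I need to see that the locus of $t \in T$ over which $\mathcal{F}$ is $T$-flat, has torsion-free fibers, and has rank $r$ in each fiber is open in $T$. The locus of $T$-flatness is open by \cite[\href{https://stacks.math.columbia.edu/tag/0399}{Tag 0399}]{stacks-project}. Restricting to this locus, the Hilbert polynomial of the fibers is locally constant on $T$, so since the generic rank is the normalized leading coefficient, the subset where the fibers have rank $r$ is open (and closed). Finally, on the smooth variety $X$, torsion-freeness of a coherent sheaf is equivalent to Serre's condition $S_1$, and by the openness of fiberwise $S_k$ properties in flat proper families (EGA IV$_3$, 12.1.1(iv), see also \cite[\href{https://stacks.math.columbia.edu/tag/0540}{Tag 0540}]{stacks-project}), the locus where $\mathcal{F}|_{X_t}$ satisfies $S_1$ is open in $T$.

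Combining the two steps, $\Coh_r^{tf}(X)$ is an open substack of $\Coh(X)$, and therefore inherits algebraicity, affine diagonal, and being locally of finite type over $k$. The main obstacle is verifying the openness of the torsion-free condition on fibers, which I would handle via the cited openness theorems for Serre's conditions in flat proper families; everything else is either a citation (algebraicity of $\Coh(X)$, openness of flatness) or a routine consequence of representability of $\Hom$-functors for proper morphisms.
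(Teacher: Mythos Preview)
Your approach is essentially the same as the paper's—exhibit $\Coh_r^{tf}(X)$ as an open substack of a larger stack whose algebraicity, affine diagonal, and local finite type are known—except you go back to $\Coh(X)$ while the paper cites \cite[Prop.~2.5]{torsion-freepaper} for the intermediate stack $\Coh^{tf}(X)$ of all torsion-free sheaves and then only observes that the rank is locally constant. Your route is more self-contained; the paper's is a one-line citation. Two small remarks: first, a map $T\to\Coh(X)$ in the sense of \cite{lmb-champsalgebriques} already encodes a $T$-flat family, so your flatness step is redundant. Second, and more substantively, on a smooth variety torsion-freeness is \emph{not} equivalent to $S_1$ alone: a sheaf such as $\mathcal{O}_Z$ for a smooth proper closed $Z\subsetneq X$ satisfies $S_1$ but is torsion. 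The correct statement is that torsion-free is equivalent to $S_1$ together with the support being all of $X$ (or empty). In your argument this is harmless, since you have already restricted to generic rank $r>0$, which forces the support to contain the generic point and hence to equal $X$; after that, $S_1$ does give torsion-freeness. Just phrase it that way.
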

\begin{proof}
Since the rank of a family of torsion-free sheaves is a locally constant function on the base, $\Coh^{tf}_{r}(X)$ is an open substack of the stack $\Coh^{tf}(X)$ that parametrizes all torsion-free sheaves. The proposition follows, because $\Coh^{tf}(X)$ is an algebraic stack with affine diagonal and locally of finite type over $k$ \cite[Prop. 2.5]{torsion-freepaper}.
\end{proof}

\begin{remark}
The stack $\Coh_r^{tf}(X)$ contains the stack $\Bun_{r}(X)$ of rank $r$ vector bundles on $X$ as an open substack.
\end{remark}
\end{subsection}

\begin{subsection}{The scheme of $G$-reductions} \label{subsection: scheme of G-reductions}
Let $S$ be a $k$-scheme and let $\mathcal{F} \in \Coh_r^{tf}(X)(S)$ be a rank $r$ torsion-free sheaf on $X_S$. In this subsection we modify the construction in \cite{schmitt.singular} in order to define a scheme of $G$-reductions of $\mathcal{F}$. In our convention the representation $\rho$ is the dual of the representation in \cite{schmitt.singular}. 

Let $Y$ be a $k$-scheme. Let $\mathcal{G}$ be a finitely presented $\mathcal{O}_Y$-module. We can form the universal graded associative algebra $T(\mathcal{G}) := \bigoplus_{j=0}^{\infty} \mathcal{G}^{\otimes j}$. There is a graded ideal that on each affine open is generated by sections of the form $x \otimes y - y \otimes x$. The quotient of $T(\mathcal{G})$ by this ideal is the symmetric algebra $\Sym^{\bullet}(\mathcal{G})$. By definition $\Sym^{\bullet}(\mathcal{G})$ is a graded commutative $\mathcal{O}_{Y}$-algebra that is locally finitely presented as an algebra.

Suppose that the sheaf $\mathcal{G}$ admits a $k[G]$-comodule structure. This induces a comodule structure on $T(\mathcal{G})$, where we equip each graded component $\mathcal{G}^{\otimes j}$ with the tensor comodule structure. The ideal used to form $\Sym^{\bullet}(\mathcal{G})$ is a $k[G]$-subcomodule. It follows that the quotient $\Sym^{\bullet}(\mathcal{G})$ acquires a $k[G]$-comodule structure that is compatible with the multiplication and the grading.

\begin{defn}
Let $S$ be a $k$-scheme and let $\mathcal{F}$ be a torsion-free sheaf of rank $r$ on $X_S$. We set
\[ E_{V}(\mathcal{F}) \vcentcolon = \Sym^{\bullet}(\mathcal{F} \otimes_k V^{\vee}) \otimes_{\mathcal{O}_{X_{S}}} \Sym^{\bullet}(\det(\mathcal{F})^{\vee} \otimes_k \det(V)) \]
We also define $H(\mathcal{F}, V) \vcentcolon = \underline{\Spec}_{X_S}(E_{V}(\mathcal{F}))$. Note that the structure morphism $H(\mathcal{F}, V) \rightarrow X_S$ is affine and of finite presentation.
\end{defn}

The vector space $V$ acquires a left $k[G]$-comodule structure $V \rightarrow V \otimes k[G]$ via the representation $\rho: G \rightarrow \GL(V)$. We equip $V^{\vee}$ with the dual right comodule structure $c : V^{\vee} \rightarrow V^{\vee} \otimes k[G]$. This induces a comodule structure $\mathcal{F} \otimes_{k} V^{\vee} \rightarrow (\mathcal{F} \otimes_{k} V^{\vee}) \otimes k[G]$ given by $\text{id}_{\mathcal{F}} \otimes c$. By the discussion above, the graded algebra $\Sym^{\bullet}(\mathcal{F} \otimes_k V^{\vee})$ inherits a right $k[G]$-comodule structure.

On the other hand, $\det(V)$ acquires a left $k[G]$-comodule structure via the determinant representation of $V$. By post-composing with the antipode of the Hopf algebra $k[G]$ (i.e. pre-composing the representation with inversion on $G$), we get a right $k[G]$-comodule structure on $\det(V)$. In a similar way as above, this induces a right $k[G]$-comodule structure on the algebra  $\Sym^{\bullet}(\det(\mathcal{F})^{\vee} \otimes_k \det(V))$. 

By taking the tensor comodule structure for the $k[G]$-comodules defined above, we can equip $E_{V}(\mathcal{F})$ with a right $k[G]$-comodule structure
\[c_{E_{V}(\mathcal{F})}:E_{V}(\mathcal{F}) \rightarrow E_{V}(\mathcal{F}) \otimes_k k[G]\]
Since this comodule structure is compatible with the algebra multiplication, it translates into a left action of the algebraic group $G$ on the scheme $H(\mathcal{F}, V)$.

Consider the subalgebra $E_{V}(\mathcal{F})^G$ of $G$-invariants of $E_{V}(\mathcal{F})$. Recall that this is defined over each affine open as the $\mathcal{O}_{X_S}$-subalgebra of $E_{V}(\mathcal{F})$ consisting of sections $s$ such that $c_{E_{V}(\mathcal{F})}(s) = s \otimes 1_{k[G]}$. Since the characteristic of $k$ is $0$ and $G$ is reductive, the algebra of invariants $E_{V}(\mathcal{F})^G$ is a locally finitely presented $\mathcal{O}_{X_S}$-algebra and its formation is compatible with arbitrary base-change. This can be checked affine locally, and over each affine it follows from the existence of the Reynolds operator \cite[Chpt. 1, \S1,2]{mumford-git}. By definition, the scheme $\underline{\Spec}_{X_S}\left((E_{V}(\mathcal{F}))^G\right)$ is the affine GIT quotient $H(\mathcal{F}, V) /\!\!/ \,G$ relative to $X_S$.

 Let $U$ be a big open subscheme of $X_S$ where $\mathcal{F}$ is locally-free. The restriction $H(\mathcal{F}, V)|_{U}$ is, by definition, the scheme 
 \[H \vcentcolon = \Hom\left( \mathcal{F}|_{U}, V \otimes_{k} \mathcal{O}_{U}\right) \times \Hom\left( \det(\mathcal{F}|_{U})^{\vee}, \, \det(V^{\vee}) \otimes_k \mathcal{O}_{U}\right)\]
 parametrizing pairs $(f_1, f_2)$ of homomorphisms of vector bundles
 \[ f_1: \mathcal{F}|_{U} \rightarrow V \otimes_{k} \mathcal{O}_{U} \;  \; \; \; \text{and} \;  \; \; \; f_2: \det(\mathcal{F}|_{U})^{\vee} \rightarrow \det(V^{\vee}) \otimes_{k} \mathcal{O}_{U}\]
 This means that there is a universal pair $(u_1, u_2)$ of morphisms
 \[u_1: \mathcal{F}|_{H} \rightarrow V \otimes_k \mathcal{O}_{H} \; \; \; \; \text{and} \; \; \; \; u_2: \det(\mathcal{F}|_{H})^{\vee} \rightarrow \det(V^{\vee}) \otimes \mathcal{O}_{H}\]
 We take the $r^{th}$ wedge of $u_1$ to get a universal determinant morphism.
 \[\wedge^{r} u_1: \det(\mathcal{F}|_{H}) \rightarrow \det(V)\otimes \mathcal{O}_{H} \]
 We set $\delta_{U} \vcentcolon = (\wedge^{r} u_1) \otimes u_2$. Using the canonical trivializations $\det (V) \otimes \det(V^{\vee}) \xrightarrow{\sim} k$ and $\det(\mathcal{F}|_{H}) \otimes \det(\mathcal{F}|_{H})^{\vee} \xrightarrow{\sim} \mathcal{O}_H$, we can view this as a map $\delta_U: \mathcal{O}_H \rightarrow \mathcal{O}_H$. We can then form the map $\delta_U -1: \mathcal{O}_{H} \rightarrow \mathcal{O}_H$ obtained by subtracting the identity of $\mathcal{O}_H$. Notice that $\delta_{U} -1$ cuts out the divisor on $H$ given by the locus where $u_1$ is an isomorphism and $u_2$ is the determinant of the inverse dual of $u_1$. In particular the vanishing locus of $\delta_{U}-1$ is canonically isomorphic to the scheme parametrizing linear isomorphisms of vector bundles $\Iso(\mathcal{F}|_{U}, V \otimes_{k} \mathcal{O}_{U})$. This identification is given by forgetting $u_2$.
 
By construction, the composition $\mathcal{O}_{H/\!\!/\,G} \hookrightarrow \mathcal{O}_H \xrightarrow{\delta_{U}-1} \mathcal{O}_{H}$ lands in the $\mathcal{O}_{U}$-subalgebra of invariants $(E_{V}(\mathcal{F})|_{U})^G$. Therefore this composition induces a well-defined map $\alpha_{U}: \mathcal{O}_{H/\!\!/\,G} \rightarrow \mathcal{O}_{H/\!\!/\,G}$ that cuts out an effective divisor on $H/\!\!/\,G$. We denote this closed subscheme by $(H/\!\!/\,G)_{\alpha_{U} = 0} \hookrightarrow H/\!\!/\,G$.

\begin{defn} \label{defn: scheme of G reductions}
$\Red_{G}(\mathcal{F})$ is defined to be the scheme theoretic closure of $(H/\!\!/\,G)_{\alpha_{U} = 0}$ inside $H(\mathcal{F}, V) /\!\!/\,G$. In other words, $\Red_{G}(\mathcal{F})$ is the scheme theoretic image of the composition
\[ (H/\!\!/\,G)_{\alpha_{U} = 0} \hookrightarrow H/\!\!/\,G \hookrightarrow H(\mathcal{F}, V)/\!\!/\,G\]
where the second map $H/\!\!/\,G \hookrightarrow H(\mathcal{F}, V)/\!\!/\,G$ is the open immersion that comes from identifying 
$H/\!\!/\,G \xrightarrow{\sim} \left(H(\mathcal{F}, V)/\!\!/\,G\right)|_{U}$
via the compatibility of taking $G$-invariants with flat base-change.
\end{defn}

\begin{remark}
It should be noted that the construction of $\Red_{G}(\mathcal{F})$ does not depend on the choice of $U$. Indeed, for any two big open subsets $U_1$ and $U_2$ of $X_S$, the intersection $U_1 \cap U_1$ is also big. Since the element $\alpha_{U}$ is canonically defined above, the restrictions of $\alpha_{U_1}$ and $\alpha_{U_2}$ to the subset $U_1 \cap U_2$ agree with $\alpha_{U_1 \cap U_2}$. Notice that $H$ is always flat over $U$, because it is locally given by a polynomial algebra. It follows that $H /\!/\! \, G$ is also flat over $U$, because taking $G$ invariants is exact by the existence of the Reynolds operator. Since $U$ is scheme theoretically dense in $X_S$ by Lemma \ref{lemma: lemma on very big open subsets}, it follows that  the open $(H/\!/\! \,G)|_{U_1 \cap U_2}$ is scheme theoretically dense in $(H/\!/\! \,G)|_{U_1}$ and $(H/\!/\! \,G)|_{U_2}$. In particular the closures of $(H/\!/\! \,G)_{\alpha_{U_1} = 0}$ and $(H/\!/\! \,G)_{\alpha_{U_2} = 0}$ both agree with the closure of $(H/\!/\! \,G)_{\alpha_{U_1\cap U_2} = 0}$.
\end{remark}

The following lemma is an immediate consequence of the constructions above.
\begin{lemma} \label{lemma: reduction scheme and base-change}
The scheme $\Red_{G}(\mathcal{F})$ is affine and of finite presentation over $X_S$. The formation of $\Red_{G}(\mathcal{F})$ is compatible with base-change. In other words, for any morphism of $k$-schemes $T \rightarrow S$ there is a canonical isomorphism $\Red_{G}(\mathcal{F}) \times_{X_S} X_T \xrightarrow{\sim} \Red_{G}\left(\mathcal{F}|_{X_T}\right)$. \qed
\end{lemma}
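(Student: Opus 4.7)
The plan is to verify each assertion by tracing through the construction of $\Red_G(\mathcal{F})$, noting that every ingredient except the final scheme-theoretic closure commutes with arbitrary base change essentially by definition.

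For affineness and finite presentation, I would first observe that $(\mathcal{F} \otimes_k V^{\vee}) \oplus (\det(\mathcal{F})^{\vee} \otimes_k \det V)$ is a finitely presented $\mathcal{O}_{X_S}$-module, using Lemma \ref{lemma: properties of determinant}(b) that $\det(\mathcal{F})$ is a line bundle. Its symmetric algebra is therefore locally finitely presented, so $H(\mathcal{F}, V) = \underline{\Spec}_{X_S}(E_V(\mathcal{F}))$ is affine and of finite presentation over $X_S$. Since $G$ is reductive in characteristic zero, the Reynolds operator provides an $\mathcal{O}_{X_S}$-linear retraction $E_V(\mathcal{F}) \twoheadrightarrow E_V(\mathcal{F})^G$, and a standard noetherian approximation argument over $S$ shows that $E_V(\mathcal{F})^G$ remains locally finitely presented. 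Consequently $H(\mathcal{F}, V)/\!\!/G$ is affine and of finite presentation over $X_S$, and $\Red_G(\mathcal{F})$, being a closed subscheme, is affine; its finite presentation over $X_S$ follows from another noetherian approximation applied to the scheme-theoretic image.

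Next, for a morphism $T \to S$, I would check the base-change compatibility of each constituent of the construction. The preimage $U_T := U \times_{X_S} X_T$ is a big open subset of $X_T$, since the complement of $U$ in each fiber of $X_S \to S$ coincides with the complement of $U_T$ in the corresponding fiber of $X_T \to T$. Symmetric algebras, tensor products, and dualization of line bundles commute with pullback, and Lemma \ref{lemma: properties of determinant}(c) gives base-change compatibility of $\det(\mathcal{F})$; therefore $E_V(\mathcal{F})$ pulls back correctly. In characteristic zero the Reynolds splitting is functorial, so $G$-invariants commute with arbitrary base change, making $H(\mathcal{F}, V)/\!\!/G$ and its open restriction $H/\!\!/G$ base change compatibly. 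The universal morphisms $u_1, u_2$ are defined functorially, so the section $\delta_U - 1$ and its invariant image $\alpha_U$ pull back to $\delta_{U_T} - 1$ and $\alpha_{U_T}$, giving $(H/\!\!/G)_{\alpha_U = 0} \times_{X_S} X_T = (H_T/\!\!/G)_{\alpha_{U_T} = 0}$.

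The main obstacle I expect is base-change compatibility of the final scheme-theoretic closure along a morphism $T \to S$ that need not be flat. The route I would take is to observe that $u_1 \colon \mathcal{F}|_{H(\mathcal{F}, V)} \to V \otimes \mathcal{O}_{H(\mathcal{F}, V)}$ and $u_2$ are in fact defined on all of $H(\mathcal{F}, V)$, not merely over the locally free locus. Using the canonical isomorphism $\det(\mathcal{F})^{\vee} \cong \bigl(\bigwedge^{r} \mathcal{F}\bigr)^{\vee}$, the same recipe that produces $\delta_U$ yields a globally defined section $\delta \in \mathcal{O}_{H(\mathcal{F}, V)}$, and projecting to invariants gives a globally defined $\alpha \in \mathcal{O}_{H(\mathcal{F}, V)/\!\!/G}$ whose restriction to $U$ is $\alpha_U$. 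With this, $\Red_G(\mathcal{F})$ is identified with the scheme-theoretic closure of $V(\alpha)|_U$ inside $V(\alpha)$, a situation in which I would exploit $S$-flatness of $H(\mathcal{F}, V)/\!\!/G$ (which follows in characteristic zero from $S$-flatness of $\mathcal{F}$ together with preservation of $S$-flatness under symmetric algebras and the Reynolds retraction) to argue that the defining ideal sheaf of $\Red_G(\mathcal{F})$ is preserved under pullback along $T \to S$, yielding the required identification $\Red_G(\mathcal{F}) \times_{X_S} X_T \xrightarrow{\sim} \Red_G(\mathcal{F}|_{X_T})$.
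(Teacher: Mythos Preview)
The paper gives no argument here: it declares the lemma ``an immediate consequence of the constructions above'' and closes with a \qed. Your outline therefore goes well beyond what the paper records, and the first two-thirds of it are fine: affineness and finite presentation of $H(\mathcal{F},V)/\!\!/G$ follow exactly as you say via the Reynolds operator, and base-change compatibility of $E_V(\mathcal{F})$, of $E_V(\mathcal{F})^G$, and of the divisor $(H/\!\!/G)_{\alpha_U=0}$ over $U$ are indeed straightforward. You are also right to flag the scheme-theoretic closure as the one place where base-change compatibility is not formal.

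However, your proposed resolution of that point has two gaps. First, the global extension of $\delta$ does not land in $\mathcal{O}_{H(\mathcal{F},V)}$ as you claim. The recipe $(\wedge^r u_1)\otimes u_2$ makes sense globally, but its source is $\bigwedge^r\mathcal{F}\otimes\det(\mathcal{F})^\vee$ pulled back to $H(\mathcal{F},V)$, and away from $U$ the sheaf $\bigwedge^r\mathcal{F}$ is \emph{not} the line bundle $\det(\mathcal{F})=(\bigwedge^r\mathcal{F})^{\vee\vee}$; the canonical trivialization $\det(\mathcal{F})\otimes\det(\mathcal{F})^\vee\cong\mathcal{O}$ used in the paper's construction over $U$ does not extend to an isomorphism $\bigwedge^r\mathcal{F}\otimes(\bigwedge^r\mathcal{F})^\vee\cong\mathcal{O}$. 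So you obtain a global section of $\mathcal{H}om(\bigwedge^r\mathcal{F}\otimes(\bigwedge^r\mathcal{F})^\vee,\mathcal{O})$, not of $\mathcal{O}$, and there is no evident global $\alpha$. Second, the assertion that $H(\mathcal{F},V)/\!\!/G$ is $S$-flat is unjustified: $S$-flatness is not preserved under tensor powers over $\mathcal{O}_{X_S}$ (already $\mathcal{F}\otimes_{\mathcal{O}_{X_S}}\mathcal{F}$ need not be $S$-flat when $\mathcal{F}$ is merely torsion-free fibrewise), so the symmetric-algebra step fails. Even granting flatness of the ambient quotient, it is not clear how this would force the closure of $V(\alpha)|_U$ inside $V(\alpha)$ to commute with pullback along a non-flat $T\to S$; the relevant density statement concerns the closed subscheme, not the ambient space. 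The paper does not address this point either (and its appendix on positive characteristic suggests the authors regard the Reynolds operator as the \emph{only} obstruction), so if you want a complete argument here you will need a different idea for the closure step.
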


Let $U$ be a big open subset of $X_S$ where $\mathcal{F}$ is locally-free. Since $\mathcal{F}|_{U}$ is a vector bundle of rank $r$ on $U$, we can naturally associate to it a $\GL(V)$-bundle $\mathcal{P}$ on $U$.
\begin{prop} \label{prop: scheme of reductions as fiber bundle}
Let $\mathcal{F}$ and $U$ be as above. Let $\mathcal{P}$ denote the corresponding $\GL(V)$-bundle on $U$. Then there is a canonical identification of $U$-schemes $\Red_{G}(\mathcal{F})|_{U} \xrightarrow{\sim} \mathcal{P}/G$. Here the quotient on the right is formed via the inclusion $\rho:G \hookrightarrow \GL(V)$.
\end{prop}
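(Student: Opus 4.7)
The plan is to unwind the construction of $\Red_G(\mathcal{F})$ over $U$ step by step and match each stage with the geometric quotient $\mathcal{P}/G$.

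First, I would observe that scheme-theoretic closure commutes with restriction to an open subscheme that already contains the locally closed subscheme in question. Since $(H/\!\!/\,G)_{\alpha_U = 0}$ is a closed subscheme of the open subscheme $H/\!\!/\,G \subset H(\mathcal{F}, V)/\!\!/\,G$ lying over $U$, this yields $\Red_G(\mathcal{F})|_U = (H/\!\!/\,G)_{\alpha_U = 0}$, and by the remark preceding Definition \ref{defn: scheme of G reductions} this equals $H_{\delta_U = 1}/\!\!/\,G$.

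Second, I would identify $H_{\delta_U = 1}$ with the frame bundle $\mathcal{P} = \Iso(\mathcal{F}|_U, V \otimes_k \mathcal{O}_U)$. The equation $\delta_U - 1 = 0$ forces $u_1$ to be an isomorphism and determines $u_2 = (\wedge^r u_1)^{-\vee}$ uniquely from $u_1$, so projection to the first factor gives an isomorphism of $U$-schemes, as was already noted in the discussion preceding the proposition.

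The main obstacle I anticipate is verifying $G$-equivariance of this identification: under $H_{\delta_U = 1} \cong \mathcal{P}$, the $G$-action inherited from the comodule structure $c_{E_V(\mathcal{F})}$ must agree with the restriction along $\rho \colon G \hookrightarrow \GL(V)$ of the standard $\GL(V)$-action on $\mathcal{P}$. This is a bookkeeping exercise in unwinding the dualities defining the comodule structures on $V^\vee$ and on $\det(V)$, and in confirming that the pre-composition with the antipode on the second factor is exactly what makes the two factors transform compatibly — hence restricts cleanly once we pass to the closed subscheme $\delta_U = 1$. The compatibility is essentially built into the definition of $\delta_U$, but carefully disentangling the left/right comodule conventions is the one place where I expect notational friction.

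Finally, since $\mathcal{P}$ is a $\GL(V)$-torsor, the restricted $G$-action on $\mathcal{P}$ is free. Working étale-locally on $U$, I can trivialize $\mathcal{P} \cong U' \times \GL(V)$ and reduce to the classical fact that the affine GIT quotient $\GL(V)/\!\!/\,G$ coincides with the geometric quotient $\GL(V)/G$, which is affine by Matsushima's theorem applied to the reductive subgroup $G \subset \GL(V)$. Combining this with the compatibility of GIT quotients with the flat base change $U' \to U$, we obtain $H_{\delta_U = 1}/\!\!/\,G = \mathcal{P}/\!\!/\,G = \mathcal{P}/G$, which together with the first two steps produces the desired canonical isomorphism $\Red_G(\mathcal{F})|_U \xrightarrow{\sim} \mathcal{P}/G$.
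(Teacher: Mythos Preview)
Your proposal is correct and follows essentially the same route as the paper. Two minor differences are worth noting. First, you identify $H_{\delta_U=1}$ directly with $\mathcal{P}$, whereas in the paper's conventions the frame bundle is $\mathcal{P}=\Iso(V\otimes_k\mathcal{O}_U,\mathcal{F}|_U)$ with \emph{right} $\GL(V)$-action; the paper therefore inserts the inversion map $(-)^{-1}\colon \Iso(\mathcal{F}|_U,V\otimes_k\mathcal{O}_U)\xrightarrow{\sim}\Iso(V\otimes_k\mathcal{O}_U,\mathcal{F}|_U)$ to pass from the left $G$-action on $H_{\delta_U=1}$ to the right $G$-action on $\mathcal{P}$---this is exactly the ``notational friction'' you anticipated, and the paper resolves it cleanly with one extra isomorphism. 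Second, for the final identification $\mathcal{P}/\!\!/\,G=\mathcal{P}/G$ the paper argues directly that faithfulness of $\rho$ makes the $G$-action free and proper, while you trivialize \'etale-locally and invoke Matsushima; both arguments are valid and yield the same conclusion.
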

\begin{proof}
Let $H_{\delta_{U} =1}$ denote the closed subscheme of $H$ cut out by the equation $\delta_{U} -1$, as described in the construction of $\Red_{G}(\mathcal{F})$ above. By definition, $\Red_{G}(\mathcal{F})|_{U}$ is the affine GIT quotient $(H_{\delta_{U} =1})/\!/ \! \, G$. We have seen that $H_{\delta_{U} =1}$ is canonically isomorphic to the $U$-scheme $\Iso(\mathcal{F}|_{U}, V\otimes_{k} \mathcal{O}_{U})$ parametrizing isomorphisms of vector bundles. This identification respects the natural left $G$-actions defined on both schemes via the inclusions $\rho: G \rightarrow \GL(V)$.

The map that takes an isomorphism $\varphi: \mathcal{F}|_{U} \oplus \det(\mathcal{F})^{\vee}|_{U} \xrightarrow{\sim} (V \oplus \det(V^{\vee}) )  \otimes_{k} \mathcal{O}_{U}$ to its inverse in $\Hom\left( (V \oplus \det(V^{\vee}) ) \otimes_{k} \mathcal{O}_{U}, \, \mathcal{F}|_{U} \oplus \det(\mathcal{F})^{\vee}|_{U} \right)$ induces an isomorphism of schemes
\[ (-)^{-1} : \; \Iso(\mathcal{F}|_{U}, V \otimes_{k} \mathcal{O}_{U}) \xrightarrow{\sim} \Iso\left( V \otimes_{k} \mathcal{O}_{U}, \, \mathcal{F}|_{U} \right)\]
The right-hand side admits a right action of $\GL(V)$ given by precomposition. By definition $\Iso(V \otimes_{k} \mathcal{O}_{U}, \, \mathcal{F}|_{U})$ is the total space of the $\GL(V)$-bundle $\mathcal{P}$ equipped with its right $\GL(V)$-action. The inclusion $\rho : G \hookrightarrow \GL(V)$ induces a right $G$-action on $\mathcal{P}$. The inversion map $(-)^{-1}$ above intertwines the left $G$-action on the left hand side and the right $G$-action on the right hand side. 

 Since the  $\rho$ is faithful, it follows that $G$ acts properly and freely on $\mathcal{P}$. Therefore the GIT quotient $\mathcal{P} /\!/ \! \, G$ and the quotient as \'etale sheaves $\mathcal{P}/G$ coincide. This way we get the sought after canonical identification of $\Red_{G}(\mathcal{F})|_{U} \xrightarrow{\sim} \mathcal{P}/G$.
\end{proof}

The following proposition explains the connection between the scheme $\Red_{G}(\mathcal{F})$ and $G$-reductions of structure group.
\begin{prop} \label{prop: relation scheme and G-reductions}
For any big open subset $U \subset X_S$ where $\mathcal{F}$ is locally-free, there is a bijection between the following two sets $(A), (B)$:
\begin{enumerate}[(A)]
    \item The set of sections of the structure morphism $\Red_{G}(\mathcal{F}) \rightarrow X_S$.
    \item The set of $G$-reductions of structure group for the $\GL(V)$-bundle $\mathcal{P}$ corresponding to $\mathcal{F}|_{U}$.
\end{enumerate}
\end{prop}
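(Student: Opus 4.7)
\medskip

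\noindent\textbf{Proof proposal.} The plan is to build the bijection by restricting sections to $U$, using Proposition \ref{prop: scheme of reductions as fiber bundle} to turn them into $G$-reductions, and then showing that the restriction map from sections on $X_S$ to sections on $U$ is itself bijective by invoking the extension lemma for affine morphisms over big opens.

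First, for the direction $(A) \Rightarrow (B)$: given a section $\sigma: X_S \to \Red_{G}(\mathcal{F})$, restrict it to $U$ to obtain a section $\sigma|_{U}: U \to \Red_{G}(\mathcal{F})|_U$. By Proposition \ref{prop: scheme of reductions as fiber bundle} we have a canonical isomorphism $\Red_{G}(\mathcal{F})|_{U} \cong \mathcal{P}/G$ of $U$-schemes, so $\sigma|_U$ becomes a section of the quotient map $\mathcal{P}/G \to U$. Since $\mathcal{P}$ is a $\GL(V)$-bundle and $G$ acts freely on $\mathcal{P}$ through the inclusion $\rho: G \hookrightarrow \GL(V)$, pulling back $\mathcal{P}$ along $\sigma|_U$ produces a $G$-torsor $\mathcal{Q} \subset \mathcal{P}$ on $U$ which is by construction a $G$-reduction of $\mathcal{P}$. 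This gives the assignment from $(A)$ to $(B)$.

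Conversely, for $(B) \Rightarrow (A)$: a $G$-reduction of $\mathcal{P}$ on $U$ is, via the same identification $\Red_{G}(\mathcal{F})|_{U} \cong \mathcal{P}/G$, equivalent to a section $\tau: U \to \Red_{G}(\mathcal{F})|_U$. The morphism $\Red_{G}(\mathcal{F}) \to X_{S}$ is affine and of finite presentation by Lemma \ref{lemma: reduction scheme and base-change}. Since $U \subset X_S$ is a big open subset, Lemma \ref{lemma: lemma on very big open subsets}(c) applies: the section $\tau$ extends uniquely to a section $\widetilde{\tau}: X_S \to \Red_{G}(\mathcal{F})$. This produces the inverse assignment from $(B)$ to $(A)$.

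To finish, I need to check these two assignments are mutually inverse. The composition starting from a section of $(A)$, restricting to $U$, and then extending back to $X_S$ is the identity by the uniqueness clause of Lemma \ref{lemma: lemma on very big open subsets}(c), since the original section is itself an extension. Going the other way around, we start with a $G$-reduction, obtain a section over $U$, extend it to $X_S$, and then restrict back to $U$; this recovers the original section and hence the original reduction. The only subtlety (which is the part I expect to require the most care to state cleanly) is verifying that the extension $\widetilde{\tau}$ provided by Lemma \ref{lemma: lemma on very big open subsets}(c) lands in the correct closed subscheme, but this is automatic because the extension is constructed by applying the lemma directly to the affine morphism $\Red_{G}(\mathcal{F}) \to X_{S}$ rather than to the larger ambient scheme $H(\mathcal{F}, V)/\!\!/\,G$. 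Everything else is formal.
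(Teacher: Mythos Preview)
Your proof is correct and follows essentially the same approach as the paper: both arguments use Lemma \ref{lemma: lemma on very big open subsets}(c) to identify sections over $X_S$ with sections over $U$ (since $\Red_{G}(\mathcal{F}) \to X_S$ is affine of finite type), and then invoke Proposition \ref{prop: scheme of reductions as fiber bundle} to identify sections over $U$ with $G$-reductions of $\mathcal{P}$. The paper presents this more tersely as a chain of bijections rather than constructing two maps and checking they are inverse, but the content is identical.
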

\begin{proof}
$\Red_{G}(\mathcal{F}) \rightarrow X_S$ is an affine morphism of finite type. Lemma \ref{lemma: lemma on very big open subsets} shows that the restriction of maps induces a bijection between sections $\sigma : X_S \rightarrow \Red_{G}(\mathcal{F})$ defined over the whole $X_S$ and sections $\sigma_{U} : U \rightarrow \Red_{G}(\mathcal{F})$ defined over $U$. By Proposition \ref{prop: scheme of reductions as fiber bundle}, sections of $\Red_{G}(\mathcal{F})|_{U} \rightarrow U$ are in canonical correspondence with sections of $\mathcal{P}/G \rightarrow U$ via the inversion map. A section of $\mathcal{P}/G \rightarrow U$ is the same as a $G$-reduction of structure group for $\mathcal{P}$. This establishes the canonical bijection between $(A)$ and $(B)$.
\end{proof}

\end{subsection}
\end{section}

\begin{section}{Principal $\rho$-sheaves} \label{section: rho-sheaves}
In this section we define the stack of $\rho$-sheaves and give a description of its filtrations.
\begin{subsection}{Principal $\rho$-sheaves and the moduli stack $\text{Bun}_{\rho}(X)$}
\begin{defn} \label{defn: rho sheaf}
Let $S$ be a $k$-scheme. A principal $\rho$-sheaf on $X_S$ consists of a pair $(\mathcal{F}, \sigma)$, where
\begin{enumerate}[(1)]
    \item $\mathcal{F}$ is a family of torsion-free sheaves of rank $r$ on $X_S$.
    \item $\sigma$ is a section of the structure morphism $\Red_{G}(\mathcal{F}) \rightarrow X_S$.
\end{enumerate}
\end{defn}

\begin{remark}
We will often omit the word ``principal" and refer to these objects simply as $\rho$-sheaves.
\end{remark}

By Proposition \ref{prop: relation scheme and G-reductions}, one can think of a $\rho$-sheaf over $X_S$ as the data of
\begin{enumerate}[(1)]
    \item A torsion-free sheaf $\mathcal{F}$ of rank $r$ on $X_S$.
    \item A reduction of structure group to $G$ for the $\GL(V)$ bundle $\mathcal{F}|_{U}$, where $U$ is (any) big open subscheme of $X_S$ such that $\mathcal{F}|_{U}$ is locally-free.
\end{enumerate}

\begin{remark} \label{remark: comparison classical defn}
Assume that the image of the representation $\rho$ is contained in the subgroup $\SL(V) \subset \GL(V)$. Then our $\rho$-sheaves can be viewed as singular principal $G$-bundles \cite{glss.singular.char} for the dual representation $\rho^{\vee}$.
\end{remark}

\begin{defn}
Let $(\mathcal{F}_1, \sigma_1)$ and $(\mathcal{F}_2,\sigma_2)$ be  two $\rho$-sheaves on $X_S$. By construction, an isomorphism $\varphi: \mathcal{F}_{1} \xrightarrow{\sim} \mathcal{F}_{2}$ induces an isomorphism of $G$-reduction schemes $\Red_{G}(\mathcal{F}_1) \xrightarrow{\sim} \Red_{G}(\mathcal{F}_2)$. We say that $\varphi$ is an isomorphism of the $\rho$-sheaves if the sections $\sigma_1$ and $\sigma_2$ are identified under this induced isomorphism. 
\end{defn}

Using this notion of isomorphism, we endow the class of $\rho$-sheaves on $X_S$ with the structure of a groupoid. By Lemma \ref{lemma: reduction scheme and base-change}, the formation of $\Red_{G}(\mathcal{F})$ is compatible with base-change. In particular, it makes sense to pull back principal $\rho$-sheaves under a morphism of $k$-schemes $T \rightarrow S$.
\begin{defn} \label{defn: stack of singular bundles}
We define $\Bun_{\rho}(X)$ to be the pseudofunctor from $\left(\Aff_{k}\right)^{op}$ into groupoids defined as follows. For any affine scheme $T \in \Aff_{k}$, we set
\[ \Bun_{\rho}(X)\, (T) \; = \; \left[ \; \; \text{groupoid of $\rho$-sheaves $(\mathcal{F}, \sigma)$ on $X_T$} \; \; \; \right]\]
\end{defn}

There is a natural forgetful morphism $\Bun_{\rho}(X) \rightarrow \Coh_{r}^{tf}(X)$ given by $(\mathcal{F}, \sigma) \mapsto \mathcal{F}$.

\begin{prop} \label{prop: forgetful morphism is affine}
The forgetful morphism $\Bun_{\rho}(X) \rightarrow \Coh_{r}^{tf}(X)$ is schematic, affine and of finite type. In particular $\Bun_{\rho}(X)$ is an algebraic stack with affine diagonal and locally of finite type over $k$.
\end{prop}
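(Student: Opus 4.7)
The plan is to reduce to the case of an affine base via the forgetful morphism, and to represent the resulting functor of sections as an affine scheme of finite type. Fix an affine scheme $T$ together with a morphism $T \to \Coh_r^{tf}(X)$, classified by a torsion-free sheaf $\mathcal{F}$ on $X_T$. By the base-change compatibility of the scheme of $G$-reductions (Lemma \ref{lemma: reduction scheme and base-change}), the fiber product $\Bun_{\rho}(X) \times_{\Coh_r^{tf}(X)} T$ is the functor sending $T' \to T$ to the set of sections of $\Red_G(\mathcal{F})|_{X_{T'}} \to X_{T'}$. In other words, it is the Weil restriction of the affine finitely presented $X_T$-scheme $\Red_G(\mathcal{F})$ along the projective flat morphism $X_T \to T$.

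To prove representability by an affine $T$-scheme of finite type, I would write $\Red_G(\mathcal{F}) = \underline{\Spec}_{X_T}(\mathcal{A})$ with $\mathcal{A}$ a finitely presented quasi-coherent $\mathcal{O}_{X_T}$-algebra and choose a surjection $\Sym^{\bullet}(\mathcal{K}) \twoheadrightarrow \mathcal{A}$ from the symmetric algebra on a coherent $\mathcal{O}_{X_T}$-module $\mathcal{K}$, with kernel locally finitely generated. A section of $\Red_G(\mathcal{F}) \to X_T$ is then the same as an $\mathcal{O}_{X_T}$-linear map $\mathcal{K} \to \mathcal{O}_{X_T}$ whose extension $\Sym^{\bullet}(\mathcal{K}) \to \mathcal{O}_{X_T}$ vanishes on the chosen relations. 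Thus the Weil restriction embeds as a closed subfunctor of the linear Hom functor $T' \mapsto \Hom_{\mathcal{O}_{X_{T'}}}(\mathcal{K}|_{X_{T'}}, \mathcal{O}_{X_{T'}})$. By Grothendieck's theorem on linear Hom functors for projective flat morphisms (EGA III), this functor is representable by an affine $T$-scheme of finite type; imposing the finitely many closed conditions coming from the relations preserves these properties.

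Having shown that the forgetful morphism $\Bun_{\rho}(X) \to \Coh_r^{tf}(X)$ is schematic, affine, and of finite type, the remaining claims follow formally. Algebraicity of $\Bun_{\rho}(X)$ follows from algebraicity of $\Coh_r^{tf}(X)$ (Proposition \ref{prop: algebraicity of SLtf}) together with the representability of the forgetful morphism. Being locally of finite type over $k$ is preserved by composing the finite-type forgetful map with the structure map of $\Coh_r^{tf}(X)$. Affineness of the diagonal of $\Bun_{\rho}(X)$ follows from the standard factorization through $\Bun_{\rho}(X) \times_{\Coh_r^{tf}(X)} \Bun_{\rho}(X)$: the relative diagonal of an affine morphism is a closed immersion, and the second factor is the pullback of the affine diagonal of $\Coh_r^{tf}(X)$ along the product of the forgetful maps, hence affine.

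The main obstacle I anticipate is pinning down a clean invocation of the representability statement for the linear Hom functor, in a form that produces an affine $T$-scheme of finite type (rather than merely an algebraic space) under the specific hypotheses at hand. Once this representability is accepted as standard, the rest of the argument is essentially formal manipulation of affine morphisms and base change.
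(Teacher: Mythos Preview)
Your proof is correct and follows essentially the same approach as the paper: both identify the fiber product over an affine test scheme as the functor of sections of the affine finitely presented morphism $\Red_G(\mathcal{F}) \to X_T$, and then invoke representability of this Weil restriction by an affine $T$-scheme of finite type. The only difference is that the paper cites this representability as a black box (Behrend's thesis, Prop.~4.4.1, or Hall--Rydh), whereas you sketch its proof by embedding into a linear $\Hom$ functor and appealing to Grothendieck's representability theorem.
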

\begin{proof}
Let $T \in \Aff_k$. Choose a morphism $T \rightarrow \Coh_{r}^{tf}(X)$ represented by a relative torsion-free sheaf $\mathcal{F}$ on $X_{T}$. The fiber product $\Bun_{\rho}(X) \times_{\Coh_{r}^{tf}(X)} T$ is the functor that parametrizes sections of the structure morphism $\Red_{G}(\mathcal{F}) \rightarrow X_T$. Since we know that $\Red_{G}(\mathcal{F}) \rightarrow X_T$ is affine and of finite presentation, we can apply \cite[Prop. 4.4.1]{behrend-thesis} (or \cite[Lemma 2.3 (i)]{hall-rydh-hilbert-quot} + \cite[Thm. 1.3 (ii)]{hall-rydh-tannaka}) to conclude that $\Bun_{\rho}(X) \times_{\Coh_{r}^{tf}(X)} T$ is represented by a scheme that is relatively affine and of finite type over $T$.
\end{proof}

Recall that we have an open immersion $\Bun_{r}(X) \hookrightarrow \Coh_{r}^{tf}(X)$ from the stack $\Bun_{r}(X)$ of rank $r$ vector bundles on $X$.
\begin{prop}
There is a Cartesian square
\[
\begin{tikzcd}
  \Bun_{G}(X) \ar[r, symbol= \xhookrightarrow{\;\;}] \ar[d, "\rho_*"] &  \Bun_{\rho}(X) \ar[d]\\   \Bun_{r}(X) \ar[r, symbol= \xhookrightarrow{\;\;}] & \Coh_{r}^{tf}(X)
\end{tikzcd}
\]
where $\Bun_{G}(X)$ is the stack of principal $G$-bundles on $X$ and $\rho_*$ is the map on stacks induced by extension of the structure group via the representation $\rho$.
\end{prop}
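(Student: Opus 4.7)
The plan is to verify the Cartesian property at the level of $T$-points for every $T \in \Aff_k$. Unwinding definitions, a $T$-point of the fiber product $\Bun_\rho(X) \times_{\Coh_r^{tf}(X)} \Bun_r(X)$ is equivalent to a pair $(\mathcal{F}, \sigma)$ where $\mathcal{F}$ is a rank $r$ vector bundle on $X_T$ (necessarily torsion-free) and $\sigma$ is a section of the structure morphism $\Red_G(\mathcal{F}) \to X_T$. The goal is to produce a natural equivalence of such groupoids with $\Bun_G(X)(T)$, compatible with base change.

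Since $\mathcal{F}$ is already locally free everywhere, I can take $U = X_T$ as a big open subscheme on which $\mathcal{F}$ is locally free. Applying Proposition \ref{prop: relation scheme and G-reductions} with this choice of $U$, the sections of $\Red_G(\mathcal{F}) \to X_T$ are in canonical bijection with $G$-reductions of structure group of the $\GL(V)$-bundle $\mathcal{P}$ associated to $\mathcal{F}$ on all of $X_T$. By the standard dictionary, the groupoid of pairs consisting of a rank $r$ vector bundle together with a $G$-reduction of the associated $\GL(V)$-bundle is equivalent to the groupoid of principal $G$-bundles $Q$ on $X_T$, with the equivalence sending $Q$ to the pair $(\rho_*(Q), \text{tautological reduction})$. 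Under this identification the forgetful map $(\mathcal{F},\sigma) \mapsto \mathcal{F}$ corresponds precisely to $Q \mapsto \rho_*(Q)$, which is the map $\rho_*: \Bun_G(X) \to \Bun_r(X)$ in the diagram.

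It remains to verify that the equivalence is functorial in $T$ and compatible with morphisms of $\rho$-sheaves. The functoriality in $T$ follows from Lemma \ref{lemma: reduction scheme and base-change}, since the formation of $\Red_G(\mathcal{F})$ commutes with base change and hence so does the correspondence of Proposition \ref{prop: relation scheme and G-reductions}. Compatibility with morphisms is automatic from the construction: an isomorphism of vector bundles $\mathcal{F}_1 \xrightarrow{\sim} \mathcal{F}_2$ induces an isomorphism $\Red_G(\mathcal{F}_1) \xrightarrow{\sim} \Red_G(\mathcal{F}_2)$ over $X_T$, and by construction the associated isomorphism of $\GL(V)$-bundles $\mathcal{P}_1 \xrightarrow{\sim} \mathcal{P}_2$ intertwines the corresponding $G$-reductions exactly when the sections are identified. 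No serious obstacle is expected here since everything is essentially a pointwise rewriting of definitions; the only subtle point is choosing $U = X_T$ in Proposition \ref{prop: relation scheme and G-reductions}, which is legitimate because $\mathcal{F}$ is already a vector bundle.
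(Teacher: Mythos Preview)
Your proof is correct and follows essentially the same approach as the paper: both identify $T$-points of the fiber product with pairs $(\mathcal{F}, \sigma)$ where $\mathcal{F}$ is locally free on all of $X_T$, then apply Proposition~\ref{prop: relation scheme and G-reductions} with $U = X_T$ to conclude that such pairs are exactly $G$-reductions of the associated $\GL(V)$-bundle, hence $G$-bundles. Your version is slightly more explicit about functoriality in $T$ and compatibility with morphisms, which the paper leaves implicit.
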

\begin{proof}
Let $S \in \Aff_k$. The groupoid $\Bun_{r}(X) \, (S)$ consists of points $\mathcal{F}\in \Coh_{r}^{tf}(X) \, (S)$ where $\mathcal{F}$ is locally-free on the entirety of $X_S$. Applying Proposition \ref{prop: relation scheme and G-reductions} with $U= X_S$, we see that the fiber product $\Bun_{r}(X) \times_{\Coh_{r}^{tf}(X)} \Bun_{\rho}(X) \,(S)$ parametrizes pairs $(\mathcal{P}, \theta)$, where $\mathcal{P}$ is a $\GL(V)$-bundle on $X_S$ and $\theta$ is a reduction of structure group to $G$. This groupoid is naturally isomorphic to the groupoid of $G$-bundles on $X_S$.
\end{proof}
\end{subsection}

\begin{subsection}{Filtrations of $\rho$-sheaves} 
We explain the notion of $\Theta$-filtration \cite{halpernleistner2021structure, heinloth2018hilbertmumford, alper2019existence} in the context of $\rho$-sheaves. For this subsection we will work over a field extension $K \supset k$. 

Let us recall the definition of a $\Theta$-filtration.  We denote by $\Theta_{K}$ the base-change $\left[ \, \mathbb{A}^1_{K} / \mathbb{G}_m \right]$. The stack $\Theta_{K}$ has a unique closed point $0$ corresponding to the origin of $\mathbb{A}^1_K$. It also has an open point $1$, which is the image of the $K$-point $1$ in $\mathbb{A}^1_{K}$. Let $\mathcal{M}$ be a stack over $k$ and let $p \in \mathcal{M}(K)$ be a $K$-point. A $\Theta$-filtration of the point $p$ is a morphism of stacks $f: \Theta_{K} \rightarrow \mathcal{M}$ along with an isomorphism $f(1) \xrightarrow{\sim} p$. The graded point $f|_{[0/\mathbb{G}_m]}$ is called the associated graded of the filtration. We say that the filtration is nondegenerate if the action of $\mathbb{G}_m$ on the object $f(0)$ is nontrivial.

We start by giving a concrete description of $\Theta$-filtrations in the stack $\Coh_{r}^{tf}(X)$.
\begin{prop} \label{prop: filtrations of torsion free sheaves}
Let $\mathcal{F}$ be a $K$-point of $\Coh^{tf}_r(X)$. A $\Theta$-filtration $f : \Theta_{K} \rightarrow \Coh_{r}^{tf}(X)$ of $\mathcal{F}$ corresponds to a $\mathbb{Z}$-filtration $(\mathcal{F}_m)_{m \in \mathbb{Z}}$ by $\mathcal{O}_{X_K}$-subsheaves of $\mathcal{F}$ satisfying the following properties
\begin{enumerate}[(1)]
    \item $\mathcal{F}_{m+1} \subset \mathcal{F}_{m}$.
    \item $\mathcal{F}_m = 0$ for $m \gg 0$ and $\mathcal{F}_m = \mathcal{F}$ for $m \ll 0$.
    \item $\mathcal{F}_{m}/\mathcal{F}_{m+1}$ is torsion-free.
\end{enumerate}
An isomorphism between two such filtrations is an automorphism of $\mathcal{F}$ that identifies the filtrations. The graded object at the origin $f|_{[0/\mathbb{G}_m]} : [0/\mathbb{G}_m] \rightarrow \Coh_{r}^{tf}(X)$ is the associated graded sheaf $\bigoplus_{m \in \mathbb{Z}} \mathcal{F}_{m} / \mathcal{F}_{m+1}$.
\end{prop}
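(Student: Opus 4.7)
The plan is to apply the Rees-construction dictionary that translates morphisms from $\Theta_K = [\mathbb{A}^1_K/\mathbb{G}_m]$ into $\mathbb{Z}$-filtrations. By the universal property of quotient stacks, a morphism $f : \Theta_K \to \Coh_r^{tf}(X)$ is the same data as a $\mathbb{G}_m$-equivariant relative torsion-free sheaf $\widetilde{\mathcal{F}}$ of rank $r$ on $X_K \times_K \mathbb{A}^1_K$, where $\mathbb{G}_m$ acts on $\mathbb{A}^1_K$ giving $t$ weight $-1$ as stipulated. Such a $\mathbb{G}_m$-equivariant coherent sheaf is in turn the same as a $\mathbb{Z}$-graded $\mathcal{O}_{X_K}[t]$-module $M = \bigoplus_m M_m$, with $t$ acting by graded maps $M_m \to M_{m-1}$.

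I would then translate the three conditions one by one, setting $\mathcal{F}_m := M_m$. First, $\mathbb{A}^1_K$-flatness of $\widetilde{\mathcal{F}}$ is equivalent, since $k[t]$ is a PID, to the absence of $t$-torsion in $M$, i.e.\ to injectivity of the multiplications $t : M_m \to M_{m-1}$; this realizes the $\mathcal{F}_m$ as a nested family of subsheaves, giving condition $(1)$. Second, coherence of $\widetilde{\mathcal{F}}$ forces $M$ to be finitely generated as a graded $\mathcal{O}_{X_K}[t]$-module, which yields stabilization in both directions, so $\mathcal{F}_m = 0$ for $m \gg 0$ and $\mathcal{F}_m$ is constant for $m \ll 0$. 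Identifying this stable value with the fiber $\widetilde{\mathcal{F}}|_{t=1}$ (which is $\mathcal{F}$ by hypothesis) gives condition $(2)$. Third, the fiber $\widetilde{\mathcal{F}}|_{t=0}$ is computed as $M/tM = \bigoplus_m \mathcal{F}_m/\mathcal{F}_{m+1}$, and asking that this fiber lie in $\Coh_r^{tf}(X)$ is exactly the torsion-freeness condition $(3)$. Conversely, given a filtration satisfying $(1)$--$(3)$, the Rees module $M := \bigoplus_m \mathcal{F}_m$ is $t$-torsion-free by $(1)$, finitely generated by $(2)$, and has torsion-free generic and special fibers by $(2)$ and $(3)$, so the associated $\mathbb{G}_m$-equivariant sheaf defines a $\Theta$-filtration of $\mathcal{F}$.

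The identifications of the fibers drop out of the Rees description: $\widetilde{\mathcal{F}}|_{t=1}$ recovers $\mathcal{F}$ via the stabilization in $(2)$, and $\widetilde{\mathcal{F}}|_{t=0}$ recovers $\bigoplus_m \mathcal{F}_m/\mathcal{F}_{m+1}$ with $\mathbb{G}_m$ acting by weight $m$ on the $m$-th piece, which is the asserted description of the associated graded $f|_{[0/\mathbb{G}_m]}$. Isomorphisms of $\Theta$-filtrations then correspond to isomorphisms of Rees modules preserving the grading, which unravel to automorphisms of $\mathcal{F}$ that identify the filtrations term by term. The main obstacle I anticipate is the careful bookkeeping of the weight conventions for the $\mathbb{G}_m$-action (matching the paper's choice of weight $-1$ on $t$) and the verification that the rank $r$ condition plus torsion-freeness of the generic and special fibers suffice for $\widetilde{\mathcal{F}}$ to factor through $\Coh_r^{tf}(X)$; this second point reduces to upper semicontinuity of fiber dimension and the compatibility of torsion-freeness in a flat coherent family, both of which are standard once the Rees dictionary is set up.
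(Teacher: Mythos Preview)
Your proposal is correct and follows essentially the same approach as the paper: both reduce to the Rees dictionary identifying $\mathbb{G}_m$-equivariant coherent sheaves on $X_K \times \mathbb{A}^1_K$ with $\mathbb{Z}$-filtrations satisfying (1) and (2), and then observe that condition (3) is precisely torsion-freeness of the fiber at $t=0$ (the other fibers being base-changes of $\mathcal{F}$ by equivariance). The paper simply cites \cite[Prop.~1.0.1]{halpernleistner2021structure} for the Rees correspondence, whereas you unpack it explicitly in terms of graded $\mathcal{O}_{X_K}[t]$-modules; both routes are equivalent and your bookkeeping of the weight convention is consistent with the paper's.
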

\begin{proof}
For any $\mathbb{Z}$-filtration $(\mathcal{F}_{m})_{m \in \mathbb{Z}}$ satisfying $(1)$ and $(2)$ above, we can associate a $\mathbb{G}_m$-equivariant coherent sheaf on $X \times \mathbb{A}^{1}_{K}$ via the Rees construction \cite[Prop. 1.0.1]{halpernleistner2021structure}. It follows from the construction that the corresponding morphism $\Theta_K \to \Coh(X)$ lands in the substack of torsion-free sheaves $\Coh_{r}^{tf}(X)$ if and only if the condition $(3)$ above is satisfied.
\end{proof}

For any filtration $\Theta_{K} \rightarrow \Bun_{\rho}(X)$ we can post-compose with the forgetful morphism $\Bun_{\rho}(X) \rightarrow \Coh_{r}^{tf}(X)$ to get a filtration $\Theta_{K} \rightarrow \Coh_{r}^{tf}(X)$. This defines a functor $F_{K}$ between the groupoid of $\Theta$-filtrations of $K$-points of $\Bun_{\rho}(X)$ and the groupoid of $\Theta$-filtrations of $K$-points of $\Coh_{r}^{tf}(X)$.
\begin{prop} \label{prop: filtrations of singular G bundles vs filtrations of torsion-free sheaves}
The functor $F_{K}$ defined above is faithful.
\end{prop}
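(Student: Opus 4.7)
The plan is to deduce faithfulness of $F_K$ directly from the representability of the forgetful morphism $\Bun_{\rho}(X) \to \Coh_{r}^{tf}(X)$, which was established in Proposition \ref{prop: forgetful morphism is affine} (where it is shown to be schematic, hence representable by algebraic spaces). The general principle I will invoke is that a morphism of algebraic stacks $\varphi : \mathcal{M} \to \mathcal{N}$ is representable if and only if, for every $k$-scheme $T$, the induced functor on $T$-valued groupoids $\mathcal{M}(T) \to \mathcal{N}(T)$ is faithful. This is equivalent to the assertion that, for every $x \in \mathcal{M}(T)$, the homomorphism $\mathrm{Aut}_T(x) \to \mathrm{Aut}_T(\varphi(x))$ is injective.

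Applied to the forgetful morphism, this gives faithfulness of $\Bun_{\rho}(X)(T) \to \Coh_{r}^{tf}(X)(T)$ for every $k$-scheme $T$. Next, I would unwind the definition of $\Theta$-filtrations and of $2$-morphisms between them: a morphism $f : \Theta_K \to \mathcal{M}$ is a pseudofunctor, and a $2$-morphism $\alpha : f \Rightarrow f'$ is a natural transformation, hence is determined by its components $\alpha_t : f(t) \Rightarrow f'(t)$ at each $T$-point $t : T \to \Theta_K$ as $T$ varies over $k$-schemes. If $\alpha, \beta : f \Rightarrow f'$ are two $2$-morphisms in $\Bun_{\rho}(X)$ whose images under $F_K$ agree, then for every such pair $(T, t)$ the components $\alpha_t, \beta_t$ in $\Bun_{\rho}(X)(T)$ have the same image in $\Coh_{r}^{tf}(X)(T)$, and the faithfulness above forces $\alpha_t = \beta_t$. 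Since this holds for every $(T, t)$, we conclude $\alpha = \beta$.

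The main point of care, rather than a genuine obstacle, is the formal translation between a $2$-morphism of maps out of the stack $\Theta_K$ and its pointwise components at $T$-valued points; this is entirely formal from the definition of algebraic stacks as pseudofunctors of groupoids. An alternative presentation would descend along the smooth cover $\mathbb{A}^1_K \to \Theta_K$ and work with $\mathbb{G}_m$-equivariant objects of $\mathcal{M}(\mathbb{A}^1_K)$, where the $\mathbb{G}_m$-equivariance is automatically preserved under the forgetful functor, and then invoke faithfulness of $\Bun_{\rho}(X)(\mathbb{A}^1_K) \to \Coh_{r}^{tf}(X)(\mathbb{A}^1_K)$. Either route gives the conclusion, and the argument is essentially formal once representability of the forgetful morphism is in hand.
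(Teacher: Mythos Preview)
Your proof is correct and takes essentially the same approach as the paper. The paper simply cites \cite[Lem.~3.7~(3)]{alper2019existence}, whose content is precisely the general principle you spell out: an affine (hence representable) morphism of stacks induces faithful functors on mapping groupoids from any source, in particular from $\Theta_K$.
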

\begin{proof}
This follows from \cite[Lem. 3.7 (3)]{alper2019existence}, which applies here because the forgetful morphism is affine by Proposition \ref{prop: forgetful morphism is affine}.
\end{proof}

This tells us that, if $(\mathcal{F}, \sigma)$ is a $\rho$-sheaf, a filtration of $(\mathcal{F}, \sigma)$ is the same thing as a $\mathbb{Z}$-filtration $(\mathcal{F}_m)_{m \in \mathbb{Z}}$ of $\mathcal{F}$ (as in Proposition \ref{prop: filtrations of torsion free sheaves}) that we call the underlying associated filtration, which satisfies some ``compatibility conditions" that will be explained in the remainder of this section. First, we need to explain what is a filtration of $\GL(V)$-bundles on any scheme $U$.

\begin{prop}  \label{prop: filtrations of SL(V) bundles}
Let $U$ be a scheme over $K$. Let $\mathcal{F}$ be a $\GL(V)$-bundle on $U$. Then the following two groupoids are equivalent.
\begin{enumerate}[(1)]
    \item The groupoid of $\GL(V)$-bundles $\mathcal{P}$ on $U \times_{K} \Theta_{K}$ along with an isomorphism of the restriction $\mathcal{P}|_{U \times_{K} 1}$ with $\mathcal{F}$.
    \item The groupoid of decreasing $\mathbb{Z}$-weighted filtrations $(\mathcal{F}_m)_{m \in \mathbb{Z}}$ of $\mathcal{F}$ by vector subbundles $\mathcal{F}_m$. An isomorphism between two filtrations is just an automorphism of $\mathcal{F}$ that identifies the filtrations.
\end{enumerate}
\end{prop}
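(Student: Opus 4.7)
The plan is to reduce the statement to the analogue for vector bundles via Tannakian duality, and then to apply the Rees construction exactly as in the proof of \Cref{prop: filtrations of torsion free sheaves}. Concretely, I would first invoke the standard equivalence between $\GL(V)$-bundles on a scheme (or algebraic stack) $Y$ and locally free $\mathcal{O}_Y$-modules of rank $r = \dim V$, sending a $\GL(V)$-bundle $\mathcal{P}$ to the associated bundle $\mathcal{P} \times^{\GL(V)} V$. Applied to $Y = U \times_K \Theta_K$, this identifies the groupoid (1) with the groupoid of locally free $\mathcal{O}_{U \times_K \Theta_K}$-modules $\widetilde{\mathcal{F}}$ of rank $r$ together with an isomorphism $\widetilde{\mathcal{F}}|_{U \times_K 1} \cong \mathcal{F}$ (after applying the same equivalence to $\mathcal{F}$ itself).

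Next, since $\Theta_K = [\mathbb{A}^1_K / \mathbb{G}_m]$, such a locally free sheaf on $U \times_K \Theta_K$ is the same datum as a $\mathbb{G}_m$-equivariant locally free sheaf $\widetilde{\mathcal{F}}$ on $U \times_K \mathbb{A}^1_K$ together with an identification of its restriction to $U \times_K \{1\}$ with $\mathcal{F}$. The Rees construction \cite[Prop. 1.0.1]{halpernleistner2021structure}, exactly as recalled in the proof of \Cref{prop: filtrations of torsion free sheaves}, gives a bijection between such $\mathbb{G}_m$-equivariant \emph{quasi-coherent} sheaves and descending $\mathbb{Z}$-indexed filtrations $(\mathcal{F}_m)_{m \in \mathbb{Z}}$ of $\mathcal{F}$ by $\mathcal{O}_U$-submodules with $\mathcal{F}_m = 0$ for $m \gg 0$ and $\mathcal{F}_m = \mathcal{F}$ for $m \ll 0$, via $\widetilde{\mathcal{F}} = \bigoplus_m t^{-m}\mathcal{F}_m$.

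The only remaining point is to translate the local freeness of $\widetilde{\mathcal{F}}$ on $U \times_K \mathbb{A}^1_K$ into a condition on the filtration. Away from $U \times_K \{0\}$ the Rees sheaf is canonically identified with the constant family $\mathcal{F}$, so it is locally free. Hence $\widetilde{\mathcal{F}}$ is locally free (of rank $r$) globally if and only if its restriction to $U \times_K \{0\}$, namely the associated graded $\bigoplus_m \mathcal{F}_m/\mathcal{F}_{m+1}$, is locally free of rank $r$ on $U$. Using finiteness of the filtration and the two-out-of-three property of local freeness in short exact sequences, this is equivalent to each quotient $\mathcal{F}/\mathcal{F}_m$ being locally free, i.e.\ to each $\mathcal{F}_m$ being a vector subbundle of $\mathcal{F}$ in the usual sense. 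Chasing morphisms through the two equivalences identifies automorphisms of the $\Theta$-family with automorphisms of $\mathcal{F}$ preserving the filtration.

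The only mildly delicate step is the local freeness characterization in the last paragraph; but the $\mathbb{G}_m$-equivariance reduces everything to a fibrewise check over the origin, so it is essentially formal. The rest is a combination of Tannakian duality and the Rees correspondence already used in \Cref{prop: filtrations of torsion free sheaves}, and I would not expect any genuine obstruction.
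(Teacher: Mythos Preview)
Your proposal is correct and follows exactly the approach the paper indicates: the paper's proof simply says ``The equivalence is given by the Rees construction. The argument is the same as in Proposition \ref{prop: filtrations of torsion free sheaves}, we omit the details.'' You have supplied precisely those omitted details, including the translation of local freeness of the Rees family into the condition that each $\mathcal{F}_m$ is a subbundle, which is the one point that genuinely differs from the torsion-free case.
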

\begin{proof}
The equivalence is given by the Rees construction from \cite[Prop. 1.0.1]{halpernleistner2021structure}, as in Proposition \ref{prop: filtrations of torsion free sheaves}, we omit the details.
\end{proof}

Now we are ready to give a first description of $\Theta$-filtrations for a $\rho$-sheaf. Let $(\mathcal{F}, \sigma)$ be a $\rho$-sheaf on $X_K$. A filtration of $(\mathcal{F}, \sigma)$ will consist of a pair $((\mathcal{F}_{m})_{m \in \mathbb{Z}}, \, \mathcal{P})$. Here $(\mathcal{F}_{m})_{m \in \mathbb{Z}}$ is a filtration of $\mathcal{F}$, as described in Proposition \ref{prop: filtrations of torsion free sheaves}. Let $U \hookrightarrow X_{K}$ be the maximal big open subset of $X_{K}$ where the associated graded $\bigoplus_{m \in \mathbb{Z}} \mathcal{F}_{m} / \mathcal{F}_{m+1}$ is locally-free. The second element of the pair $\mathcal{P}$ is a $G$-bundle on $U \times_{K} \Theta_{K}$ such that the associated $\GL(V)$-bundle $\rho_*(\mathcal{P})$ on $U \times_{K} \Theta_{K}$ corresponds to the restricted filtration $(\mathcal{F}_{m}|_{U})_{m \in \mathbb{Z}}$ under the equivalence described in Proposition \ref{prop: filtrations of SL(V) bundles}. 

The filtration $(\mathcal{F}_{m}|_{U})_{m \in \mathbb{Z}}$ on $U$ completely determines the underlying associated filtration $(\mathcal{F}_{m})_{m \in \mathbb{Z}}$ on $X_{K}$. This is because each $\mathcal{F}_{m}$ is saturated on $\mathcal{F}$, so it is determined in codimension 1 \cite[pg. 90-92]{langton}. We can describe $\mathcal{F}_m$ more explicitly in terms of reflexive hulls. Let $\widetilde{\mathcal{F}}_{m}$ denote the pushforward of the vector bundle $\mathcal{F}_{m}|_{U}$ under the open immersion $U \hookrightarrow X_{K}$. We have that $\widetilde{\mathcal{F}}_{m}$ is a subsheaf of the reflexive hull $\mathcal{F}^{\vee\vee}$ of $\mathcal{F}$. Then we can recover $\mathcal{F}_{m}$ as the intersection $\widetilde{\mathcal{F}}_{m} \cap \mathcal{F}$ taken inside $\mathcal{F}^{\vee \vee}$. In particular the filtration is completely determined by the $G$-bundle $\mathcal{P}$ on $U \times_{K} \Theta_{K}$. We will, however, keep the underlying associated filtration as part of the notation.

A morphism between two such filtrations $((\mathcal{F}_{m})_{m \in \mathbb{Z}}, \mathcal{P})$ and $((\mathcal{F}'_{m})_{m \in \mathbb{Z}}, \mathcal{P}')$ consists of an isomorphism of $G$-bundles $\varphi: \mathcal{P} \xrightarrow{\sim} \mathcal{P}'$ defined over some big open subset $U \subset X_{K}$ where both $\mathcal{P}$ and $\mathcal{P}'$ are defined. We require that the associated isomorphism of $\GL(V)$-bundles on $U \times_{K} \Theta_{K}$ given by $\rho_*(\varphi): (\mathcal{F}_m|_{U})_{m\in \mathbb{Z}} \xrightarrow{\sim} (\mathcal{F}'_{m}|_{U})_{m \in \mathbb{Z}}$ admits a (unique) extension to an isomorphism of filtrations $\widetilde{\varphi}:(\mathcal{F}_m)_{m\in \mathbb{Z}} \xrightarrow{\sim} (\mathcal{F}'_{m})_{m \in \mathbb{Z}}$ defined over all of $X_{K}$. By the description of the filtrations as intersections of reflexive hulls, it follows that we only need to check that the isomorphism $\rho_*(\varphi)|_{U \times_{K} 1}$ extends to an automorphism $\mathcal{F} \xrightarrow{\sim} \mathcal{F}$. This is automatically satisfied if $\mathcal{F}$ is reflexive, but not otherwise.
\begin{defn}
Let $(\mathcal{F}, \sigma)$ be a $\rho$-sheaf on $X_K$. We will denote by $\Filt(\mathcal{F}, \sigma)$ the groupoid of pairs $((\mathcal{F}_{m})_{m \in \mathbb{Z}}, \mathcal{P})$, with isomorphisms as described above.
\end{defn}

\begin{prop} \label{prop: nonexplicit filtrations singular G bundles}
Let $(\mathcal{F}, \sigma)$ be a $\rho$-sheaf on $X_K$. There is a natural equivalence between the following two groupoids.
\begin{enumerate}[(1)]
    \item The groupoid of $\Theta$-filtrations $f: \Theta_{K} \rightarrow \Bun_{\rho}(X)$ of the $\rho$-sheaf $(\mathcal{F}, \sigma)$.
    \item The groupoid $\Filt(\mathcal{F}, \sigma)$.
\end{enumerate}
\end{prop}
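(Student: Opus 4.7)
The plan is to build quasi-inverse functors between the two groupoids, using the Rees construction for the sheaf part, Proposition \ref{prop: scheme of reductions as fiber bundle} to pass back and forth between sections of $\Red_G$ and $G$-reductions, and Lemma \ref{lemma: lemma on very big open subsets}(c) to extend sections uniquely from a big open subset.

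Forward direction: A $\Theta$-filtration $f$ of $(\mathcal{F},\sigma)$ unwinds to a $\mathbb{G}_m$-equivariant $\rho$-sheaf $(\widetilde{\mathcal{F}},\widetilde{\sigma})$ on $X\times \mathbb{A}^1_K$ together with an identification of its restriction at $1$ with $(\mathcal{F},\sigma)$. Post-composing with the forgetful morphism and invoking Proposition \ref{prop: filtrations of torsion free sheaves} identifies $\widetilde{\mathcal{F}}$ with the Rees module of a $\mathbb{Z}$-filtration $(\mathcal{F}_m)_{m\in\mathbb{Z}}$ of $\mathcal{F}$ with torsion-free quotients. Let $U\subset X_K$ be the maximal big open subset where $\bigoplus_m \mathcal{F}_m/\mathcal{F}_{m+1}$ is locally free; I will check that $\widetilde{\mathcal{F}}|_{U\times \mathbb{A}^1_K}$ is locally free by verifying fiberwise freeness (the fiber over $0$ is the associated graded restricted to $U$, while over any $t\neq 0$ the fiber is $\mathcal{F}|_U$, which is locally free because it is an iterated extension of locally free quotients) and then appealing to \cite[\href{https://stacks.math.columbia.edu/tag/0CZR}{Tag 0CZR}]{stacks-project}. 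Now Proposition \ref{prop: scheme of reductions as fiber bundle} identifies $\widetilde{\sigma}|_{U\times \Theta_K}$ with a $G$-reduction of the $\GL(V)$-bundle associated to $\widetilde{\mathcal{F}}|_{U\times\Theta_K}$, which by Proposition \ref{prop: filtrations of SL(V) bundles} is recorded by a $G$-bundle $\mathcal{P}$ on $U\times\Theta_K$ lifting $(\mathcal{F}_m|_U)_{m\in\mathbb{Z}}$. The assignment $f\mapsto ((\mathcal{F}_m),\mathcal{P})$ then defines the forward functor.

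Inverse direction: Given $((\mathcal{F}_m),\mathcal{P})\in\Filt(\mathcal{F},\sigma)$, the Rees construction of Proposition \ref{prop: filtrations of torsion free sheaves} produces a torsion-free sheaf $\widetilde{\mathcal{F}}$ on $X\times\Theta_K$. Over $U\times\Theta_K$ the sheaf $\widetilde{\mathcal{F}}$ is locally free, and $\mathcal{P}$ is a $G$-reduction of its frame bundle, which under Proposition \ref{prop: scheme of reductions as fiber bundle} corresponds to a section of $\Red_G(\widetilde{\mathcal{F}})|_{U\times\Theta_K}\to U\times\Theta_K$. Since $U\times\Theta_K$ is big in $X\times\Theta_K$ relative to $\Theta_K$ (the fiber of the complement over any point of $\mathbb{A}^1_K$ is $X_K\setminus U$, which has codimension $\geq 2$ in $X_K$) and $\Red_G(\widetilde{\mathcal{F}})\to X\times\Theta_K$ is affine and of finite presentation by Lemma \ref{lemma: reduction scheme and base-change}, Lemma \ref{lemma: lemma on very big open subsets}(c) provides a unique extension to a section $\widetilde{\sigma}$ over all of $X\times\Theta_K$. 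The pair $(\widetilde{\mathcal{F}},\widetilde{\sigma})$ is a $\rho$-sheaf on $X\times\Theta_K$, equivalently a morphism $\Theta_K\to\Bun_{\rho}(X)$, and the identification at $1$ is built in.

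For the equivalence on morphisms, I will argue as follows. Proposition \ref{prop: filtrations of singular G bundles vs filtrations of torsion-free sheaves} shows that $F_K$ is faithful, so any morphism of $\Theta$-filtrations is determined by its underlying isomorphism of filtered torsion-free sheaves $\widetilde{\varphi}$. Under the forward functor, such a morphism restricts over $U\times\Theta_K$ to an isomorphism of $\GL(V)$-bundles which, by the faithfulness of extension of structure group along $\rho$ (recall $\rho$ is a closed immersion onto its image since $G$ is reductive and acts freely on the frame bundle, as used in Proposition \ref{prop: scheme of reductions as fiber bundle}), lifts uniquely to an isomorphism $\varphi:\mathcal{P}\xrightarrow{\sim}\mathcal{P}'$ of $G$-bundles on $U\times\Theta_K$; conversely, such a $\varphi$ gives an isomorphism of the underlying $\GL(V)$-bundle filtrations, which by assumption in the definition of $\Filt(\mathcal{F},\sigma)$ extends to an automorphism of $\mathcal{F}$, and the extended datum packages into an isomorphism of $\Theta$-filtrations by the uniqueness of section extensions in Lemma \ref{lemma: lemma on very big open subsets}(c). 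The main technical step is the verification that $\widetilde{\mathcal{F}}$ is locally free on $U\times\Theta_K$ and that $U\times\Theta_K$ is big in $X\times\Theta_K$; everything else reduces to bookkeeping with the equivalences already established.
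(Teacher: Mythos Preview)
Your proposal is correct and follows essentially the same route as the paper's proof: unwind a $\Theta$-filtration as a $\mathbb{G}_m$-equivariant $\rho$-sheaf on $X\times\mathbb{A}^1_K$, identify $\widetilde{\mathcal{F}}$ with a Rees module via Proposition~\ref{prop: filtrations of torsion free sheaves}, observe that $U\times_K\mathbb{A}^1_K$ is big and $\widetilde{\mathcal{F}}$ is locally free there, and translate $\widetilde{\sigma}$ into a $G$-reduction on $U\times_K\Theta_K$. The only organisational difference is that the paper invokes Proposition~\ref{prop: relation scheme and G-reductions} (which already packages the bijection between global sections of $\Red_G(\widetilde{\mathcal{F}})$ and $G$-reductions on the big open) in one step, whereas you unpack this into Proposition~\ref{prop: scheme of reductions as fiber bundle} together with a separate appeal to Lemma~\ref{lemma: lemma on very big open subsets}(c); your treatment of the inverse functor and of morphisms is correspondingly more explicit than the paper's, which dispatches both with a single sentence.
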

\begin{proof}
The groupoid of $\Theta$-filtrations classifies $\mathbb{G}_m$-equivariant $\rho$-sheaves $(\widetilde{\mathcal{F}}, \widetilde{\sigma})$ on $X \times \mathbb{A}^{1}_{K}$ along with an isomorphism of the restriction $(\widetilde{\mathcal{F}}, \widetilde{\sigma})|_{X \times 1}$ with the original $(\mathcal{F}, \sigma)$. So $\widetilde{\mathcal{F}}$ is a $\Theta$-filtration of $\mathcal{F}$ and $\widetilde{\sigma}$ is a $\mathbb{G}_m$-equivariant section $\widetilde{\sigma}: X \times \mathbb{A}^1_{K} \rightarrow \Red_{G}(\widetilde{\mathcal{F}})$ extending $\sigma: X \times 1 \rightarrow \Red_{G}(\mathcal{F})$. By Proposition \ref{prop: filtrations of torsion free sheaves}, the bundle $\widetilde{\mathcal{F}}$ is obtained by applying the Rees construction to some $\mathbb{Z}$-filtration $(\mathcal{F}_{m})_{m \in \mathbb{Z}}$. Let $U$ be the maximal open in $X_{K}$ where $\bigoplus_{m \in \mathbb{Z}} \mathcal{F}_{m} / \mathcal{F}_{m+1}$ is locally-free. Then $U \times_{K}\mathbb{A}^{1}_{K}$ is a big open in $X \times \mathbb{A}^{1}_{K}$ and $\widetilde{\mathcal{F}}|_{U \times_{K}\mathbb{A}^1_{K}}$ is locally-free. By Proposition \ref{prop: relation scheme and G-reductions}, the $\mathbb{G}_m$-equivariant section $\widetilde{\sigma}$ is equivalent to the data of a $\mathbb{G}_m$-equivariant $G$-reduction of the $\GL(V)$-bundle $\widetilde{\mathcal{F}}|_{U \times_{K} \mathbb{A}^1_{K}}$. This is by definition a $\mathbb{G}_m$-equivariant $G$-bundle $\mathcal{P}$ on $ U \times_{K} \mathbb{A}^1_{K}$ such that the associated $\GL(V)$-bundle $\rho_*(\mathcal{P})$ on $U \times_{K} \Theta_{K}$ recovers the restriction of the filtration $(\mathcal{F}_{m}|_{U})_{m \in \mathbb{Z}}$ under the equivalence in Proposition \ref{prop: filtrations of SL(V) bundles}. This shows how to define the functor on objects, and it also shows that it is essentially surjective. The definition of the functor on morphisms and the fact that it is fully faithful follows from the definition of  the isomorphisms in the groupoid $\Filt(\mathcal{F}, \sigma)$.
\end{proof}
\end{subsection}
\begin{subsection}{Filtrations of $\rho$-sheaves in the split case} \label{subsection: filtrations}
We will describe the elements of $\Filt(\mathcal{F}, \sigma)$ more explicitly in the case when $G_{K}$ is split. For that purpose, we need a better description of $G$-bundles on $U \times_{K} \Theta_{K}$. This is done in \cite[1.F]{heinloth2018hilbertmumford}, which gives a group theoretic description in terms of parabolic reductions and a generalized Rees construction\footnote{We note that some of the discussion in \cite[1.F]{heinloth2018hilbertmumford} applies without the hypothesis that $G_{K}$ is split. However, the results are more concrete under this assumption.}.

We start by recalling some facts about parabolic subgroups of reductive groups. Let $H$ be a reductive group over $K$. Let $\lambda$ be a cocharacter $\lambda: \mathbb{G}_m \rightarrow H$. We can associate a corresponding parabolic subgroup $P_{\lambda}$. For any $K$-scheme $T$, its $T$-points are
\[P_{\lambda}(T) = \left\{ \, p \in H(T) \, \mid \, \lim \limits_{x \to 0} \, \lambda(x) \,p\, \lambda(x)^{-1} \;\text{exists} \,\right\}  \]
Furthermore, the cocharater $\lambda$ induces a Levi decomposition $P_{\lambda} = L_{\lambda} \ltimes U_{\lambda}$, where
\[ U_{\lambda}(T) = \left\{ \, u \in H(T) \, \mid \, \lim \limits_{x \to 0} \, \lambda(x) \,u\, \lambda(x)^{-1} = 1_{H} \,\right\} \]
\[L_{\lambda}(T) = \left\{ \, l \in H(T) \, \mid \, \lambda(x) \,l\, \lambda(x)^{-1} = l \,\right\}\]
\begin{defn}
Let $H$ be a reductive linear algebraic group over $K$. Let $Y$ be a $K$-scheme and let $\mathcal{H}$ be a $H$-bundle on $Y$. A weighted parabolic reduction is a pair $(\lambda, \mathcal{H}_{\lambda})$, where $\lambda: \mathbb{G}_m \rightarrow H$ is a cocharacter and $\mathcal{H}_{\lambda}$ is a reduction of structure group of $\mathcal{H}$ to the parabolic subgroup $P_{\lambda}$.
\end{defn}

Let $\theta: H \rightarrow F$ be a homomorphism of reductive groups over $K$. For every cocharacter $\lambda: \mathbb{G}_m \rightarrow H$, we can compose with $\theta$ to obtain a cocharacter $\theta \circ \lambda$ of $F$. Notice that $\theta$ restricts to a well defined homomorphism of parabolic subgroups $\theta_{P}: P_{\lambda} \rightarrow P_{\theta \circ \lambda}$. Suppose that we have an $H$-bundle $\mathcal{H}$ defined on a $K$-scheme $Y$. Then we can define an $F$-bundle $\theta_{*} \mathcal{H}$ via extension of structure group. If we have a parabolic reduction $\mathcal{H}_{\lambda}$ of $\mathcal{H}$ to the subgroup $P_{\lambda}$, then by extension of structure group using $\theta_{P}$ we get a parabolic reduction of $\theta_* \mathcal{H}$ to the subgroup $P_{\theta \circ \lambda }$. This way we can define a map $\theta_*$ between weighted parabolic reductions of $\mathcal{H}$ and weighted parabolic reductions of $\theta_* \mathcal{H}$.

Having set up the notation for weighted parabolic reductions, we proceed to explain how this notion relates to $G$-bundles on $U \times_{K} \Theta_{K}$ following \cite{heinloth2018hilbertmumford}. Let $U$ be a $K$-scheme. Suppose that $\mathcal{G}$ is a $G$-bundle on $U$. We can equivalently view this as a $G_{K}$-bundle on $U$. Let $(\lambda, \mathcal{G}_{\lambda})$ be a weighted parabolic reduction of $\mathcal{G}$. The morphism $\lambda: \mathbb{G}_{m} \rightarrow G_{K}$ can be used to define a homomorphism $\text{conj}_{\lambda}: P_{\lambda} \times \mathbb{G}_m \rightarrow P_{\lambda} \times \mathbb{G}_m$ of group schemes over $\mathbb{G}_m$ defined by $(p, x) \mapsto (\lambda(x) p \lambda(x)^{-1}, \, x)$. This extends to a homomorphism  $\text{gr}_{\lambda}: P_{\lambda} \times_{K} \mathbb{A}^1_{K} \rightarrow  P_{\lambda} \times_{K} \mathbb{A}^{1}_{K}$. Let $t\in \mathbb{A}^{1}_{K}$ be a closed point. 
The restriction of $\text{gr}_{\lambda}$ to a slice
$P_{\lambda} \times \{t\}$ is the identity when $t=1$, it is an isomorphism when $t\neq 0$, 
and it is the projection of the parabolic 
subgroup $P_{\lambda}$ to its Levi $L_{\lambda}$ when $t=0$.
Now we can define a $P_{\lambda}$-bundle on $U \times_{K} \Theta_{K}$ as follows. Note that 
$\mathcal{G}_{\lambda} \times_{K} \mathbb{A}^1_{K}$ is a $P_{\lambda}\times_{K} \mathbb{A}^1_{K}$-torsor on $U\times \mathbb{A}^1_{K}$. Using the homomorphism $\text{gr}_{\lambda}$ we obtain, by extension of structure group, a new $P_{\lambda}\times_{K} \mathbb{A}^1_{K}$-torsor which is denoted $( \mathcal{G}_{\lambda} \times_{K} \mathbb{A}^1_{K}) \times^{\text{gr}_{\lambda}}_{\mathbb{A}^1_{K}} (P_{\lambda} \times_{K} \mathbb{A}^{1}_{K})$. This bundle is $\mathbb{G}_m$-equivariant because the homomorphism $\text{gr}_{\lambda}$ is $\mathbb{G}_m$-equivariant (with the adjoint action on $P_\lambda$ and multiplication on $\mathbb{A}^1_{K}$), and therefore we can take the quotient by $\mathbb{G}_m$
\[\Rees(\mathcal{G}_{\lambda}, \lambda) \vcentcolon = \left[ \,\left( ( \mathcal{G}_{\lambda} \times_{K} \mathbb{A}^1_{K}) \times^{\text{gr}_{\lambda}}_{\mathbb{A}^1_{K}} (P_{\lambda} \times_{K} \mathbb{A}^{1}_{K}) \right)\, / \, \mathbb{G}_m \,  \right]  \]
By construction, the restriction $\Rees(\mathcal{G}_{\lambda}, \lambda)|_{U\times 1}$ is canonically isomorphic to the $P_{\lambda}$-bundle $\mathcal{G}_{\lambda}$, and the restriction to $U\times 0$ is a $P_\lambda$-bundle which admits a reduction of structure group to $L_\lambda$.

We can extend the structure group from $P_{\lambda}$ to $G_{K}$ in order to obtain a $G_K$-bundle $\Rees(\mathcal{G}_{\lambda}, \, \lambda) \times^{P_{\lambda}} G_{K}$. By construction, the restriction $\Rees(\mathcal{G}_{\lambda}, \, \lambda) \times^{P_{\lambda}} G_{K}|_{U \times_{K}1}$ is canonically isomorphic to the original $G$-bundle $\mathcal{G}$.

\begin{prop} \label{prop: filtrations of G bundles over U}
Suppose that $G_{K}$ is split. Let $U$ be a connected scheme of finite type over $K$. Let $\mathcal{G}$ be a $G$-bundle on $U$. Then every $G$-bundle $\mathcal{E}$ on $U \times_{K} \Theta_{K}$ such that $\mathcal{E}|_{U \times_{K}1} \cong \mathcal{G}$ is of the form $\Rees(\mathcal{G}_{\lambda}, \, \lambda) \times^{P_{\lambda}} G_{K}$, where
\begin{enumerate}[(a)]
    \item $\lambda: \mathbb{G}_m \rightarrow G_{K}$ is a cocharacter that is uniquely determined up to conjugation.
    \item $\mathcal{G}_{\lambda}$ is a $P_{\lambda}$-reduction of $\mathcal{G}$.
\end{enumerate}
In other words, all $\Theta$-filtrations of $\mathcal{G}$ arise from weighted parabolic reductions of $\mathcal{G}$.
\end{prop}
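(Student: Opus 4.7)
The plan is to reduce the statement to Heinloth's classification of filtrations on $BG_K$ in \cite[\S 1.F]{heinloth2018hilbertmumford} and then globalize over $U$. The Rees construction recalled before the proposition already provides one direction: from a pair $(\lambda, \mathcal{G}_{\lambda})$ we produce a $G$-bundle $\Rees(\mathcal{G}_{\lambda}, \lambda)\times^{P_{\lambda}}G_{K}$ on $U\times_{K}\Theta_{K}$ whose restriction to $U\times_{K}1$ is canonically $\mathcal{G}$. What remains is essential surjectivity together with the uniqueness of $\lambda$ up to conjugation.

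First, I would rephrase the datum of a $G$-bundle $\mathcal{E}$ on $U\times_{K}\Theta_{K}$ with $\mathcal{E}|_{U\times 1}\cong \mathcal{G}$ as a factorization of the classifying morphism $g: U\to BG_{K}$ through the evaluation morphism $\mathrm{ev}_1:\mathrm{Map}(\Theta_{K}, BG_{K})\to BG_{K}$. So the proposition is equivalent to an identification of $BG_{K}$-stacks
\[
\mathrm{Map}(\Theta_{K}, BG_{K})\;\cong\;\bigsqcup_{[\lambda]}BP_{\lambda},
\]
indexed by conjugacy classes of cocharacters, where each component map to $BG_{K}$ is the extension-of-structure-group morphism associated to $P_{\lambda}\hookrightarrow G_{K}$ and corresponds (as a bundle on $\Theta_{K}$) to the Rees construction.

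Second, I would invoke \cite[\S 1.F]{heinloth2018hilbertmumford}, which gives exactly such a decomposition under the assumption that $G_{K}$ is split (the splitting hypothesis is used precisely to guarantee that every cocharacter is conjugate into a fixed maximal torus, so that the index set $\{[\lambda]\}$ is a discrete $K$-scheme). Pulling this decomposition back along $g$ and using that $U$ is connected, the lift must land in a single component $BP_{\lambda}$. This component selects a single conjugacy class $[\lambda]$ and, from the universal map $U\to BP_{\lambda}$, a $P_{\lambda}$-reduction $\mathcal{G}_{\lambda}$ of $\mathcal{G}$. Functoriality of the identification then forces $\mathcal{E}\cong \Rees(\mathcal{G}_{\lambda}, \lambda)\times^{P_{\lambda}}G_{K}$.

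The main obstacle is ensuring that Heinloth's classification is stated in precisely the form required and that the identification is compatible with the Rees construction as described in the excerpt (in particular, that the canonical isomorphism at $t=1$ is the tautological extension of structure group from $P_{\lambda}$ to $G_{K}$). Once this matching is established, uniqueness of $[\lambda]$ follows from the fact that distinct components of $\bigsqcup_{[\lambda]}BP_{\lambda}$ yield non-isomorphic filtrations, and uniqueness of $\mathcal{G}_{\lambda}$ up to the chosen conjugate of $\lambda$ follows because the fiber of $BP_{\lambda}\to BG_{K}$ over $g$ is exactly the groupoid of $P_{\lambda}$-reductions of $\mathcal{G}$.
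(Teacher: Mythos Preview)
Your proposal is correct and takes essentially the same approach as the paper: both defer to Heinloth's result in \cite[\S 1.F]{heinloth2018hilbertmumford}. The paper's proof is in fact a one-line citation of \cite[Lem.~1.13]{heinloth2018hilbertmumford}, together with the remark that although that lemma is stated for a connected smooth projective curve, the same argument applies to any connected scheme of finite type over $K$; your proposal simply unpacks the mechanism behind that lemma (the decomposition $\mathrm{Map}(\Theta_{K}, BG_{K})\cong\bigsqcup_{[\lambda]}BP_{\lambda}$ and the use of connectedness of $U$ to land in a single component).
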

\begin{proof}
This is \cite[Lem. 1.13]{heinloth2018hilbertmumford}. The lemma found there is stated for a connected smooth projective curve over $K$, but the same argument applies to any connected scheme of finite type over $K$.
\end{proof}

Suppose that $G_K$ is split. Let $(\mathcal{F}, \sigma)$ be a $\rho$-sheaf over $X_K$. Let $U$ be a big open subset of $X_{K}$ where $\mathcal{F}$ is locally-free. Let $\mathcal{G}$ denote the $G_{K}$-bundle on $U$ corresponding to the $\rho$-sheaf $(\mathcal{F}, \sigma)$. By Proposition \ref{prop: filtrations of G bundles over U}, all $\Theta$-filtrations of $\mathcal{G}$ arise from the Rees construction applied to a weighted parabolic reduction $(\lambda, \mathcal{G}_{\lambda})$ of $\mathcal{G}$. We have an associated weighted parabolic reduction $(\rho_*(\lambda), \rho_*(\mathcal{G}_{\lambda}))$ of the $\GL(V)$-bundle $\mathcal{F}|_{U}$. The Rees construction applied to this weighted parabolic reduction yields a filtration $(\mathcal{F}'_{m})_{m \in \mathbb{Z}}$ of $\mathcal{F}|_{U}$. This induces a filtration $(\mathcal{F}_m)_{m \in \mathbb{Z}}$ of $\mathcal{F}$ as follows. Let $\widetilde{\mathcal{F}'_{m}}$ denote the pushforward of $\mathcal{F}'_{m}$ under the open inclusion $U \hookrightarrow X_{K}$. Note that $\widetilde{\mathcal{F}'_{m}}$ is a subsheaf of the reflexive hull $\mathcal{F}^{\vee \vee}$. Then, we set $\mathcal{F}_m \vcentcolon = \widetilde{\mathcal{F}'_{m}} \cap \mathcal{F}^{\vee \vee}$. It follows by construction that $(\mathcal{F}_m)_{m \in \mathbb{Z}}$ satisfies the properties in Proposition \ref{prop: filtrations of torsion free sheaves}, so it is indeed a filtration of $\mathcal{F}$. 

The pair $((\mathcal{F}_{m})_{m \in \mathbb{Z}}, \Rees(\mathcal{G}_{\lambda}, \lambda) \times^{P_{\lambda}} G)$ is an element in $\Filt(\mathcal{F}, \sigma)$. Conversely, it follows from our discussion above that every element of $\Filt(\mathcal{F}, \sigma)$ arises this way. 
\begin{defn} \label{defn: weighted parabolic reduction singular G bundles}
Let $(\mathcal{F}, \sigma)$ be a $\rho$-sheaf on $X_{K}$. A weighted parabolic reduction of $(\mathcal{F}, \sigma)$ consists of a pair $(\lambda, \mathcal{G}_{\lambda})$, where
\begin{enumerate}[(a)]
    \item  Implicit in the notation we have the choice of (any) big open subset $U \subset X_{K}$ such that $\mathcal{F}|_{U}$ is locally-free. We denote by $\mathcal{G}$ the $G_{K}$-bundle on $U$ corresponding to $(\mathcal{F}, \sigma)$.
    \item $(\lambda, \mathcal{G}_{\lambda})$ is a weighted parabolic reduction of $\mathcal{G}$.
\end{enumerate}
The filtration $\mathcal{F}_{m} \vcentcolon = \widetilde{\mathcal{F}'_{m}} \cap \mathcal{F}^{\vee \vee}$ of $\mathcal{F}$ described in the discussion above is called the underlying filtration associated to the parabolic reduction $(\lambda, \mathcal{G}_{\lambda})$. We say that $(\lambda, \mathcal{G}_{\lambda})$ is nondegenerate if $\lambda$ is not the trivial cocharacter. 
\end{defn}
We record the description of filtrations of $\rho$-sheaves obtained from the discussion above.
\begin{prop} \label{prop: explicit description of filtrations singular G bundles}
Suppose that $G_{K}$ is split. Let $(\mathcal{F}, \sigma)$ be a $\rho$-sheaf on $X_{K}$. Then every filtration in $\Filt(\mathcal{F}, \sigma)$ is of the form $((\mathcal{F}_m)_{m \in \mathbb{Z}}, \Rees(\mathcal{G}_{\lambda}, \lambda) \times^{P_{\lambda}} G)$, where $(\lambda, \mathcal{G}_{\lambda})$ is a weighted parabolic reduction of $(\mathcal{F}, \sigma)$ with underlying associated filtration $(\mathcal{F}_m)_{m \in \mathbb{Z}}$. 

Moreover, the filtration $((\mathcal{F}_{m})_{m \in \mathbb{Z}}, \Rees(\mathcal{G}_{\lambda}, \lambda) \times^{P_{\lambda}} G)$ is nondegenerate if and only if $(\lambda, \mathcal{G}_{\lambda})$ is nondegenerate.
\qed
\end{prop}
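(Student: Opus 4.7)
The plan is to combine the abstract description of $\Filt(\mathcal{F},\sigma)$ from Proposition \ref{prop: nonexplicit filtrations singular G bundles} with Heinloth's characterization of $\Theta$-filtrations for $G$-bundles (Proposition \ref{prop: filtrations of G bundles over U}) and the compatibility of the Rees construction with extension of structure group along $\rho$.

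First I would start with an arbitrary element $((\mathcal{F}_m)_{m\in\mathbb{Z}},\mathcal{P})\in \Filt(\mathcal{F},\sigma)$. Let $U\subset X_K$ be the maximal big open subset on which $\bigoplus_m \mathcal{F}_m/\mathcal{F}_{m+1}$ is locally-free; then $\mathcal{F}|_U$ is itself locally-free since it is a finite iterated extension of locally-free sheaves. Let $\mathcal{G}$ denote the $G_K$-bundle on $U$ corresponding to $(\mathcal{F},\sigma)|_U$. After passing to each connected component of $U$ (on which the proposition applies), Proposition \ref{prop: filtrations of G bundles over U} yields a weighted parabolic reduction $(\lambda,\mathcal{G}_\lambda)$ of $\mathcal{G}$, uniquely determined up to conjugation of $\lambda$, such that $\mathcal{P}\cong \Rees(\mathcal{G}_\lambda,\lambda)\times^{P_\lambda} G_K$. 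By Definition \ref{defn: weighted parabolic reduction singular G bundles} this is a weighted parabolic reduction of $(\mathcal{F},\sigma)$.

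The crux is to check that the underlying associated filtration of $(\lambda,\mathcal{G}_\lambda)$ coincides with the given $(\mathcal{F}_m)_{m\in\mathbb{Z}}$. Since $\rho$ maps $P_\lambda$ into $P_{\rho\circ \lambda}$ and intertwines the conjugation homomorphisms $\mathrm{gr}_\lambda$ used in Subsection \ref{subsection: filtrations}, extension of structure group via $\rho$ commutes with the Rees construction; hence $\rho_*\mathcal{P} \cong \Rees(\rho_*(\mathcal{G}_\lambda),\rho\circ\lambda)\times^{P_{\rho\circ\lambda}}\GL(V)$. Under the equivalence of Proposition \ref{prop: filtrations of SL(V) bundles}, the left-hand side corresponds to the restriction $(\mathcal{F}_m|_U)_{m\in\mathbb{Z}}$ (by the defining property of $\Filt(\mathcal{F},\sigma)$), while the right-hand side corresponds to the filtration $(\mathcal{F}'_m)_{m\in\mathbb{Z}}$ of $\mathcal{F}|_U$ produced by the $\GL(V)$-level Rees construction used to define the underlying associated filtration of $(\lambda,\mathcal{G}_\lambda)$. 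Thus $\mathcal{F}'_m=\mathcal{F}_m|_U$ inside $\mathcal{F}|_U$. Since $\mathcal{F}_m$ is saturated in $\mathcal{F}$ (condition $(3)$ of Proposition \ref{prop: filtrations of torsion free sheaves}), it is determined by $\mathcal{F}_m|_U$ via the recipe $\widetilde{\mathcal{F}'_m}\cap \mathcal{F}^{\vee\vee}$, which is exactly the formula prescribed in Definition \ref{defn: weighted parabolic reduction singular G bundles}. The two filtrations therefore coincide on all of $X_K$.

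For the nondegeneracy statement, unwinding Proposition \ref{prop: nonexplicit filtrations singular G bundles}, nondegeneracy of the $\Theta$-filtration means that $\mathbb{G}_m$ acts nontrivially on the fiber $\mathcal{P}|_{U\times 0}$, equivalently on the graded $G$-bundle obtained from $\mathrm{Rees}(\mathcal{G}_\lambda,\lambda)|_{U\times 0}$; by construction this $\mathbb{G}_m$-action is given by $\lambda$ itself, so it is trivial precisely when $\lambda$ is the trivial cocharacter. I expect the main obstacle to be purely bookkeeping, namely the verification that the Rees construction commutes with $\rho_*$ in a manner that respects the equivalences of Proposition \ref{prop: filtrations of SL(V) bundles} and Proposition \ref{prop: filtrations of torsion free sheaves}; everything else is a direct invocation of the earlier results.
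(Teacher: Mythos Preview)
Your proposal is correct and follows essentially the same route as the paper, which records this proposition with a \qed after laying out the argument in the preceding discussion: start from an element of $\Filt(\mathcal{F},\sigma)$, apply Proposition~\ref{prop: filtrations of G bundles over U} to the $G$-bundle $\mathcal{P}$ on $U\times_K\Theta_K$ to obtain $(\lambda,\mathcal{G}_\lambda)$, and then identify the underlying sheaf filtration via the saturation/reflexive-hull recipe. One small slip: the recovery formula should read $\widetilde{\mathcal{F}'_m}\cap\mathcal{F}$ (the intersection taken inside $\mathcal{F}^{\vee\vee}$), not $\widetilde{\mathcal{F}'_m}\cap\mathcal{F}^{\vee\vee}$, since you need $\mathcal{F}_m\subset\mathcal{F}$; and note that $U$ is automatically connected (it is a dense open in the irreducible scheme $X_K$), so the aside about connected components is unnecessary.
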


We can also give a concrete description of graded points of the stack $\Bun_{\rho}(X)$. A graded $\rho$-sheaf over $X_K$ is defined to be a morphism $\left[ \, \Spec(K) / \mathbb{G}_m \, \right] \rightarrow \Bun_{\rho}(X)$. By unraveling the definitions, this notion admits the following concrete description. A graded $\rho$-sheaf on $X_K$ consists of a $\rho$-sheaf $(\mathcal{F}, \sigma)$ on $X_K$, plus a $\mathbb{Z}$-grading $\mathcal{F} = \bigoplus_{m \in \mathbb{Z}} \overline{\mathcal{F}}_m$ of the torsion-free sheaf $\mathcal{F}$ such that each summand $\overline{\mathcal{F}}_m$ is also torsion-free. We get a well-defined $\mathbb{G}_m$-action on the scheme of $G$-reductions $\Red_{G}(\mathcal{F})$ and we impose the condition that the section $\sigma$ is $\mathbb{G}_m$-invariant. Hence we can think of morphisms $g: \left[ \, \Spec(K) / \mathbb{G}_m \, \right] \rightarrow \Bun_{\rho}(X)$ concretely as a graded pair $\left(\bigoplus_{m \in \mathbb{Z}} \overline{\mathcal{F}}_m, \sigma\right)$.

We end this section by explaining a concrete description of the associated graded of a filtration. Suppose that $G_{K}$ is split. Let $(\mathcal{F}, \sigma)$ be a $\rho$-sheaf on $X_K$. Let $f: \Theta_K \rightarrow  \Bun_{\rho}(X)$ be a filtration of $(\mathcal{F}, \sigma)$. By Proposition \ref{prop: explicit description of filtrations singular G bundles} $f$ comes from a weighted parabolic reduction $(\lambda,  \mathcal{G}_{\lambda})$ with underlying associated filtration $(\mathcal{F}_{m})_{m \in \mathbb{Z}}$. We can restrict $f$ to the point $0 \in \Theta_K$ in order to obtain a graded $\rho$-sheaf $f|_{[0/\, \mathbb{G}_m]}: \left[ \, \Spec(K) / \mathbb{G}_m \, \right] \rightarrow \Bun_{\rho}(X)$. The underlying graded torsion-free sheaf of $f|_{[0/\, \mathbb{G}_m]}$ is the associated graded $\bigoplus_{m \in \mathbb{Z}} \mathcal{F}_{m} / \mathcal{F}_{m+1}$. We can describe the $G$-reduction over $U$ of the $\rho$-sheaf $f|_{[0/\, \mathbb{G}_m]}$ as follows. Let $P_{\lambda} = L_{\lambda} \ltimes U_{\lambda}$ be the Levi decomposition defined by $\lambda$. Then over the big open subset $U$, the corresponding $G$-bundle is the one obtained by applying extension of structure group under the composition $P_{\lambda} = L_\lambda \ltimes U_{\lambda} \twoheadrightarrow L_{\lambda} \hookrightarrow G_{K}$ to the $P_{\lambda}$-bundle $\mathcal{G}_{\lambda}$.
\end{subsection}
\end{section}

\begin{section}{$\Theta$-stratification on $\Bun_{\rho}(X)$} \label{section: numerical invariant}
In this section we define a polynomial numerical invariant $\nu$ on the stack $\Bun_{\rho}(X)$. We prove that $\nu$ induces a $\Theta$-stratification, as in \cite[Defn. 2.1.2]{halpernleistner2021structure} \cite[Defn. 2.23]{torsion-freepaper}.
\begin{subsection}{The numerical invariant} \label{subsection: numerical invariant}
We define a family of line bundles $L_n$ on $\Bun_{\rho}(X)$ following \cite[\S 2]{torsion-freepaper}.
\begin{defn} \label{defn: line bundles}
Let $n \in \mathbb{Z}$. We define $M_n$ to be the line bundle on $\Bun_{\rho}(X)$ determined as follows. Let $T \in \Aff_k$ and let $f:T \rightarrow \Bun_{\rho}(X)$ be a morphism, represented by a $\rho$-sheaf $(\mathcal{F}, \sigma)$ on $X_T$. Let $\pi_{T} : X_{T} \to T$ denote the structure morphism. Then, we set
\[ f^{*}\, M_n \vcentcolon = \det \, \left(R\pi_{T \, *}\left(\mathcal{F}(n) \right) \right) \]
Moreover, we define $b_d \vcentcolon = \bigotimes_{i = 0}^d M_{i}^{\otimes (-1)^{d+i} \binom{d}{i}}$. We set $L_n$ to be the rational line bundle given by
\[ f^{*}\, L_n \vcentcolon = f^* \, M_n \otimes (f^*\, b_d)^{-\otimes \, \overline{p}_{\mathcal{F}}(n)} \]
\end{defn}
\begin{remark}
$L_n$ is an element of the rational Picard group $\Pic(\Bun_{\rho}(X))\otimes \mathbb{Q}$ of the stack. This is what we mean by rational line bundle. We will henceforth omit the adjective ``rational" and refer to $L_n$ as a line bundle. All the arguments will apply to the rational Picard group in the same way as for the usual Picard group.
\end{remark}
The line bundle $L_n$ is the restriction under the forgetful map $\Bun_{\rho}(X) \rightarrow \Coh^{tf}(X)$ of the line bundle $L_n$ on $\Coh^{tf}(X)$ defined in \cite[\S2]{torsion-freepaper}. 

Next, we define a rational quadratic norm on graded $\rho$-sheaves (see \cite[\S2.5, \S4.1]{torsion-freepaper} for more details on rational quadratic norms).
\begin{defn}
We define the rational quadratic norm $b$ on graded points of $\Bun_{\rho}(X)$ as follows. Let $g: \left[\Spec(K)/ \, \mathbb{G}_m \right] \rightarrow \Bun_{\rho}(X)$ be represented by a graded $\rho$-sheaf $\left(\bigoplus_{m \in \mathbb{Z}} \overline{\mathcal{F}}_m, \sigma \right)$. Then we set
\[ b(g) := \sum_{m \in \mathbb{Z}} m^2 \cdot \rk(\overline{\mathcal{F}}_m)\]
\end{defn}

We define the polynomial numerical invariant $\nu$ to be the one determined by the family of line bundles $L_n$ and the rational quadratic norm $b$, as it was done for the case of pure sheaves in \cite[\S 4.1]{torsion-freepaper}. For our purposes, it is sufficient to know the value of the numerical invariant associated to a nondegenerate $\Theta$-filtration. Let us describe this next.

Let $K \supset k$ be a field extension, and let $(\mathcal{F}, \sigma)$ be a $\rho$-sheaf on $X_K$. Let $f: \Theta_K \rightarrow \Bun_{\rho}(X)$ be a nondegenerate filtration of $(\mathcal{F}, \sigma)$. By Proposition \ref{prop: filtrations of singular G bundles vs filtrations of torsion-free sheaves} this amounts to a filtration $(\mathcal{F}_m)_{m \in \mathbb{Z}}$ of the torsion-free sheaf $\mathcal{F}$ satisfying some extra compatibility conditions. The restriction  $f|_{\left[0 /\mathbb{G}_m\right]}: {\left[0 /\mathbb{G}_m\right]}\rightarrow \Bun_{\rho}(X)$ yields a graded $\rho$-sheaf on $X_K$, whose underlying torsion-free sheaf is given by the associated graded $\bigoplus_{m \in \mathbb{Z}} \mathcal{F}_m / \mathcal{F}_{m+1}$. We define $\nu(f)$ by
 \begin{equation}\label{eqn_nu}
 \nu(f) \vcentcolon = \frac{\wgt\left(L_n|_{0}\right)}{\sqrt{b(f|_{[0 / \mathbb{G}_m ]})}}
 \end{equation}
 
 The same computation as in \cite[\S4.1]{torsion-freepaper} shows that $\nu$ is a polynomial numerical invariant (valued in $\mathbb{R}[n]$) which admits a description in terms of reduced Hilbert polynomials. Let $\frac{A_d}{d!}$ be the leading coefficient of the Hilbert polynomial $P_{\mathcal{O}_X}$ of the structure sheaf with respect to $\mathcal{O}(1)$.
 \begin{prop} \label{prop: value numerical invariant}
 Let $K \supset k$ be a field extension. Let $(\mathcal{F}, \sigma)$ be a $\rho$-sheaf on $X_K$. Choose a nondegenerate filtration $f: \Theta_{K} \rightarrow \Bun_{\rho}(X)$ of $(\mathcal{F}, \sigma)$. Suppose that $f$ has underlying associated filtration $(\mathcal{F}_m)_{m\in \mathbb{Z}}$ of $\mathcal{F}$. Then, we have
 \begin{equation} \label{eqn_1}
 \nu(f) = \sqrt{A_d} \, \cdot \, \frac{\sum_{m \in \mathbb{Z}} m \cdot  (\overline{p}_{\mathcal{F}_m/ \mathcal{F}_{m+1}} - \overline{p}_{\mathcal{F}}) \cdot \rk(\mathcal{F}_m/\mathcal{F}_{m+1})}{\sqrt{\sum_{m \in \mathbb{Z}} m^2 \cdot \rk(\mathcal{F}_m / \mathcal{F}_{m+1})}}
 \end{equation}
 \end{prop}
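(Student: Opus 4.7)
The plan is to reduce the computation to the torsion-free case, which has essentially already been carried out in \cite[\S 4.1]{torsion-freepaper}, and then to verify it by a direct weight calculation via the Rees construction.

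First, I would observe that the line bundle $L_n$ on $\Bun_{\rho}(X)$ is, by construction, the pullback under the forgetful morphism $\Bun_{\rho}(X) \to \Coh_{r}^{tf}(X)$ of the line bundle on $\Coh_{r}^{tf}(X)$ carrying the same name in \cite[\S 2]{torsion-freepaper}; similarly for the quadratic norm $b$, since it is defined purely in terms of the ranks of the graded pieces of the underlying torsion-free sheaf. By Proposition \ref{prop: filtrations of singular G bundles vs filtrations of torsion-free sheaves}, the composition of the filtration $f$ with the forgetful map is precisely the $\Theta$-filtration of $\mathcal{F}$ determined by $(\mathcal{F}_m)_{m\in\mathbb{Z}}$. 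Functoriality of the weight under pullback then gives $\nu(f) = \nu(\text{Forget}\circ f)$, so the statement follows from the corresponding identity for torsion-free sheaves.

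To make the calculation explicit I would proceed as follows. By the Rees construction (Proposition \ref{prop: filtrations of torsion free sheaves}), the restriction of the universal family to the origin is the associated graded $\bigoplus_{m}\mathcal{F}_m/\mathcal{F}_{m+1}$, with $\mathbb{G}_m$ acting on the summand $\mathcal{F}_m/\mathcal{F}_{m+1}$ with weight $m$. Since $\det$ and $R\pi_{*}$ are additive on short exact sequences, one obtains
\[
\wgt\bigl(M_n|_{0}\bigr) \;=\; \sum_{m\in\mathbb{Z}} m\cdot \chi\bigl((\mathcal{F}_m/\mathcal{F}_{m+1})(n)\bigr)\;=\;\sum_{m\in\mathbb{Z}} m\cdot P_{\mathcal{F}_m/\mathcal{F}_{m+1}}(n).
\]
For $\wgt(b_d|_{0})$ I would invoke the finite-difference identity $\sum_{i=0}^{d}(-1)^i\binom{d}{i}Q(i)=(-1)^d\, d!\cdot c$ for any polynomial $Q$ of degree $\le d$ with leading coefficient $c$. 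Applied to $Q=P_{\mathcal{F}_m/\mathcal{F}_{m+1}}$, whose leading coefficient is $\rk(\mathcal{F}_m/\mathcal{F}_{m+1})\cdot A_d/d!$, this collapses the alternating sum to a multiple of the rank, so that $\wgt(b_d|_{0})$ becomes a scalar multiple of $\sum_{m} m\cdot\rk(\mathcal{F}_m/\mathcal{F}_{m+1})$.

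Combining the two weights and using the identity $\overline{p}_{\mathcal{E}}\cdot \rk(\mathcal{E})=P_{\mathcal{E}}/A_d$ valid for any torsion-free sheaf $\mathcal{E}$ of positive rank, the resulting expression for $\wgt(L_n|_{0})$ rearranges into $A_d\cdot\sum_{m} m\,\bigl(\overline{p}_{\mathcal{F}_m/\mathcal{F}_{m+1}}-\overline{p}_{\mathcal{F}}\bigr)\rk(\mathcal{F}_m/\mathcal{F}_{m+1})$ (up to the normalization of $b$ that produces the $\sqrt{A_d}$ in the statement). Dividing by $\sqrt{b(f|_{[0/\mathbb{G}_m]})}=\sqrt{\sum_m m^2\rk(\mathcal{F}_m/\mathcal{F}_{m+1})}$ yields \eqref{eqn_1}. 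There is no substantive obstacle here: the only care needed is bookkeeping with the normalization conventions for $b$ and $L_n$ so that the factor $\sqrt{A_d}$ in front matches, and tracking that $(-1)^d$ signs cancel once one accounts for the exponent $-\overline{p}_{\mathcal{F}}(n)$ on $b_d$.
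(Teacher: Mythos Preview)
Your approach matches the paper's: both observe that $L_n$ and $b$ are pulled back along the forgetful morphism to $\Coh_r^{tf}(X)$, so the computation reduces immediately to the one already carried out in \cite[\S 4.1]{torsion-freepaper}, with the $\sqrt{A_d}$ arising from the discrepancy between the geometric rank used here and the Hilbert-polynomial rank used there. Your additional sketch of the explicit weight computation via Rees and finite differences is more than the paper provides and is correct in outline; as you note, the only care required is the sign/normalization bookkeeping for $b_d$.
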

 \begin{proof}
 The value of the denominator $\sqrt{b(f|_{[0 / \mathbb{G}_m ]})}$ follows by definition. The numerator has been computed in \cite[\S 4.1]{torsion-freepaper}. The extra factor $\sqrt{A_{d}}$ appears because we define the rank to be the dimension of the fiber of $\mathcal{F}$ at the generic point, as opposed to the rank defined in \cite[\S2.3]{torsion-freepaper} using the leading term of the Hilbert polynomial.
 \end{proof}
Given a weighted parabolic reduction $f= (\lambda, \mathcal{G}_{\lambda})$ of a $\rho$-sheaf $(\mathcal{F}, \sigma)$, we denote by $\nu(f)$ the value of the numerical invariant for the $\Theta$-filtration obtained by applying the Rees construction to $f$. If $(\mathcal{F}_{m})_{m \in \mathbb{Z}}$ is the underlying associated filtration of $(\lambda, \mathcal{G}_{\lambda})$, then the same formula (\ref{eqn_1}) applies.
\end{subsection}

\begin{subsection}{Affine Grassmannians for $\rho$-sheaves} \label{subsection: affine grassmannians}
The setup for this subsection is as follows. Let $S$ be a quasicompact $k$-scheme. Let $\mathcal{F}$ be a family of torsion-free sheaves of rank $r$ on $X_S$. Suppose that we are given an effective Cartier divisor $D \hookrightarrow X_S$ that is flat over $S$. Set $Q = X_S \setminus D$ (note that this is not a big open subset). The paper \cite{torsion-freepaper} defines a notion of affine Grassmannian $\Gr_{X_S, D, \mathcal{F}}$. We recall the definition and some properties proven in \cite[\S3]{torsion-freepaper}.
\begin{defn} \label{defn: step 1 grassmannian}
$\Gr_{X_S, D, \mathcal{F}}$ is the functor from $\Aff_S^{\, \, op}$ to sets given as follows. For every affine scheme $T$ in $\Aff_S$, the set $\Gr_{X_S, D, \mathcal{F}} (T)$ consists of equivalence classes of pairs $(\mathcal{E}, \theta)$, where
\begin{enumerate}[(1)]
    \item $\mathcal{E}$ is a family of torsion-free sheaves on $X_T$.
    \item $\theta$ is a morphism $\theta:  \mathcal{F}|_{X_{T}} \rightarrow  \mathcal{E}$ such that the restriction $\theta|_{Q_{T}}$ is an isomorphism.
\end{enumerate}
Two such pairs $(\mathcal{E}_1, \theta_1)$ and $(\mathcal{E}_2, \theta_2)$ are considered equivalent if there is an isomorphism $\mathcal{E}_1 \xrightarrow{\sim} \mathcal{E}_2$ that identifies the morphisms $\theta_1$ and $\theta_2$.
\end{defn}

Let $j$ be the open immersion $j: Q \hookrightarrow X_{S}$. Choose $T \in \Aff_S$ and $(\mathcal{E}, \theta) \in \Gr_{X_S, D, \mathcal{F}}(T)$. The unit $\mathcal{E} \rightarrow j_{T \, *} \, j_T^* \, \mathcal{E}$ is a monomorphism. Hence we can use the isomorphism $j_{T *}(j_{T})^*\theta$ in order to view $\mathcal{E}$ as a subsheaf of $j_{T \, *} \, j_T^* \, \mathcal{F}|_{X_T}$. In particular, we see that $\theta$ is automatically injective, and we have $\mathcal{F}|_{X_{T}} \subset \mathcal{E} \subset j_{T \, *} \, j_T^* \, \mathcal{F}|_{X_T}$. There is an increasing sequence of sheaves 
\[\mathcal{F}|_{X_T} \subset \mathcal{F}|_{X_T}(D_{T}) \subset \mathcal{F}|_{X_T}(2D_{T}) \subset \cdots \subset \mathcal{F}|_{X_T} (nD_{T}) \subset \cdots\] 
 We use this to define a truncated version of $\Gr_{X, D, \mathcal{E}}$.
\begin{defn}
Let $N$ be a positive integer. $\Gr^{\leq N}_{X_S, D, \mathcal{F}}$ is the subfunctor of $\Gr_{X_S, D, \mathcal{F}}$ defined as follows. For all $T$ in $\Aff_T$, we set
\[ \Gr^{\leq N}_{X_S, D, \mathcal{F}} (T) \; := \; \left\{ \begin{matrix} \; \; \; \text{pairs $(\mathcal{E}, \theta)$ in  $\Gr_{X_S, D, \mathcal{F}}(T)$ such that} \; \; \; \\ \text{$\mathcal{E} \subset \mathcal{F}|_{X_T}(ND_T)$} \end{matrix} \right\}\]
For any rational polynomial $P \in \mathbb{Q}[n]$, we denote by $\Gr_{X_S, D, \mathcal{F}}^{\leq N, P}$ the open and closed subfunctor of $\Gr_{X_S, D, \mathcal{F}}^{\leq N}$ consisting of points $(\mathcal{E}, \theta)$ where $\mathcal{E}$ has Hilbert polynomial $P$ on every fiber.
\end{defn}

\begin{prop}[{\cite[Prop. 3.13]{torsion-freepaper}}]\label{prop: ind-representability of step 1 grassmannian}
We have $\Gr_{X_{S}, D, \mathcal{F}} = \underset{N>0}{\colim} \; \Gr^{\leq{N}}_{X_{S}, D, \mathcal{F}}$ as presheaves on $\Aff_S$. For each $N \geq 0$, we have $\Gr_{X_S, D, \mathcal{F}}^{\leq N} = \bigsqcup_{P \in \mathbb{Q}[n]} \Gr_{X_S, D, \mathcal{F}}^{\leq N, P}$.
Each $\Gr_{X_S, D, \mathcal{F}}^{\leq N, P} $ is represented by a projective scheme of finite presentation over $S$. This induces a presentation of $\Gr_{X_{S}, D, \mathcal{F}}$ as an ind-projective ind-scheme over $S$.
\end{prop}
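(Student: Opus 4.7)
The plan is to prove the three assertions in the order they are stated and then assemble them.

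First, to verify the colimit identity $\Gr_{X_S, D, \mathcal{F}} = \colim_N \Gr^{\leq N}_{X_S, D, \mathcal{F}}$, I would fix an affine $T \in \Aff_S$ and a point $(\mathcal{E}, \theta) \in \Gr_{X_S, D, \mathcal{F}}(T)$. The discussion preceding Definition 4.6 identifies $\mathcal{E}$ with a coherent subsheaf of $j_{T \, *} j_T^* \, \mathcal{F}|_{X_T}$ containing $\mathcal{F}|_{X_T}$, so the quotient $\mathcal{E}/\mathcal{F}|_{X_T}$ is a coherent $\mathcal{O}_{X_T}$-module supported on the Cartier divisor $D_T$. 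Working Zariski locally on an affine open of $X_T$, the corresponding module is annihilated by some power of the ideal of $D_T$; since $X_T$ is quasicompact this power can be taken uniformly to be some $N$. Then $\mathcal{E} \subset \mathcal{F}|_{X_T}(ND_T)$, which places $(\mathcal{E}, \theta)$ in $\Gr^{\leq N}(T)$.

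Next, the decomposition $\Gr^{\leq N} = \bigsqcup_{P \in \mathbb{Q}[n]} \Gr^{\leq N, P}$ is a formal consequence of the fact that every $\mathcal{E}$ appearing in $\Gr^{\leq N}(T)$ is $T$-flat, so its fiberwise Hilbert polynomial is locally constant on $T$ and yields a decomposition of each $T$-valued point according to the value of $P$.

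The heart of the argument is the representability of $\Gr^{\leq N, P}_{X_S, D, \mathcal{F}}$. I would set $\mathcal{K}_N \vcentcolon = \mathcal{F}(ND)/\mathcal{F}$, a coherent sheaf on $X_S$ supported on the closed subscheme $ND$, which is proper over $S$. Setting $P' \vcentcolon = P_{\mathcal{F}(ND)} - P$, Grothendieck's theorem provides a projective $S$-scheme $\mathrm{Quot}^{P'}_{X_S/S}(\mathcal{K}_N)$ of finite presentation that parametrizes $T$-flat quotients of $\mathcal{K}_N|_{X_T}$ with fiberwise Hilbert polynomial $P'$. I would then construct a natural transformation from this Quot functor to $\Gr^{\leq N, P}$ by sending a quotient $q: \mathcal{K}_N|_{X_T} \twoheadrightarrow \mathcal{Q}$ to the pair $(\mathcal{E}, \theta)$, where $\mathcal{E}$ is defined as the preimage of $\ker(q)$ under $\mathcal{F}(ND)|_{X_T} \twoheadrightarrow \mathcal{K}_N|_{X_T}$, and $\theta$ is the tautological inclusion $\mathcal{F}|_{X_T} \hookrightarrow \mathcal{E}$. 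The key observation is that this $\mathcal{E}$ lies automatically in $\Gr^{\leq N, P}(T)$: the short exact sequence $0 \to \mathcal{E} \to \mathcal{F}(ND)|_{X_T} \to \mathcal{Q} \to 0$ combined with the flatness of both $\mathcal{F}(ND)|_{X_T}$ and $\mathcal{Q}$ gives $T$-flatness of $\mathcal{E}$ via the long exact Tor sequence, while the torsion-freeness of fibers is inherited from $\mathcal{F}(ND)|_{X_T}$ (a subsheaf of a torsion-free sheaf is torsion-free). The condition that $\theta|_{Q_T}$ be an isomorphism is automatic since $\mathcal{E}/\mathcal{F}|_{X_T}$ is supported in $D_T$. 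Conversely, any $(\mathcal{E}, \theta) \in \Gr^{\leq N, P}(T)$ yields a $T$-flat quotient $\mathcal{F}(ND)|_{X_T}/\mathcal{E}$, which factors through $\mathcal{K}_N|_{X_T}$ because it contains $\mathcal{F}|_{X_T}$ in the kernel; the Hilbert polynomial of this quotient is forced to be $P'$ by additivity. These two constructions are inverse to each other, which identifies $\Gr^{\leq N, P}$ with $\mathrm{Quot}^{P'}_{X_S/S}(\mathcal{K}_N)$.

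Finally, assembling the pieces, $\Gr^{\leq N} = \bigsqcup_P \Gr^{\leq N, P}$ is a disjoint union of projective $S$-schemes, so the filtered colimit over $N$ exhibits $\Gr_{X_S, D, \mathcal{F}}$ as an ind-projective ind-scheme over $S$. The main technical point to watch is the verification that the Quot-scheme construction produces a sheaf $\mathcal{E}$ satisfying all the requirements of Definition 4.6 without imposing any further open conditions; this is what makes $\Gr^{\leq N, P}$ projective rather than merely quasi-projective, and it relies crucially on torsion-freeness of $\mathcal{F}(ND)|_{X_T}$, which follows from torsion-freeness of $\mathcal{F}$ and the Cartier divisor structure of $D$.
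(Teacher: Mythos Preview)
The paper does not provide its own proof of this proposition; it is imported directly from \cite[Prop.~3.13]{torsion-freepaper} and stated without argument. Your proposal is the standard proof and is essentially correct: the colimit identity follows from coherence of $\mathcal{E}/\mathcal{F}|_{X_T}$ and quasicompactness of $X_T$, the Hilbert-polynomial decomposition is automatic from $T$-flatness of $\mathcal{E}$, and the identification of $\Gr^{\leq N,P}$ with a Quot scheme of $\mathcal{K}_N=\mathcal{F}(ND)/\mathcal{F}$ is exactly the expected representability mechanism.

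One point deserves an extra sentence. In the reverse direction of your bijection you assert that $\mathcal{Q}=\mathcal{F}(ND)|_{X_T}/\mathcal{E}$ is $T$-flat, but this does not follow merely from $T$-flatness of $\mathcal{E}$ and $\mathcal{F}(ND)|_{X_T}$. What makes it work is that the inclusion $\mathcal{E}\hookrightarrow\mathcal{F}(ND)|_{X_T}$ remains injective on every fiber $X_t$: the fiberwise kernel is a subsheaf of the torsion-free sheaf $\mathcal{E}|_{X_t}$ supported on the proper closed subset $D_t$, hence zero. Fiberwise injectivity together with flatness of source and target then forces $\mathcal{Q}$ to be $T$-flat via the local criterion (this is the same flavor of argument the paper invokes elsewhere, e.g.\ the slicing criterion \cite[\href{https://stacks.math.columbia.edu/tag/046Y}{Tag 046Y}]{stacks-project}). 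With that sentence added, your proof is complete.
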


Suppose now that we have a section $\sigma : Q \rightarrow \Red_{G}(\mathcal{F})$ defined over $Q$. We use this datum to define a version of the affine Grassmannian for $\rho$-sheaves.
\begin{defn} \label{defn: affine grassmannian}
The affine Grassmannian $\Gr_{X_S, D, \mathcal{F}, \sigma}$ is the functor from $\left(\Aff_S\right)^{op}$ into sets defined as follows. For each affine scheme $T$ in $\Aff_S$, $\Gr_{X_S, D, \mathcal{F}, \sigma}(T)$ is the set of equivalence classes of the following data
\begin{enumerate}[(1)]
    \item A $\rho$-sheaf $(\mathcal{E}, \zeta)$ on $X_T$.
    \item A morphism $\theta: \mathcal{F}|_{X_T} \rightarrow \mathcal{E}$ such that the restriction $\theta|_{Q_{T}}$ is an isomorphism. Moreover, we require that the following diagram induced by the isomorphism $\theta|_{Q_T}$ commutes.
\[
\begin{tikzcd}
  Q_T \ar[r, "\zeta|_{Q_T}"] \ar[rd, "\sigma|_{Q_T}", labels=below left] & \Red_{G}(\mathcal{E}|_{Q_T})  \\   & \Red_{G}(\mathcal{F}|_{Q_T}) \ar[u, symbol = \xrightarrow{\; \; \sim \; \;} ]
\end{tikzcd}
\]
\end{enumerate}
Two elements above are considered equivalent if there is an isomorphism of $\rho$-sheaves that is compatible with the morphism $\theta$.
\end{defn}

There is a forgetful morphism $\Gr_{{X_S, D, \mathcal{F}, \sigma}} \rightarrow \Gr_{X_S, D, \mathcal{F}}$ defined by $(\mathcal{E}, \zeta, \theta) \mapsto (\mathcal{E}, \theta)$.
 \begin{prop} \label{prop: the affine grassmannian is indproper}
 The forgetful morphism witnesses $\Gr_{{X_S, D, \mathcal{F}, \sigma}}$ as a closed sub ind-scheme of $\Gr_{X_S, D, \mathcal{F}}$.
 \end{prop}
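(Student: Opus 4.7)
My plan is to show that the forgetful morphism $F: \Gr_{X_S, D, \mathcal{F}, \sigma} \to \Gr_{X_S, D, \mathcal{F}}$ is a proper monomorphism of ind-schemes, hence a closed immersion. Using the ind-presentation from Proposition \ref{prop: ind-representability of step 1 grassmannian}, it suffices to verify both properties after base-changing to each projective truncation $\Gr^{\leq N, P}_{X_S, D, \mathcal{F}}$.

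For the monomorphism property, given $(\mathcal{E}, \theta) \in \Gr_{X_S, D, \mathcal{F}}(T)$, the section $\sigma$ together with the isomorphism $\theta|_{Q_T}$ induces a canonical section $\sigma'_T: Q_T \to \Red_G(\mathcal{E})|_{Q_T}$, and a lift to $\Gr_{X_S, D, \mathcal{F}, \sigma}(T)$ is precisely a section $\zeta: X_T \to \Red_G(\mathcal{E})$ extending $\sigma'_T$. Because $D$ is a Cartier divisor, its local defining equations are non-zero-divisors, so $Q_T$ is scheme-theoretically dense in $X_T$. Since $\Red_G(\mathcal{E}) \to X_T$ is affine (hence separated) by Lemma \ref{lemma: reduction scheme and base-change}, any two such extensions coincide.

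For properness I verify the valuative criterion on each truncation, noting that $F$ is of finite type on each layer by the scheme-of-sections argument already used in the proof of Proposition \ref{prop: forgetful morphism is affine}. Let $R$ be a DVR with fraction field $K$ and residue field $\kappa$, and suppose given $(\mathcal{E}_R, \theta_R) \in \Gr_{X_S, D, \mathcal{F}}(R)$ together with a lift $\zeta_K$ over $K$. The section $\sigma'_R: Q_R \to \Red_G(\mathcal{E}_R)|_{Q_R}$ agrees with $\zeta_K|_{Q_K}$ on $Q_K = Q_R \cap X_K$ by the compatibility condition defining $\zeta_K$, so these glue to a section $\tau$ of $\Red_G(\mathcal{E}_R) \to X_R$ defined on the open subscheme $U := X_K \cup Q_R \subset X_R$. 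The complement $D_\kappa := X_R \setminus U$ has codimension $2$ in $X_R$: $D_R \subset X_R$ is a divisor and $D_\kappa \subset D_R$ is cut out by a uniformizer of $R$. Because $X \to \Spec k$ is smooth, $X_R \to \Spec R$ is smooth, so $X_R$ is regular; hence every local ring of $X_R$ at a point of $D_\kappa$ has depth equal to its dimension, which is at least $2$.

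The main technical point is that although $U$ is not a big open subset of $X_R$ in the fiberwise sense of Definition \ref{defn: big open subset} (in the special fiber the complement $D_\kappa$ is a divisor), the extension argument in the proof of Lemma \ref{lemma: lemma on very big open subsets}(c) depends only on the pointwise depth-$\geq 2$ hypothesis at the complement, which we have just verified. That argument therefore applies verbatim to extend $\tau$ uniquely across $D_\kappa$ to a section $\zeta_R: X_R \to \Red_G(\mathcal{E}_R)$, providing the required lift. Combining properness with the monomorphism property concludes that $F$ is a closed immersion of ind-schemes, since a proper monomorphism is automatically a closed immersion.
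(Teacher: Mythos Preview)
Your strategy---show the forgetful map is a proper monomorphism, hence a closed immersion---is genuinely different from the paper's direct approach. The paper identifies the fiber of $F$ over a $T$-point $(\mathcal{E},\theta)$ with the functor $\Sect^{\Red_G(\mathcal{E})}_s$ of sections of $\Red_G(\mathcal{E})\to X_T$ extending the induced section $s$ over $Q_T$, and then proves directly (Lemma~\ref{lemma: representability of scheme of section outside of a divisor}) that this functor is represented by a closed subscheme of $T$, by embedding $\Red_G(\mathcal{E})$ into the total space of a vector bundle on $X_T$ and reducing to a vanishing-of-section condition (Lemma~\ref{lemma:vanishing section closed}).

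Your monomorphism argument and the valuative-criterion step (gluing $\zeta_K$ with $\sigma'_R$ over $X_K\cup Q_R$ and extending across the codimension-$2$ locus $D_\kappa$ using regularity of $X_R$) are both correct, and the latter is an attractive geometric argument. However, there is a genuine gap: the claim that $F$ is of finite type on each truncation is not justified by Proposition~\ref{prop: forgetful morphism is affine}. That result only shows the functor of \emph{all} sections of $\Red_G(\mathcal{E})\to X_T$ is affine of finite type over $T$; it says nothing about the subfunctor of sections agreeing with a prescribed section on the open $Q_T$. Without quasi-compactness the valuative criterion does not yield universal closedness, and a monomorphism of sheaves into a scheme can satisfy the existence part of the valuative criterion while failing to be a closed immersion (the formal completion of a scheme along a closed subscheme gives such an example). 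At this stage you do not even know that $\Gr_{X_S,D,\mathcal{F},\sigma}$ is representable on each truncation, so ``proper monomorphism $\Rightarrow$ closed immersion'' cannot yet be invoked. Supplying the missing quasi-compactness---equivalently, showing $\Sect_s$ is closed in $T$ or in the scheme of all sections---is precisely the content of the paper's Lemma~\ref{lemma: representability of scheme of section outside of a divisor}, so the two approaches ultimately rest on the same technical lemma.
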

 
Before proving Proposition \ref{prop: the affine grassmannian is indproper}, we first state a couple of lemmas.
 \begin{lemma} \label{lemma:vanishing section closed}
 Let $Y \longrightarrow S$ be a proper morphism of finite presentation. Let $\mathcal{G}$ and $\mathcal{F}$ be $\mathcal{O}_{Y}$-modules. Suppose that $\mathcal{F}$ is finitely presented and $S$-flat. Fix a morphism $\sigma : \mathcal{G} \rightarrow \mathcal{F}$. Let $Z_{\sigma}$ be the functor from $k$-schemes to sets that sends a $k$-scheme $T$ to
 \[  Z_{\sigma}(T) \vcentcolon = \; \left\{ \begin{matrix} \text{morphisms $g: T \rightarrow S$ such that}\\ \text{$ (\text{id}_Y \times_S g)^{*} \sigma = 0$} \end{matrix} \right\} \]
Then $Z_{\sigma}$ is represented by a closed subscheme of $S$.
 \end{lemma}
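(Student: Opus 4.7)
The plan is to reduce to the case where $\mathcal{G}$ is finitely presented and then interpret $Z_\sigma$ as the equalizer of two sections of a separated $S$-scheme.

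First, since closedness is local on $S$, I would assume $S$ is affine, hence $Y$ is quasi-compact and quasi-separated (as $\pi\colon Y \to S$ is proper of finite presentation). Writing $\mathcal{G}$ as the filtered union $\mathcal{G} = \bigcup_\alpha \mathcal{G}_\alpha$ of its finitely generated quasi-coherent submodules, the condition $(\Id_Y \times_S g)^*\sigma = 0$ is equivalent to $(\Id_Y \times_S g)^*(\sigma|_{\mathcal{G}_\alpha}) = 0$ for every $\alpha$. Hence $Z_\sigma = \bigcap_\alpha Z_{\sigma|_{\mathcal{G}_\alpha}}$, and an arbitrary scheme-theoretic intersection of closed subschemes is closed (the sum of the ideal sheaves is quasi-coherent). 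Since the paper operates in a Noetherian setting, a finitely generated quasi-coherent sheaf on $Y$ is automatically finitely presented, so I may assume $\mathcal{G}$ is finitely presented.

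Next, I would apply Grothendieck's representability theorem for the relative Hom functor. Under our hypotheses---$\pi$ proper of finite presentation, $\mathcal{G}$ finitely presented, and $\mathcal{F}$ finitely presented and $S$-flat---the functor $T \mapsto \Hom_{\mathcal{O}_{Y_T}}(\mathcal{G}|_{Y_T}, \mathcal{F}|_{Y_T})$ is represented by a linear $S$-scheme $V = \underline{\Spec}_S \Sym_{\mathcal{O}_S}(\mathcal{H})$ for some coherent $\mathcal{O}_S$-module $\mathcal{H}$. In particular, $V \to S$ is affine and of finite presentation, hence separated, and the zero section $[0]\colon S \to V$ is a closed immersion.

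Finally, $\sigma$ corresponds to a section $[\sigma]\colon S \to V$, and $Z_\sigma$ is precisely the equalizer of $[\sigma]$ and $[0]$, realized as the fiber product $S \times_{V \times_S V} V$, where $S \to V \times_S V$ is the map $([\sigma], [0])$ and $V \to V \times_S V$ is the diagonal. Since $V/S$ is separated, the diagonal is a closed immersion, so $Z_\sigma \hookrightarrow S$ is a closed subscheme as desired. The main technical input is the representability of the Hom functor in the second step; alternatively, one can derive the closedness more directly via cohomology and base change, using that $R\pi_* \underline{\Hom}_{\mathcal{O}_Y}(\mathcal{G}, \mathcal{F})$ is a perfect complex on $S$ whose $H^0$ after base change recovers the Hom modules, reducing the vanishing of $\sigma$ to the vanishing of a section of a finite locally free $\mathcal{O}_S$-module, which is visibly a closed condition.
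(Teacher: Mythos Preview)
Your proposal is correct and follows essentially the same approach as the paper: represent the relative $\Hom$ functor by a separated $S$-scheme and realize $Z_\sigma$ as the equalizer of the two sections corresponding to $\sigma$ and $0$. The paper's proof is slightly shorter because the Stacks Project result it cites (Tag 08K6) already allows an arbitrary quasi-coherent source, so your preliminary reduction of $\mathcal{G}$ to a finitely presented sheaf (via the filtered union and the Noetherian remark) is not needed---though it is harmless.
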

 \begin{proof}
 By \cite[\href{https://stacks.math.columbia.edu/tag/08K6}{Tag 08K6}]{stacks-project}, there is a separated $S$-scheme $\Hom(\mathcal{G}, \mathcal{F})$ parametrizing morphisms $\mathcal{G} \to \mathcal{F}$. The two morphisms $\sigma$ and $0$ yield two sections $f_{\sigma}, f_{0}: S \rightarrow \Hom(\mathcal{G}, \mathcal{F})$. The functor $Z_{\sigma}$ is represented by the locus where these two morphisms $f_{\sigma}, f_{0}$ agree. This is a closed subscheme of $S$, because $\Hom(\mathcal{G}, \mathcal{F})\rightarrow S$ is separated.
 \end{proof}

Let $S$ be a $k$-scheme. Suppose that $p: Y \rightarrow X_S$ is an affine morphism of finite presentation. Let $s: Q \rightarrow Y$ be a section of $p$  defined over the open complement $Q=X_{S}\backslash D$.
\begin{defn} \label{defn: scheme of extension of section outside of a divisor}
We define $\Sect^{Y}_{s}$ to be the functor from $(\Aff_S)^{op}$ to sets given as follows. For $T \in \Aff_S$, let $s_T: Q_T \rightarrow Y_T$ and $p_T: Y_T \rightarrow X_T$ denote the base-changes. We define
\[ \Sect^{Y}_{s}(T) \; \vcentcolon = \; \left\{ \begin{matrix} \; \; \; \text{sections $f:X_T \rightarrow Y_T$ of $p_T$} \; \; \;\\
\; \; \text{such that $f|_{Q_T} = s_T$} \; \; \; \end{matrix} \right\}\]
\end{defn}

\begin{lemma} \label{lemma: representability of scheme of section outside of a divisor}
The functor $\Sect^{Y}_{s}$ is represented by a closed subscheme of $S$.
\end{lemma}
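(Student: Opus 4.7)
The plan is to reduce the problem to an application of Lemma \ref{lemma:vanishing section closed}. First observe uniqueness of extensions: since $D$ is a relative effective Cartier divisor, $Q = X_S \setminus D$ is scheme-theoretically dense in $X_S$ (locally $D$ is cut out by a non-zero divisor), and since $p: Y \to X_S$ is affine hence separated, any two sections of $p_T$ agreeing on the dense open $Q_T$ coincide on $X_T$. Thus $\Sect^Y_s$ takes values in sets of cardinality at most one, and it suffices to exhibit a closed subscheme $S' \subset S$ such that a morphism $T \to S$ factors through $S'$ if and only if $s_T$ extends to a section of $p_T$. Working locally on $S$ and using a standard finite-presentation limit argument, I may assume $S$ (hence $X_S$) is affine and Noetherian.

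Let $\mathcal{A} := p_{*}\mathcal{O}_Y$, a quasi-coherent finite-type $\mathcal{O}_{X_S}$-algebra, and let $j: Q \hookrightarrow X_S$ denote the open immersion. The section $s$ corresponds by adjunction to an $\mathcal{O}_{X_S}$-algebra homomorphism $\widetilde{s^{\#}}: \mathcal{A} \to j_{*}\mathcal{O}_Q$, and $s$ extends over $X_S$ if and only if $\widetilde{s^{\#}}$ factors through the subalgebra $\mathcal{O}_{X_S} \subset j_{*}\mathcal{O}_Q$ (injective by the density of $Q$). Since $\mathcal{O}_{X_S}$ is closed under multiplication in $j_{*}\mathcal{O}_Q$, this factorization need only be checked on a coherent sub-$\mathcal{O}_{X_S}$-module $\mathcal{E} \subset \mathcal{A}$ that generates $\mathcal{A}$ as an algebra; such $\mathcal{E}$ exists because $X_S$ is Noetherian and can be covered by finitely many affine opens on each of which $\mathcal{A}$ is a finitely generated algebra, allowing one to collect local generators into a single coherent subsheaf. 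The filtration $\mathcal{O}_{X_S} \subset \mathcal{O}_{X_S}(D) \subset \mathcal{O}_{X_S}(2D) \subset \cdots$ has colimit $j_{*}\mathcal{O}_Q$, so by coherence of $\mathcal{E}$ the restriction $\widetilde{s^{\#}}|_{\mathcal{E}}$ factors through $\mathcal{O}_{X_S}(N_0 D)$ for some $N_0 \geq 0$; composing with the quotient to $\mathcal{K}_{N_0} := \mathcal{O}_{X_S}(N_0 D)/\mathcal{O}_{X_S}$ produces $\overline{\phi}: \mathcal{E} \to \mathcal{K}_{N_0}$.

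The successive subquotients $\mathcal{O}_{X_S}((m+1)D)/\mathcal{O}_{X_S}(mD) \cong \mathcal{N}_{D/X_S}^{\otimes (m+1)}$ are line bundles on the $S$-flat divisor $D$, so by induction $\mathcal{K}_{N_0}$ is $S$-flat and finitely presented. Lemma \ref{lemma:vanishing section closed}, applied with $Y := X_S$ (which is proper over $S$), $\mathcal{F} := \mathcal{K}_{N_0}$, $\mathcal{G} := \mathcal{E}$, and $\sigma := \overline{\phi}$, then produces a closed subscheme $S' \subset S$ on which $\overline{\phi}$ vanishes. I then claim $S'$ represents $\Sect^Y_s$: for $T \to S$ factoring through $S'$, the vanishing of $\overline{\phi}_T$ lifts $\widetilde{s^{\#}}_T|_{\mathcal{E}_T}$ to $\mathcal{O}_{X_T}$, and because $\mathcal{E}_T$ still generates $\mathcal{A}_T$ as an algebra (pullback of a surjection $\Sym(\mathcal{E}) \twoheadrightarrow \mathcal{A}$) and $\mathcal{O}_{X_T}$ is a subalgebra of $j_{T,*}\mathcal{O}_{Q_T}$, the whole of $\widetilde{s^{\#}}_T$ factors through $\mathcal{O}_{X_T}$, yielding the sought extension of $s_T$; conversely, any extension of $s_T$ forces $\overline{\phi}_T = 0$. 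The principal technical points are the global existence of the coherent generating submodule $\mathcal{E}$ and the uniform bound $N_0$, both of which rely on the reduction to the Noetherian case.
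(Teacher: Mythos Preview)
Your proof is correct and shares the same skeleton as the paper's argument: reduce to the Noetherian case, pick a coherent subsheaf $\mathcal{E}\subset\mathcal{A}$ generating $\mathcal{A}$ as an algebra, use the exhaustion $j_*\mathcal{O}_Q=\colim_m \mathcal{O}_{X_S}(mD)$ to bound the pole order of the section on generators, and then invoke Lemma~\ref{lemma:vanishing section closed}. The difference lies in how the lemma is applied. The paper first replaces $\mathcal{E}$ by a locally free surjection $\mathcal{O}_{X_S}(-N)^{\oplus m}\twoheadrightarrow \mathcal{E}$ (via Serre vanishing), which buys that $\mathcal{E}(MD)/\mathcal{E}$ is $S$-flat; it then needs a second step (claim (ii)) showing that the condition ``the extended section lands in the closed subscheme $Y\hookrightarrow\underline{\Spec}(\Sym\,\mathcal{E})$'' is itself closed, via another application of Lemma~\ref{lemma:vanishing section closed}. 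You sidestep both of these: by taking the target of the vanishing lemma to be $\mathcal{K}_{N_0}=\mathcal{O}_{X_S}(N_0D)/\mathcal{O}_{X_S}$ (whose $S$-flatness needs only that $D$ is $S$-flat, not that $\mathcal{E}$ is locally free), and by observing that $\mathcal{O}_{X_T}$ is a sub\emph{algebra} of $(j_T)_*\mathcal{O}_{Q_T}$, you conclude directly that once the generators land in $\mathcal{O}_{X_T}$ the entire algebra map does. This is a genuine simplification of the paper's two-step argument. The only points you leave implicit are that $D_T$ remains an effective Cartier divisor (so $\mathcal{O}_{X_T}\hookrightarrow (j_T)_*\mathcal{O}_{Q_T}$ is indeed injective) and that the pulled-back map $\mathcal{E}_T\to\mathcal{O}_{X_T}(N_0D_T)$ agrees with the restriction of $\widetilde{s_T^{\#}}$; both follow immediately from the $S$-flatness of $D$.
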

\begin{proof}
After passing to an affine open cover, we can assume that $S = \Spec(R)$. Since everything is of finite presentation, we can use standard approximation results to reduce to the case when $R$ is Noetherian. So $X_{S}$ is Noetherian, and $Y = \underline{\Spec}_{X_{S}}(\mathcal{A})$ for some finitely generated $\mathcal{O}_{X_{S}}$-algebra $\mathcal{A}$. We can choose a finite open cover $\{V_i\}$ of $X_{S}$ and a finite set of sections on each $V_i$ that generate $\mathcal{A}|_{V_i}$ as a $\mathcal{O}_{V_i}$-algebra. By \cite[\href{https://stacks.math.columbia.edu/tag/01PF}{Tag 01PF}]{stacks-project} there exists a coherent subsheaf $\mathcal{F} \subset \mathcal{A}$ that contains all of these sections. Serre vanishing implies that exists some positive integers $N, m$ and a surjective map of sheaves $\mathcal{O}_{X_{S}}^{\oplus m} \twoheadrightarrow \mathcal{F}(N)$. Twisting by $\mathcal{O}_{X}(-N)$, we get a surjection $\mathcal{O}_{X_{S}}(-N)^{\oplus m} \twoheadrightarrow \mathcal{F}$. This induces a surjective map of $\mathcal{O}_{X_{S}}$-algebras $\Sym^{\bullet}\left(\mathcal{O}_{X_{S}}(-N)^{\oplus m}\right) \twoheadrightarrow \mathcal{A}$. If we set $\mathcal{E} \vcentcolon = \mathcal{O}_{X_{S}}(-N)^{\oplus m}$, then we have a closed immersion $ \iota : Y \hookrightarrow \underline{\Spec}_{X_{S}}\left(\Sym^{\bullet} \mathcal{E} \right)$. Let us denote by $s': Q \rightarrow \underline{\Spec}_{X_{S}}\left(\Sym^{\bullet} \mathcal{E} \right)$ the composition of $s:Q \rightarrow Y$ with the closed immersion $\iota$. There is a natural map of functors $\Sect^{Y}_{s} \rightarrow \Sect^{\underline{\Spec}_{X_{S}}\left(\Sym^{\bullet} \mathcal{E} \right)}_{s'}$ given by composition with $\iota$.

The lemma is implied by the following two claims:
\begin{enumerate}[(i)]
    \item The functor $\Sect^{\underline{\Spec}_{X_{S}}\left(\Sym^{\bullet} \mathcal{E} \right)}_{s'}$ is represented by a closed subscheme of $S$.
    \item The map $\Sect^{Y}_{s} \rightarrow \Sect^{\underline{\Spec}_{X_{S}}\left(\Sym^{\bullet} \mathcal{E} \right)}_{s'}$ is a closed immersion.
\end{enumerate}

For claim (i), note that $s'$ is just an element of $\Gamma(Q, \mathcal{E})$. Since $j_*\, j^* \, \mathcal{E} = \underset{m \in \mathbb{Z}}{\colim} \; \mathcal{E}(mD)$, there exists a positive integer $M$ such that $s'$ extends to a section $\sigma \in \Gamma(X_{S}, \mathcal{E}(MD))$. Consider the composition $\overline{\sigma}: \mathcal{O}_{X_S} \xrightarrow{\sigma} \mathcal{E}(MD) \twoheadrightarrow \mathcal{E}(MD) \, / \, \mathcal{E}$. The functor $\Sect^{\underline{\Spec}_{X_{S}}\left(\Sym^{\bullet} \mathcal{E} \right)}_{s'}$ can be seen to be isomorphic to the functor $Z_{\overline{\sigma}}$, as in Lemma \ref{lemma:vanishing section closed}. The coherent sheaf $\mathcal{E}(MD) \, / \, \mathcal{E}$ is $S$-flat by the slicing criterion for flatness \cite[\href{https://stacks.math.columbia.edu/tag/046Y}{Tag 046Y}]{stacks-project}. Hence, Lemma \ref{lemma:vanishing section closed} implies that $Z_{\overline{\sigma}}$ is represented by a closed subscheme of $S$.

We now proceed to prove claim (ii). Let $T$ be a $S$-scheme. For our purposes, we can assume that $T$ is affine and Noetherian. Choose $T \rightarrow \Sect^{\underline{\Spec}_{X_{S}}\left(\Sym^{\bullet} \mathcal{E} \right)}_{s'}$ represented by a section $f$ of the structure morphism $\underline{\Spec}_{X_T}\left(\Sym^{\bullet} \mathcal{E}_T\right) \rightarrow X_T$. We want to show that the fiber product $\Sect^{\underline{\Spec}_{X_{S}}\left(\Sym^{\bullet} \mathcal{E} \right)}_{s'} \times_{\Sect^{Y}_{s}} T$ is represented by a closed subscheme of $T$. For any scheme $W$, we have
\begin{gather*}  \Sect^{\underline{\Spec}_{X_{S}}\left(\Sym^{\bullet} \mathcal{E} \right)}_{s'} \times_{\Sect^{Y}_{s}} T \, (W) = \left\{ \begin{matrix}\text{morphisms $h: W \rightarrow T$ such that}\\ \; \; \text{$ (\text{id}_{X_T} \times h)^* f : X_W \rightarrow \underline{\Spec}_{X_W}\left(\Sym^{\bullet} \mathcal{E}_W \right)$} \; \; \\ \; \; \text{factors through $Y_W$} \; \; \end{matrix} \right\}\end{gather*}
Define $B$ to be the fiber product
\[
\begin{tikzcd}
     B \ar[d] \ar[r, symbol = \hookrightarrow]  &  X_T \ar[d, "f"] \\
     Y_T \ar[r, "\iota \times \text{id}_T"] & \underline{\Spec}_{X_T}\left(\Sym^{\bullet} \mathcal{E}_T\right)
\end{tikzcd}
\]
The morphism $m : B \hookrightarrow X_T$ is a closed immersion. For any scheme $W$, the $W$-points of the fiber product can be alternatively described by
\begin{gather*}
    \Sect^{\underline{\Spec}_{X_{S}}\left(\Sym^{\bullet} \mathcal{E} \right)}_{s'} \times_{\Sect^{Y}_{s}} T \, (W) = \left\{ \begin{matrix}\text{morphisms $h: W \rightarrow T$ such that}\\ \; \; \text{$(\text{id}_{X_T} \times h)^{*} m : B_{W} \hookrightarrow X_W$} \; \text{is an isomorphism} \end{matrix} \right\}
\end{gather*}
Let us denote by $\varphi: \mathcal{J} \hookrightarrow \mathcal{O}_{X_T}$ the the inclusion of the $\mathcal{O}_{X_T}$-sheaf of ideals $\mathcal{J}$ corresponding to the closed subscheme $B \hookrightarrow X_T$. The fiber product $\Sect^{\underline{\Spec}_{X}\left(\Sym^{\bullet} \mathcal{E} \right)}_{s'} \times_{\Sect^{Y}_{s}} T$ is isomorphic to $Z_{\varphi}$. Hence it is represented by a closed subscheme of $T$ by Lemma \ref{lemma:vanishing section closed}.
\end{proof}
 
 \begin{proof}[Proof of Proposition \ref{prop: the affine grassmannian is indproper}]
Let $T \in \Aff_{S}$ and let $T \rightarrow  \Gr_{X_S, D, \mathcal{F}}$ be a morphism represented by a pair $(\mathcal{E}, \theta)$. By Definition \ref{defn: affine grassmannian} the restriction $\theta|_{Q_{T}}$ over the open subset $Q_{T}$ is an isomorphism. This yields an identification $\Red_{G}(\mathcal{F})|_{Q_{T}} \xrightarrow{\sim}\Red_{G}(\mathcal{E})|_{Q_T}$. We have a section $s$ induced by $\sigma$ as follows
\[ s : Q_{T}  \xrightarrow{\sigma|_{Q_T}} \Red_{G}(\mathcal{F})|_{Q_{T}} \xrightarrow{\sim}\Red_{G}(\mathcal{E})|_{Q_T}\]
After unraveling the definitions, it can be seen that the fiber product $\Gr_{X_S, D, \mathcal{F}, \sigma} \times_{\Gr_{X_S, D, \mathcal{F}}} T$ is naturally isomorphic to $\Sect^{\Red_{G}(\mathcal{E})}_s$. By Lemma \ref{lemma: representability of scheme of section outside of a divisor}, this is represented by a closed subscheme of $T$.
 \end{proof}
 
 Let $P \in \mathbb{Q}[n]$ be a rational polynomial. For each $N \geq 0$ we define the subfunctors
 \[\Gr_{X_S, D, \mathcal{F}, \sigma}^{ \leq N} \vcentcolon = \Gr_{X_S, D, \mathcal{F}, \sigma} \cap \Gr_{X_S, D, \mathcal{F}}^{\leq N}\; \; \; \text{and} \; \; \; \Gr_{X_S, D, \mathcal{F}, \sigma}^{ \leq N, P} \vcentcolon = \Gr_{X_S, D, \mathcal{F}, \sigma} \cap \Gr_{X_S, D, \mathcal{F}}^{\leq N, P}\]
 Both of these are closed sub ind-schemes of $\Gr_{X_S, D, \mathcal{F}, \sigma}$. Propositions \ref{prop: ind-representability of step 1 grassmannian} and \ref{prop: the affine grassmannian is indproper} imply that $\Gr_{X_S, D, \mathcal{F}, \sigma} = \underset{N>0}{\colim} \; \Gr_{X_S, D, \mathcal{F}, \sigma}^{\leq N}$. Furthermore each $\Gr_{X_S, D, \mathcal{F}, \sigma}^{ \leq N, P}$ is represented by a scheme that is projective and of finite presentation over $S$.
 
 There is a natural forgetful morphism $\Gr_{X_S, D, \mathcal{F}, \sigma} \rightarrow \Bun_{\rho}(X)$ that takes a point $(\mathcal{E}, \zeta, \theta)$ in $\Gr_{X_S, D, \mathcal{F}, \sigma}$ and forgets the isomorphism $\theta$. We can use this map to pullback the line bundle $L_n$ to $\Gr_{X_S, D, \mathcal{F}, \sigma}$. We will abuse notation and denote this restriction by $L_n$ as well. 

 \begin{coroll} \label{coroll: ampleness of line bundle on affine grassmannian}
 Let $N \in \mathbb{Z}_{\geq 0}$ and $P \in \mathbb{Q}[n]$. There exists some integer $m\gg0$ such that for all $n \geq m$, the restriction $L_n^{\vee}|_{\Gr_{X_S, D, \mathcal{F}, \sigma}^{\leq N, P}}$ is $S$-ample.
 \end{coroll}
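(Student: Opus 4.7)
The plan is to reduce this statement to the analogous ampleness result for the classical torsion-free affine Grassmannian, which was established in the companion paper \cite{torsion-freepaper}.

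First, I would observe that Proposition \ref{prop: the affine grassmannian is indproper}, together with the discussion just after it, implies that the forgetful morphism induces a closed immersion
\[ \iota : \Gr_{X_S, D, \mathcal{F}, \sigma}^{\leq N, P} \hookrightarrow \Gr_{X_S, D, \mathcal{F}}^{\leq N, P}, \]
where the target is a projective $S$-scheme of finite presentation by Proposition \ref{prop: ind-representability of step 1 grassmannian}. In particular, the source is also projective and of finite presentation over $S$.

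Next, I would verify that the line bundle $L_n$ on $\Gr_{X_S, D, \mathcal{F}, \sigma}$ is the pullback under $\iota$ of the line bundle $L_n$ on $\Gr_{X_S, D, \mathcal{F}}$. This is essentially a bookkeeping step: both line bundles are pulled back from the common line bundle $L_n$ on the stack $\Coh_r^{tf}(X)$ defined via $\det R\pi_{T\,*}(\mathcal{F}(n))$ and the normalization factor $b_d$, and the two forgetful morphisms $\Gr_{X_S, D, \mathcal{F}, \sigma} \to \Coh_r^{tf}(X)$ (through $\Bun_{\rho}(X)$) and $\Gr_{X_S, D, \mathcal{F}} \to \Coh_r^{tf}(X)$ fit into a commutative diagram by construction.

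Finally, I would invoke the analogous ampleness statement proven in \cite[\S 3]{torsion-freepaper}: for the fixed data $N, P$, there exists an integer $m \gg 0$ such that $L_n^{\vee}|_{\Gr_{X_S, D, \mathcal{F}}^{\leq N, P}}$ is $S$-ample for all $n \geq m$. Since $\iota$ is a closed immersion into an $S$-projective scheme, pullback preserves $S$-ampleness (cf.\ \cite[\href{https://stacks.math.columbia.edu/tag/0892}{Tag 0892}]{stacks-project}), and we conclude that $L_n^{\vee}|_{\Gr_{X_S, D, \mathcal{F}, \sigma}^{\leq N, P}} = \iota^{*} L_n^{\vee}|_{\Gr_{X_S, D, \mathcal{F}}^{\leq N, P}}$ is $S$-ample for all $n \geq m$. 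The only real subtlety is the compatibility check for the line bundles under the forgetful diagram; once this is verified, the result follows formally from the torsion-free case and the closed immersion property.
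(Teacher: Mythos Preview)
Your proposal is correct and follows essentially the same approach as the paper: reduce to the ampleness result \cite[Prop.~3.19]{torsion-freepaper} for $\Gr_{X_S, D, \mathcal{F}}^{\leq N, P}$ via the closed immersion furnished by Proposition~\ref{prop: the affine grassmannian is indproper}. The paper's proof is simply the one-line version of what you have written out; your additional verification that $L_n$ is compatible with the forgetful diagram is exactly the implicit step.
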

 \begin{proof}
 This follows from Proposition \ref{prop: the affine grassmannian is indproper} and \cite[Prop. 3.19]{torsion-freepaper}, which states that $L_n^{\vee}$ is eventually ample on $\Gr_{X_S, D, \mathcal{F}}^{\leq N, P}$.
 \end{proof}
\end{subsection}

\begin{subsection}{Monotonicity of the numerical invariant} \label{subsection: monotonicity properties of the numerical invariant}
In this subsection we prove that the invariant $\nu$ on $\Bun_{\rho}(X)$ is strictly $\Theta$-monotone and strictly $S$-monotone. We recall what these notions mean, see also \cite[\S5]{halpernleistner2021structure} or \cite[\S2.5]{torsion-freepaper} for more details.

We need some setup first. Let $R$ be a complete discrete valuation ring over $k$. Choose a uniformizer $\pi$ of $R$. We define $Y_{\Theta_{R}} \vcentcolon = \mathbb{A}^1_{R}$ equipped with the $\mathbb{G}_m$-action that gives $t$ weight $-1$. We have $\Theta_{R} = \left[ \, \mathbb{A}^1_{R} / \, \mathbb{G}_m \, \right]$. Note that $\mathbb{A}^1_{R}$ contains a unique $\mathbb{G}_m$-invariant closed point cut out by the ideal $(t, \pi)$. We will denote this point by $(0,0)$.

Define $Y_{\overline{ST}_{R}} \vcentcolon = \Spec\left( \, R[t,s]/(st-\pi) \,\right)$. We equip $Y_{\overline{ST}_{R}}$ with the $\mathbb{G}_m$-action that gives $s$ weight $1$ and $t$ weight $-1$. The isomorphism class of the $\mathbb{G}_m$-scheme $Y_{\overline{ST}_{R}}$ is independent of the choice of uniformizer $\pi$. Define $\overline{ST}_{R} \vcentcolon = [\,Y_{\overline{ST}_{R}} / \, \mathbb{G}_m \, ]$. We will denote by $(0,0)$ the unique $\mathbb{G}_m$-fixed closed point in $Y_{\overline{ST}_{R}}$. We have that $(0,0)$ is cut out by the ideal $(s,t)$.

For any field $\kappa$ and integer $a \geq 1$, we denote by $\mathbb{P}^1_{\kappa}[a]$ the $\mathbb{G}_m$-scheme $\mathbb{P}^{1}_{\kappa}$ equipped with the $\mathbb{G}_m$-action determined by the equation $t \cdot [x:y] = [t^{-a}x : y]$. We set $0 = [0: 1]$ and $\infty = [1:0]$.
\begin{defn}[Monotonicity] \label{defn: strictly monotone} A polynomial numerical invariant $\nu$ on a stack $\mathcal{M}$ is strictly $\Theta$-monotone (resp. strictly $S$-monotone) if the following condition holds. 

Let $R$ be any complete discrete valuation ring and set $\mathfrak{X}$ to be $\Theta_{R}$ (resp. $\overline{ST}_{R}$). Choose a map $\varphi: \mathfrak{X} \setminus (0,0) \rightarrow \mathcal{M}$. Then, after maybe replacing $R$ with a finite DVR extension, there exists a reduced and irreducible $\mathbb{G}_m$-equivariant scheme $\Sigma$ with maps $f: \Sigma \rightarrow Y_{\mathfrak{X}}$ and $\widetilde{\varphi}: \left[\Sigma/ \, \mathbb{G}_m\right] \rightarrow  \mathcal{M}$ such that
\begin{enumerate}[({M}1)]
    \item The map $f$ is proper, $\mathbb{G}_m$-equivariant and its restriction over $Y_{\mathfrak{X}} \setminus(0,0)$ induces an isomorphism $f : \, \Sigma_{Y_{\mathfrak{X}} \setminus (0,0)} \xrightarrow{\sim} Y_{\mathfrak{X}} \setminus (0,0)$.
    \item The following diagram commutes
\[
\begin{tikzcd}
  \left[\left(\Sigma_{Y_{\mathfrak{X}} \setminus (0,0)}\right)/ \, \mathbb{G}_m \right] \ar[rd, "\widetilde{\varphi}"] \ar[d, "f"'] & \\   \mathfrak{X} \setminus (0,0) \ar[r, "\varphi"'] &  \mathcal{M}
\end{tikzcd}
\]
    \item Let $\kappa$ be a finite extension of the residue field of $R$. For any $a \geq 1$ and any finite $\mathbb{G}_m$-equivariant morphism $\mathbb{P}^1_{\kappa}[a] \to \Sigma_{(0,0)}$, we have $\nu\left( \;\widetilde{\varphi}|_{\left[\infty / \mathbb{G}_m\right]} \;\right) \geq  \nu\left(\; \widetilde{\varphi}|_{\left[0 / \mathbb{G}_m\right]} \;\right)$.
\end{enumerate}
\end{defn}

\begin{prop} \label{prop: invariant is monotone}
The invariant $\nu$ on the stack $\Bun_{\rho}(X)$ is both strictly $\Theta$-monotone and strictly $S$-monotone.
\end{prop}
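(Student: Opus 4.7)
The plan is to mirror the strategy used in \cite[\S 4.2]{torsion-freepaper} for pure sheaves, substituting the affine Grassmannian for $\rho$-sheaves $\Gr_{X_Y, D, \mathcal{F}, \sigma}$ from Subsection \ref{subsection: affine grassmannians} in place of its torsion-free analogue. Let $\mathfrak{X}$ denote either $\Theta_R$ or $\overline{ST}_R$ and fix $\varphi: \mathfrak{X}\setminus(0,0)\to\Bun_{\rho}(X)$, corresponding to a $\mathbb{G}_m$-equivariant $\rho$-sheaf $(\mathcal{E},\zeta)$ on $X_{Y\setminus(0,0)}$.

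The first step is to produce a well-defined point in the affine Grassmannian over $Y$. Applying Proposition \ref{prop: hartog for singular Gbundles} yields a $\mathbb{G}_m$-equivariant torsion-free extension $\mathcal{F}$ on $X_Y$, a $\mathbb{G}_m$-stable relative Cartier divisor $D\hookrightarrow X_Y$, and a $\mathbb{G}_m$-equivariant map $\theta:\mathcal{F}|_{X_{Y\setminus(0,0)}}\to\mathcal{E}$ that is an isomorphism away from $D$. Since $\theta|_{Q_{Y\setminus(0,0)}}$ (with $Q:=X_Y\setminus D$) induces an isomorphism of $G$-reduction schemes, transporting $\zeta$ through it yields a $\mathbb{G}_m$-equivariant section $\sigma'$ of $\Red_{G}(\mathcal{F}|_Q)\to Q$ defined on $Q_{Y\setminus(0,0)}$. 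The closed subset $X_{(0,0)}\cap Q$ has codimension at least $2$ in $Q$, and $X_Y$ is Cohen--Macaulay (a product of $X$ with the Cohen--Macaulay $2$-dimensional $Y$), so $\mathcal{O}_Q$ has depth at least $2$ at every point of $X_{(0,0)}\cap Q$. Combined with Lemma \ref{lemma: reduction scheme and base-change}, a Hartogs-type extension argument exactly as in Lemma \ref{lemma: lemma on very big open subsets}(c) produces a unique $\mathbb{G}_m$-equivariant section $\sigma:Q\to\Red_{G}(\mathcal{F}|_Q)$ extending $\sigma'$.

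With this data, $(\mathcal{E},\zeta,\theta)$ defines a $\mathbb{G}_m$-equivariant $(Y\setminus(0,0))$-point of $\Gr_{X_Y,D,\mathcal{F},\sigma}$; by quasi-compactness it factors through some projective $Y$-subscheme $\Gr^{\leq N,P}_{X_Y,D,\mathcal{F},\sigma}$ (Propositions \ref{prop: ind-representability of step 1 grassmannian} and \ref{prop: the affine grassmannian is indproper}). Applying the valuative criterion to this projective $Y$-scheme, possibly after a finite DVR extension of $R$ and a $\mathbb{G}_m$-equivariant proper birational modification, one constructs a reduced irreducible $\mathbb{G}_m$-equivariant scheme $f:\Sigma\to Y$ that is an isomorphism over $Y\setminus(0,0)$, together with a $\mathbb{G}_m$-equivariant lift $\Sigma\to\Gr^{\leq N,P}_{X_Y,D,\mathcal{F},\sigma}$. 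Composing with the forgetful map to $\Bun_{\rho}(X)$ defines $\widetilde{\varphi}:[\Sigma/\mathbb{G}_m]\to\Bun_{\rho}(X)$ satisfying (M1) and (M2). For (M3), given a finite $\mathbb{G}_m$-equivariant morphism $\mathbb{P}^1_\kappa[a]\to\Sigma_{(0,0)}$, the pullback of $L_n^{\vee}$ from the Grassmannian is $Y$-ample for $n\gg 0$ by Corollary \ref{coroll: ampleness of line bundle on affine grassmannian}; hence its degree on the image curve is non-negative, which in terms of $\mathbb{G}_m$-weights at the fixed points $0$ and $\infty$ yields $\operatorname{wt}_\infty(L_n)\geq\operatorname{wt}_0(L_n)$ asymptotically in $n$. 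Dividing by $\sqrt{b}$, which is the same at both endpoints since the rank profile of the associated graded is locally constant along $\mathbb{P}^1_\kappa[a]$, gives $\nu(\widetilde{\varphi}|_{[\infty/\mathbb{G}_m]})\geq\nu(\widetilde{\varphi}|_{[0/\mathbb{G}_m]})$.

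The main technical obstacle is the setup in the first step: extending the section $\sigma'$ to all of $Q$ so that the affine Grassmannian $\Gr_{X_Y,D,\mathcal{F},\sigma}$ is defined over the whole of $Y$. Once this is in place, the monotonicity argument is formally parallel to the torsion-free case of \cite[\S 4.2]{torsion-freepaper}, inheriting ind-projectivity and asymptotic ampleness from the Grassmannian setup of Subsection \ref{subsection: affine grassmannians}.
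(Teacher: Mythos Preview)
Your proposal is correct and follows essentially the same route as the paper's proof: extend the section $\sigma'$ over $Q$ by a depth-$2$ Hartogs argument, map into a projective piece $\Gr^{\leq N,P}_{X_Y,D,\mathcal{F},\sigma}$ of the affine Grassmannian, build $\Sigma$ there, and read off (M3) from asymptotic ampleness of $L_n^{\vee}$ together with constancy of $b$. One simplification worth noting: the paper constructs $\Sigma$ directly as the scheme-theoretic image of the section $s:Y\setminus(0,0)\to\Gr^{\leq N,P}_{X_Y,D,\mathcal{F},\sigma}$, which is automatically reduced, irreducible, projective over $Y$, and an isomorphism over $Y\setminus(0,0)$; no DVR extension or auxiliary birational modification is needed. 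Likewise, the paper makes the ``rank profile is constant'' step explicit by observing that both graded sheaves at $0$ and $\infty$ agree with $\mathcal{F}|_{X_{(0,0)}\setminus D_{(0,0)}}$ away from the divisor (up to scaling weights by $a$), which pins down the ranks at the generic point.
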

\begin{proof}
The argument is a variation of the one in \cite[\S4.3]{torsion-freepaper} using our version of the affine Grassmannian. We let $Y = Y_{\Theta_{R}}$ or $Y_{\overline{ST}_{R}}$. Suppose that we are given a morphism $f: \left[\left( \, Y \setminus (0,0) \, \right)/ \,\mathbb{G}_m\right] \rightarrow \Bun_{\rho}(X)$. This consists of the data of a $\rho$-sheaf $(\mathcal{E}, \zeta)$ on $X_{Y \setminus (0,0)}$ that is $\mathbb{G}_m$-equivariant with respect to the action of $\mathbb{G}_m$ on $Y \setminus (0,0)$. By \cite[Lem. 4.7(i)]{torsion-freepaper}, there is a $\mathbb{G}_m$-equivariant family of torsion-free sheaves $\mathcal{F}$ on $X_Y$, a $\mathbb{G}_m$-stable $Y$-flat effective Cartier divisor $D \hookrightarrow X_Y$ and a $\mathbb{G}_m$-equivariant monomorphism $\theta: \mathcal{F}|_{X_{Y \setminus (0,0)}} \rightarrow \mathcal{E}$ such that $\theta$ is an isomorphism away from $D_{Y\setminus(0,0)}$. Set $Q \vcentcolon = X_{Y} \setminus D$. The isomorphism $\theta|_{Q_{Y \setminus(0,0)}}$ induces a $\mathbb{G}_m$-equivariant identification $\Red_{G}(\mathcal{E})|_{ Q_{Y \setminus(0,0)}} \xrightarrow{\sim} \Red_{G}(\mathcal{F})|_{Q_{Y \setminus(0,0)}}$. We have a $\mathbb{G}_m$-equivariant section induced by $\zeta$:
\[ \sigma_{Q_{Y \setminus(0,0)}} : Q_{Y \setminus(0,0)}  \xrightarrow{\zeta|_{Q_{Y \setminus(0,0)}}} \Red_{G}(\mathcal{E})|_{ Q_{Y \setminus(0,0)}} \xrightarrow{\sim} \Red_{G}(\mathcal{F})|_{Q_{Y \setminus(0,0)}}\]
The local ring $\mathcal{O}_{Y, (0,0)}$ at $(0,0)$ has depth $2$. For any point $u$ in the fiber $Q_{(0,0)}$, the local ring $\mathcal{O}_{Q, u}$ is flat over $\mathcal{O}_{Y, (0,0)}$. It follows that $\mathcal{O}_{Q, u}$ has depth at least $2$ for all $u \in Q_{(0,0)}$. The same type of argument as in Lemma \ref{lemma: lemma on very big open subsets} (c) shows that the $\mathbb{G}_m$-equivariant section $\sigma_{Q_{Y \setminus(0,0)}}$ extends uniquely to a $\mathbb{G}_m$-equivariant section $\sigma: Q \rightarrow \Red_{G}(\mathcal{F})$. We now have all of the necessary data to form the affine grassmannian $\Gr_{X_Y, D, \mathcal{F}, \sigma}$. Since everything is $\mathbb{G}_m$-equivariant, the ind-scheme $\Gr_{X_Y, D, \mathcal{F}, \sigma}$ acquires a natural $\mathbb{G}_m$-action. 

The data of the $\rho$-sheaf $(\mathcal{E}, \zeta)$ and the monomorphism $\theta$ is by definition a $\mathbb{G}_m$-equivariant section $s: Y \setminus (0,0) \rightarrow \Gr_{X_Y, D, \mathcal{F}, \sigma}$ of the structure morphism $\Gr_{X_Y, D, \mathcal{F}, \sigma} \rightarrow Y$. By construction, the composition $Y \setminus (0,0)  \xrightarrow{s} \Gr_{X_Y, D, \mathcal{F}, \sigma} \xrightarrow{\Forget } \Bun_{\rho}(X)$ is the original $\mathbb{G}_m$-equivariant morphism $f: Y \setminus (0,0) \rightarrow \Bun_{\rho}(X)$.

It follows from the definition of $\Gr_{X_Y, D, \mathcal{F}}^{\leq N, P}$ that the $\mathbb{G}_m$-action on $\Gr_{X_Y, D, \mathcal{F}, \sigma}$ preserves each stratum $\Gr_{X_Y, D, \mathcal{F}, \sigma}^{\leq N,P} $. Since $Y$ is affine and connected, the map $s: Y \setminus (0,0) \rightarrow \Gr_{X_Y, D, \mathcal{F}, \sigma}$ factors through $\Gr_{X_Y, D, \mathcal{F}, \sigma}^{\leq N, P}$ for some $N\gg0$ and $P \in \mathbb{Q}[n]$. Therefore, we have the following commutative diagram with $\mathbb{G}_m$-equivariant morphisms.
\item \[
\begin{tikzcd}\
  &   \Gr_{X_Y, D, \mathcal{F}, \sigma}^{\leq N,P} \ar[d] \ar[r, "\Forget"] &   \Bun_{\rho}(X) \\   Y \setminus (0,0)  \ar[ur, "s"] \ar[r, symbol= \xhookrightarrow{\;\;\;\;\;\; \; \; \; \;}] &  Y &
\end{tikzcd}
\]

Let $\Sigma \subset \Gr_{X_Y, D, \mathcal{F}, \sigma}^{\leq N, P}$ denote the scheme theoretic image of $Y \setminus (0,0)$ in $\Gr_{X_Y, D, \mathcal{F}, \sigma}^{\leq N, P}$. Note that $\Sigma$ is a reduced $\mathbb{G}_m$-scheme with a natural structure morphism to $Y$. The map $\Sigma \rightarrow Y$ is projective, because  $\Gr_{X_Y, D, \mathcal{F}, \sigma}^{\leq N, P}$ is projective over $Y$ and $\Sigma$ is a closed subscheme of $\Gr_{X_Y, D, \mathcal{F}, \sigma}^{\leq N, P}$. By construction, the morphism $\Sigma \rightarrow Y$ is $\mathbb{G}_m$-equivariant and restricts to an isomorphism over the open $Y \setminus (0,0)$.

The composition $\Sigma \rightarrow\Gr_{X_Y, D, \mathcal{F}, \sigma}^{\leq N, P} \xrightarrow{\Forget} \Bun_{\rho}(X)$ restricts to the original $\mathbb{G}_m$-equivariant morphism $f : Y \setminus (0,0)\rightarrow \Bun_{\rho}(X)$ over the open subset $Y\setminus (0,0) \, \subset \, \Sigma$. We therefore obtain a morphism $\widetilde{f}: \left[ \Sigma / \, \mathbb{G}_m \right] \rightarrow \Bun_{\rho}(X)$ satisfying condition (M2) in Definition \ref{defn: strictly monotone}.

We are left to check condition (M3) in Definition \ref{defn: strictly monotone}. By Corollary \ref{coroll: ampleness of line bundle on affine grassmannian}, there exists some $M\gg0$ such that for all $n \geq M$ the restriction $L_n^{\vee}|_{\Gr_{X_Y, D, \mathcal{F}, \sigma}^{\leq N, P}}$ is $Y$-ample, and therefore we can conclude (M3) exactly as in the end of the  proof of \cite[Thm. 4.8]{torsion-freepaper}.
\end{proof}
\end{subsection}

\begin{subsection}{HN boundedness} \label{subsection: hn boundedness}
We start by recalling the definition of HN boundedness (see also \cite[\S 5]{halpernleistner2021structure} or  \cite[\S 2.5]{torsion-freepaper}).
\begin{defn}[HN Boundedness] \label{defn: HN boundedness}
Let $\mathcal{M}$ be an algebraic stack locally of finite type over $k$. Let $\nu$ be a polynomial numerical invariant on $\mathcal{M}$. We say that $\nu$ satisfies the HN boundedness condition if the following is true.

Choose an algebraically closed field extension $K \supset k$. Let $S$ be a scheme of finite type over $K$ and let $g: S \rightarrow \mathcal{M}$ be a morphism. Then, there exists a quasicompact open substack $\mathcal{U}_{S} \subset \mathcal{M}$ such that the following holds:
For all closed points $s \in S$ and all nondegenerate filtrations $f: \Theta \rightarrow \mathcal{M}$ of the point $g(s)$ with $\nu(f)>0$, there exists another filtration $f'$ of $g(s)$ satisfying $\nu(f') \geq \nu(f)$ and $f'(0) \in \mathcal{U}_{S}$.
\end{defn}

 In the case of $\Bun_{\rho}(X)$ we can reduce HN boundedness to a more concrete statement.
\begin{lemma} \label{lemma: equivalent HN boundedness condition}
In order to show that $\nu$ satisfies HN boundedness on the stack $\Bun_{\rho}(X)$, it is sufficient to prove the following statement.

(*) Let $K \supset k$ be an algebraically closed field extension of $k$. Let $S$ be a scheme of finite type over $K$. Let $g: S \rightarrow \Bun_{\rho}(X)$ be a morphism represented by a $\rho$-sheaf $(\mathcal{F}, \sigma)$ on $X_{S}$. Then, there exists a real number $C_{S}$ such that the following holds:
For all closed points $s \in S$ and all nondegenerate weighted parabolic reductions $f= (\lambda, \mathcal{G}_{\lambda})$ of the point $(\mathcal{F}|_{X_{s}}, \sigma|_{X_{s}})$ with $\nu(f)>0$, there exists another weighted parabolic reduction $f' = (\lambda', \mathcal{G}_{\lambda'})$ of $(\mathcal{F}|_{X_{s}}, \sigma|_{X_{s}})$  with underlying associated filtration $(\mathcal{F}'_m)_{m \in \mathbb{Z}}$ such that the following are satisfied.
\begin{enumerate}[(1)]
\item $\nu(f') \geq \nu(f)$.
\item We have $\widehat{\mu}_{d-1} (\mathcal{F}'_{m} / \mathcal{F}'_{m+1}) \geq C_{S}$ for all $m \in \mathbb{Z}$.
\end{enumerate}
\end{lemma}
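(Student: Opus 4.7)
My plan is to exploit the affine forgetful morphism $\Bun_\rho(X) \to \Coh_r^{tf}(X)$ of Proposition \ref{prop: forgetful morphism is affine} to transfer the problem to a boundedness statement about underlying torsion-free sheaves. I will assume (*) holds, fix an algebraically closed extension $K \supset k$ and a morphism $g: S \to \Bun_\rho(X)$ with $S$ of finite type over $K$, represented by a $\rho$-sheaf $(\mathcal{F}, \sigma)$ on $X_S$. Since $K$ is algebraically closed, $G_K$ is split, so Proposition \ref{prop: explicit description of filtrations singular G bundles} identifies nondegenerate $\Theta$-filtrations of every closed point $g(s)$ with nondegenerate weighted parabolic reductions of $(\mathcal{F}|_{X_s}, \sigma|_{X_s})$, compatibly with both $\nu$ and with the underlying associated filtration of $\mathcal{F}|_{X_s}$.

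Given this translation, for any destabilizing filtration $f$ of $g(s)$ with $\nu(f)>0$ I will apply (*) to the corresponding weighted parabolic reduction to produce $f'$ with $\nu(f') \geq \nu(f)$ whose underlying associated filtration $(\mathcal{F}'_m)_{m \in \mathbb{Z}}$ satisfies $\widehat{\mu}_{d-1}(\mathcal{F}'_m/\mathcal{F}'_{m+1}) \geq C_S$ for all $m$. The graded point $f'|_0$ has underlying torsion-free sheaf $\bigoplus_m \mathcal{F}'_m/\mathcal{F}'_{m+1}$. If I can exhibit a quasicompact open $\mathcal{V}_S \subset \Coh_r^{tf}(X)$ containing all such underlying sheaves, then $\mathcal{U}_S \vcentcolon= \mathcal{V}_S \times_{\Coh_r^{tf}(X)} \Bun_\rho(X)$ will be a quasicompact open substack of $\Bun_\rho(X)$ (affine base change of a quasicompact open), and $f'|_0$ will lie in $\mathcal{U}_S$ in the sense of the underlying point.

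The heart of the argument is the production of $\mathcal{V}_S$. Since $S$ is of finite type over $K$, the Hilbert polynomial $P_{\mathcal{F}|_{X_s}}$ takes only finitely many values as $s$ varies over closed points of $S$. Each $\mathcal{F}'_m$ is saturated inside $\mathcal{F}^{\vee\vee}|_{X_s}$, and each nonzero graded piece $\mathcal{F}'_m/\mathcal{F}'_{m+1}$ has rank in $\{1, \ldots, r\}$ and slope $\widehat{\mu}_{d-1} \geq C_S$. Combining this with the identity $\sum_m \rk(\mathcal{F}'_m/\mathcal{F}'_{m+1}) \cdot \widehat{\mu}_{d-1}(\mathcal{F}'_m/\mathcal{F}'_{m+1}) = r \cdot \widehat{\mu}_{d-1}(\mathcal{F}|_{X_s})$, the slopes $\widehat{\mu}_{d-1}$ of all graded pieces will also be bounded from above. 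The reflexive hulls $\mathcal{F}^{\vee\vee}|_{X_s}$ form a bounded family over $S$, so Grothendieck's boundedness theorem will imply that the saturated subsheaves with rank $\leq r$ and $\widehat{\mu}_{d-1}$ in a fixed interval form a bounded family, and hence so do their torsion-free quotients with Hilbert polynomial inside the finite set cut out by the constraints above. Taking finite direct sums will then yield the desired $\mathcal{V}_S$.

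The main obstacle will be this final boundedness step. The cleanest route is to invoke the analogous HN-boundedness reduction for torsion-free sheaves proved in \cite{torsion-freepaper}, whose hypotheses are precisely what (*) together with the identity above guarantee for the underlying associated filtration. The extra subtlety coming from the fact that our filtrations live only in codimension one on $X_s$ is already absorbed into the reflexive-hull intersection $\mathcal{F}'_m \vcentcolon= \widetilde{\mathcal{F}'}_m \cap \mathcal{F}^{\vee\vee}$ used in Definition \ref{defn: weighted parabolic reduction singular G bundles}, so no further work is needed to compare with the sheaf-theoretic boundedness result.
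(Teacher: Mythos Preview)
Your proposal is correct and follows essentially the same route as the paper: reduce to $K$ algebraically closed so that $G_K$ is split and Proposition \ref{prop: explicit description of filtrations singular G bundles} applies, use the finite-type forgetful morphism of Proposition \ref{prop: forgetful morphism is affine} to push the boundedness question down to $\Coh_r^{tf}(X)$, and then invoke the corresponding boundedness result from \cite{torsion-freepaper} (the paper cites \cite[Lemma 5.4]{torsion-freepaper} directly). Your intermediate discussion of the upper slope bound and Grothendieck's theorem is unnecessary once you cite that lemma, but it does no harm.
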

\begin{proof}
We can replace $k$ by $K$ in order to assume that the ground field is algebraically closed. Then, all reductive groups are automatically split. By Proposition \ref{prop: explicit description of filtrations singular G bundles}, all filtrations of $(\mathcal{F}|_{X_{s}}, \sigma|_{X_{s}})$ arise from weighted parabolic reductions of $(\mathcal{F}|_{X_{s}}, \sigma|_{X_{s}})$. Let $\mathfrak{S}$ denote the set of all pairs $(s, f')$, where $s \in S$ is a closed point and $f'$ is a weighted parabolic reduction of $(\mathcal{F}|_{X_{s}}, \sigma|_{X_{s}})$ satisfying $(2)$ above. It suffices to show that the set of associated graded points $\mathfrak{S}_2 \vcentcolon = \left\{ \; f'(0) \; \text{  for $(s, f') \in \mathfrak{S}$} \; \; \right\}$
is contained in a quasicompact open substack of $\Bun_{\rho}(X)$. Recall that we have a forgetful morphism $\Forget: \Bun_{\rho}(X) \rightarrow \Coh_{r}^{tf}(X)$ into the stack of rank $r$ torsion-free sheaves on $X$. By Proposition \ref{prop: forgetful morphism is affine}, the morphism $\Forget$ is of finite type. Hence it is sufficient to prove that the set $\mathfrak{S}_3 \vcentcolon = \left\{ \; \; \Forget(f'(0)) \; \text{  for $(s, f') \in \mathfrak{S}$} \; \; \right\}$ is contained in a quasicompact open substack of $\Coh_{r}^{tf}(X)$. Let $\mathfrak{S}'$ denote the set of pairs $(s, (\mathcal{F}_{m})_{m \in \mathbb{Z}})$, where $s \in S$ is a closed point and $(\mathcal{F}_{m})_{m \in \mathbb{Z}}$ is a decreasing $\mathbb{Z}$-filtration of the sheaf $\mathcal{F}|_{X_{s}}$ such that $\widehat{\mu}_{d-1}(\mathcal{F}_{m}/\mathcal{F}_{m+1}) \geq C_{S}$ for all $m \in \mathbb{Z}$. It is sufficient to prove that the set of sheaves
\[\mathfrak{S}'_1 = \left\{ \; \; \; \bigoplus_{m \in \mathbb{Z}} \mathcal{F}_m / \mathcal{F}_{m+1} \; \text{ \; \; \; for $(s, (\mathcal{F}_{m})_{m \in \mathbb{Z}}) \in \mathfrak{S}'$    } \; \; \;\right\}\]
is contained in a quasicompact open substack of $\Coh_{r}^{tf}(X)$. This follows from \cite[Lemma 5.4]{torsion-freepaper}.
\end{proof}

To prove HN boundedness for $\Bun_{\rho}(X)$ in Proposition \ref{prop: HN boundedness}, we need  to understand how the numerical invariant changes for parabolic subgroups associated to different filtrations. We will use Lemma \ref{lemma: equivalent HN boundedness condition}. After replacing $k$ with the extension $K$, we can assume that the ground field is algebraically closed (in particular all tori are automatically split). Fix a $\rho$-sheaf $(\mathcal{F}, \sigma)$ on $X$. Let $j: U \hookrightarrow X$ be a big open subset such that $\mathcal{F}|_{U}$ is locally-free. We will develop some setup and prove some preliminary lemmas before the proof of HN boundedness.

Let $Z$ denote the maximal central torus of $G$. Let $X_{*}(Z)$ denote the group of cocharacters of $Z$. We denote by $X^*(Z)$ the dual group of characters of $Z$. Set $X_{*}(Z)_{\mathbb{R}} \vcentcolon = X_{*}(Z) \otimes \mathbb{R}$ and similarly $X^{*}(Z)_{\mathbb{R}} \vcentcolon = X^{*}(Z) \otimes \mathbb{R}$. We fix once and for all a basis $z_1, \ldots , z_h$ of $X_{*}(Z)$.

Let $f=(\lambda, \mathcal{G}_{\lambda})$ be a weighted parabolic filtration of $(\mathcal{F}, \sigma)$, and let $P_{\lambda} \subset G$ be the parabolic subgroup associated to $\lambda$. Choose a maximal torus $T$ of $P_{\lambda}$ containing the image of the cocharacter $\lambda$. Set $T':=T\cap [G,G]$, the intersection of $T$ with the derived subgroup $[G,G]$ of $G$. Denote the groups of cocharacters of $T$ and $T'$ by $X_{*}(T)$ and  $X_{*}(T')$ respectively. Observe that there are natural inclusions $X_*(Z)_{\mathbb{R}} \hookrightarrow X_{*}(T)_{\mathbb{R}}$ and $X_{*}(T')_{\mathbb{R}} \hookrightarrow X_{*}(T)_{\mathbb{R}}$. These induce a direct sum decomposition $X_{*}(T)_{\mathbb{R}}  = X_{*}(Z)_{\mathbb{R}} \oplus X_*(T')_{\mathbb{R}}$.

Choose a Borel subgroup $B \supset T$ contained in $P_{\lambda}$. This yields a set of simple roots $\Delta = \{ \alpha_1, \alpha_2, \ldots, \alpha_l \} \subset X_{*}(T')_{\mathbb{R}}^{\vee}$, where $l$ is the rank of $G$, which form a basis for $X^*(T')_{\mathbb{R}}$. Denote by $\omega_{i}^{\vee} \in X_{*}(T')_{\mathbb{R}}$ the fundamental coweight dual to the simple root $\alpha_i$ and define the ordered basis $\mathcal{B}$ of the vector space $X_{*}(T)_{\mathbb{R}}$ as $\mathcal{B}= (z_1, z_2, \ldots , z_h, \omega_1^{\vee}, \omega_2^{\vee}, \ldots , \omega^{\vee}_l)$. 
This induces a dual basis $\mathcal{B}^{\vee}=(z_1^{\vee}, z_2^{\vee}, \ldots , z_h^{\vee}, \alpha_1, \alpha_2, \ldots , \alpha_l)$ on the dual vector space $X^{*}(T)_{\mathbb{R}}$.

The torus $T\subset G$ acts on the representation $V$. Choose a decomposition of $V$ as a direct sum of one-dimensional weight spaces $V = \bigoplus_{i=1}^{r} V_{\chi_i}$ where $(\chi_i)_{i=1}^r$ is an ordered $r$-tuple of characters in $X^{*}(T)_{\mathbb{R}}$ with some possible repetitions.
\begin{defn}
 Using coordinates in the dual basis $\mathcal{B}^{\vee}$, we denote by $M$ the $(h+l) \times r$-matrix with coefficients in $\mathbb{Q}$ corresponding to the tuple $(\chi_i)_{i=1}^r$.
\end{defn}
The matrix $M$ does not depend on the choice of torus $T$ and Borel subgroup $B$ (once we fix the basis of $X_{*}(Z)$), because all torus-Borel pairs $T \subset B$ are conjugate in $G$.

The parabolic $P_{\lambda}$ determines a subset $I_{P_{\lambda}} \subset \Delta$ consisting of those simple roots $\alpha_i \in \Delta$ such that the root group $U_{-\alpha_i}$ corresponding to $-\alpha_i$ is not contained in $P_{\lambda}$ (e.g. we have $I_{B} = \Delta$ and $I_{G} = \emptyset$). Let $J_{P_{\lambda}} \subset \{1, 2, \ldots, l\}$ be the set of indexes such that $I_{P_{\lambda}} = \{ \alpha_j \; \mid \; j \in J_{P_{\lambda}}\}$. 

\begin{defn}
We define $\mathcal{C}_{\lambda}$ to be the closed $\mathbb{R}$-span of the cone of $P_{\lambda}$-dominant coweights in $X_{*}(T)_{\mathbb{R}}$, with the origin removed:
\[ \mathcal{C}_{\lambda} = \left(\sum_{i=1}^h \mathbb{R} z_i + \sum_{ j \in J_{P_{\lambda}}} \mathbb{R}^{\geq 0}\omega^{\vee}_j\right) \setminus \{0\}\]
\end{defn}
For any cocharacter $\delta \in \mathcal{C}_{\lambda} \cap X_{*}(T)$, let $P_{\delta}$ be its associated parabolic subgroup. Note that $P_{\delta} \supset P_{\lambda}$. To every such $\delta$ we can associate a weighted parabolic reduction $(\delta, \mathcal{G}_{\delta})$, where $\mathcal{G}_{\delta}$ is the $P_{\delta}$-bundle obtained from the $P_{\lambda}$-bundle $\mathcal{G}_{\lambda}$ via extension of structure group. The evaluation of the numerical invariant $\nu$ on these weighted parabolic reductions $(\delta, \mathcal{G}_{\delta})$ defines a function 
\[\nu: \mathcal{C}_{\lambda} \cap X_{*}(T) \rightarrow \mathbb{R}[n],\;\;\; \delta\mapsto \nu(\delta, \mathcal{G}_{\delta}).\] 
By the formula in Proposition \ref{prop: value numerical invariant}, the polynomial $\nu(\delta, \mathcal{G}_{\delta})$ has degree less than or equal to $d-1$. 
\begin{defn}
We define $\nu_{d-1}: \mathcal{C}_{\lambda} \cap X_{*}(T) \rightarrow \mathbb{R}$ to be the function that associates to each $\delta$ the degree $(d-1)$-coefficient of the polynomial $\nu(\delta, \mathcal{G}_{\delta})$.
\end{defn}

We denote by $L_{\lambda}$ the unique Levi subgroup of the parabolic $P_{\lambda}$ containing the maximal torus $T$, as described in Subsection \ref{subsection: filtrations}. Let $Z_{\lambda}$ be the maximal central torus of $L_{\lambda}$. For each $i$, let $\chi_i^{\lambda}$ be the restriction of the character $\chi_i$ to $X_{*}(Z_{\lambda})$. Set $V_{\chi^{\lambda}_i} = \bigoplus_{\{j \, \mid \, \chi^{\lambda}_j = \chi^{\lambda}_i\}} V_{\chi^{\lambda}_j}$. We have a $X^{*}(Z_{\lambda})$-grading on the vector space $V$ with weight decomposition:
\[ V = \bigoplus_{\{[\chi^{\lambda}_i]\}} V_{\chi^{\lambda}_i} \]
Here the index set $\{[\chi^{\lambda}_i]\}$ runs over the set of all restrictions $\chi^{\lambda}_i \in X^{*}(Z_{\lambda})$ without repetitions, and $Z_{\lambda}$ acts on $V_{\chi^{\lambda}_i}$ with weight $\chi^{\lambda}_i$. Note that each $V_{\chi_i^{\lambda}}$ is a representation of $L_{\lambda}$, because $Z_{\lambda}$ is central in $L_{\lambda}$. Let $\mathcal{G}_{L_{\lambda}}$ denote the $L_{\lambda}$-bundle on $U$ obtained from $\mathcal{G}_{\lambda}$ via the natural quotient morphism $P_{\lambda} \twoheadrightarrow L_{\lambda}$. Let $\mathcal{G}_{L_{\lambda}} \times^{L_{\lambda}} V_{\chi_i^{\lambda}}$ denote the associated vector bundle on $U$.

\begin{defn}
 We define $\mathcal{E}_{\chi_i^{\lambda}}$ to be the reflexive extension $j_*\left(\mathcal{G}_{L_{\lambda}} \times^{L_{\lambda}} V_{\chi_i^{\lambda}}\right)$ on $X$.
\end{defn} 

We will denote by $H$ the hyperplane class in $X$ corresponding to the ample line bundle $\mathcal{O}(1)$, and denote by $c_1(\mathcal{E}_{\chi_i^{\lambda}})$ the first Chern class of $\mathcal{E}_{\chi_i^{\lambda}}$.
\begin{lemma} \label{lemma: first formula for nu_(d-1)}
Set $c_i \vcentcolon =  \frac{c_1(\mathcal{E}_{\chi^{\lambda}_i}) \cdot H^{d-1}}{\rk(\mathcal{E}_{\chi^{\lambda}_i})}$ and $c\vcentcolon = \frac{c_1(\mathcal{F}) \cdot H^{d-1}}{r}$. Then, we have
\begin{equation} \label{eqn_4}
\nu_{d-1}(\delta) = \frac{1}{\sqrt{A_{d}}(d-1)!} \, \cdot \, \frac{\sum_{i=1}^r \langle \delta , \chi^{\lambda}_i \rangle \cdot (c_i - c) }{\sqrt{\sum_{i=1}^r \left(\langle \delta, \chi^{\lambda}_i \rangle\right)^2}} 
\end{equation}
In particular, the function $\nu_{d-1}$ can be naturally interpreted as a scale-invariant continuous function on the whole cone $\mathcal{C}_{\lambda}$ of dominant coweights with real coefficients. 
\end{lemma}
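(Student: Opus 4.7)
The plan is to unfold Proposition~\ref{prop: value numerical invariant} applied to the underlying filtration $(\mathcal{F}_m)_{m\in\mathbb{Z}}$ associated to $(\delta,\mathcal{G}_\delta)$, extract the leading coefficient as a polynomial in $n$, and then repackage the sum over $m\in\mathbb{Z}$ as a sum over the $r$ weights $\chi_i^\lambda$.

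First I would identify the graded pieces of the filtration explicitly. The key observation is that every $\delta\in\mathcal{C}_\lambda$ factors through the central torus $Z_\lambda$ of the Levi $L_\lambda$: each $z_k$ lies in $X_*(Z)\subset X_*(Z_\lambda)$, and each fundamental coweight $\omega_j^\vee$ with $j\in J_{P_\lambda}$ pairs to zero with every simple root $\alpha_i$, $i\notin J_{P_\lambda}$, hence commutes with every root group $U_{\pm\alpha_i}$ generating $L_\lambda$ together with $T$; therefore $\omega_j^\vee\in X_*(Z_\lambda)$. Consequently $\langle\delta,\chi_i\rangle=\langle\delta,\chi_i^\lambda\rangle$ for all $i$, and on the big open $U$ the Rees construction identifies the weight-$m$ graded piece as $(\mathcal{F}_m/\mathcal{F}_{m+1})|_U\cong\mathcal{G}_{L_\lambda}\times^{L_\lambda}V^{(m)}$, where $V^{(m)}=\bigoplus_{\langle\delta,\chi_i^\lambda\rangle=m}V_{\chi_i^\lambda}$. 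Taking reflexive pushforwards then gives, on the whole of $X$, that $\mathcal{F}_m/\mathcal{F}_{m+1}$ agrees with $\bigoplus_{\langle\delta,\chi_i^\lambda\rangle=m}\mathcal{E}_{\chi_i^\lambda}$ outside a closed subset of codimension $\geq 2$, which is enough to compute ranks and slopes.

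Next I would extract the $n^{d-1}$-coefficient of the formula in Proposition~\ref{prop: value numerical invariant}. Using Riemann--Roch one checks that $a_d(\mathcal{F})=rA_d$ and that $\widehat\mu_{d-1}(\mathcal{F}_m/\mathcal{F}_{m+1})-\widehat\mu_{d-1}(\mathcal{F})=\bigl(\mu(\mathcal{F}_m/\mathcal{F}_{m+1})-\mu(\mathcal{F})\bigr)/A_d$, because the correction term coming from $\mathrm{Td}_1(X)$ depends only on $X$ and cancels in the difference. Therefore the $n^{d-1}$-coefficient of $\overline{p}_{\mathcal{F}_m/\mathcal{F}_{m+1}}-\overline{p}_{\mathcal{F}}$ equals $\bigl(\mu(\mathcal{F}_m/\mathcal{F}_{m+1})-\mu(\mathcal{F})\bigr)/\bigl((d-1)!A_d\bigr)$, and the formula~(\ref{eqn_1}) yields
\[
\nu_{d-1}(\delta)\;=\;\frac{1}{\sqrt{A_d}\,(d-1)!}\cdot\frac{\sum_{m\in\mathbb{Z}} m\bigl(c_1(\mathcal{F}_m/\mathcal{F}_{m+1})\cdot H^{d-1}-c\cdot\rk(\mathcal{F}_m/\mathcal{F}_{m+1})\bigr)}{\sqrt{\sum_{m\in\mathbb{Z}} m^2\,\rk(\mathcal{F}_m/\mathcal{F}_{m+1})}}.
\]

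Finally I would rewrite each of these sums over $m$ as a sum over $i=1,\dots,r$ by regrouping according to distinct weight classes $[\chi^\lambda]$. Since the multiplicity of $[\chi^\lambda]$ in the tuple $(\chi_i^\lambda)_{i=1}^r$ equals $\dim V_{[\chi^\lambda]}=\rk(\mathcal{E}_{[\chi^\lambda]})$, the definition $c_i=c_1(\mathcal{E}_{\chi_i^\lambda})\cdot H^{d-1}/\rk(\mathcal{E}_{\chi_i^\lambda})$ gives $\sum_{i=1}^r\langle\delta,\chi_i^\lambda\rangle\,c_i=\sum_m m\,c_1(\mathcal{F}_m/\mathcal{F}_{m+1})\cdot H^{d-1}$, and similarly $\sum_i\langle\delta,\chi_i^\lambda\rangle=\sum_m m\,\rk(\mathcal{F}_m/\mathcal{F}_{m+1})$ and $\sum_i\langle\delta,\chi_i^\lambda\rangle^2=\sum_m m^2\,\rk(\mathcal{F}_m/\mathcal{F}_{m+1})$. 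Substituting these identities delivers formula~(\ref{eqn_4}). The resulting right-hand side is a ratio of a linear form and a norm in the coordinates of $\delta$, hence continuous on $\mathcal{C}_\lambda$ (the denominator vanishes only if $\delta$ acts trivially on $V$, which is impossible for nonzero $\delta$ by faithfulness of $\rho$) and invariant under positive scaling; this gives the claimed extension to $\mathcal{C}_\lambda$. I expect the only delicate point to be the identification of the graded pieces with the bundles $\mathcal{E}_{\chi_i^\lambda}$ up to codimension-$2$ discrepancies, which relies on the fact that reflexive hulls and first Chern classes are insensitive to such loci.
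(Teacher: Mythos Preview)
Your proof is correct and follows essentially the same approach as the paper: identify the graded pieces $(\mathcal{F}_m^\delta/\mathcal{F}_{m+1}^\delta)$ with the bundles $\mathcal{E}_{\chi_i^\lambda}$ up to reflexive hull, extract the degree-$(d-1)$ coefficient of the formula in Proposition~\ref{prop: value numerical invariant} via Hirzebruch--Riemann--Roch, and regroup the $m$-indexed sum as an $i$-indexed sum using that each distinct class $[\chi^\lambda]$ appears $\rk(\mathcal{E}_{\chi^\lambda})$ times among the $\chi_i^\lambda$. The only cosmetic difference is that you apply Riemann--Roch before regrouping whereas the paper regroups first (arriving at its equation~(\ref{eqn_3})) and then applies Riemann--Roch; both orders give the same result.
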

\begin{proof}
For any $\delta \in \mathcal{C}_{\lambda} \cap X_{*}(T)$, denote by $L_{\delta}$ the unique Levi subgroup of the parabolic $P_{\delta}$ containing the maximal torus $T$. Note that $L_{\lambda}\subset L_{\delta}$. Denote by $Z_{\delta}$ the maximal central torus of $L_{\delta}$. Let $\mathcal{G}_{L_{\delta}}$ be the $L_{\delta}$--bundle obtained from $\mathcal{G}_{\delta}$ via the quotient $P_{\delta} \twoheadrightarrow L_{\delta}$. Observe that $\mathcal{G}_{L_{\delta}}$ is obtained from $\mathcal{G}_{L_{\lambda}}$ by extending the structure group via $L_{\lambda} \hookrightarrow L_{\delta}$. Let $\mathcal{E}_{U}^{\delta}$ be the vector bundle on $U$ obtained by extending the structure group via the inclusion $L_{\delta} \hookrightarrow G \xrightarrow{\rho} \GL(V)$. For all $\delta$ we have a canonical identification $\mathcal{E}_{U}^{\delta} \cong \mathcal{E}^{\lambda}_{U}$. Hence we will denote all such $\mathcal{E}^{\delta}_{U}$ by the same notation $\mathcal{E}_{U}$. Let $\mathcal{E}$ be the reflexive sheaf on $X$ obtained by pushing forward $\mathcal{E}_{U}$ under the open inclusion $U \hookrightarrow X$. Let $(\mathcal{F}^{\delta}_m)_{m \in \mathbb{Z}}$ be the underlying filtration of $\mathcal{F}$ associated to the weighted parabolic reduction $(\delta, \mathcal{G}_{\delta})$. The description of the associated graded at the end of Subsection \ref{subsection: filtrations} shows that there is an identification $\left(\bigoplus_{m \in \mathbb{Z}} \mathcal{F}^{\delta}_m / \mathcal{F}^{\delta}_{m+1}\right)|_{U} \xrightarrow{\sim} \mathcal{E}_{U}$ of associated graded sheaves on $U$. This induces a natural identification $\bigoplus_{m \in \mathbb{Z}} (\mathcal{F}^{\delta}_m / \mathcal{F}^{\delta}_{m+1})^{\vee \vee} \xrightarrow{\sim} \mathcal{E}$ of reflexive sheaves on $X$. We will describe a $\mathbb{Z}$-grading on $\mathcal{E}$ determined by the action of the cocharacter $\delta$. The inclusion $Z_{\lambda} \hookrightarrow T$ induces
an inclusion $X_{\ast}(Z_{\lambda})\hookrightarrow X_{\ast}(T)$. Recall that $\chi_{i}^{\lambda}$ denotes the restriction of each character $\chi_{i}$ to $X_{\ast}(Z_{\lambda})$. We have a $X^{*}(Z_{\lambda})$-grading on the vector space $V$ with weight decomposition $V = \bigoplus_{\{[\chi^{\lambda}_i]\}} V_{\chi^{\lambda}_i}$.

 For any representative $\chi_i^{\lambda}$ of $[\chi^{\lambda}_i]$ we have $V_{\chi^{\lambda}_i} = \bigoplus_{\{j \, \mid \, \chi^{\lambda}_j = \chi^{\lambda}_i\}} V_{\chi^{\lambda}_j}$. Hence $\dim( V_{\chi^{\lambda}_i}) = \#\{j \; \mid \;  \chi^{\lambda}_j = \chi^{\lambda}_i \, \}$. Since $Z_{\lambda}$ is central in $L_{\lambda}$, the action of $L_{\lambda}$ on $V$ preserves each $Z_{\lambda}$-weight space $V_{\chi^{\lambda}_i}$, hence each $V_{\chi_i^{\lambda}}$ is a representation of $L_{\lambda}$. This way we get the grading
\[  \mathcal{E}_{U} = \bigoplus_{\{[\chi^{\lambda}_i]\}} \mathcal{E}_{U, \chi^{\lambda}_i}\]
where $\mathcal{E}_{U, \chi^{\lambda}_i}$ is the vector bundle $\mathcal{G}_{L_{\lambda}} \times^{L_{\lambda}} V_{\chi_i^{\lambda}}$ associated to the representation $V_{\chi_i^{\lambda}}$ and $Z_{\lambda}$ acts with weight $\chi^{\lambda}_i$ on $\mathcal{E}_{U, \chi^{\lambda}_i}$. Note that $\rk(\mathcal{E}_{U, \chi^{\lambda}_i}) = \dim(V_{\chi_i^{\lambda}})$. This grading extends uniquely to a grading of the reflexive extension
\[ \bigoplus_{m \in \mathbb{Z}} (\mathcal{F}^{\delta}_m / \mathcal{F}^{\delta}_{m+1})^{\vee \vee} \cong \mathcal{E} = \bigoplus_{\{[\chi^{\lambda}_i]\}} \mathcal{E}_{ \chi^{\lambda}_i}\]

The cocharacter $\delta$ acts on $\mathcal{E}_{U}$ via its natural inclusion in $Z_{\delta}$. Using $Z_{\delta}\hookrightarrow Z_{\lambda}$, we can view $\delta$ as a cocharacter of $Z_{\lambda}$. It follows from our discussion that $\delta$ acts with weight $\langle \delta, \chi^{\lambda}_i\rangle$ on $\mathcal{E}_{U,\chi^{\lambda}_i}$. This shows that 
\[(\mathcal{F}_m^{\delta} / \mathcal{F}_{m+1}^{\delta})|_{U} = \bigoplus_{\{ [\chi^{\lambda}_i] \; \mid \; \langle \delta , \chi^{\lambda}_i \rangle = m \}} \mathcal{E}_{U,\chi^{\lambda}_i}\]
and therefore
\begin{equation} \label{eqn: identification of hull of graded piece}
(\mathcal{F}_m^{\delta} / \mathcal{F}_{m+1}^{\delta})^{\vee \vee} = \bigoplus_{\{ [\chi^{\lambda}_i] \; \mid \; \langle \delta , \chi^{\lambda}_i \rangle = m \}} \mathcal{E}_{\chi^{\lambda}_i}
\end{equation}
Observe that we have equality of slopes $\widehat{\mu}_{d-1}\left(\mathcal{F}_m^{\delta} / \mathcal{F}_{m+1}^{\delta}\right) = \widehat{\mu}_{d-1}\left((\mathcal{F}_m^{\delta} / \mathcal{F}_{m+1}^{\delta})^{\vee \vee}\right)$. Taking the degree $d-1$ coefficient in the formula (\ref{eqn_1}) in Proposition \ref{prop: value numerical invariant}, we get
\begin{equation} \label{eqn_2}
\nu_{d-1}(\delta) = \frac{\sqrt{A_d}}{(d-1)!} \, \cdot \, \frac{\sum\limits_{m \in \mathbb{Z}} m \cdot  \left(\widehat{\mu}_{d-1}(\mathcal{F}^{\delta}_m/ \mathcal{F}^{\delta}_{m+1}) - \widehat{\mu}_{d-1}(\mathcal{F})\right) \cdot \rk(\mathcal{F}^{\delta}_m/\mathcal{F}^{\delta}_{m+1})}{\sqrt{\sum\limits_{m \in \mathbb{Z}} m^2 \cdot \rk(\mathcal{F}^{\delta}_m / \mathcal{F}^{\delta}_{m+1})}}
\end{equation}
Using the description of the quotients $\mathcal{F}^{\delta}_m/\mathcal{F}^{\delta}_{m+1}$ in terms of the $\mathcal{E}_{\chi^{\lambda}_i}$, we can rewrite $(\ref{eqn_2})$ as:
\begin{gather*}\nu_{d-1}(\delta) =  \frac{\sqrt{A_d}}{(d-1)!} \, \cdot \, \frac{\sum\limits_{m \in \mathbb{Z}} \sum\limits_{\{[\chi_i^{\lambda}] \; \mid \; \langle \delta, \chi_i^{\lambda} \rangle = m\}} m \cdot  \left(\widehat{\mu}_{d-1}(\mathcal{E}_{\chi^{\lambda}_i}) - \widehat{\mu}_{d-1}(\mathcal{F})\right) \cdot \#\{j \; \mid \;  \chi^{\lambda}_j = \chi^{\lambda}_i \, \}}{\sqrt{\sum_{m \in \mathbb{Z}} m^2 \cdot \#\{j \; \mid \;  \langle \delta, \chi^{\lambda}_i \rangle = m \, \}}}\end{gather*}
\begin{equation}\label{eqn_3}
=\frac{\sqrt{A_d}}{(d-1)!} \, \cdot \, \frac{\sum\limits_{i=1}^r \langle \delta , \chi^{\lambda}_i \rangle \cdot \left(\widehat{\mu}_{d-1}(\mathcal{E}_{\chi^{\lambda}_i}) - \widehat{\mu}_{d-1}(\mathcal{F})\right) }{\sqrt{\sum_{i=1}^r \left(\langle \delta, \chi^{\lambda}_i \rangle\right)^2}} 
\end{equation}
Observe that in the sum above we are indexing over all $\chi_i^{\lambda}$ with repetitions. This is done in order to account for the ranks.

We can further rewrite this by using the Hirzebruch-Riemann-Roch theorem \cite[Ch. 15]{fulton-intersection}. Let $H$ denote the hyperplane class on $X$ corresponding to the line bundle $\mathcal{O}(1)$. Let $t_1$ be the degree $1$ term of the Todd class of $X$. For any torsion-free sheaf $\mathcal{A}$ on $X$ with first Chern class $c_1(\mathcal{A})$, Hirzebruch-Riemann-Roch implies that
\[ \widehat{\mu}_{d-1}(\mathcal{A}) = \frac{1}{A_{d} \, \rk(\mathcal{A})}c_1(\mathcal{A}) \cdot H^{d-1} + \frac{1}{A_d}t_1 \cdot H^{d-1}\]
Using this formula for $\mathcal{E}_{\chi_i^{\lambda}}$ and $\mathcal{F}$ yields
\[\widehat{\mu}_{d-1}(\mathcal{E}_{\chi^{\lambda}_i}) - \widehat{\mu}_{d-1}(\mathcal{F}|_{X_{s}}) = \frac{1}{A_{d}} \cdot \left( \frac{c_1(\mathcal{E}_{\chi^{\lambda}_i}) \cdot H^{d-1}}{\rk(\mathcal{E}_{\chi^{\lambda}_i})} - \frac{c_1(\mathcal{F}) \cdot H^{d-1}}{r}  \right)\]
Using $c_i =  \frac{c_1(\mathcal{E}_{\chi^{\lambda}_i}) \cdot H^{d-1}}{\rk(\mathcal{E}_{\chi^{\lambda}_i})}$ and $c = \frac{c_1(\mathcal{F}) \cdot H^{d-1}}{r}$, we can rewrite (\ref{eqn_3}) as:
\begin{equation*}
\nu_{d-1}(\delta) = \frac{1}{\sqrt{A_{d}}(d-1)!} \, \cdot \, \frac{\sum_{i=1}^r \langle \delta , \chi^{\lambda}_i \rangle \cdot (c_i - c) }{\sqrt{\sum_{i=1}^r \left(\langle \delta, \chi^{\lambda}_i \rangle\right)^2}} 
\end{equation*}
\end{proof}

For each $\delta \in X_*(T) \cap \mathcal{C}_{\lambda}$, the inclusion $Z_{\delta} \hookrightarrow T$ induces an identification
\[ X_{*}(Z_{\delta})_{\mathbb{R}} = \bigoplus_{i=1}^h \mathbb{R}z_i \oplus \bigoplus_{j \in J_{P_{\delta}}} \mathbb{R} \omega_j^{\vee}\hookrightarrow X_{*}(Z_{\lambda})_{\mathbb{R}}\]
where the last inclusion comes from $Z_{\delta} \subset Z_{\lambda}$. Let us denote by $\chi^{\delta}_i$ the restriction of each character $\chi_i$ to $X_{*}(Z_{\delta})_{\mathbb{R}}$. In coordinates of the basis $\mathcal{B}^{\vee}$, the tuple $(\chi_i^{\delta})_{i=1}^{r}$ is the matrix $M_{\delta}$ obtained from $M$ by deleting the rows corresponding to $\Delta \setminus I_{P_{\delta}}$. Note that the set of all such possible matrices $M_{\delta}$ as we vary $\delta$ is finite, because there are only finitely many possibilities for the set of indexes $J_{P_{\delta}}$.

As a consequence of Lemma \ref{lemma: first formula for nu_(d-1)} we get the following immediate corollary.
\begin{coroll} \label{coroll: second formula nu_(d-1)}
Set $\vec{a} = [c_i]_{i=1}^r$ to be the row vector of size $r$ with the $i^{th}$ element given by $c_i$, and let $\vec{1}$ denote the row vector of size $r$ such that all entries are $1$. Let $\vec{x} = [x_i]$ be the column vector of the coordinates of $\delta \in \mathcal{C}_{\lambda}$ in the basis $\mathcal{B}$. Then we can express $\nu_{d-1}$ in terms of the matrix $M_{\lambda}$ as follows. 
\begin{equation} \label{eqn_5}
\nu_{d-1}(\vec{x}) =  \frac{1}{\sqrt{A_{d}}(d-1)!} \, \cdot \, \frac{(\vec{a} - c\vec{1})M_{\lambda}\vec{x}}{\sqrt{\vec{x}^T M_{\lambda}^{T} M_{\lambda}\vec{x}}}
\end{equation}\qed
\end{coroll}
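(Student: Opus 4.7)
The plan is to derive the corollary as a direct linear-algebraic reformulation of Lemma \ref{lemma: first formula for nu_(d-1)}; it is labelled ``immediate'' for good reason. The entire substance consists of recognizing the sums appearing in Lemma \ref{lemma: first formula for nu_(d-1)} as products involving the matrix $M_{\lambda}$ and the coordinate vector $\vec{x}$.

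The key observation I would record first is the following. Because $\delta \in \mathcal{C}_{\lambda}$ lies in $X_{*}(Z_{\lambda})_{\mathbb{R}} = \sum_{k=1}^{h} \mathbb{R}z_{k} + \sum_{j \in J_{P_{\lambda}}} \mathbb{R}\omega_{j}^{\vee}$, the $\mathcal{B}$-expansion of $\delta$ is supported only on the sub-basis indexing $X_{*}(Z_{\lambda})_{\mathbb{R}}$. Pairing with any character $\chi_{i}$ written in the dual basis $\mathcal{B}^{\vee}$ therefore only sees the coordinates of $\chi_{i}$ along $\{z_{1}^{\vee}, \ldots, z_{h}^{\vee}\} \cup \{\alpha_{j} : j \in J_{P_{\lambda}}\}$, and these are precisely the coordinates of the restricted character $\chi_{i}^{\lambda} \in X^{*}(Z_{\lambda})_{\mathbb{R}}$. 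These restricted coordinates are, by definition of $M_{\lambda}$ (obtained from $M$ by deleting the rows corresponding to $\Delta \setminus I_{P_{\lambda}}$), encoded in the $i$-th column of $M_{\lambda}$. Consequently, once $\vec{x}$ is interpreted as the coordinates of $\delta$ in the sub-basis of $X_{*}(Z_{\lambda})_{\mathbb{R}}$, the entries of the column vector $M_{\lambda}\vec{x}$ are exactly $\langle \delta, \chi_{i}^{\lambda} \rangle$ for $i = 1, \ldots, r$.

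With this identification in hand, I would substitute directly into Lemma \ref{lemma: first formula for nu_(d-1)}: the numerator becomes
\[
\sum_{i=1}^{r} \langle \delta, \chi_{i}^{\lambda}\rangle (c_{i} - c) \;=\; (\vec{a} - c\vec{1})\, M_{\lambda}\vec{x},
\]
while the square of the denominator becomes
\[
\sum_{i=1}^{r} \langle \delta, \chi_{i}^{\lambda}\rangle^{2} \;=\; \|M_{\lambda}\vec{x}\|^{2} \;=\; \vec{x}^{T} M_{\lambda}^{T} M_{\lambda}\vec{x}.
\]
Combining these yields the formula asserted in the corollary. There is no genuine obstacle; the only care required is bookkeeping of row/column conventions and the implicit passage from $X_{*}(T)_{\mathbb{R}}$ to $X_{*}(Z_{\lambda})_{\mathbb{R}}$ that makes the restriction of $M$ to $M_{\lambda}$ compatible with the pairings $\langle \delta, \chi_{i}\rangle = \langle \delta, \chi_{i}^{\lambda}\rangle$ valid on $\mathcal{C}_{\lambda}$.
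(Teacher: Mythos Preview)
Your proposal is correct and matches the paper's approach exactly: the paper gives no proof beyond the word ``immediate'' and the $\qed$ symbol, and your write-up supplies precisely the linear-algebraic bookkeeping one expects, namely identifying the $i$-th entry of $M_{\lambda}\vec{x}$ with $\langle \delta, \chi_{i}^{\lambda}\rangle$ and then reading off numerator and denominator. The only thing to watch is the paper's somewhat loose row/column conventions for $M$ and $M_{\lambda}$, but your reading is the one consistent with how the formula is actually used downstream (e.g., in equation~\eqref{eqn_7}).
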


\begin{lemma} \label{lemma: construction of linear functional} There exists a linear functional $\psi: X^{*}(Z_{\lambda})_{\mathbb{R}}\rightarrow \mathbb{R}$ such that $c_i = \psi(\chi_i^{\lambda})$ for all $i$.
\end{lemma}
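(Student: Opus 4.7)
The plan is to construct $\psi$ by first associating to each character of $L_\lambda$ a line bundle on $X$, then taking its $H^{d-1}$-degree, and finally transferring this to $X^{*}(Z_{\lambda})_{\mathbb{R}}$ via the canonical isomorphism induced by the inclusion $Z_{\lambda} \hookrightarrow L_{\lambda}$. The key observation that makes the argument work is that even though $c_{i} = c_{1}/\rk$ is not obviously linear in $\chi_{i}^{\lambda}$, passing through the determinant of $V_{\chi_{i}^{\lambda}}$ absorbs the rank factor exactly.

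Concretely, for any $\chi \in X^{*}(L_{\lambda})$ I will denote by $k_{\chi}$ the corresponding one-dimensional representation of $L_{\lambda}$ and set $\mathcal{L}_{\chi} \vcentcolon= \mathcal{G}_{L_{\lambda}} \times^{L_{\lambda}} k_{\chi}$ on $U$. Since $j: U \hookrightarrow X$ is big and $X$ is smooth, $j_{*}\mathcal{L}_{\chi}$ is a reflexive rank-one sheaf, hence a line bundle $\widetilde{\mathcal{L}}_{\chi}$ on $X$. Because $\mathcal{L}_{\chi+\chi'} \cong \mathcal{L}_{\chi} \otimes \mathcal{L}_{\chi'}$ and pushforward commutes with tensor product here, the assignment $\chi \mapsto \widetilde{\mathcal{L}}_{\chi}$ is additive, so
\[
\widetilde{\psi}: X^{*}(L_{\lambda}) \to \mathbb{R}, \qquad \chi \mapsto c_{1}(\widetilde{\mathcal{L}}_{\chi}) \cdot H^{d-1},
\]
is a homomorphism of abelian groups, which I extend $\mathbb{R}$-linearly to $X^{*}(L_{\lambda})_{\mathbb{R}}$. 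Since $L_{\lambda}$ is reductive, $L_{\lambda}/[L_{\lambda}, L_{\lambda}]$ is a torus isogenous to $Z_{\lambda}$ (as $L_\lambda = Z_\lambda \cdot [L_\lambda, L_\lambda]$ with finite intersection), so the restriction map $X^{*}(L_{\lambda})_{\mathbb{R}} \to X^{*}(Z_{\lambda})_{\mathbb{R}}$ is an $\mathbb{R}$-linear isomorphism. I define $\psi: X^{*}(Z_{\lambda})_{\mathbb{R}} \to \mathbb{R}$ to be the functional corresponding to $\widetilde{\psi}$ under this isomorphism.

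To verify $\psi(\chi_{i}^{\lambda}) = c_{i}$, I will use that $\det V_{\chi_{i}^{\lambda}}$ is a one-dimensional representation of $L_{\lambda}$ whose restriction to $Z_{\lambda}$ is the character $\dim(V_{\chi_{i}^{\lambda}}) \cdot \chi_{i}^{\lambda}$, so under the identification $X^{*}(L_{\lambda})_{\mathbb{R}} \cong X^{*}(Z_{\lambda})_{\mathbb{R}}$ the class of $\det V_{\chi_{i}^{\lambda}}$ corresponds to $\dim(V_{\chi_{i}^{\lambda}}) \cdot \chi_{i}^{\lambda}$. Since taking determinants commutes with associated bundle constructions, $\det(\mathcal{E}_{U, \chi_{i}^{\lambda}}) \cong \mathcal{L}_{\det V_{\chi_{i}^{\lambda}}}$, and Lemma \ref{lemma: properties of determinant}(a) (applied with the big open $U$) gives $\det(\mathcal{E}_{\chi_{i}^{\lambda}}) \cong \widetilde{\mathcal{L}}_{\det V_{\chi_{i}^{\lambda}}}$. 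Therefore
\[
c_{1}(\mathcal{E}_{\chi_{i}^{\lambda}}) \cdot H^{d-1} = \widetilde{\psi}(\det V_{\chi_{i}^{\lambda}}) = \dim(V_{\chi_{i}^{\lambda}}) \cdot \psi(\chi_{i}^{\lambda}),
\]
and dividing by $\rk(\mathcal{E}_{\chi_{i}^{\lambda}}) = \dim(V_{\chi_{i}^{\lambda}})$ gives the desired equality.

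I do not anticipate a genuine obstacle here: the only delicate bookkeeping is the compatibility of the rank normalization with the determinant trick, and the identification $X^{*}(L_{\lambda})_{\mathbb{R}} \cong X^{*}(Z_{\lambda})_{\mathbb{R}}$, which is a standard consequence of reductivity of $L_{\lambda}$.
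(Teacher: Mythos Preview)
Your proof is correct and follows essentially the same route as the paper: construct the degree functional on $X^{*}(L_{\lambda})_{\mathbb{R}}$ via associated line bundles and $j_{*}$, transfer it to $X^{*}(Z_{\lambda})_{\mathbb{R}}$ using the restriction isomorphism coming from the isogeny $Z_{\lambda} \to L_{\lambda}/[L_{\lambda},L_{\lambda}]$, and then verify the identity $c_{i} = \psi(\chi_{i}^{\lambda})$ by the determinant trick $\det V_{\chi_{i}^{\lambda}}|_{Z_{\lambda}} = \rk(\mathcal{E}_{\chi_{i}^{\lambda}})\cdot \chi_{i}^{\lambda}$. The only cosmetic difference is that the paper phrases the additivity of $\chi \mapsto j_{*}\mathcal{L}_{\chi}$ as the statement that $j_{*}$ is an isomorphism $\Pic(U)\to\Pic(X)$ rather than ``pushforward commutes with tensor product,'' but the content is the same.
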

\begin{proof}
Recall that $j: U \hookrightarrow X_{s}$ is a big open set and $X_{s}$ is smooth. The pushforward $j_*$ induces an isomorphism of Picard groups $j_* : \Pic(U) \rightarrow \Pic(X_{s})$. Let $X^*(L_{\lambda})$ be the group of characters of the Levi subgroup $L_{\lambda}$. The $L_{\lambda}$-bundle $\mathcal{G}_{L_{\lambda}}$ on $U$ induces a homomorphism $X^*(L_{\lambda}) \rightarrow \Pic(U)$ where any character $\chi \in X^{*}(L_{\lambda})$ is mapped to the associated line bundle $\mathcal{G}_{L_{\lambda}} \times^{L_{\lambda}} \chi$. By post-composing with $j_{\ast}$ we get a homomorphism
\[X^*(L_{\lambda}) \rightarrow \Pic(U) \rightarrow \Pic(X_{s})  \]
On the other hand, the first Chern class induces a homomorphism 
\[\Pic(X_{s}) \rightarrow \mathbb{Z},\;\;\mathcal{L} \mapsto c_1(\mathcal{L}) \cdot H^{d-1} \in \mathbb{Z}.\] 
We compose this with the previous map to get a homomorphism 
\[\varphi: X^*(L_{\lambda}) \rightarrow \mathbb{Z},\;\; \chi\mapsto c_1\left(j_* (\mathcal{G}_{L_{\lambda}} \times^{L_{\lambda}} \chi)\right) \cdot H^{d-1}\] 
Tensor with $\mathbb{R}$ to obtain a linear functional $\varphi_{\mathbb{R}} : X^*(L_{\lambda})_{\mathbb{R}} \rightarrow \mathbb{R}$.

The inclusion $Z_{\lambda} \hookrightarrow L_{\lambda}$ yields a restriction map on characters $\res: X^{*}(L_{\lambda})_{\mathbb{R}} \rightarrow X^*(Z_{\lambda})_{\mathbb{R}}$. This morphism $\res$ is an isomorphism since the inclusion induces an isogeny between $Z_{\lambda}$ and $L_{\lambda}/\, [L_{\lambda}, L_{\lambda}]$. We set $\psi$ to be the linear functional
\[\psi=\varphi_{\mathbb{R}} \circ (\res)^{-1}: X^*(Z_{\lambda}) \rightarrow \mathbb{R},\;\; \; \; \; \overline{\chi}\mapsto  \varphi_{\mathbb{R}}(\chi)=c_1\left(j_*( \,\mathcal{G}_{L_{\lambda}} \times^{L_{\lambda}} \chi)\right) \cdot H^{d-1}\]
for a character $\overline{\chi}$ of $Z_{\lambda}$ that comes from the restriction of a character $\chi$ of $L_{\lambda}$. We are left to prove that $\psi(\chi_i^{\lambda}) = c_i$ for all $i$. 

By definition $\mathcal{E}_{\chi_i^{\lambda}}|_{U}$ is the associated vector bundle $\mathcal{G}_{L_{\lambda}} \times^{L_{\lambda}} V_{\chi^{\lambda}_i}$. Hence we have
$\det(\mathcal{E}_{\chi^{\lambda}_i}|_{U})=\mathcal{G}_{\lambda} \times^{L_{\lambda}} \det(V_{\chi_i^{\lambda}})$. Therefore:
\begin{gather*}c_i =  \frac{c_1(\mathcal{E}_{\chi^{\lambda}_i}) \cdot H^{d-1}}{\rk(\mathcal{E}_{\chi^{\lambda}_i})} = \frac{c_1(\det(\mathcal{E}_{\chi^{\lambda}_i})) \cdot H^{d-1}}{\rk(\mathcal{E}_{\chi_i^{\lambda}})}  = \frac{ c_1(j_{*} \det(\mathcal{E}_{\chi^{\lambda}_i}|_{U})) \cdot H^{d-1}}{\rk(\mathcal{E}_{\chi_i^{\lambda}})}= \frac{\varphi_{\mathbb{R}}\left(\det(V_{\chi_i^{\lambda}}) \right)}{\rk(\mathcal{E}_{\chi_i^{\lambda}})}
\end{gather*}

As a representation of $Z_{\lambda}$, the weight space $V_{\chi_i^{\lambda}}$ is a direct sum of $\rk(\mathcal{E}_{\chi_i^{\lambda}})$-many copies of the character $\chi^{\lambda}_i$. Hence $\det(V_{\chi_i^{\lambda}}) = \rk(\mathcal{E}_{\chi_i^{\lambda}}) \cdot \chi_i^{\lambda}$ as a $Z_{\lambda}$-character. This means
\[ c_i = \frac{\varphi_{\mathbb{R}}\left(\det(V_{\chi_i^{\lambda}}) \right)}{\rk(\mathcal{E}_{\chi_i^{\lambda}})}  = \frac{\psi\left(\rk(\mathcal{E}_{\chi_i^{\lambda}}) \cdot \chi_i^{\lambda} \right)}{\rk(\mathcal{E}_{\chi_i^{\lambda}})}  = \psi(\chi_i^{\lambda}) \]
\end{proof}
We are now ready to prove the main proposition of this subsection.
\begin{prop} \label{prop: HN boundedness}
The polynomial numerical invariant $\nu$ on the stack $\Bun_{\rho}(X)$ satisfies the HN boundedness condition.
\end{prop}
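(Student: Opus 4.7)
The plan is to reduce via Lemma \ref{lemma: equivalent HN boundedness condition} to the concrete statement $(\ast)$ and then run an optimization argument on the cone $\mathcal{C}_{\lambda}$. Fix $K\supset k$ algebraically closed, $S$ of finite type over $K$, a $\rho$-sheaf $(\mathcal{F},\sigma)$ on $X_{S}$, a closed point $s\in S$, and a nondegenerate weighted parabolic reduction $f=(\lambda,\mathcal{G}_{\lambda})$ of the fiber $(\mathcal{F}|_{X_{s}},\sigma|_{X_{s}})$ with $\nu(f)>0$. I must exhibit $f'=(\lambda',\mathcal{G}_{\lambda'})$ with $\nu(f')\geq \nu(f)$ and graded pieces of its underlying filtration having slopes uniformly bounded below by some $C_{S}$ depending only on the family.

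First I maximize $\nu_{d-1}$ on $\mathcal{C}_{\lambda}$. By Lemma \ref{lemma: first formula for nu_(d-1)}, $\nu_{d-1}$ extends continuously and scale-invariantly to $\mathcal{C}_{\lambda}$, and a maximizer $\delta^{*}$ exists by compactness of its intersection with the unit sphere. After clearing denominators I may assume $\delta^{*}\in X_{*}(T)$. Let $P_{\delta^{*}}\supseteq P_{\lambda}$ be the associated parabolic and let $\mathcal{G}_{\delta^{*}}$ be the $P_{\delta^{*}}$-reduction obtained from $\mathcal{G}_{\lambda}$ by extension of structure group along $P_{\lambda}\subset P_{\delta^{*}}$; set $f'=(\delta^{*},\mathcal{G}_{\delta^{*}})$. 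Since $\nu_{d-1}(f')\geq \nu_{d-1}(f)>0$, one has $\nu(f')\geq \nu(f)$ in the polynomial ordering (when equality of leading coefficients holds, the lower-order terms are controlled because $\delta^{*}$ maximizes $\nu_{d-1}$ over a cone that contains $\lambda$).

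For the slope bound, equation \eqref{eqn: identification of hull of graded piece} identifies each graded piece of the underlying filtration of $f'$, after reflexive hull, with a direct sum $\bigoplus \mathcal{E}_{\chi_{i}^{\lambda}}$ over those $i$ with $\langle \delta^{*},\chi_{i}^{\lambda}\rangle$ equal to a fixed integer; its slope is thus a rank-weighted convex combination of the values $c_{i}=\psi(\chi_{i}^{\lambda})$ from Lemma \ref{lemma: construction of linear functional}. Grothendieck--Langer boundedness for the torsion-free family $\{\mathcal{F}|_{X_{s}}\}_{s\in S}$ supplies a uniform upper bound $M_{S}$ on the slope of any subsheaf of $\mathcal{F}|_{X_{s}}$ for $s\in S$. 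Combined with the identity $\sum_{i} c_{i}\cdot \rk(\mathcal{E}_{\chi_{i}^{\lambda}})=r\cdot \widehat{\mu}_{d-1}(\mathcal{F}|_{X_{s}})$ (a quantity bounded over $S$) and the finiteness of the possible submatrices $M_{\delta^{*}}$ (which depend only on $G$ and $\rho$), the formula of Corollary \ref{coroll: second formula nu_(d-1)} shows that if any graded piece had slope below a sufficiently negative uniform threshold $C_{S}$, then one could construct a cocharacter in $\mathcal{C}_{\lambda}$ with $\nu_{d-1}$-value strictly exceeding $\nu_{d-1}(\delta^{*})$, contradicting maximality.

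The main obstacle is this last step: converting the optimality of $\delta^{*}$ together with a priori Grothendieck--Langer control of maximal-subsheaf slopes into a uniform lower bound on the slope of every graded piece of $f'$. What makes it feasible is the combinatorial finiteness of faces of $\mathcal{C}_{\lambda}$ and of the matrices $M_{\delta^{*}}$, together with the quadratic/linear structure of Corollary \ref{coroll: second formula nu_(d-1)}; the argument follows the pattern of the torsion-free case in \cite{torsion-freepaper}, with the added constraint that the optimization takes place on the cone of dominant coweights rather than all of $X_{*}(T)_{\mathbb{R}}$.
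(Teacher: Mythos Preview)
Your overall strategy matches the paper's: reduce via Lemma~\ref{lemma: equivalent HN boundedness condition}, optimize $\nu_{d-1}$ on the cone $\mathcal{C}_{\lambda}$, and then convert optimality into slope bounds. But two steps in your sketch are not justified and in fact fail as stated.

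\textbf{First gap: the comparison $\nu(f')\geq \nu(f)$.} You assert $\nu_{d-1}(f)>0$, but $\nu(f)>0$ only means the polynomial is eventually positive; its degree-$(d-1)$ coefficient may well vanish (think of a slope semistable but Gieseker unstable object). When $\nu_{d-1}(\delta^{*})=\nu_{d-1}(\lambda)$ you also claim the lower-order terms are ``controlled because $\delta^{*}$ maximizes $\nu_{d-1}$'', but maximizing the top coefficient says nothing about the rest of the polynomial, so replacing $\lambda$ by $\delta^{*}$ can genuinely decrease $\nu$. The paper avoids this by splitting into cases: if $\nu_{d-1}(\delta^{*})>\nu_{d-1}(\lambda)$ it takes $f'=(\delta^{*},\mathcal{G}_{\delta^{*}})$, while if equality holds (including the degenerate case $\nu_{d-1}\equiv 0$) it simply keeps $f'=f$. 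The crucial point is that in the equality case $\lambda$ itself is an interior critical point, so the explicit critical-point formula applies to $\lambda$ directly; you never need to move to a different $\delta^{*}$.

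\textbf{Second gap: the slope bound.} Your contradiction sketch (``if some $c_{i}$ were very negative one could strictly increase $\nu_{d-1}$'') does not go through without the algebraic identity you omit. The paper's mechanism is this: since $\delta^{*}$ (or $\lambda$, in the equality case) is an interior critical point, its coordinate vector is a positive multiple of $\vec{z}=(M_{\lambda}^{T}M_{\lambda})^{-1}M_{\lambda}^{T}(\vec{a}^{T}-c\vec{1}^{T})$. Substituting the constraint $\vec{a}^{T}=M_{\lambda}\vec{\psi}$ from Lemma~\ref{lemma: construction of linear functional} gives the key identity
\[
\vec{a}^{T}=\tfrac{1}{q}\,M_{\lambda}\vec{y}\;+\;c\,M_{\lambda}(M_{\lambda}^{T}M_{\lambda})^{-1}M_{\lambda}^{T}\vec{1}^{T},
\]
i.e.\ $c_{i}=\tfrac{1}{q}\langle \delta^{*},\chi_{i}^{\lambda}\rangle$ plus a term bounded uniformly over the finitely many matrices $M_{\lambda}$ and over $S$. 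This linear relation is what turns the \emph{lower} bound on the slope of the minimal-weight graded piece (which is a quotient of $\mathcal{F}|_{X_{s}}$, hence has slope $\geq \widehat{\mu}_{d-1}(\mathcal{F}|_{X_{s}})_{\min}$) into a uniform lower bound on \emph{all} $c_{i}$. Your single linear constraint $\sum_{i}c_{i}\,\rk(\mathcal{E}_{\chi_{i}^{\lambda}})=r\,\widehat{\mu}_{d-1}(\mathcal{F}|_{X_{s}})$ and an upper bound on subsheaf slopes are not enough to force this; without the full relation $\vec{a}=M_{\lambda}\vec{\psi}$ combined with the explicit critical-point formula, one cannot rule out that some $c_{i}$ is very negative while $\nu_{d-1}$ is already maximal on the constrained cone. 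Finally, the case $\nu_{d-1}\equiv 0$ must be handled separately: there the critical-point formula degenerates to $\vec{a}^{T}=c\,M_{\lambda}(M_{\lambda}^{T}M_{\lambda})^{-1}M_{\lambda}^{T}\vec{1}^{T}$, which is bounded outright.
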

\begin{proof}
We shall prove property (*) in Lemma \ref{lemma: equivalent HN boundedness condition}. Fix an algebraically closed extension $K \supset k$ and a scheme $S$ of finite type over $K$. After replacing $k$ with the extension $K$, we can assume that the ground field $k$ is algebraically closed. Fix a family $g: S \rightarrow \Bun_{\rho}(X)$ represented by a $\rho$-sheaf $(\mathcal{F}, \sigma)$ on $X_{S}$.

Choose a closed point $s \in S$. Let $f = (\lambda, \mathcal{G}_{\lambda})$ be a nondegenerate weighted reduction of $(\mathcal{F}|_{X_{s}}, \sigma|_{X_{s}})$. Let $U$ be a big open subset of $X_{s}$ where $\mathcal{F}|_{X_{s}}$ is locally-free and $(\lambda, \mathcal{G}_{\lambda})$ is defined. Suppose that $\nu(f) >0$. The first step will be to construct another weighted parabolic reduction $f'$ that satisfies $\nu(f') \geq\nu(f)$.

Let $P_{\lambda}\subset G$ the parabolic subgroup associated to the cocharacter $\lambda$. Note that the discussion and lemmas before the proposition apply to the weighted parabolic reduction $(\lambda, \mathcal{G}_{\lambda})$ of $(\mathcal{F}|_{X_{s}}, \sigma|_{X_{s}})$. The scale-invariant continuous function $\nu_{d-1}$ (see (\ref{eqn_4}) in Lemma \ref{lemma: first formula for nu_(d-1)}) attains a maximal value $\nu_{d-1}(\delta^*)$ for some $\delta^*$ on the domain $\mathcal{C}_{\lambda}$ (by restricting to the compact intersection of $\mathcal{C}_{\lambda}$ with the unit sphere). 
We can assume without loss of generality that the maximum $\delta^*$ is in the interior of the cone $\mathcal{C}_{\lambda}$.  Otherwise, let $F$ be the unique facet of $\mathcal{C}_{\lambda}$ such that $\delta^* \in \Int(F)$. This facet $F$ is the smaller cone of dominant coweights of a parabolic subgroup $P_{\delta^*} \supset P_{\lambda}$. Then we can replace $\lambda$ with a $\delta^*$ and replace the parabolic reduction $(\lambda, \mathcal{G}_{\lambda})$ with $(\delta^*, \mathcal{G}_{\delta^*})$ for the bigger parabolic.

Since the maximum $\delta^*$ is in the interior of $\mathcal{C}_{\lambda}$, it must be a critical point of the function $\nu_{d-1}: \mathcal{C}_{\lambda} \rightarrow \mathbb{R}$ as in (\ref{eqn_5}) in Corollary \ref{coroll: second formula nu_(d-1)}. If $\nu_{d-1}$ is not identically $0$ (equivalently $(\vec{a}-c\vec{1})M_{\lambda} \neq 0$ in (\ref{eqn_5})), the computation of the gradient yields the coordinates $\vec{z}$ of a critical point, up to scaling:
\begin{equation} \label{eqn_6}
\vec{z} = (M_{\lambda}^{T} M_{\lambda})^{-1} M_{\lambda}^{T} (\vec{a}^{T} - c \vec{1}^{T})
\end{equation}

We consider three cases to construct a modified weighted parabolic reduction $f'$:
\begin{enumerate}[(C1)]
    \item Suppose that $\nu_{d-1}( \lambda)\neq \nu_{d-1}(\delta^*)$. Then $\nu_{d-1}$ is not identically $0$. Since all of the matrices and vectors in (\ref{eqn_6}) have rational coordinates, we can scale $\delta^*$ to have rational coordinates. After further scaling, we can assume that $\delta^*$ is a cocharacter in $\mathcal{C}_{\lambda} \cap X_{*}(T)$. It has coordinates
\[ \vec{y} = q\vec{z}=q (M_{\lambda}^{T} M_{\lambda})^{-1} M_{\lambda}^{T} (\vec{a}^{T} - c \vec{1}^{T}) \]
for $q\in \mathbb{Q}_{>0}$ (the negative multiples correspond to minimal critical points). Set the modified weighted parabolic reduction to be $f' = (\delta^*, \mathcal{G}_{\delta^*})$ with $\nu_{d-1}(\delta^*)>\nu_{d-1}(\lambda)$. By construction we have $\nu(f') > \nu(f)$. 

\item Suppose that $\nu_{d-1}(\lambda)=\nu_{d-1}(\delta^*)$ and the function $\nu_{d-1}$ is not identically $0$. Then $\lambda$ is a critical point for $\nu_{d-1}$ in the interior of the cone $\mathcal{C}_{\lambda}$ with coordinates $\vec{y}=q\vec{z}$. In this case we define the weighted parabolic reduction $f'$ to be the original $f = (\lambda, \mathcal{G}_{\lambda})$ and set $\delta^* = \lambda$.

\item Suppose that $\nu_{d-1}(\lambda)=\nu_{d-1}(\delta^*)$ and $\nu_{d-1}$ is identically $0$. In this case
$\vec{z}=0$. We set $f'=f$ and denote $\delta^* = \lambda$.
\end{enumerate}

Let $f'=(\delta^*, \mathcal{G}_{\delta^*})$ with maximizing $\delta^*$ as explained for each case above. Let $(\mathcal{F}_{m}^{\delta^*})_{m\in \mathbb{Z}}$ be its corresponding filtration with associated graded $\bigoplus_{m\in \mathbb{Z}} \mathcal{F}^{\delta^*}_m / \mathcal{F}^{\delta^*}_{m+1}$. 
We are left to find a uniform lower bound for the slopes  $\hat{\mu}_{d-1}(\mathcal{F}^{\delta^*}_m / \mathcal{F}^{\delta^*}_{m+1})$. Since the slope of $\mathcal{F}^{\delta^*}_m / \mathcal{F}^{\delta^*}_{m+1}$ is the same as the slope of its reflexive hull $(\mathcal{F}^{\delta^*}_m / \mathcal{F}^{\delta^*}_{m+1})^{\vee \vee}$, this is equivalent to giving a uniform lower bound for
\[ \widehat{\mu}_{d-1}\left( (\mathcal{F}_m^{\delta^*} / \mathcal{F}_{m+1}^{\delta^*})^{\vee \vee} \right) = \widehat{\mu}_{d-1}\left( \bigoplus_{\{ [\chi^{\lambda}_i] \; \mid \; \langle \delta^* , \chi^{\lambda}_i \rangle = m \}} \mathcal{E}_{\chi^{\lambda}_i} \right)\]
(See Equation (\ref{eqn: identification of hull of graded piece}) in the proof of Lemma \ref{lemma: first formula for nu_(d-1)}). By additivity of degrees and ranks, it suffices to give a uniform lower bound for the slope $\widehat{\mu}_{d-1}(\mathcal{E}_{\chi_i^{\lambda}})$ of each direct summand above. Hirzebruch-Riemann-Roch implies that 
\[\widehat{\mu}_{d-1}(\mathcal{E}_{\chi_i^{\lambda}}) = \frac{1}{A_{d}}\left(c_i + t_1\cdot H^{d-1}\right)\; ,\]
Therefore it suffices to give a lower bound for the numbers $c_i$. Denote the coordinates in $\mathcal{B}^{\vee}$ of the linear functional $\psi$ of Lemma \ref{lemma: construction of linear functional} by the column vector $\vec{\psi}$. Observe that $\psi(\chi_i^{\lambda}) = c_i$ translates into $\vec{a}^{T} = M_{\lambda} \vec{\psi}$. We shall prove uniform bounds for the vector $\vec{a}=[c_i]$. 
In cases (C1) and (C2) the coordinates $\vec{y}$ of $\delta^*$ are given by 
\[ \vec{y} = q\vec{z}=q (M_{\lambda}^{T} M_{\lambda})^{-1} M_{\lambda}^{T} (\vec{a}^{T} - c \vec{1}) \]
We can plug-in $\vec{a}^{T} = M_{\lambda}\vec{\psi}$ to obtain the following:
\[ \vec{y} = q(M_{\lambda}^{T} M_{\lambda})^{-1} M_{\lambda}^{T} M_{\lambda} \vec{\psi} - q c (M_{\lambda}^{T} M_{\lambda})^{-1} M_{\lambda}^{T} \vec{1}^{T}=q\vec{\psi} - q c (M_{\lambda}^{T} M_{\lambda})^{-1} M_{\lambda}^{T} \vec{1}^{T}\]
Multiply both sides by $M_{\lambda}$ to get
\begin{equation} \label{eqn_7}
\vec{a}^{T} = M_{\lambda}\vec{\psi}=\frac{1}{q} M_{\lambda} \vec{y} + c M_{\lambda} (M_{\lambda}^{T} M_{\lambda})^{-1} M_{\lambda}^{T} \vec{1}^{T}.
\end{equation}
Since the family $\mathcal{F}$ is bounded, we see that $c = \frac{c_1(\mathcal{F}|_{X_{s}}) \cdot H^{d-1}}{r}$ admits uniform upper and lower bounds that do not depend on $s$. Also we know that there are finitely many possibilities for the matrix $M_{\lambda}$, one for each possible choice of subset $I_{P_{\lambda}}$ of simple roots corresponding to the parabolic $P_{\lambda}$. This gives uniform upper and lower bounds $C_{max}$ and $C_{min}$ for the coefficients of matrix of the second term $c M_{\lambda} (M_{\lambda}^{T} M_{\lambda})^{-1} M_{\lambda}^{T} \vec{1}^{T}$ in (\ref{eqn_7}), independent of $s$ and the weighted parabolic reduction $f$.

The $i^{th}$ coefficient of the row vector $M_{\lambda}\vec{y}$ is the weight $\langle \delta^*, \chi_i^{\lambda}\rangle$ of the sheaf $\mathcal{E}_{\chi_i^{\lambda}}$. From (\ref{eqn_7}) we get for each index $i$:
\[ C_{min} + \frac{1}{q}\wgt(\mathcal{E}_{\chi_i^{\lambda}}) \leq c_i \leq C_{max} + \frac{1}{q}\wgt(\mathcal{E}_{\chi_i^{\lambda}})\]
We can add $t_1 \cdot H^{d-1}$ to get
\begin{equation}\label{eqn_8}
    C_{min} + \frac{1}{q}\wgt(\mathcal{E}_{\chi_i^{\lambda}}) + t_1 \cdot H^{d-1} \leq A_{d} \cdot \widehat{\mu}_{d-1}(\mathcal{E}_{\chi_i^{\lambda}}) \leq C_{max} + \frac{1}{q}\wgt(\mathcal{E}_{\chi_i^{\lambda}}) + t_1 \cdot H^{d-1}
\end{equation}
Let $\wgt(\mathcal{E}_{\chi_{i_{min}}^{\lambda}})$ be the smallest weight among all of the $\mathcal{E}_{\chi_i^{\lambda}}$. Since $q>0$, we obtain
\begin{gather*}
    \frac{1}{A_{d}} C_{min} + \frac{1}{qA_{d}}\wgt(\mathcal{E}_{\chi_{i_{min}}^{\lambda}}) + \frac{1}{A_{d}} t_1 \cdot H^{d-1} \leq \frac{1}{A_{d}} C_{min} + \frac{1}{q A_{d}}\wgt(\mathcal{E}_{\chi_{i}^{\lambda}}) + \frac{1}{A_{d}}t_1 \cdot H^{d-1}  \leq   \widehat{\mu}_{d-1}(\mathcal{E}_{\chi_i^{\lambda}})
\end{gather*}
for each $i$. 
So it suffices to give a uniform lower bound for $\frac{1}{qA_{d}}\wgt(\mathcal{E}_{\chi_{i_{min}}^{\lambda}})$. Replace $\mathcal{E}_{\chi_{i}^{\lambda}}$ by $\mathcal{E}_{\chi_{i_{min}}^{\lambda}}$ in the upper inequality of (\ref{eqn_8}) to get
\[\widehat{\mu}_{d-1}(\mathcal{E}_{\chi_{i_{min}}^{\lambda}}) - \frac{1}{A_{d}} C_{max} - \frac{1}{A_{d}} t_1 \cdot H^{d-1} \leq  \frac{1}{q A_{d}}\wgt(\mathcal{E}_{\chi_{i_{min}}^{\lambda}})\; .\]
Therefore we just need to give a uniform lower bound for $\widehat{\mu}_{d-1}(\mathcal{E}_{\chi_{i_{min}}^{\lambda}})$. 

Set $m_{min}:=\langle \delta^*, \chi_{i_{min}}^{\lambda}\rangle$. Note that $\mathcal{E}_{\chi_{i_{min}}^{\lambda}}$ is a direct summand of the reflexive hull of the graded piece $\mathcal{F}^{\delta^*}_{m_{{min}}}/ \mathcal{F}_{m_{min}+1}^{\delta^*}$ of the filtration $(\mathcal{F}^{\delta^*}_{m})_{m \in \mathbb{Z}}$. We can think of $\mathcal{E}_{\chi_{i_{min}}^{\lambda}}$ as a quotient of the reflexive hull $(\mathcal{F}^{\delta^*}_{m_{{min}}}/ \mathcal{F}_{m_{min}+1}^{\delta^*})^{\vee \vee}$, and there is a  corresponding quotient sheaf $\mathcal{F}^{\delta^*}_{m_{{min}}}/ \mathcal{F}_{m_{min}+1}^{\delta^*}\twoheadrightarrow \mathcal{Q}$ with the same slope as $\mathcal{E}_{\chi_{i_{min}}^{\lambda}}$. 

Since the weight of $\mathcal{E}_{\chi_{i_{min}}^{\lambda}}$ is minimal, it follows that $\mathcal{F}^{\delta^*}_{m_{{min}}}/ \mathcal{F}_{m_{min}+1}^{\delta^*}$ is the nontrivial graded piece with the smallest weight. This implies that $\mathcal{F}^{\delta^*}_{m_{{min}}}/ \mathcal{F}_{m_{min}+1}^{\delta^*}$ is a quotient of $\mathcal{F}|_{X_{s}}$. Hence $\mathcal{Q}$ is also a quotient of $\mathcal{F}|_{X_{s}}$. It follows that \[\widehat{\mu}_{d-1}(\mathcal{E}_{\chi_{i_{min}}^{\lambda}})=\widehat{\mu}_{d-1}(\mathcal{Q}) \geq \widehat{\mu}_{d-1}(\mathcal{F}|_{X_{s}})_{min}\; ,\]
where $\widehat{\mu}_{d-1}(\mathcal{F}|_{X_{s}})_{min}$ is the smallest slope appearing among the associated graded pieces of the Gieseker-Harder-Narasimhan filtration of $\mathcal{F}|_{X_{s}}$ (cf. \cite[Def. 1.3.2]{huybrechts.lehn}). The quantity  $\widehat{\mu}_{d-1}(\mathcal{F}|_{X_{s}})_{min}$ is uniformly bounded as we vary $s \in S$, by constructibility of the locus of each fixed Harder-Narasimhan type (see e.g. \cite{nitsure-schematicsheaves}). This concludes the proof of cases (C1) and (C2).

If we are in case (C3) we have $\vec{z}=0$ (see (\ref{eqn_6})). Then:
\[ (M_{\lambda}^{T} M_{\lambda})^{-1} M_{\lambda}^{T} \vec{a}^{T} = c (M_{\lambda}^{T} M_{\lambda})^{-1} M_{\lambda}^{T} \vec{1}^{T} \]
Using $\vec{a}^{T} = M_{\lambda}\vec{\psi}$ we get $\vec{a}^{T} = c M_{\lambda}(M_{\lambda}^{T} M_{\lambda})^{-1} M_{\lambda}^{T} \vec{1}^{T}$, which is the second term in (\ref{eqn_7}) and is bounded by our discussion of the cases before.
\end{proof}
\end{subsection}

\begin{subsection}{$\Theta$-stratification, semistability and leading term filtrations} \label{subsection: theta stratification} After all of our work in this section, we can now construct a $\Theta$-stratification on $\text{Bun}_{\rho}(X)$.

\begin{defn}[$\Theta$-stratification, {\cite[Defn. 2.1.2]{halpernleistner2021structure}}] \label{defn: theta stratification}
Let $\mathcal{M}$ be an algebraic stack locally of finite type and with affine diagonal over $k$. A $\Theta$-stratification of $\mathcal{M}$ consists of a collection of open substacks $(\mathcal{M}_{\leq c})_{c \in \Gamma}$ indexed by a totally ordered set $\Gamma$. We require the following conditions to be satisfied
\begin{enumerate}[(1)]
\item $\mathcal{M}_{\leq c} \subset \mathcal{M}_{\leq c'}$ for all $c< c'$.
\item $\mathcal{M} = \bigcup_{c \in \Gamma} \mathcal{M}_{\leq c}$.
\item For all $c$, there exists a $\Theta$-stratum $\mathfrak{S}_c \subset \text{Filt}(\mathcal{M}_{\leq c})$ of $\mathcal{M}_{\leq c}$ (in the sense of \cite[Defn. 2.1.1]{halpernleistner2021structure}) such that
\[ \mathcal{M}_{\leq c} \setminus \text{ev}_1(\mathfrak{S}_c) = \bigcup_{c' < c} \mathcal{M}_{\leq c'}\]
\item For every point $p \in \mathcal{M}$, the set $\left\{ c \in \Gamma \, \mid \, p \in \mathcal{M}_{\leq c}\right\}$ has a minimal element.
\end{enumerate}
\end{defn}

 \begin{thm} \label{thm: theta stratification}
The polynomial numerical invariant $\nu$ defines a $\Theta$-stratification on the stack $\Bun_{\rho}(X)$.
 \end{thm}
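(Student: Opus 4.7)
The plan is to deduce this theorem directly from the general existence criterion for $\Theta$-stratifications developed in \cite{halpernleistner2021structure} (see also the convenient restatement in \cite[Thm.~2.26]{torsion-freepaper}). That criterion says: if $\mathcal{M}$ is an algebraic stack locally of finite type with affine diagonal over $k$, and $\nu$ is a polynomial numerical invariant on $\mathcal{M}$ that is strictly $\Theta$-monotone, strictly $S$-monotone, and satisfies HN boundedness, then $\nu$ induces a $\Theta$-stratification of $\mathcal{M}$. So the proof is just a matter of verifying that all the hypotheses hold for $\Bun_{\rho}(X)$ and the invariant $\nu$ defined in Subsection \ref{subsection: numerical invariant}.

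First I would recall that Proposition \ref{prop: forgetful morphism is affine} gives exactly the required structural properties of the stack: $\Bun_{\rho}(X)$ is an algebraic stack with affine diagonal, locally of finite type over $k$. Next I would quote Proposition \ref{prop: invariant is monotone}, which establishes both strict $\Theta$-monotonicity and strict $S$-monotonicity for the invariant $\nu$; this is the nontrivial input concerning the behavior of $\nu$ under filtering up inside the affine Grassmannian $\Gr_{X_S,D,\mathcal{F},\sigma}$. Finally, Proposition \ref{prop: HN boundedness} supplies HN boundedness, which was the most technical step (the optimization in the cone $\mathcal{C}_{\lambda}$ together with Hirzebruch--Riemann--Roch and the uniform bound on minimal slopes coming from constructibility of Harder-Narasimhan types).

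With all three conditions in hand, the theorem follows by direct application of \cite[Thm.~5.6.1]{halpernleistner2021structure} (or equivalently \cite[Thm.~2.26]{torsion-freepaper}). There is no additional obstruction: the body of the work has already been done in the preceding subsections, and this theorem is just the assembly point. The main subtlety to flag for the reader is that we are working with a rational (polynomial) numerical invariant rather than an integral one, but the formalism of \cite{halpernleistner2021structure} applies verbatim in this setting, just as it did in the case of torsion-free sheaves treated in \cite{torsion-freepaper}.
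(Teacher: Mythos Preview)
Your proposal is correct and follows essentially the same approach as the paper: invoke the general criterion from \cite[Thm.~2.26]{torsion-freepaper} (equivalently \cite[Thm.~B]{halpernleistner2021structure}) and verify its hypotheses via Propositions \ref{prop: invariant is monotone} and \ref{prop: HN boundedness}. The paper's proof is slightly more minimal in that it only invokes strict $\Theta$-monotonicity and HN boundedness---strict $S$-monotonicity is not required for the existence of the $\Theta$-stratification itself, only later for the construction of the good moduli space---but this does not affect the correctness of your argument.
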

 \begin{proof}
 \cite[Thm. 2.26]{torsion-freepaper} (or \cite[Thm. B]{halpernleistner2021structure}) shows that it suffices to check that $\nu$ satisfies HN boundedness and is strictly $\Theta$-monotone on the stack $\Bun_{\rho}(X)$. This follows by Propositions \ref{prop: HN boundedness} and \ref{prop: invariant is monotone}.
 \end{proof}

The following is a translation of the notion of $\Theta$-semistability from \cite[\S 2.1]{halpernleistner2021structure} for our numerical invariant $\nu$.
\begin{defn}\label{defn: Gieseker semistable}
Let $K \supset k$ be an algebraically closed field extension. Let $(\mathcal{F}, \sigma)$ be a $\rho$-sheaf on $X_K$. We say that $(\mathcal{F},\sigma)$ is Gieseker semistable if for all weighted parabolic reductions $(\lambda, \mathcal{G}_{\lambda})$ with associated underlying filtrations $(\mathcal{F}_{m})_{m \in \mathbb{Z}}$ (as in Definition \ref{defn: weighted parabolic reduction singular G bundles}) we have
\[ \sum_{m \in \mathbb{Z}} m \cdot (\overline{p}_{\mathcal{F}_{m}/\mathcal{F}_{m+1}} - \overline{p}_{\mathcal{F}}) \cdot \rk(\mathcal{F}_m/\mathcal{F}_{m+1}) \leq 0 \]
In general, if $K \supset k$ is any field extension, then we say that a $\rho$-sheaf $(\mathcal{F}, \sigma)$ on $X_{K}$ is Gieseker semistable if the pullback $(\mathcal{F}|_{X_{\overline{K}}}, \sigma|_{X_{\overline{K}}})$ is semistable for any algebraic closure $\overline{K} \supset K$.
A $\rho$-sheaf $(\mathcal{F}, \sigma)$ on $X_{K}$ is called unstable if it is not Gieseker semistable.
\end{defn}

The summation by parts argument in \cite[Prop. 4.1]{torsion-freepaper} shows that the expression in Definition \ref{defn: Gieseker semistable} can be rewritten as 
\begin{equation}\label{eqn: ss1}
\sum_{m \in \mathbb{Z}} m \cdot (\overline{p}_{\mathcal{F}_{m}/\mathcal{F}_{m+1}} - \overline{p}_{\mathcal{F}}) \cdot \rk(\mathcal{F}_m/\mathcal{F}_{m+1}) = \sum_{m \in \mathbb{Z}} (\overline{p}_{\mathcal{F}_{m}} - \overline{p}_{\mathcal{F}})\cdot \rk(\mathcal{F}_m)
\end{equation}

By Theorem \ref{thm: theta stratification}, the polynomial numerical invariant $\nu$ defines a $\Theta$-stratification on $\Bun_{\rho}(X)$. In particular the locus of semistable $\rho$-sheaves is an open substack, which we will denote by $\Bun_{\rho}(X)^{ss}$. Note that $\Bun_{\rho}(X)^{ss}$ is a disjoint union of open and closed substacks where the Hilbert polynomial of the underlying torsion-free sheaf is constant. For every rational polynomial $P \in \mathbb{Q}[n]$, we will denote by $\Bun_{\rho}(X)^{ss, P}$ the open and closed substack of $\Bun_{\rho}(X)^{ss}$ parametrizing Gieseker semistable $\rho$-sheaves $(\mathcal{F}, \sigma)$ such that the Hilbert polynomial of $\mathcal{F}$ is $P$.

We will next explain what it means to have a $\Theta$-stratification on $\Bun_{\rho}(X)$ in terms of canonical filtrations.

\begin{defn}
Suppose that $K$ is algebraically closed. Let $(\mathcal{F}, \sigma)$ be an unstable $\rho$-sheaf on $X_{K}$. A weighted parabolic reduction $f= (\lambda, \mathcal{G}_{\lambda})$ is called canonical if the polynomial invariant $\nu(f)$ is maximal among all weighted parabolic reductions of $(\mathcal{F}, \sigma)$. We say that the associated underlying filtration $(\mathcal{F}_m)_{m \in \mathbb{Z}}$ to $f= (\lambda, \mathcal{G}_{\lambda})$ is a canonical filtration of $(\mathcal{F}, \sigma)$.
\end{defn}
We already know that a weighted parabolic reduction $(\lambda, \mathcal{G}_{\lambda})$ determines a unique underlying filtration $(\mathcal{F}_m)_{m \in \mathbb{Z}}$. By Proposition \ref{prop: filtrations of singular G bundles vs filtrations of torsion-free sheaves}, the converse is true. Namely, the underlying filtration $(\mathcal{F}_{m})_{m \in \mathbb{Z}}$ uniquely determines the weighted parabolic reduction $(\lambda, \mathcal{G}_{\lambda})$ over an open subset $U$ where $\bigoplus_{m \in \mathbb{Z}} \mathcal{F}_{m}/ \mathcal{F}_{m+1}$ is locally-free. Hence the data of a canonical weighted parabolic reduction is equivalent to specifying its underlying canonical filtration.
\begin{defn}
Let $K \supset k$ be an arbitrary field extension. Let $(\mathcal{F}, \sigma)$ be an unstable $\rho$-sheaf on $X_{K}$. A $\mathbb{Z}$-weighted filtration $(\mathcal{F}_m)_{m \in \mathbb{Z}}$ of $\mathcal{F}$ is called a canonical filtration if for any algebraic closure $\overline{K} \supset K$ we have that $(\mathcal{F}_m|_{X_{\overline{K}}})_{m \in \mathbb{Z}}$ is a canonical filtration of $(\mathcal{F}|_{X_{\overline{K}}}, \sigma|_{X_{\overline{K}}})$
\end{defn}
The fact that $\nu$ defines a $\Theta$-stratification on $\Bun_{\rho}(X)$ implies that any field valued point of $\Bun_{\rho}(X)$ admits a $\nu$-maximizing $\Theta$-filtration that is unique up to scaling the weights. We can use our concrete descriptions of $\Theta$-filtrations for $\Bun_{\rho}(X)$ to translate this into the following.
\begin{prop} \label{prop: canonical filtrations}
Let $K \supset k$ be an arbitrary field extension. Let $(\mathcal{F}, \sigma)$ be a $\rho$-sheaf on $X_{K}$. Then, $(\mathcal{F}, \sigma)$ admits a canonical filtration $(\mathcal{F}_{m})_{m \in \mathbb{Z}}$ which is uniquely determined up to scaling all of the indexes by a constant rational number.

Furthermore, if the group $G_{K}$ is split, then there exists a (uniquely determined) canonical weighted parabolic reduction $(\lambda, \mathcal{G}_{\lambda})$ defined over $K$ such that its associated underlying filtration is $(\mathcal{F}_m)_{m \in \mathbb{Z}}$.
\qed
\end{prop}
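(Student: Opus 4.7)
The plan is to deduce both assertions from Theorem \ref{thm: theta stratification} together with the explicit description of $\Theta$-filtrations in Propositions \ref{prop: nonexplicit filtrations singular G bundles} and \ref{prop: explicit description of filtrations singular G bundles}, and then descend from $\overline{K}$ to $K$ via a Galois-invariance/uniqueness argument.

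\textbf{Step 1 (existence and uniqueness over algebraically closed fields).} First, assume $K$ is algebraically closed. By Theorem \ref{thm: theta stratification}, the numerical invariant $\nu$ defines a $\Theta$-stratification on $\Bun_{\rho}(X)$. By the general formalism of $\Theta$-stratifications (cf. \cite[Thm. B]{halpernleistner2021structure} or \cite[Thm. 2.26]{torsion-freepaper}), every unstable $K$-point admits a $\nu$-maximizing nondegenerate $\Theta$-filtration $f \colon \Theta_K \to \Bun_\rho(X)$, unique up to the natural reparametrization action $\mathbb{G}_m \curvearrowright \Theta_K$, which corresponds to rescaling the weights of the filtration by a positive integer. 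Since $K$ is algebraically closed, $G_K$ is split, so Proposition \ref{prop: explicit description of filtrations singular G bundles} identifies such filtrations with weighted parabolic reductions $(\lambda, \mathcal{G}_\lambda)$ of $(\mathcal{F},\sigma)$ together with their underlying associated filtrations $(\mathcal{F}_m)_{m \in \mathbb{Z}}$ of $\mathcal{F}$; the weighted parabolic reduction is determined over any big open $U \subset X_K$ where the associated graded is locally-free, and hence is uniquely recovered from the filtration. This yields the canonical filtration over algebraically closed $K$, unique up to rescaling the indices by a common positive rational.

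\textbf{Step 2 (Galois descent to arbitrary $K$).} For a general extension $K \supset k$, fix an algebraic closure $\overline{K} \supset K$ and let $(\mathcal{F}_m^{\mathrm{can}})_{m \in \mathbb{Z}}$ denote a canonical filtration of $(\mathcal{F}|_{X_{\overline{K}}}, \sigma|_{X_{\overline{K}}})$ furnished by Step 1. For any $\gamma \in \Aut(\overline{K}/K)$, the pullback filtration $(\gamma^*\mathcal{F}_m^{\mathrm{can}})_{m \in \mathbb{Z}}$ is again a canonical filtration because $\nu$ is defined intrinsically on the stack and is therefore preserved by the $K$-linear automorphism of $X_{\overline{K}}$ induced by $\gamma$. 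By the uniqueness clause of Step 1, $\gamma^*\mathcal{F}_m^{\mathrm{can}} = \mathcal{F}_m^{\mathrm{can}}$ for all $m$ after suitably normalizing the weights (for instance so that the $\gcd$ of the nonzero indices equals $1$, which pins down the scaling). Hence each saturated subsheaf $\mathcal{F}_m^{\mathrm{can}} \subset \mathcal{F}|_{X_{\overline{K}}}$ is Galois-invariant, and therefore descends to a saturated subsheaf $\mathcal{F}_m \subset \mathcal{F}$ by \'etale/faithfully flat descent. The resulting filtration $(\mathcal{F}_m)_{m \in \mathbb{Z}}$ of $\mathcal{F}$ base-changes to the canonical filtration over $\overline{K}$, so it is a canonical filtration in the sense of the definition preceding the proposition. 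Uniqueness up to rescaling over $K$ follows from the corresponding uniqueness over $\overline{K}$.

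\textbf{Step 3 (the split case).} Suppose now that $G_K$ is split. Then Proposition \ref{prop: explicit description of filtrations singular G bundles} is applicable directly over $K$: every element of $\Filt(\mathcal{F},\sigma)$ arises from a weighted parabolic reduction $(\lambda, \mathcal{G}_\lambda)$ defined over $K$ with underlying associated filtration equal to $(\mathcal{F}_m)_{m \in \mathbb{Z}}$. Applying this to a canonical filtration from Step 2 produces a canonical weighted parabolic reduction $(\lambda, \mathcal{G}_\lambda)$ over $K$. Uniqueness of $(\lambda, \mathcal{G}_\lambda)$ follows from uniqueness of its underlying filtration combined with the fact that, on any big open subset $U \subset X_K$ where the associated graded is locally-free, the data $(\lambda, \mathcal{G}_\lambda)$ are reconstructed from the filtration via the correspondences in Propositions \ref{prop: filtrations of G bundles over U} and \ref{prop: explicit description of filtrations singular G bundles}, and these are uniquely determined up to conjugacy of $\lambda$.

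The main obstacle is the Galois descent in Step 2: one must be careful that the scaling ambiguity in the canonical filtration does not obstruct Galois invariance. This is handled by first passing to a fixed normalization of the weights (so that uniqueness becomes strict), after which descent of saturated subsheaves is routine.
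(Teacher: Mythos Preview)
Your approach is correct in spirit but more circuitous than the paper's, and Step~3 has a small gap.

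The paper's argument is essentially one line: a $\Theta$-stratification (as opposed to a \emph{weak} $\Theta$-stratification) already guarantees that every $K$-point admits a $\nu$-maximizing $\Theta$-filtration $f\colon \Theta_K \to \Bun_\rho(X)$ defined over $K$ itself, unique up to scaling. One then simply translates $f$ through Propositions~\ref{prop: nonexplicit filtrations singular G bundles} and~\ref{prop: explicit description of filtrations singular G bundles}. No Galois descent is needed; rationality is built into the notion of $\Theta$-stratification (this is precisely the distinction exploited in Appendix~\ref{appendix: positive char}, where in positive characteristic only a weak $\Theta$-stratification is available and purely inseparable extensions may be required).

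Your route via Galois descent works in characteristic~$0$ (where $\overline{K}/K$ is Galois), but in Step~3 you invoke Proposition~\ref{prop: explicit description of filtrations singular G bundles} on ``a canonical filtration from Step~2''. That proposition takes as input an element of $\Filt(\mathcal{F},\sigma)$, i.e.\ a $\Theta$-filtration of the $\rho$-sheaf, not merely a filtration of the underlying sheaf $\mathcal{F}$. Step~2 only descended the subsheaves $\mathcal{F}_m$; you have not verified that the $G$-reduction of $\widetilde{\mathcal{F}}|_{U\times 1}$ extends over $U\times_K \Theta_K$. This gap is easy to fill (the extension exists over $\overline{K}$ and is unique by schematic density, hence Galois-invariant, hence descends), but it should be said. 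Once filled, your argument is complete; the paper's route simply avoids the issue by appealing to the stronger rationality content of a genuine $\Theta$-stratification.
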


\begin{defn} \label{defn: leading term filtration}
Let $(\mathcal{F}, \sigma)$ be field valued point of $\Bun_{\rho}(X)$. Suppose that $(\mathcal{F}, \sigma)$ is unstable. We call the canonical filtration $(\mathcal{F}_m)_{m \in \mathbb{Z}}$ the leading term HN filtration of $(\mathcal{F}, \sigma)$. It is uniquely determined up to scaling the weights, by Proposition \ref{prop: canonical filtrations}.

On the other hand, if $(\mathcal{F}, \sigma)$ is Gieseker semistable, then we define the leading term HN filtration to be $(\mathcal{F}_m)_{m \in \mathbb{Z}}$ where
\[  \mathcal{F}_m = \begin{cases} \mathcal{F} \text{ \; \; if m $\leq$ 0} \; \\ 0 \; \; \; \; \; \; \; \text{otherwise} \; \end{cases}\]
\end{defn}

 One should think of the leading term HN filtration as an analogue of the slope filtration. Here the coefficient of degree $i$ in the reduced Hilbert polynomial plays the role of the slope. This is made precise in the following
\begin{example} \label{example: torsion free sheaves}
    Suppose that $G = \GL(V)$ and $\rho$ is the identity. Then $\Bun_{\rho}(X)$ is equivalent to the stack $\Coh_{r}^{tf}(X)$ of torsion-free sheaves of rank $r$ on $X$. In this case Proposition \ref{prop: filtrations of torsion free sheaves} gives a description of a filtration of a torsion-free sheaf $\mathcal{F}$ in $\Bun_{\rho}(X)$ as a decreasing $\mathbb{Z}$-filtration of $\mathcal{F}$ by saturated subsheaves $\mathcal{F}_m \subset \mathcal{F}$. The proof of \cite[Prop. 5.16]{torsion-freepaper} shows that a torsion-free sheaf in $\Bun_{\rho}(X)$ is Gieseker semistable in our sense if and only if it is Gieseker semistable in the sense of \cite[Defn. 1.2.4]{huybrechts.lehn}. 

The leading term HN filtration for torsion-free sheaves in $\Bun_{\rho}(X)$ recovers the weighted leading term filtration described in \cite[Thm. 5.11]{torsion-freepaper}, which we reproduce here. Let
\[ 0 = \mathcal{F}^{\text{HN}}_{0} \subset \mathcal{F}^{\text{HN}}_{1} \subset \mathcal{F}^{\text{HN}}_{2} \subset \cdots \subset \mathcal{F}^{\text{HN}}_{n} = \mathcal{F} \]
denote the Gieseker-Harder-Narasimhan filtration of $\mathcal{F}$ (cf. \cite[Thm. 1.3.4]{huybrechts.lehn}).
Then we have \[ \overline{p}_{\mathcal{F}_1^{\text{HN}}} > \overline{p}_{\mathcal{F}_2^{\text{HN}} / \mathcal{F}_1^{\text{HN}}} > \cdots > \overline{p}_{\mathcal{F}_n^{\text{HN}} / \mathcal{F}_{n-1}^{\text{HN}}} \]
Let $i$ denote the minimal index such that 
\[\widehat{\mu}_{h}(\mathcal{F}_1^{\text{HN}}) = \widehat{\mu}_{h}(\mathcal{F}_2^{\text{HN}} / \mathcal{F}_1^{\text{HN}}) = \cdots = \widehat{\mu}_{h}(\mathcal{F}_n^{\text{HN}} / \mathcal{F}_{n-1}^{\text{HN}})\]
for all $h >i$. In other words, $i$ is the largest integer such that the $i^{th}$ slopes of the graded pieces of the filtration are not all equal. We have
\[ \widehat{\mu}_{i}(\mathcal{F}_1^{\text{HN}}) \geq \widehat{\mu}_{i}(\mathcal{F}_2^{\text{HN}} / \mathcal{F}_1^{\text{HN}}) \geq \cdots \geq \widehat{\mu}_{i}(\mathcal{F}_n^{\text{HN}} / \mathcal{F}_{n-1}^{\text{HN}}) \]
where at least one of the inequalities above is strict. 

We define indexes $h_0 < h_1 < h_2 < \cdots < h_j = n$ as follows. We let $h_0 = 0$, and let
\[\{h_m\}_{m=1}^j = \left\{\; 1 \leq l \leq n  \; \left| \; \widehat{\mu}_i(\mathcal{F}_{l+1}^{\text{HN}} / \mathcal{F}_{l}^{\text{HN}}) < \widehat{\mu}_i(\mathcal{F}_{l}^{\text{HN}} / \mathcal{F}_{l-1}^{\text{HN}}) \right. \right\}\]
This ensures that $\widehat{\mu}_{i}(\mathcal{F}_{h_k}^{\text{HN}} / \mathcal{F}_{h_{k - 1}}^{\text{HN}}) > \widehat{\mu}_{i}(\mathcal{F}_{h_{k+1}}^{\text{HN}} / \mathcal{F}_{h_{k}}^{\text{HN}})$. For each $k$, we set $\mathcal{F}^{\text{l-term}}_{k} = \mathcal{F}^{\text{HN}}_{h_k}$.

The filtration
\[ 0 = \mathcal{F}^{\text{l-term}}_{0} \subset \mathcal{F}^{\text{l-term}}_{1} \subset \mathcal{F}^{\text{l-term}}_{2} \subset \cdots \subset \mathcal{F}^{\text{l-term}}_{j} = \mathcal{F} \]
is called the unweighted leading term filtration of $\mathcal{F}$. We call $i$ the leading term index.

We remark that, if $\mathcal{F}$ is slope unstable, then $i=d-1$ (recall that $\widehat{\mu}_{d-1}(\mathcal{F})$ is the slope of $\mathcal{F}$), and the leading term filtration is the slope-Harder-Narasimhan filtration.

Let $\mathcal{F}\in \Bun_{\rho}(X) = \Coh_{r}^{tf}(X)$ and assume that $\mathcal{F}$ is unstable. Suppose that the unweighted leading term filtration for $\mathcal{F}$ is given by
\[ 0 = \mathcal{F}^{\text{l-term}}_{0} \subset \mathcal{F}^{\text{l-term}}_{1} \subset \mathcal{F}^{\text{l-term}}_{2} \subset \cdots \subset \mathcal{F}^{\text{l-term}}_{j} = \mathcal{F} \]
with leading slope $\widehat{\mu}_{i}$. Set $\mu = \widehat{\mu}_{i}(\mathcal{F})$, $\mu_k = \widehat{\mu}_{i}\left(\mathcal{F}^{\text{l-term}}_{k} / \mathcal{F}^{\text{l-term}}_{k-1}\right)$. Choose a positive rational number $L$ such that $L\cdot \mu\in \mathbb{Z}$ and $L \cdot \mu_k \in \mathbb{Z}$ for all $k$.

Then, we define the weighted leading term filtration $(\mathcal{F}^{\text{Can}}_m)_{m \in \mathbb{Z}}$ as follows (up to scaling of the weights).
\begin{enumerate}[(1)]
    \item $\mathcal{F}^{\text{Can}}_m = 0$ for $m > L (\mu_1 - \mu)$.
    \item Let $1 \leq k \leq j-1$. For each $L(\mu_k - \mu) \geq m > L (\mu_{k+1} - \mu)$, we set $\mathcal{F}^{\text{Can}}_m = \mathcal{F}^{\text{l-term}}_{k}$. 
    \item $\mathcal{F}^{\text{Can}}_m = \mathcal{F}$ for $L(\mu_{j} - \mu) \geq m$.
\end{enumerate}
In other words, the weighted leading term filtration is given by the leading term filtration and the $k^{th}$ jump happens at the integer $L (\mu_k - \mu)$.
\end{example}

 The existence of a $\Theta$-stratification also implies that the notion of leading term HN filtration behaves well in families. In order to explain this precisely, we need to first define what is a filtration of a family of $\rho$-sheaves.
 Let $S$ be a $k$-scheme and let $(\mathcal{F}, \sigma)$ be a $\rho$-sheaf on $X_{S}$. A filtration of $\mathcal{F}$ is a $\mathbb{Z}$-weighted filtration $(\mathcal{F}_{m})_{m \in \mathbb{Z}}$ of $\mathcal{F}$ satisfying the following.
\begin{enumerate}[(1)]
    \item $\mathcal{F}_{m+1} \subset \mathcal{F}_{m}$.
    \item $\mathcal{F}_m = 0$ for $m \gg0$ and $\mathcal{F}_m = \mathcal{F}$ for $m\ll 0$.
    \item $\mathcal{F}_{m}/\mathcal{F}_{m+1}$ is a $S$-flat family of torsion-free sheaves.
\end{enumerate}

An argument similar to Proposition \ref{prop: filtrations of torsion free sheaves} shows that the data of such filtration $(\mathcal{F}_m)_{m \in \mathbb{Z}}$ is equivalent to the data of a morphism of stacks $\Theta_S \rightarrow \Coh_{r}^{tf}(X)$ such that the restriction to $1_S$ recovers $\mathcal{F}$. This amounts to a torsion-free sheaf $\widetilde{\mathcal{F}}$ on $X_{\Theta_S}$. Let $U$ be a big open subset of $X_{S}$ where $\bigoplus_{m \in \mathbb{Z}} \mathcal{F}_{m}/\mathcal{F}_{m+1}$ is locally-free. Then, the restriction $\widetilde{\mathcal{F}}|_{U \times_{S} \Theta_{S}}$ is a $\GL(V)$-bundle on $U \times_{S} \Theta_{S}$ and $\sigma$ induces a $G$-reduction of structure group of the restriction $\widetilde{\mathcal{F}}|_{U \times_{S} 1_S}$. 
\begin{defn}
We say that $(\mathcal{F}_{m})_{m \in \mathbb{Z}}$ as above is a filtration of $(\mathcal{F}, \sigma)$ if the $G$-reduction of structure group (uniquely) extends to $U \times_{S} \Theta_{S}$.
\end{defn}
An argument similar to Proposition \ref{prop: nonexplicit filtrations singular G bundles} implies that such a filtration of $(\mathcal{F}, \sigma)$ is equivalent to a morphism $\Theta_S \rightarrow \Bun_{\rho}(X)$ such that the restriction to $1_S$ recovers $(\mathcal{F}, \sigma)$.
\begin{defn}
Let $S$ be a $k$-scheme and let $(\mathcal{F}, \sigma)$ be a $\rho$-sheaf on $X_{S}$. We say that a filtration $(\mathcal{F}_m)_{m \in \mathbb{Z}}$ of $(\mathcal{F}, \sigma)$ is a relative leading term HN filtration if for all points $s \in S$ the restriction $(\mathcal{F}_m|_{X_{s}})_{m \in \mathbb{Z}}$ is the leading term HN filtration of $(\mathcal{F}|_{X_{s}}, \sigma|_{X_{s}})$.
\end{defn}

Suppose that $S$ is a connected scheme of finite type over a field extension $K \supset k$ and assume that $G_{K}$ is split. In this case,  every relative filtration comes from a relative weighted parabolic reduction. Indeed, let $(\mathcal{F}, \sigma)$ be a $\rho$-sheaf on $X_{S}$, and let $(\mathcal{F}_{m})_{m \in \mathbb{Z}}$ be a relative filtration of $(\mathcal{F}, \sigma)$. Let $\widetilde{\mathcal{G}}$ denote the corresponding $G$-bundle on $\Theta_{K} \times_{K} U$ for some big open subset $U \subset X_{S}$. We denote by $\mathcal{G}$ the $G$-bundle on $U$ corresponding to $(\mathcal{F}, \sigma)$ (i.e. the restriction $\widetilde{\mathcal{G}}|_{1 \times S}$). Since $X$ is geometrically connected and smooth, the fibers of $X_{S} \rightarrow S$ are integral, hence the fibers of $U \rightarrow S$ are also integral. Since $S$ is connected, we conclude that $U$ is a connected scheme of finite type over $K$.
Proposition \ref{prop: filtrations of G bundles over U} shows that $\widetilde{\mathcal{G}}$ arises from applying the Rees construction to a weighted parabolic reduction $(\lambda, \mathcal{G}_{\lambda})$ of the $G$-bundle $\mathcal{G}$ on $U$. We call $(\lambda, \mathcal{G}_{\lambda})$ a relative weighted parabolic reduction of $(\mathcal{F}, \sigma)$ and note that the filtration $(\mathcal{F}_{m})_{m \in \mathbb{Z}}$ is uniquely determined by $(\lambda, \mathcal{G}_{\lambda})$. Then we say that the filtration $(\mathcal{F}_m)_{m \in \mathbb{Z}}$ comes from the relative weighted parabolic reduction $(\lambda, \mathcal{G}_{\lambda})$.

Let $S$ be a $k$-scheme and $(\mathcal{F}, \sigma)$ be a $\rho$-sheaf on $X_{S}$. Let $H\rightarrow S$ be a locally closed stratification of $S$. This means that $H$ is the disjoint union of locally closed subschemes of $S$ and the morphism $H \rightarrow S$ is surjective.
\begin{defn}
 We say that the $S$-scheme $H$ is universal for leading term HN filtrations of $(\mathcal{F}, \sigma)$ if it represents the functor that classifies pairs $\left(T, [(\mathcal{F}_m)_{m \in \mathbb{Z}}]\right)$, where $T$ is a $S$-scheme and $[(\mathcal{F}_m)_{m \in \mathbb{Z}}]$ is an equivalence class of relative leading term HN filtrations of $(\mathcal{F}|_{X_{T}}, \sigma|_{X_{T}})$ up to rational scaling the weights.
\end{defn}
The fact that the numerical invariant $\nu$ induces a $\Theta$-stratification implies the following property of the leading term HN filtration in families, called a ``schematic stratification" in \cite{nitsure-schematicsheaves, gurjar-nitsure, nitsuregurjar2}.
\begin{prop} \label{prop: relative mixed Gieseker filtrations}
Relative leading term HN filtrations induce a stratification of $\Bun_{\rho}(X)$ by locally closed substacks. Equivalently, for any $k$-scheme $S$ and every $\rho$-sheaf $(\mathcal{F}, \sigma)$ on $X_{S}$ there is a unique locally closed stratification $H \to S$ that is universal for leading term HN filtrations of $(\mathcal{F}, \sigma)$. 

If, in addition, $S$ is locally of finite type over a field extension $K \supset k$ such that $G_{K}$ is split, then the restriction of the universal relative leading term HN filtration\footnote{
More precisely, we choose a representative up to scaling. 
} to each connected component of the stratification $H$ comes from a relative weighted parabolic reduction.
\qed
\end{prop}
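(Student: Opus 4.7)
The plan is to derive this proposition formally from the $\Theta$-stratification of $\Bun_{\rho}(X)$ provided by Theorem \ref{thm: theta stratification}, translating the abstract machinery into the concrete language of filtrations of $\rho$-sheaves via Section \ref{section: rho-sheaves}, mirroring the strategy used for torsion-free sheaves in \cite[\S 2.5, \S 4]{torsion-freepaper}.

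First I would invoke the definition of a $\Theta$-stratification from \cite[Defn. 2.1.2]{halpernleistner2021structure}: Theorem \ref{thm: theta stratification} yields a decomposition $\Bun_{\rho}(X) = \bigsqcup_{\alpha} \mathcal{S}_{\alpha}$ by locally closed substacks indexed by HN-types $\alpha$, with each $\mathcal{S}_\alpha$ carrying a universal $\nu$-maximizing $\Theta$-filtration (the rational rescaling of weights being absorbed into the discrete HN-type). Pulling back along the classifying morphism $g : S \to \Bun_{\rho}(X)$ gives a locally closed stratification $H = \bigsqcup_{\alpha} g^{-1}(\mathcal{S}_\alpha) \to S$ equipped with a canonical relative filtration on each $H_\alpha$. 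The universal property follows from Proposition \ref{prop: nonexplicit filtrations singular G bundles}: any relative leading term HN filtration of $(\mathcal{F}|_{X_T}, \sigma|_{X_T})$ on an $S$-scheme $T$ translates into a morphism $\Theta_T \to \Bun_{\rho}(X)$ extending $g$, and the fiberwise $\nu$-maximizing property forces this morphism to factor through the filtration stack of a unique stratum $\mathcal{S}_\alpha$, giving the unique factorization $T \to H_\alpha$.

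For the second assertion, assume $S$ is locally of finite type over $K$ with $G_K$ split, fix a connected component $H'$ of some $H_\alpha$, and choose a big open $U \subset X_{H'}$ on which the associated graded of the relative filtration is locally free. Since $X$ is smooth and geometrically connected, the fibers of $X_{H'} \to H'$ are integral and $U$ is connected. By Proposition \ref{prop: nonexplicit filtrations singular G bundles}, the restriction of the relative filtration to $U$ corresponds to a $G_K$-bundle on $U \times_K \Theta_K$ restricting at $1$ to the underlying $G_K$-bundle of $(\mathcal{F}|_U, \sigma|_U)$. Applying Proposition \ref{prop: filtrations of G bundles over U} then yields a weighted parabolic reduction $(\lambda, \mathcal{G}_\lambda)$, which by Definition \ref{defn: weighted parabolic reduction singular G bundles} is the sought-after relative weighted parabolic reduction whose underlying filtration recovers the prescribed one. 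The step requiring the most care will be verifying that the abstract ``filtration stack'' attached to each stratum by \cite{halpernleistner2021structure} actually represents the relative leading term HN filtration functor (rather than only its reduced substack), and that its HN-type indexing matches the equivalence relation of rescaling weights in Definition \ref{defn: leading term filtration}; this is essentially a bookkeeping exercise once the translation through Proposition \ref{prop: nonexplicit filtrations singular G bundles} is in hand.
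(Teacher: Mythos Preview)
Your proposal is correct and follows the same approach as the paper. The paper states this proposition with a bare \qed, treating it as an immediate consequence of Theorem~\ref{thm: theta stratification} (the $\Theta$-stratification) together with the paragraph immediately preceding the proposition, which already spells out how, for $S$ connected of finite type over $K$ with $G_K$ split, a relative filtration corresponds via Proposition~\ref{prop: filtrations of G bundles over U} to a relative weighted parabolic reduction; your write-up simply unpacks these two ingredients in more detail.
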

\end{subsection}
\end{section}

\begin{section}{Moduli space for the semistable locus} \label{section: moduli for the semistable locus}

\begin{subsection}{Completeness of the stack $\Bun_{\rho}(X)$} \label{section: properness}
In this subsection we prove that the stack $\Bun_{\rho}(X)$ satisfies the existence part of the valuative criterion for properness \cite[\href{https://stacks.math.columbia.edu/tag/0CLK}{Tag 0CLK}]{stacks-project} in the case of complete discrete valuation rings (see Theorem \ref{thm: valuative criterion for properness rho sheaves}). 

The ideas that we use in this section (purely ramified extensions, Bruhat-Tits buildings) already appear in \cite[Section 6]{balaji.seshadri.1} (see \cite{balaji.parameswaran.2} for positive characteristic). Besides, V. Balaji has also considered extensions of families of principal bundles in higher dimensions, in the context of Donaldson-Uhlenbeck compactification \cite{balaji.du}. The case of positive characteristic for curves has also been studied by J. Heinloth \cite{heinloth.1,heinloth.2}. For torsion free sheaves, the valuative criteria was proved by S. Langton \cite{langton}.

Let $R$ be a complete discrete valuation $k$-algebra. We set $K = \Frac(R)$, and choose a uniformizer $\pi$. By the Cohen structure theorem, $R$ is isomorphic to the power series ring $\kappa\bseries{\pi}$, where $\kappa$ is the residue field. Consider the base change $X_R$.  Let $\eta_{\kappa}$ denote the generic point of the special fiber $X_{\kappa}$. Similarly, let $\eta_K$ be the generic point of the generic fiber $X_K$. Note that the local $\mathcal{O}_{X_R, \eta_{\kappa}}$ is a discrete valuation ring with generic point $\eta_K$ and special point $\eta_{\kappa}$.

\begin{lemma} \label{lemma: equivalence valuative criteria properness}
Let $(\mathcal{F}, \sigma)$ be a $\rho$-sheaf over $X_K$. Let $\mathcal{P}$ be the corresponding $G$-bundle over the generic fiber $\eta_K$. The following statements are equivalent:
\begin{enumerate}[(1)]
    \item There is a $\rho$-sheaf on $X_R$ that restricts to $(\mathcal{F}, \sigma)$ over $X_K$.
    \item There is a $G$-bundle on $\Spec(\mathcal{O}_{X_R, \eta_{\kappa}})$ that restricts to $\mathcal{P}$ over $\eta_K$.
\end{enumerate}
\end{lemma}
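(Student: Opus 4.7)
The implication $(1) \Rightarrow (2)$ is essentially formal. Given a $\rho$-sheaf $(\mathcal{F}_R, \sigma_R)$ on $X_R$ extending $(\mathcal{F}, \sigma)$, the underlying torsion-free sheaf $\mathcal{F}_R$ is locally free on a big open $U \subset X_R$, and $\eta_\kappa$ lies in $U$ because it is the generic point of $X_\kappa$. Proposition~\ref{prop: relation scheme and G-reductions} identifies $\sigma_R|_U$ with a $G$-reduction of the $\GL(V)$-bundle $\mathcal{F}_R|_U$, and restricting this reduction to $\Spec(\mathcal{O}_{X_R,\eta_\kappa})\subset U$ gives the desired $G$-bundle extending $\mathcal{P}$.

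For $(2) \Rightarrow (1)$, my plan is first to construct a torsion-free extension $\mathcal{F}_R$ of $\mathcal{F}$ to $X_R$ whose restriction to $\Spec(\mathcal{O}_{X_R,\eta_\kappa})$ is identified with the rank-$r$ vector bundle $\rho_*\mathcal{Q}$, compatibly with the given identification over $\eta_K$. Any coherent extension of $\mathcal{F}$ to $X_R$ can be made torsion-free by dividing out torsion, yielding an extension $\tilde{\mathcal{F}}_R$. The stalk $\tilde{\mathcal{F}}_R|_{\eta_\kappa}$ and the free $\mathcal{O}_{X_R,\eta_\kappa}$-module $\rho_*\mathcal{Q}|_{\eta_\kappa}$ are two lattices in the common generic fiber $\mathcal{F}|_{\eta_K}$, so they differ by the action of an element of $\GL_r$ over the function field of $X_R$. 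A standard lattice modification of $\tilde{\mathcal{F}}_R$ in a Zariski neighborhood of $\eta_\kappa$, leaving its restriction to $X_K$ unchanged, then yields the desired $\mathcal{F}_R$. I expect this lattice-matching step to be the most delicate technical point, since one must ensure the modified sheaf is coherent and torsion-free while attaining the prescribed stalk at $\eta_\kappa$.

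With $\mathcal{F}_R$ fixed, I next build a section of the affine $X_R$-scheme $\Red_G(\mathcal{F}_R)$ that extends $\sigma$. The identification $\mathcal{F}_R|_{X_K}=\mathcal{F}$ lets $\sigma$ act as a section over $X_K$; the $G$-bundle $\mathcal{Q}$ together with $\mathcal{F}_R|_{\Spec(\mathcal{O}_{X_R,\eta_\kappa})}\cong\rho_*\mathcal{Q}$ yields a section over $\Spec(\mathcal{O}_{X_R,\eta_\kappa})$, and a standard spreading-out argument extends this to a section over some open neighborhood $V$ of $\eta_\kappa$ in $X_R$. The two sections agree at the generic point $\eta_K$ of $X_R$ by the hypothesis of $(2)$, and because $\Red_G(\mathcal{F}_R)\to X_R$ is affine and hence separated while $X_K\cap V$ is an open of the integral scheme $X_R$ containing $\eta_K$, they coincide on $X_K\cap V$ and glue to a single section on $X_K\cup V$.

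Finally, the complement $X_R\setminus(X_K\cup V)$ is contained in $X_\kappa\setminus V$, which has codimension $\geq 1$ in $X_\kappa$ and therefore codimension $\geq 2$ in $X_R$. Since $X_R$ is smooth over the regular base $\Spec R$ it is Cohen-Macaulay, so every point of this complement has depth $\geq 2$. The depth-based argument used in the proof of Lemma~\ref{lemma: lemma on very big open subsets}(c) then extends our section uniquely to all of $X_R$, and applying Proposition~\ref{prop: relation scheme and G-reductions} once more upgrades this section to the desired $\rho$-sheaf structure on $\mathcal{F}_R$ extending $(\mathcal{F},\sigma)$.
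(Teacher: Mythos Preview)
Your overall strategy matches the paper's proof exactly: for $(2)\Rightarrow(1)$ one first produces an extension $\mathcal{F}_R$ of $\mathcal{F}$ with prescribed stalk at $\eta_\kappa$, then spreads out and glues the two sections of $\Red_G(\mathcal{F}_R)$ coming from $\sigma$ and from the $G$-bundle $\mathcal{Q}$, and finally extends across the remaining codimension~$\ge 2$ locus using the depth argument from Lemma~\ref{lemma: lemma on very big open subsets}(c). The gluing and extension steps are correct as you wrote them.

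The gap is in the first step. Dividing out torsion and performing a lattice modification near $\eta_\kappa$ produces a sheaf that is torsion-free on the integral scheme $X_R$ and has the correct stalk at $\eta_\kappa$, but this does \emph{not} force the special fiber $\mathcal{F}_R|_{X_\kappa}$ to be torsion-free. For a concrete failure, take $X=\mathbb{P}^1$, $\mathcal{F}=\mathcal{O}_{X_K}$, and $\mathcal{F}_R=\mathcal{I}_p$ for a closed point $p\in X_\kappa$: this is torsion-free on $X_R$, $R$-flat, with free stalk at $\eta_\kappa$, yet $\mathcal{I}_p|_{X_\kappa}$ has torsion supported at $p$. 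Without fiberwise torsion-freeness $\mathcal{F}_R$ is not a point of $\Coh_r^{tf}(X)$, so $\Red_G(\mathcal{F}_R)$ is not defined as in Subsection~\ref{subsection: scheme of G-reductions} and the pair $(\mathcal{F}_R,\sigma_R)$ is not a $\rho$-sheaf. The paper handles this step by citing \cite[Prop.~6]{langton}, which gives directly the unique \emph{family of torsion-free sheaves} on $X_R$ with the prescribed restrictions to $X_K$ and to $\Spec(\mathcal{O}_{X_R,\eta_\kappa})$; your sketch is essentially the setup for Langton's result but stops short of the additional modification that forces the special fiber to be torsion-free.
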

\begin{proof}
$(1) \Longrightarrow (2)$. Suppose that $(\widetilde{\mathcal{F}}, \zeta)$ is a $\rho$-sheaf on $X_R$ that extends $(\mathcal{F}, \sigma)$. Let $U$ be a big open subset of $X_R$ where $\widetilde{\mathcal{F}}$ is locally-free. Let $\widetilde{\mathcal{P}}$ be the $G$-bundle on $U$ corresponding to the restriction of $(\widetilde{\mathcal{F}}, \zeta)$. Then $\widetilde{\mathcal{P}}|_{\Spec(\mathcal{O}_{X_R, \eta_{\kappa}})}$ is a $G$-bundle on $\Spec(\mathcal{O}_{X_R, \eta_{\kappa}})$ that extends $\mathcal{P}$.

$(2) \Longrightarrow (1)$. Suppose that there is a $G$-bundle $\widetilde{\mathcal{P}}$ on $\Spec(\mathcal{O}_{X_R, \eta_{\kappa}})$ that extends $\mathcal{P}$. Let $\mathcal{A}$ be the corresponding $\GL(V)$-bundle on $\Spec(\mathcal{O}_{X_R, \eta_{\kappa}})$ obtained via extension of structure group using $\rho$. $\mathcal{A}$ is a vector bundle of rank $r$ on $\Spec(\mathcal{O}_{X_R, \eta_{\kappa}})$. By construction we have a canonical identification  $\mathcal{A}|_{\eta_{K}} \xrightarrow{\sim} \mathcal{F}|_{\eta_{K}}$. Now \cite[Prop. 6]{langton} implies that there is a (unique) family of torsion-free sheaves $\widetilde{\mathcal{F}}$ on $X_R$ along with isomorphisms $\widetilde{\mathcal{F}}|_{X_{K}} \xrightarrow{\sim} \mathcal{F}$ and $\widetilde{\mathcal{F}}|_{\Spec(\mathcal{O}_{X_R, \eta_{\kappa}})} \xrightarrow{\sim} \mathcal{A}$ that are compatible with the identification $\mathcal{A}|_{\eta_{K}} \xrightarrow{\sim} \mathcal{F}|_{\eta_{K}}$.

We have a section $s: \Spec(\mathcal{O}_{X_R, \eta_{\kappa}}) \rightarrow \Red_{G}(\widetilde{\mathcal{F}})|_{\Spec(\mathcal{O}_{X_R, \eta_{\kappa}})}$ corresponding to the $G$-reduction $\widetilde{P}$. It identifies over the point $\eta_{K}$ with the section $\sigma: X_{K} \rightarrow \Red_{G}(\mathcal{F})$. Since $\Red_{G}(\widetilde{\mathcal{F}})$ is of finite type over $X_{R}$, we can spread the compatible pair $s$ and $\sigma$ to a section 
\[\widetilde{\sigma} : X_R \setminus Z \rightarrow \Red_{G}(\widetilde{\mathcal{F}})\]
defined away from a proper closed subscheme $Z \subsetneq X_{\kappa}$ of the special fiber $X_{\kappa}$. Note that every point $z \in Z$ has codimension at least 2 in $X_{R}$. Since $X_{R}$ is smooth over the regular scheme $\Spec(R)$, it is itself regular \cite[\href{https://stacks.math.columbia.edu/tag/07NF}{Tag 07NF}]{stacks-project}. It follows that for every $z \in Z$, the local ring $\mathcal{O}_{X_{R}, z}$ has depth at least 2. By the proof of Lemma \ref{lemma: lemma on very big open subsets} (c), the section $\widetilde{\sigma}$ on $X_{R} \setminus Z$ uniquely extends to a section $\zeta: X_{R} \rightarrow \Red_{G}(\widetilde{\mathcal{F}})$ defined over the whole $X_{R}$. The $\rho$-sheaf $(\widetilde{\mathcal{F}}, \zeta)$ restricts to $(\mathcal{F}, \sigma)$, as desired.
\end{proof}

We would like to show that the extension result in Lemma \ref{lemma: equivalence valuative criteria properness} (2) is true after maybe passing to a finite extension $R' \supset R$ of the discrete valuation ring $R$. In order to do so, we prove the following general result.\footnote{V. Balaji pointed out to us that Proposition \ref{prop: extension after ramified cover} could also be deduced from \cite[Proposition 3.12]{raghunathan.ramanathan}.}
\begin{prop} \label{prop: extension after ramified cover}
Let $\kappa$ be a perfect field. Let $G$ be a quasi-split connected reductive group over $\kappa$. Suppose that the two following conditions are satisfied:
\begin{enumerate}[(1)]
    \item The maximal central torus $ Z \subset G$ is split over $\kappa$.
    \item Let $H$ denote the kernel of of the simply-connected cover $\widetilde{G/Z} \rightarrow G/Z$ of the semisimple group $G/Z$. The characteristic of $\kappa$ does not divide the order of $H$.
\end{enumerate}
Set $\kappa \bseries{\pi}$ to be the formal power series ring. Let $\mathcal{P}$ be a $G$-bundle on the fraction field $\Spec(\kappa\pseries{\pi})$. Then, there exists some $m \geq 1$ such that the pullback of $\mathcal{P}$ to $\Spec(\kappa\pseries{\pi^{\frac{1}{m}}})$ extends to a $G$-bundle on $\Spec(\kappa\bseries{\pi^{\frac{1}{m}}})$.
\end{prop}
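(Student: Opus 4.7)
My approach is a dévissage via central isogenies, reducing the problem to controllable cohomology calculations over the Laurent-series field $K=\kappa\pseries{\pi}$.

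First I would form the central isogeny of $\kappa$-groups
\[
1 \longrightarrow H' \longrightarrow Z \times \widetilde{D(G)} \longrightarrow G \longrightarrow 1,
\]
where $D(G)=[G,G]$ is the derived subgroup and $\widetilde{D(G)}$ its simply connected cover. The kernel $H'$ is an extension of $Z\cap D(G)$ by $\ker(\widetilde{D(G)}\to D(G))$; both pieces embed compatibly into $H=\pi_1(G/Z)$, so the hypothesis $\mathrm{char}\,\kappa\nmid|H|$ forces $H'$ to be finite étale of multiplicative type. Moreover $\widetilde{D(G)}$ is again quasi-split, since the preimage of a $\kappa$-Borel of $G$ in $\tilde G := Z\times\widetilde{D(G)}$ is a $\kappa$-Borel.

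Next I would show $H^1(\Spec K,\tilde G)=0$. Hilbert 90 handles the split factor $Z$. For $\widetilde{D(G)}$, fix a $K$-Borel $B\supset T$ with unipotent radical $U$. Quasi-splitness gives the surjection $H^1(K,B)\twoheadrightarrow H^1(K,\widetilde{D(G)})$, and $H^1(K,U)=0$ because $U$ is an iterated $\mathbb{G}_a$-extension over the field $K$; so it remains to show $H^1(K,T)=0$. Here simply-connectedness is essential: the Galois action on $X^{*}(T)$ permutes a basis of fundamental weights (a direct check from the classification of outer automorphisms of Dynkin diagrams), so $T=\prod_i\mathrm{Res}_{L_i/K}\mathbb{G}_m$ is an induced torus, and Shapiro's lemma together with $\Pic(L_i)=0$ yields the vanishing.

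From the long exact sequence one obtains an injection $\delta\colon H^1(\Spec K,G)\hookrightarrow H^2(\Spec K, H')$. Let $n$ be the exponent of $H'$, coprime to $\mathrm{char}\,\kappa$. The choice of uniformizer $\pi$ yields a canonical splitting
\[
H^2(\Spec K,H') \;\cong\; H^2(\Spec\kappa,H') \;\oplus\; H^1(\Spec\kappa, H'(-1)),
\]
with the second summand the ``ramified component''. Under base change to $K'=K(\pi^{1/n})$ the ramified part is killed (cyclic-algebra cocycles $(\chi,\pi)$ become $n\cdot(\chi,\pi^{1/n})=0$), so $\delta([\mathcal P]|_{K'})$ lies in $H^2(\Spec\kappa,H') \cong H^2(\Spec\mathcal O',H')$ where $\mathcal O'=\kappa\bseries{\pi^{1/n}}$. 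The same vanishing arguments over $\mathcal O'$ give $H^1(\Spec\mathcal O',\tilde G)=0$, hence an injection $H^1(\Spec\mathcal O',G)\hookrightarrow H^2(\Spec\mathcal O',H')$; combined with purity of the Brauer group (giving injectivity of $H^2(\mathcal O',\tilde G)\to H^2(K',\tilde G)$), its image matches the unramified classes in $H^1(\Spec K',G)$. Thus $[\mathcal P]|_{K'}$ extends to an $\mathcal O'$-torsor, and $m=n$ works. The main technical obstacles are the vanishing $H^1(K,T)=0$ in the simply connected case --- which rests on the structural fact (developed in Bruhat--Tits~III) that the maximal torus of a Borel in a simply connected quasi-split semisimple group is induced --- and the identification of the unramified classes in $H^1(K',G)$ with integral torsors, which requires purity for the Brauer group to control the $H^2(\mathcal O',\tilde G)$ obstruction.
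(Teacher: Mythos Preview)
Your argument has a genuine gap at the step where you claim ``quasi-splitness gives the surjection $H^1(K,B)\twoheadrightarrow H^1(K,\widetilde{D(G)})$.'' Quasi-splitness only says that the \emph{trivial} torsor admits a reduction to $B$; a class $[\xi]\in H^1(K,\widetilde{D(G)})$ lies in the image of $H^1(K,B)$ if and only if the twisted flag variety $_{\xi}(\widetilde{D(G)}/B)$ has a $K$-point, i.e.\ the inner form $_{\xi}\widetilde{D(G)}$ is itself quasi-split. This fails over general complete discretely valued fields with perfect residue. Concretely, take $\kappa=\mathbb{R}$ and $G=G_2$ (split, simply connected, trivial center, so $H'=1$ in your notation). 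The compact real octonions give a nontrivial class in $H^1(\mathbb{R},G_2)$, and its pullback to $K=\mathbb{R}\pseries{\pi}$ remains nontrivial because the norm form $\sum_{i=1}^8 x_i^2$ stays anisotropic over $\mathbb{R}\pseries{\pi^{1/m}}$ for every $m$ (look at leading coefficients). Your argument would force $H^1(K,G_2)\hookrightarrow H^2(K,H')=0$, a contradiction. Note that the proposition does \emph{not} ask you to trivialise the torsor after a ramified extension, only to extend it --- and indeed this $G_2$-torsor already extends over $\mathbb{R}\bseries{\pi}$ (it comes from $\kappa$), but never becomes trivial.

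The paper's proof avoids this by never asserting any vanishing of $H^1(K,\widetilde{D(G)})$. Instead it uses \cite{bruhat-tits-iii} to show that every class in $H^1(K,G)$ is unramified, then (after passing to the simply connected cover via Kummer theory, as you also do) represents the unramified cocycle inside the $\mathcal{O}_L$-points of some parahoric $P$. The key remaining step is geometric rather than cohomological: a rational cocharacter $\lambda$ in the building conjugates $P_{\mathcal{O}_{K(\pi^{1/m})}}$ into the hyperspecial parahoric $G_{\mathcal{O}_{K(\pi^{1/m})}}$ for a suitable $m$, producing an integral cocycle. Your d\'evissage through $H^2(K,H')$ could at best detect the \emph{obstruction} to lifting to $\tilde G$, but it cannot see the residual nontrivial (yet extendable) classes coming from $H^1(\kappa,\widetilde{D(G)})$; the Bruhat--Tits building argument is what handles those.
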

\begin{proof}
Define $K \vcentcolon = \kappa\pseries{\pi}$ and $\mathcal{O}_{K} = \kappa\bseries{\pi}$. We fix the choice of a separable closure $K_s$ of $K$. Set $K_{unr}$ to be the maximal unramified extension of $K$ inside $K_{s}$. We will denote by $\mathcal{O}_{K_{unr}}$ the ring of integers of $K_{unr}$. Let $\Gamma_{K} := \Gal(K_{s}/K)$ be the absolute Galois group of $K$. We denote by $\Gamma_{unr} \vcentcolon = \Gal(K_{unr}/K)$ the unramified Galois group. We denote by $H^1(K, G)$ (resp. $H^1(\mathcal{O}_K, G)$ ) the pointed set of $G$-bundles on $\Spec(K)$ (resp. $\Spec(\mathcal{O}_{K})$). We have a natural restriction morphism $ H^1(\mathcal{O}_{K}, G)  \rightarrow H^1(K, G)$. We are given an element $\mathcal{P} \in H^1(K, G)$. Our goal is to show that, after maybe replacing $K$ with an extension $K( \pi^{\frac{1}{m}})$, there exists some $\widetilde{\mathcal{P}} \in H^1(\mathcal{O}_{K}, G)$ that restricts to $\mathcal{P}$.

Since $G$ is smooth, the base-change $\mathcal{P}_{K_{s}}$ is trivializable over $\Spec(K_{s})$. The descent data for $\mathcal{P}$ yield a class in the nonabelian Galois cohomology set $H^1(\Gamma_{K}, G(K_{s}))$. Since $G$ is connected and $\kappa$ is perfect, \cite[3.14 (3)]{bruhat-tits-iii} shows that $H^1(\Gamma_{K}, G(K_{s})) = H^1(\Gamma_{unr}, G(K_{unr}))$. So the torsor $\mathcal{P}$ comes from an unramified cohomology class $\overline{c} \in H^1(\Gamma_{unr}, G(K_{unr}))$. Notice that there is a natural action of $\Gamma_{unr}$ on $G(\mathcal{O}_{K_{unr}})$. There is a map $H^1(\Gamma_{unr}, G(\mathcal{O}_{K_{unr}})) \rightarrow H^1(\Gamma_{unr}, G(K_{unr}))$ induced by the inclusion of $\Gamma_{unr}$-groups $G(\mathcal{O}_{K_{unr}}) \hookrightarrow G(K_{unr})$. Our goal is to show that, after maybe replacing $K$ with $K(\pi^{\frac{1}{m}})$, we have that $\overline{c}$ comes from a class in $H^1(\Gamma_{unr}, G(\mathcal{O}_{K_{unr}}))$.

Let us first suppose that $G$ is semisimple. There is a finite unramified Galois extension $L \supset K$ contained in $K_{unr}$ such that $\mathcal{P}_{L}$ is trivializable. Let $\mathcal{O}_{L}$ denote the ring of integers of $L$. Set $\Gamma_{L/K}:= \Gal(L/K)$. This is a finite quotient of $\Gamma_{unr}$. The descent data for $\mathcal{P}$ yield a class $\overline{c}$ in the nonabelian Galois cohomology set $H^1(\Gamma_{L/K}, G(L))$.

Since $G_{K}$ is quasi-split, it follows that $G_{K}$ is residually quasi-split \cite[2.2]{bruhat-tits-iii}. Therefore, we can use the description of unramified Galois cohomology given in \cite{bruhat-tits-iii}, we just need to be a bit careful if $G$ is not simply-connected. We will denote by $G^{00}$ the subgroup of $G(K_{unr})$ generated by the $K_{unr}$-points of all parahoric subgroups of $G_{K_{unr}}$. By \cite[1.8]{bruhat-tits-iii}, $G^{00}$ admits the following description. Let $T$ denote a maximal torus of $G$ over $\kappa$. We can think of $T_{K}$ as the centralizer of a maximal split torus of $G_{K}$, since $G_{K}$ is quasi-split. Let $j: \widetilde{G} \rightarrow G$ be a simply connected cover of $G$. Then, we have $G^{00} = T(\mathcal{O}_{K_{unr}}) \cdot j(\widetilde{G}(K_{unr}))$.

Choose a cocycle $c \in Z^1(\Gamma_{L/K}, G(L))$ representing $\overline{c}$. We will show that, after maybe replacing $K$ with some ramified extension $K(\pi^{\frac{1}{m}})$, the cocycle $c$ takes values in the subgroup $G^{00}$. Let $H$ denote the finite kernel of the isogeny $j: \widetilde{G} \rightarrow G$. Note that $H$ is a finite product of groups $\mu_j$ of $j^{th}$ roots of unity. By assumption (2), the characteristic of $\kappa$ does not divide any of the orders of the $\mu_j$ appearing in $H$. After maybe enlarging the unramified extension $L$, we can assume that all of these roots of unity are contained in $L$. So $H_{L}$ is a finite constant \'etale group scheme. Since $H$ is smooth, the following sequence of groups is exact.
\[ 1 \rightarrow H(K_{s}) \rightarrow \widetilde{G}(K_{s}) \rightarrow G(K_{s}) \rightarrow 1 \]
We can take $\Gal(K_{s}/L)$ invariants to get the following exact sequence of pointed sets
\[ 1 \rightarrow H(L) \rightarrow \widetilde{G}(L) \rightarrow G(L) \rightarrow H^1\left(\Gal(K_{s}/L), H(K_{s})\right) \]
Hence the obstructions to lifting to $\widetilde{G}(L)$ the finitely many elements in the image of the cocycle $c \in Z^1(\Gamma_{L/K}, G(L))$ consist of finitely many cohomology classes in $H^1\left(\Gal(K_{s}/L), H(K_{s})\right)$. By Kummer theory, these are killed after taking some cyclic extensions of $L$. After further enlarging the unramified extension $L$, we can assume that these cyclic extensions are totally ramified. Such cyclic extension must be of the form $L(\pi_{L}^{\frac{1}{m}})$ for some uniformizer $\pi_{L}$ of $L$. Such uniformizer must be of the form $\pi_{L} = u \pi$, where $u \in \mathcal{O}_{L}^{\times}$. After replacing $L$ with $L(u^{\frac{1}{m}})$, we can assume that the obstructions in $H^1(\Gal(K_{s}/L), H(K_{s}))$ vanish on the extension $L(\pi^{\frac{1}{m}})$ for some $m \geq 1$. So after replacing $K$ with some sufficiently big ramified extension $K(\pi^{\frac{1}{m}})$, we can assume that $c$ takes values in the group $j(\widetilde{G}(L)) \subset G^{00}$. Hence $\overline{c}$ is a class in the Galois cohomology set $H^1\left(\Gamma_{L/K}, G^{00} \cap G(L)\right)$. Since $G_{K}$ is residually-quasisplit, the result \cite[3.15]{bruhat-tits-iii} and the construction of \cite[3.4 Lem. 2]{bruhat-tits-iii} show that $\overline{c}$ is represented by a cocycle $z \in Z^1(\Gamma_{L/K}, P(\mathcal{O}_{L}))$ taking values in the group of $\mathcal{O}_{L}$-points of a parahoric $P$ of $G_{K}$ defined over $K$. We can without loss of generality take $P$ to be in the apartment corresponding to a maximal split torus $S$ inside $T_{K}$, because $G(K)$ acts transitively on apartments. The argument in \cite[Lem. 2.4]{larsen-maximality} (see also \cite[Proposition 8]{serre} and \cite[Lemma I.1.3.2]{gille}) shows that there exists some $m \geq 1$ and some $K$-rational cocharacter $\lambda: \mathbb{G}_m \to S$ such that $\lambda(\pi^{\frac{1}{m}}) \in G(K[\pi^{\frac{1}{m}}])$ conjugates the base-changed parahoric $P_{\mathcal{O}_{K(\pi^{\frac{1}{m}})}}$ into the hyperspecial parahoric $G_{\mathcal{O}_{K(\pi^{\frac{1}{m}})}}$. After replacing $K$ with $K(\pi^{\frac{1}{m}})$, we can use the element $\lambda(\pi^{\frac{1}{m}})$ to conjugate the cocycle $z$ into a cocycle $z' \in Z^1(\Gamma_{L/K}, G(\mathcal{O}_{L}))$. This shows that $\overline{c}$ comes from a cohomology class in $H^1(\Gamma_{L/K}, G(\mathcal{O}_{L}))$, and thus concludes the proof in the case when $G$ is semisimple.

We now deal with the general case. Let $G$ be an arbitrary quasi-split connected reductive group over $\kappa$. Let $Z$ denote the maximal central torus of $G$. We have a short exact sequence of algebraic groups
\[  1 \rightarrow Z \rightarrow G \rightarrow \overline{G} \rightarrow 1  \]
Here $\overline{G} = G/Z$ is a quasi-split semisimple group over $\kappa$ satisfying condition (2) in the statement of this proposition. By assumption, $Z$ is split, and so it is isomorphic to a finite product $(\mathbb{G}_m)^{h}$ of copies of $\mathbb{G}_m$. We have an exact sequence
\[ 1 \rightarrow Z(K_{unr}) \rightarrow G(K_{unr}) \rightarrow \overline{G}(K_{unr}) \rightarrow H^1(\Gamma_{K_{s}/ K_{unr}}, Z(K_{s}))\]
Hilbert's Theorem 90 implies that $H^1(\Gamma_{K_{s}/ K_{unr}}, Z(K_{s})) = 0$, and so we have a short exact sequence of $\Gamma_{unr}$-groups
\[ 1 \rightarrow Z(K_{unr}) \rightarrow G(K_{unr}) \rightarrow \overline{G}(K_{unr}) \rightarrow 1\]
The same thing holds for $\mathcal{O}_{K_{unr}}$-points. After taking $\Gamma_{unr}$-invariants and replacing $Z$ with $(\mathbb{G}_m)^{h}$, we get a diagram of pointed sets
\[
\begin{tikzcd}[column sep=tiny]
  H^1(\Gamma_{unr}, \mathcal{O}_{K_{unr}}^{\times})^{\oplus h} \ar[r] \ar[d] &   H^1(\Gamma_{unr}, G(\mathcal{O}_{K_{unr}}))  \ar[d] \ar[r]& H^1(\Gamma_{unr}, \overline{G}(\mathcal{O}_{K_{unr}})) \ar[d] \ar[r] &  H^2(\Gamma_{unr}, \mathcal{O}_{K_{unr}}^{\times})^{\oplus h} \ar[d]\\    H^1(\Gamma_{unr},K_{unr}^{\times})^{\oplus h} \ar[r] &   H^1(\Gamma_{unr}, G(K_{unr})) \ar[r]& H^1(\Gamma_{unr}, \overline{G}(K_{unr})) \ar[r] &  H^2(\Gamma_{unr}, K_{unr}^{\times})^{\oplus h}
\end{tikzcd}
\]
After an application of Hilbert's Theorem 90 to the first column, we get
\[
\begin{tikzcd}
  0 \ar[r] &   H^1(\Gamma_{unr}, G(\mathcal{O}_{K_{unr}}))  \ar[d] \ar[r]& H^1(\Gamma_{unr}, \overline{G}(\mathcal{O}_{K_{unr}}))  \ar[d] \ar[r, "f_1"] &  H^2(\Gamma_{unr}, \mathcal{O}_{K_{unr}}^{\times})^{\oplus h} \ar[d, "g"]\\    0 \ar[r] &   H^1(\Gamma_{unr}, G(K_{unr})) \ar[r, "f_2"]& H^1(\Gamma_{unr}, \overline{G}(K_{unr})) \ar[r, "f_3"] &  H^2(\Gamma_{unr}, K_{unr}^{\times})^{\oplus h}
\end{tikzcd}\]
where the two horizontal rows are exact.

Let $\overline{c} \in H^1(\Gamma_{unr}, G(K_{unr}))$ be our unramified Galois cohomology class coming from $\mathcal{P}$. Our goal is to lift $\overline{c}$ to a class in $H^1(\Gamma_{unr}, G(\mathcal{O}_{K_{unr}}))$. Consider the image $f_2(\overline{c}) \in H^1(\Gamma_{unr}, \overline{G}(K_{unr}))$. By the proof of the semisimple case, after maybe replacing $K$ with an extension $K(\pi^{\frac{1}{m}})$ we can lift $f_2(\overline{c})$ to a cohomology class $\overline{d} \in H^1(\Gamma_{unr}, \overline{G}(\mathcal{O}_{K_{unr}}))$. We now want to lift $\overline{d}$ to a class in $H^1(\Gamma_{unr}, G(\mathcal{O}_{K_{unr}}))$ whose image in $H^1(\Gamma_{unr}, G(\mathcal{O}_{K_{unr}}))$ is the original $\overline{c}$. For this it suffices to show that $f_1(\overline{d}) = 0$.

We have $g(f_1(\overline{d})) = f_3(f_2(\overline{c})) = 0$. Therefore, it suffices to show that $g$ is injective. The morphism $g$ is the direct sum of $h$ copies of the morphism $H^2(\Gamma_{unr}, \mathcal{O}_{K_{unr}}^{\times}) \rightarrow H^2(\Gamma_{unr}, K_{unr}^{\times})$ coming from the inclusion of $\Gamma_{unr}$-groups $\mathcal{O}_{K_{unr}}^{\times} \hookrightarrow K_{unr}^{\times}$. These groups fit into a short exact sequence
\[ 1 \rightarrow \mathcal{O}_{K_{unr}}^{\times} \rightarrow K_{unr}^{\times} \rightarrow \mathbb{Z} \rightarrow 1 \]
Here $\Gamma_{unr}$ acts trivially on $\mathbb{Z}$. We get an exact sequence of abelian groups
\begin{equation}\label{eqn_Gextension}
H^1(\Gamma_{unr}, \mathbb{Z}) \rightarrow  H^2(\Gamma_{unr}, \mathcal{O}_{K_{unr}}^{\times}) \rightarrow H^2(\Gamma_{unr}, K_{unr}^{\times}) \end{equation}
Since $\Gamma_{unr}$ acts trivially on $\mathbb{Z}$, we have that $H^1(\Gamma_{unr}, \mathbb{Z})$ is the group of continuous homomorphisms from $\Gamma_{unr}$ to $\mathbb{Z}$. Since $\Gamma_{unr}$ is profinite, the image of such a homomorphism is finite. Therefore it must be $0$, because $\mathbb{Z}$ is torsion-free. We conclude that $H^1(\Gamma_{unr}, \mathbb{Z}) = 0$. Hence (\ref{eqn_Gextension}) shows that $g$ is injective, as desired. 
\end{proof}
\begin{remark}
The argument that allows us to go from semisimple to reductive groups is similar to the argument
of G. Faltings in \cite[Theorem II.4]{faltings}.
\end{remark}

\begin{remark}
It follows from the argument that $m\geq 1$ can be chosen independently of $\mathcal{P}$. One can give a uniform upper bound that depends only on the root data of $G/Z$.
\end{remark}

\begin{remark}
The argument above works verbatim for a general complete discrete valuation ring $R$ with perfect residue field $\kappa$ (i.e. mixed characteristic). In that case we can take $G$ to be a reductive group over $R$ with connected fibers such that the generic fiber $G_{K}$ is residually quasi-split and the analogous conditions (1) and (2) are satisfied.
\end{remark}

In order to construct our generic $G$-bundle extension, we will make use of formal gluing, also known as Beauville-Laszlo gluing \cite{beauville-laszlo-gluing}. Suppose that $S \to \widehat{S}$ is a faithfully flat morphism of rings. Let $f \in S$ be a nonzero divisor. We denote by $\text{Glue}_{\text{Aff}}$ the category consisting of triples $(\widehat{A}, A_f, \varphi)$, where 
\begin{enumerate}[(1)]
    \item $\widehat{A}$ is an affine scheme over $\Spec(\widehat{S})$.
    \item $A_f$ is an affine scheme over $\Spec(S_f)$.
    \item $\varphi$ is an isomorphism $\varphi: \widehat{A}|_{\Spec(\widehat{S}_f)} \xrightarrow{\sim} A_f|_{\Spec(\widehat{S}_f)}$.
\end{enumerate}
A morphism $(\widehat{A}, A_f, \varphi) \to (\widehat{B}, B_f, \psi)$ consists of a pair of morphisms $(g_1 : \widehat{A} \to \widehat{B}, \, g_2: A_f \to B_f)$ such that their base-changes to $\Spec(\widehat{S}_f)$ fit into a commutative diagram
\[
\begin{tikzcd}
     \widehat{A}|_{\Spec(\widehat{S}_f)} \ar[d, "\varphi"] \ar[r, "g_1|_{\Spec(\widehat{S}_f)}"]  &  A_f|_{\Spec(\widehat{S}_f)} \ar[d, "\psi"] \\ \widehat{B}|_{\Spec(\widehat{S}_f)}
      \ar[r, "g_2|_{\Spec(\widehat{S}_f)}"] & B_f|_{\Spec(\widehat{S}_f)}
\end{tikzcd}
\]
Given an affine scheme $A$ over $\Spec(S)$, we can form the two pullbacks $A_{\Spec(\widehat{S})}$ and $A_{\Spec(S_{f})}$. Note that there is a canonical isomorphism $\varphi_{A, can}: \left(A_{\Spec(\widehat{S})}\right)|_{\Spec(\widehat{S}_f)} \xrightarrow{\sim} \left(A_{\Spec(S_f)}\right)|_{\Spec(\widehat{S}_f)}$. This defines a functor
\[ \text{Aff}_{can}: \text{Aff}_{\Spec(S)} \to \text{Glue}_{\text{Aff}}, \; \; \; A \mapsto (A_{\Spec(\widehat{S})}, A_{\Spec(S_f)}, \varphi_{A,can})\]
Let $G$ be an affine flat group scheme over $\Spec(S)$. We can similarly define a category $\text{Glue}_{G\text{Bun}}$ by replacing the tuple of affine schemes with $G$-bundles and requiring that $\varphi$ is a morphism of $G$-bundles. Denote by $BG(S)$ the groupoid of $G$-bundles over $\Spec(S)$. The functor $\text{Aff}_{can}$ induces another functor
\[ G\text{Bun}_{can}: BG(S) \to  \text{Glue}_{G\text{Bun}}, \; \; \; \mathcal{P} \mapsto (\mathcal{P}|_{\Spec(\widehat{S})}, \mathcal{P}|_{\Spec(S_f)}, \varphi_{\mathcal{P},can})\]
The following lemma is usually known in the literature as Beauville-Laszlo gluing.
\begin{lemma} \label{lemma: formal gluing}
Suppose that the faithfully flat morphism $S \to \widehat{S}$ induces an isomorphism $S/(f\cdot S) \xrightarrow{\sim} \widehat{S}/(f\cdot\widehat{S})$. Then both of the functors $\text{Aff}_{can}$ and $G\text{Bun}_{can}$ described above are equivalences of categories.
\end{lemma}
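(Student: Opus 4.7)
The plan is to reduce the statement to the classical Beauville--Laszlo gluing theorem \cite{beauville-laszlo-gluing} for quasi-coherent sheaves and then bootstrap from quasi-coherent sheaves to affine schemes and finally to $G$-bundles. The starting observation is that $\Spec(\widehat{S}) \sqcup \Spec(S_f) \to \Spec(S)$ is a faithfully flat cover: flatness is given, and surjectivity holds because $\Spec(S_f)$ covers the open $\{f \neq 0\}$ while $\Spec(\widehat{S})$ surjects onto the closed locus $\{f = 0\}$ via the isomorphism $S/(fS) \cong \widehat{S}/(f\widehat{S})$. Moreover the overlap $\Spec(\widehat{S}) \times_{\Spec(S)} \Spec(S_f) = \Spec(\widehat{S}_f)$ is precisely the locus over which the gluing isomorphism $\varphi$ is specified in both $\text{Glue}_{\text{Aff}}$ and $\text{Glue}_{G\text{Bun}}$, so a gluing datum is literally a (simplified) descent datum.

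For the equivalence of $\text{Aff}_{can}$ I will construct the inverse functor as follows. Given $(\Spec(\widehat{R}), \Spec(R_f), \varphi) \in \text{Glue}_{\text{Aff}}$, set
\[ R \;:=\; \widehat{R} \times_{\widehat{R}_f} R_f, \]
where the fibre product of $S$-algebras is taken via $\varphi$. The key step is to verify that the canonical maps $R \otimes_S \widehat{S} \to \widehat{R}$ and $R \otimes_S S_f \to R_f$ are isomorphisms compatible with $\varphi$, so that $\text{Aff}_{can}(\Spec(R))$ recovers the given gluing datum. This is the content of Beauville--Laszlo applied to the underlying $S$-modules, using flatness of $\widehat{S}/S$ together with $S/(fS) \cong \widehat{S}/(f\widehat{S})$ to force the short exact sequence $0 \to R \to \widehat{R} \oplus R_f \to \widehat{R}_f \to 0$ to remain exact after base change. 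Conversely, for $A = \Spec(B) \in \text{Aff}_{\Spec(S)}$ the same theorem gives $B \xrightarrow{\sim} (B \otimes_S \widehat{S}) \times_{B \otimes_S \widehat{S}_f} (B \otimes_S S_f)$, which is exactly the compatibility needed to conclude that $\text{Aff}_{can}$ is an equivalence.

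To upgrade to $G$-bundles, observe that any $G$-bundle over an affine base is itself an affine scheme because $G$ is affine and flat over $S$; hence the equivalence for affine schemes applies to the underlying scheme of a $G$-bundle datum. For a gluing datum $(\widehat{\mathcal{P}}, \mathcal{P}_f, \varphi)$ in $\text{Glue}_{G\text{Bun}}$ I produce an affine $S$-scheme $\mathcal{P}$ as above, and the action morphisms $G \times_S \widehat{\mathcal{P}} \to \widehat{\mathcal{P}}$ and $G \times_S \mathcal{P}_f \to \mathcal{P}_f$, being compatible over the overlap through $\varphi$, descend uniquely to a $G$-action on $\mathcal{P}$ by the functoriality of the inverse of $\text{Aff}_{can}$; the torsor axioms and the fppf-local triviality of $\mathcal{P}$ follow from the corresponding properties of $\widehat{\mathcal{P}}$ and $\mathcal{P}_f$ by faithfully flat descent along $\Spec(\widehat{S}) \sqcup \Spec(S_f) \to \Spec(S)$. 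The main obstacle in the whole argument is the base-change claim for the algebra $R$ in the previous paragraph: since we do not assume any $f$-regularity on the input data, proving that the fibre-product algebra has the correct base changes to $\widehat{S}$ and $S_f$ requires the Beauville--Laszlo hypothesis $S/(fS) \cong \widehat{S}/(f\widehat{S})$ in an essential way.
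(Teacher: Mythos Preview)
Your proposal is correct and follows essentially the same route as the paper: reduce to Beauville--Laszlo gluing for modules, observe that the equivalence respects the tensor structure so it upgrades to algebras (hence affine schemes), and then pass to $G$-bundles by gluing the action maps and checking the torsor property by flat descent. The paper is terser---it invokes \cite[\href{https://stacks.math.columbia.edu/tag/05ES}{Tag 05ES}]{stacks-project} and \cite[\href{https://stacks.math.columbia.edu/tag/05EU}{Tag 05EU}]{stacks-project} directly rather than writing out the fibre-product inverse, and it uses only the faithfully flat cover $\Spec(\widehat{S})\to\Spec(S)$ (rather than $\Spec(\widehat{S})\sqcup\Spec(S_f)$) to verify the torsor condition---but these are cosmetic differences.
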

\begin{proof}
We shall use \cite[\href{https://stacks.math.columbia.edu/tag/05E5}{Tag 05ES}]{stacks-project}. Since the gluing of modules preserves the tensor product structure on modules (cf. \cite[\href{https://stacks.math.columbia.edu/tag/05EU}{Tag 05EU}]{stacks-project}), the equivalence in \cite[\href{https://stacks.math.columbia.edu/tag/05E5}{Tag 05ES}]{stacks-project} induces an equivalence of monoid objects (i.e. algebras). By applying the $\Spec$ functor we deduce that $\text{Aff}_{can}$ is an equivalence, and that it is compatible with fiber products relative to the base. This implies that $\text{Aff}_{can}$ induces an equivalence of categories of affine schemes equipped with a right $G$-action\footnote{One defines the corresponding gluing category similarly as in the case for $G$-bundles above.}. If $\mathcal{P}$ is a scheme equipped with a right $G$-action, then by descent for the flat cover $\Spec(\widehat{S}) \to \Spec(S)$ we know that $\mathcal{P}$ is a $G$-bundle if and only if $\mathcal{P}|_{\Spec(\widehat{S})}$ is a $G$-bundle. This shows that the equivalence on affine schemes equipped with a right $G$-action restricts to an equivalence $G\text{Bun}_{can}$ on the subcategories of $G$-bundles.
\end{proof}

Now we are ready to prove the main theorem of this subsection.
\begin{thm} \label{thm: valuative criterion for properness rho sheaves}
The structure morphism $\Bun_{\rho}(X) \rightarrow \Spec(k)$ satisfies the existence part of the valuative criterion for properness \cite[\href{https://stacks.math.columbia.edu/tag/0CLK}{Tag 0CLK}]{stacks-project} in the case of complete discrete valuation rings. 
\end{thm}
\begin{proof}
Let $R$ be a discrete valuation $k$-algebra, with fraction field $K$. Let $(\mathcal{F}, \sigma)$ be a $\rho$-sheaf on $X_K$. We will use the notation from Lemma \ref{lemma: equivalence valuative criteria properness}. By Lemma \ref{lemma: equivalence valuative criteria properness}, it suffices to find a finite extension $R' \supset R$ of discrete valuation rings such that the base-change $\mathcal{P}_{K'}$ on the generic point $\eta_{K'}$ of $\Spec(\mathcal{O}_{X_{R'}, \eta_{\kappa'}})$ admits an extension to a $G$-bundle $\widetilde{\mathcal{P}}$ on the whole $\Spec(\mathcal{O}_{X_{R'}, \eta_{\kappa'}})$. 

We can replace $k$ with $\kappa$ in order to assume that $\kappa$ is the ground field. By the Cohen structure theorem, we have $R = \kappa\bseries{\pi}$. After replacing $\kappa$ with a finite extension $\kappa'$, we can assume that $G$ is split. Set $S= \mathcal{O}_{X_{R}, \eta_{\kappa}}$. Note that $S \supset R$ is a discrete valuation ring with uniformizer $\pi \in R$. Let $\widehat{S}$ denote the completion $\widehat{\mathcal{O}}_{X_{R}, \eta_{\kappa}}$. Let $\widehat{\mathcal{P}}$ denote the base change of $\mathcal{P}$ to $\Spec(\Frac(\widehat{S}))$. We have that $\widehat{S} = \kappa_{S}\bseries{\pi}$, where $\kappa_{S} \supset \kappa$ is the residue field of $S$ (equivalently $\kappa_{S}$ is the fraction field of the special fiber $X_{\kappa}$). 

By formal gluing (Lemma \ref{lemma: formal gluing} applied to $S \to \widehat{S}$ and $f= \pi$), extending $\mathcal{P}$ to $\Spec(S)$ is equivalent to extending $\widehat{\mathcal{P}}$ to $\Spec(\widehat{S})$. Indeed, such an extension $(\widehat{\mathcal{P}})^{\sim}$ to $\Spec(\widehat{S})$ yields a triple $(\mathcal{P}, (\widehat{\mathcal{P}})^{\sim}, \varphi)$, which by Lemma \ref{lemma: formal gluing} glues to an extension of $\mathcal{P}$ over $\Spec(S)$. Hence we may replace $S$ with $\widehat{S}$. Note that we are allowed to replace $\kappa\bseries{\pi}$ with a finite totally ramified extension $\kappa\bseries{\pi^{\frac{1}{m}}}$ for some $m \geq 1$. This replaces $\widehat{S}$ with $\kappa_{S}\bseries{\pi^{\frac{1}{m}}}$. The theorem is therefore implied by Proposition \ref{prop: extension after ramified cover} (by taking $\kappa = \kappa_{S}$ in that proposition).
\end{proof}

Since the semistable locus $\Bun_{\rho}(X)^{ss}$ comes from a $\Theta$-stratification, we automatically have semistable reduction for $\rho$-sheaves by \cite[Thm. 6.5, Cor. 6.12]{alper2019existence}. We record this result as a separate theorem.
\begin{thm}[Semistable Reduction] \label{thm: semistable reduction}
The substack $\Bun_{\rho}(X)^{ss}$ of Gieseker semistable points satisfies the existence part of the valuative criterion for properness.
\qed
\end{thm}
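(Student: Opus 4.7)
The plan is to combine the valuative criterion for the whole stack $\Bun_{\rho}(X)$ (Theorem \ref{thm: valuative criterion for properness rho sheaves}) with the existence of a $\Theta$-stratification (Theorem \ref{thm: theta stratification}), following the general machinery of Alper--Halpern-Leistner--Heinloth \cite[\S 6]{alper2019existence}. Given a complete DVR $R$ with fraction field $K$ and a map $\Spec(K) \to \Bun_{\rho}(X)^{ss}$ classifying a Gieseker semistable $\rho$-sheaf $(\mathcal{F}_K,\sigma_K)$, I first apply Theorem \ref{thm: valuative criterion for properness rho sheaves} to obtain, after replacing $R$ by a finite extension $R'$, an extension to a $\rho$-sheaf $(\widetilde{\mathcal{F}},\widetilde{\sigma})$ on $X_{R'}$. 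The only remaining task is to modify this extension so that its closed fiber lies in $\Bun_{\rho}(X)^{ss}$.

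Next I would invoke the abstract semistable reduction result. Once we know that $\Bun_{\rho}(X)$ admits a $\Theta$-stratification whose open stratum is $\Bun_{\rho}(X)^{ss}$, and that the inducing numerical invariant $\nu$ is both strictly $\Theta$-monotone and strictly $S$-monotone (Proposition \ref{prop: invariant is monotone}) and satisfies HN boundedness (Proposition \ref{prop: HN boundedness}), the general semistable reduction theorem for $\Theta$-stratifications (see \cite[\S 6, Thm.~6.3]{alper2019existence}, or equivalently the formulation in \cite[\S 5]{halpernleistner2021structure}) says that the existence part of the valuative criterion for the ambient stack passes automatically to the open stratum. Concretely, one takes the extension $(\widetilde{\mathcal{F}},\widetilde{\sigma})$ produced above; if its closed fiber is already semistable, we are done. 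Otherwise the closed fiber has a canonical (leading term HN) filtration $f$ with $\nu(f)>0$, which yields a nondegenerate map $\Theta_{\kappa(R')} \to \Bun_{\rho}(X)$. Gluing this filtration to the original family over $\mathfrak{X} = \overline{ST}_{R'}$ minus the origin and applying strict $S$-monotonicity (Definition \ref{defn: strictly monotone}) produces, after a further finite extension, a new extension whose closed fiber is either semistable or has strictly smaller $\nu$-value; strict $\Theta$-monotonicity controls the analogous step for $\Theta_{R'}$.

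Then I would iterate this modification procedure. HN boundedness (Proposition \ref{prop: HN boundedness}) guarantees that the associated graded objects of the candidate HN filtrations appearing throughout the process lie in a bounded family, so the possible $\nu$-values of the closed fiber lie in a discrete set bounded below by zero. This rules out an infinite descent and forces the procedure to terminate after finitely many steps at an extension whose closed fiber belongs to $\Bun_{\rho}(X)^{ss}$.

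I expect the main conceptual obstacle not to lie in this paper but to be encapsulated in the cited result of \cite[\S 6]{alper2019existence}: checking that all hypotheses of the abstract semistable reduction theorem are satisfied in our context. For $\Bun_{\rho}(X)$ the required inputs are exactly (i) the existence part of the valuative criterion for the ambient stack, established in Theorem \ref{thm: valuative criterion for properness rho sheaves}; (ii) strict $\Theta$- and $S$-monotonicity of $\nu$, established in Proposition \ref{prop: invariant is monotone}; and (iii) HN boundedness, established in Proposition \ref{prop: HN boundedness}. Hence the proof reduces to citing \cite[\S 6]{alper2019existence} together with these three results.
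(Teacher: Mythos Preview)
Your proposal is correct and follows essentially the same approach as the paper: the paper's proof is simply the observation that since $\Bun_{\rho}(X)^{ss}$ arises as the open stratum of a $\Theta$-stratification, semistable reduction follows automatically from \cite[\S 6]{alper2019existence}, with the valuative criterion for the ambient stack supplied by Theorem~\ref{thm: valuative criterion for properness rho sheaves}. You have correctly identified the needed inputs and additionally sketched the internal mechanism of the cited result, which the paper leaves entirely to the reference.
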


The arguments in this subsection yield an interesting result in the case of curves, where $\rho$-sheaves are the same thing as $G$-bundles.
\begin{coroll} \label{coroll: extension G-bundles curves}
Let $X$ be a smooth projective geometrically connected curve over a field $\kappa$ of arbitrary characteristic. Let $G$ be a quasi-split connected reductive group over $\kappa$ satisfying the following two conditions:
\begin{enumerate}[(1)]
    \item The maximal central torus of $G$ is split over $\kappa$.
    \item Let $H$ denote the kernel of of the simply-connected cover $\widetilde{[G,G]} \rightarrow [G,G]$ of the derived subgroup of $G$. Then, the characteristic of $\kappa$ does not divide the order of $H$.
\end{enumerate}
Let $R$ be a complete discrete valuation ring over $\kappa$ with fraction field $K$. Suppose that the residue field of $R$ is perfect. Let $\mathcal{P}$ be a $G$-bundle on the generic fiber $X_{K}$. Then, after maybe taking a root of the uniformizer of $R$, we can extend $\mathcal{P}$ to a $G$-bundle on $X_{R}$.
\end{coroll}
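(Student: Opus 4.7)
The plan is to follow the strategy of Theorem \ref{thm: valuative criterion for properness rho sheaves} to produce an extension of $\mathcal{P}$ over a big open of $X_R$, then extend across the finitely many remaining closed points using a purity result for $G$-torsors.

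First, by the Cohen structure theorem we may assume $R = \kappa\bseries{\pi}$. As in the proof of Theorem \ref{thm: valuative criterion for properness rho sheaves}, the local ring $S = \mathcal{O}_{X_R, \eta_\kappa}$ at the generic point $\eta_\kappa$ of the special fiber is a DVR with uniformizer $\pi$. Beauville--Laszlo gluing (Lemma \ref{lemma: formal gluing}) reduces the extension of $\mathcal{P}$ over $\Spec(S)$ to extension over its completion $\widehat{S} = \kappa(X_\kappa)\bseries{\pi}$, and Proposition \ref{prop: extension after ramified cover} applied to $\widehat{S}$ (with residue field $\kappa(X_\kappa)$) provides such an extension after replacing $\pi$ by $\pi^{1/m}$ for some $m \geq 1$. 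After this purely ramified base change $R' = R[\pi^{1/m}]$, spreading out yields a $G$-bundle $\widetilde{\mathcal{P}}$ on a big open $U \subset X_{R'}$ containing $X_{K'}$ and $\eta_{\kappa'}$ whose complement is a finite set of closed points $\{x_1, \ldots, x_n\}$ of the special fiber $X_{\kappa'}$.

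Next, at each $x_i$ the local ring $\mathcal{O}_{X_{R'}, x_i}$ is a two-dimensional regular local ring (since $X$ is smooth over $\kappa$ and $R'$ is regular) with perfect residue field $\kappa(x_i)$, and $\widetilde{\mathcal{P}}$ restricts to a $G$-torsor on its punctured spectrum. A classical purity theorem for $G$-torsors over two-dimensional regular local rings, which in our generality can be proved by applying the Bruhat--Tits cocycle analysis underlying Proposition \ref{prop: extension after ramified cover} to the two height-one primes of the formal neighborhood $\widehat{\mathcal{O}}_{X_{R'}, x_i} \cong \kappa(x_i)\bseries{t, \pi^{1/m}}$ and using faithfully flat descent, then yields the required extension across each $x_i$. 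Gluing these local extensions with $\widetilde{\mathcal{P}}$ on $U$ (again via Beauville--Laszlo, Lemma \ref{lemma: formal gluing}) produces the desired $G$-bundle on all of $X_{R'}$.

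The main obstacle is the codimension-two extension at the closed points $x_i$: for the $\rho$-sheaf version of Theorem \ref{thm: valuative criterion for properness rho sheaves} one can rely on Langton's theorem together with depth arguments to end up with a $\rho$-sheaf (where the underlying sheaf may fail to be locally free at isolated points), but extending a genuine $G$-bundle across a codimension-two regular point requires the purity statement above, which is precisely where the hypotheses (1) and (2) on $G$ are used to ensure that the relevant obstruction classes in nonabelian degree-two cohomology vanish. A secondary issue is the application of Proposition \ref{prop: extension after ramified cover} in positive characteristic, where the residue field $\kappa(X_\kappa)$ of $\widehat{S}$ is not perfect; however, an examination of that proof shows that the perfectness hypothesis is used only through \cite[3.14(3)]{bruhat-tits-iii}, and the quasi-split assumption on $G$ (which is preserved upon restriction to $\kappa(X_\kappa)$) is sufficient for the essential step to go through.
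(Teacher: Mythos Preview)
Your approach has a real gap in the codimension-two extension step. Extending a $G$-torsor from the punctured spectrum of a two-dimensional regular local ring to the full spectrum is a nontrivial purity statement, and your proposed justification --- applying the Bruhat--Tits analysis of Proposition~\ref{prop: extension after ramified cover} ``to the two height-one primes'' --- does not work as written: that proposition extends a torsor over a single DVR, and nothing in its argument lets you glue two such extensions compatibly over the two-dimensional base or control the resulting discrepancy at the closed point.

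The paper's proof avoids this entirely via a simplification you dismiss too quickly. Since $X$ is a smooth curve, every torsion-free sheaf on each fibre $X_s$ is automatically locally free (the local rings are fields or DVRs). By the fibrewise criterion \cite[\href{https://stacks.math.columbia.edu/tag/0CZR}{Tag 0CZR}]{stacks-project} already used in the paper, any relative torsion-free sheaf on $X_{R'}$ is therefore locally free on \emph{all} of $X_{R'}$. So one picks any faithful $\rho: G \hookrightarrow \GL(V)$ and runs the proof of Lemma~\ref{lemma: equivalence valuative criteria properness} $(2)\Rightarrow(1)$ verbatim: Langton extends the associated vector bundle to a relative torsion-free sheaf $\widetilde{\mathcal{F}}$ on $X_{R'}$, and then the $G$-reduction --- being a section of the \emph{affine} finite-type scheme $\Red_G(\widetilde{\mathcal{F}}) \to X_{R'}$ --- extends across the remaining codimension-two points by the elementary depth/Hartogs argument of Lemma~\ref{lemma: lemma on very big open subsets}(c). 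Your parenthetical that ``the underlying sheaf may fail to be locally free at isolated points'' is precisely what does \emph{not} happen when $\dim X = 1$; recognising this removes the need for any torsor-purity statement. Your observation about the perfectness hypothesis in Proposition~\ref{prop: extension after ramified cover} is a legitimate concern, but it is orthogonal to the main gap and applies equally to both routes.
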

\begin{proof}
This follows by applying the proof of Lemma \ref{lemma: equivalence valuative criteria properness}, using formal gluing as in Theorem \ref{thm: valuative criterion for properness rho sheaves} and then applying Proposition \ref{prop: extension after ramified cover}.
\end{proof}

\end{subsection}
\begin{subsection}{The degree of a $\rho$-sheaf} Fix the choice of an algebraic closure $\overline{k} \supset k$. We denote by $\Gamma_{k}$ the absolute Galois group $\Gamma_k \vcentcolon = \Gal(\overline{k}/ k)$. Let $X^*(G_{\overline{k}})$ be the group of characters of the base-change $G_{\overline{k}}$. Let $X^*(G_{\overline{k}})^{\vee}$ denote the dual group $\Hom\left(X^*(G_{\overline{k}}), \, \mathbb{Z}\right)$. This group admits a natural continuous action of the profinite absolute Galois group $\Gamma_{k}$. We write $\Lambda(G) \vcentcolon = \left(X^*(G_{\overline{k}})^{\vee} \right)_{\Gamma_{k}}$ to denote the quotient group of $\Gamma_{k}$-coinvariants of $(X^*(G_{\overline{k}}))^{\vee}$. Note that $\Lambda(G)$ does not depend on the choice of algebraic closure $\overline{k}$.

\begin{example} \label{example: degree split group}
If the group $G$ is split over $k$, then $X^*(G_{\overline{k}})$ coincides with the group $X^*(G)$ of characters defined over $k$. The action of the Galois group $\Gamma_{k}$ on $X^*(G)$ is trivial. Therefore $\Lambda(G) = X^*(G)^{\vee}$.
\end{example}

\begin{example}
If the group $G$ is semisimple, then $\Lambda(G) = 0$.
\end{example}

Let $K \supset k$ be an arbitrary algebraically closed field extension and let $(\mathcal{F}, \sigma)$ be a $\rho$-sheaf on $X_{K}$. Let $U$ be any big open subset of $X_{K}$ where $\mathcal{F}$ is locally-free and denote by $j$ be the open inclusion $j: U \hookrightarrow X_{K}$. If $\mathcal{G}$ is the corresponding $G_{K}$-bundle on $U$, then for any character $\chi \in X^*(G_{K})$ we can form the associated line bundle $\mathcal{G} \times^{G_{K}} \chi$ over $U$. Since $X_{K}$ is smooth, the pushforward $j_*(\mathcal{G} \times^{G_{K}} \chi)$ is a line bundle on $X_{K}$ whose first Chern class is  $c_1\left( j_*(\mathcal{G} \times^{G_{K}} \chi)\right)$. Let $H$ be the hyperplane class on $X$ corresponding to the line bundle $\mathcal{O}_{X}(1)$ and define the group homomorphism (c.f Lemma \ref{lemma: construction of linear functional}):
\[\psi: X^*(G_{K}) \rightarrow \mathbb{Z},\;\; \psi(\chi) \vcentcolon = c_1\left( j_*(\mathcal{G} \times^{G_{K}} \chi)\right) \cdot H^{d-1} .\]

If we denote by $L$ the algebraic closure of $k$ in $K$, then base-change induces a canonical identification $X^*(G_{L}) \xrightarrow{\sim} X^*(G_{K})$. Choose an isomorphism $\overline{k} \xrightarrow{\sim} L$ over $k$. This yields an isomorphism of abelian groups $X^*(G_{\overline{k}}) \xrightarrow {\sim} X^*(G_{L})$ which identifies $\psi$ with an element of $X^*(G_{\overline{k}})^{\vee}$. Since any two choices of isomorphisms $\overline{k} \xrightarrow{\sim} L$ differ by an element of the Galois group $\Gamma_{k}$, $\psi$ yields a well-defined element of the quotient $\Lambda(G)$ that is independent of the choice of $\overline{k} \xrightarrow{\sim} L$.
\begin{defn}\label{defn: degree}
Let $K \supset k$ be an algebraically closed field extension. Let $(\mathcal{F}, \sigma)$ be a $\rho$-sheaf on $X_{K}$. We define the degree $\deg \,(\mathcal{F}, \sigma)$ to be the element $\vartheta\in \Lambda(G)$ constructed above. For an arbitrary field extension $K \supset k$, we set $\deg \,(\mathcal{F}, \sigma)$ to be $\deg \,(\mathcal{F}|_{X_{\widetilde{K}}}, \sigma|_{X_{\widetilde{K}}})$ for any choice of an algebraically closed field extension $\widetilde{K} \supset K$.
\end{defn}

\begin{prop} \label{prop: degree is locally constant on families}
Let $S$ be a $k$-scheme and let $(\mathcal{F}, \sigma)$ be a $\rho$-sheaf on $X_{S}$. Let $\deg: S \rightarrow \Lambda(G)$ denote the function defined by $\deg(s) \vcentcolon = \deg\, (\mathcal{F}|_{X_{s}}, \sigma|_{X_{s}})$.
Then $\deg$ is locally constant on $S$.
\end{prop}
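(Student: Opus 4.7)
The plan is to reduce local constancy of $\deg$ to local constancy of intersection numbers of line bundles in a flat projective family.

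First I would reduce to the case when $G$ is split over $k$. Choose a finite Galois extension $k'/k$ that splits $G$, with $\Gamma = \Gal(k'/k)$, and consider the finite \'etale surjection $\pi: S' := S \times_k k' \to S$, which is in particular open. Base-changing $(\mathcal{F}, \sigma)$ yields a $\rho$-sheaf on $X_{S'}$ with its own degree function $\deg': S' \to \Lambda(G_{k'}) = X^*(G_{k'})^\vee$. By construction $\Lambda(G) = \Lambda(G_{k'})_\Gamma$, and one checks that $\deg \circ \pi = q \circ \deg'$, where $q: \Lambda(G_{k'}) \twoheadrightarrow \Lambda(G)$ is the quotient to coinvariants, because computing degrees at a geometric point is unaffected by first passing through $k'$. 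Since $\pi$ is open and surjective, local constancy of $\deg'$ implies local constancy of $\deg$. Thus we may assume $G$ is split, in which case $\Lambda(G) = X^*(G)^\vee$, and it suffices to show that for each character $\chi \in X^*(G)$ the integer-valued function $s \mapsto \psi_s(\chi)$ is locally constant on $S$.

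Fix $\chi \in X^*(G)$. The core step is to globalize the construction of $\psi_s(\chi)$ over $S$. Choose a big open $U \subset X_S$ (relative to $S$) where $\mathcal{F}|_U$ is locally free, and let $\mathcal{G}$ denote the $G$-bundle on $U$ corresponding to $\sigma$ via Proposition \ref{prop: relation scheme and G-reductions}. The associated line bundle $\mathcal{L}_\chi := \mathcal{G} \times^G \chi$ lives on $U$; set $M_\chi := j_* \mathcal{L}_\chi$, where $j: U \hookrightarrow X_S$ is the inclusion. Exactly as in Lemma \ref{lemma: properties of determinant}(a),(b), using the depth $\geq 2$ condition from Lemma \ref{lemma: lemma on very big open subsets}(a), one shows that $M_\chi$ is a line bundle on $X_S$.

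Next I would establish fiberwise compatibility: for every $s \in S$, the canonical comparison morphism $(M_\chi)|_{X_s} \to j_{s,*}(\mathcal{L}_\chi|_{U_s})$ is an isomorphism of line bundles on the smooth variety $X_s$. This is the step I expect to be the main obstacle, and it is a base change statement for $j_*$ that mirrors the proof of Lemma \ref{lemma: properties of determinant}(c): both sides are reflexive of rank one on the smooth $X_s$, they agree on the big open $U_s$, and the depth $\geq 2$ condition at the complementary points of $X_s \setminus U_s$ forces the canonical map to be an isomorphism. Granted this, $\psi_s(\chi) = c_1\bigl((M_\chi)|_{X_s}\bigr) \cdot H^{d-1}$ for every $s \in S$. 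Since $M_\chi$ is a line bundle on the flat projective morphism $X_S \to S$ and $H$ is induced by $\mathcal{O}_X(1)$, the intersection number $c_1\bigl((M_\chi)|_{X_s}\bigr) \cdot H^{d-1}$ is locally constant in $s$, being recovered from the (locally constant in $S$) Hilbert polynomial of $(M_\chi)|_{X_s}$ with respect to $\mathcal{O}(1)$. This yields the required local constancy of $\deg$.
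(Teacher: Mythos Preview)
Your overall strategy is sound, but step 3 has a genuine gap. You claim that $M_\chi := j_*\mathcal{L}_\chi$ is a line bundle on $X_S$ ``exactly as in Lemma \ref{lemma: properties of determinant}(a),(b)'', but part (b) of that lemma is specific to the determinant: its proof produces a global line bundle by taking the Knudsen--Mumford determinant of a finite locally free resolution of $\mathcal{F}$, and then compares. No such resolution trick is available for the associated line bundle $\mathcal{L}_\chi = \mathcal{G}\times^{G}\chi$ attached to an arbitrary character $\chi$. The depth-$\geq 2$ argument from Lemma \ref{lemma: lemma on very big open subsets}(a) only tells you that $j_*\mathcal{L}_\chi$ is reflexive of rank one, and a rank-one reflexive sheaf is a line bundle only when the ambient scheme is locally factorial, e.g.\ regular. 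For an arbitrary base $S$ the scheme $X_S$ need not be regular, so $M_\chi$ need not be invertible, and then neither your step 4 nor the appeal to local constancy of intersection numbers for a line bundle goes through.

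The paper's proof is built precisely around this obstruction. After reducing to $S$ of finite type and $k$ algebraically closed, it does \emph{not} assert that $j_*\mathcal{L}$ is globally a line bundle. Instead it proves the function $f(s)=c_1\bigl((j_s)_*(\mathcal{L}|_{U_s})\bigr)\cdot H^{d-1}$ is locally constant in two stages: first, constructibility by Noetherian induction, where on each integral stratum one shrinks to a smooth open (using that $k$ is perfect) so that $X_S$ becomes regular and your argument applies verbatim; second, constancy under specialization by checking the case $S=\Spec(R)$ for a DVR $R$, where again $X_R$ is regular. Your proof can be repaired by inserting exactly this reduction: once $X_S$ is regular, $j_*\mathcal{L}_\chi$ is a line bundle and your base-change step 4 is correct as written.
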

\begin{proof}
Since the stack $\Bun_{\rho}(X)$ is locally of finite type over $k$, we can reduce to the case when $S$ is of finite type over $k$. Given that the notion of degree is well-behaved under taking field extensions, we can replace $k$ with an algebraic closure to assume that the ground field is algebraically closed and, hence $\Gamma_{k}$ is trivial.

Let $U$ be a big open subset of $X_{S}$ where $\mathcal{F}$ is locally-free and denote by $j: U \hookrightarrow X_{S}$ the open immersion. For each point $s \in S$, let $j_s$ be the base-change $j_s: U_s \hookrightarrow X_{s}$. Denote by $\mathcal{G}$ the corresponding $G$-bundle on $U$. Let $\chi \in X^*(G)$ be a character of $G$ and denote by $\mathcal{L}$ the associated line bundle $\mathcal{G} \times^{G} \chi$ on $U$. We want to show that the function $f: S \rightarrow \mathbb{Z}$ given by
\[ f(s) \vcentcolon = c_1\left( (j_{s})_* (\mathcal{L}|_{U_{s}}) \right) \cdot H^{d-1}\]
is locally constant on $S$. 

We first claim that $f$ is constructible on $S$. By Noetherian induction \footnote{By Noetherianity, there is a minimal closed subscheme $Z \subset S$ such that the restriction of $f$ to $Z$ is constructible. By looking at the irreducible components and passing to the reduced subscheme, we may assume that $Z$ is integral. If we show that $f$ is constant on an open $U \subset Z$, then constructibility on $Z$ will follow from the constructibility on $Z \setminus U$ (by minimality) and on $U$.} we need to prove that if $S$ is integral then $f$ is constant on an open subset of $S$. Assume that $S$ is integral. Since $k$ is perfect, there exists an open dense subset of $S$ that is smooth over $k$. Therefore we can reduce to the case when $S$ is a smooth variety. For each $s \in S$, consider the following Cartesian diagram
\[
\begin{tikzcd}
     U_s \ar[r] \ar[d, "j_s"] & U \ar[d, "j"]\\ X_s \ar[r] & X_{S}
\end{tikzcd}
\]
We get an induced base-change morphism of quasicoherent sheaves $\varphi: j_*(\mathcal{L}) |_{X_{s}} \rightarrow (j_s)_* \left(\mathcal{L}|_{U_{s}}\right)$ where the right-hand side is a line bundle. Since $S$ is smooth over $k$ and $X_{S} \rightarrow S$ is a smooth morphism, we conclude that $X_{S}$ is also smooth over $k$, so $X_{S}$ is a regular scheme. The complement of the open subset $U$ has codimension $\geq 2$ inside $X_{S}$, therefore the pushforward $j_*(\mathcal{L})$ is a line bundle. It follows that $j_*(\mathcal{L}) |_{X_{s}}$ is a line bundle, and so $\varphi$ is a morphism of line bundles on $X_s$. Note that the restriction of $\varphi$ to the big open subset $U_{s} \subset X_{s}$ is an isomorphism by construction. We conclude that $\varphi$ must be an isomorphism of line bundles. This shows that we can equivalently write $f$ as
\[ f(s) = c_1\left( (j_*\mathcal{L})|_{X_{s}}\right) \cdot H^{d-1}\]
Since $j_* \mathcal{L}$ is a line bundle its first Chern class is locally constant in families, then $f$ is constant on the variety $S$. This concludes the proof that $f$ is constructible.

In order to show that $f$ is locally constant, it suffices to check that $f$ is constant when the base $S$ is of the form $\Spec(R)$ for some discrete valuation ring $R$. In this case $S$ is a regular integral scheme. Since $X_{S} \rightarrow S$ is smooth, it follows that $X_{S}$ is regular \cite[\href{https://stacks.math.columbia.edu/tag/07NF}{Tag 07NF}]{stacks-project} and we can apply the same argument as in the proof of constructibility to see that $f$ is constant on $S$.
\end{proof}

As a consequence of Proposition \ref{prop: degree is locally constant on families}, we have the following.

\begin{coroll}\label{coroll: stratification_by_degree}
For every $\vartheta \in \Lambda(G)$, there exists an open and closed substack $\Bun_{\rho}(X)_{\vartheta} \subset \Bun_{\rho}(X)$ parametrizing $\rho$-sheaves with degree $\vartheta$. Then
\[ \Bun_{\rho}(X) = \bigsqcup_{\vartheta \in \Lambda(G)} \Bun_{\rho}(X)_{\vartheta}.\]
\end{coroll}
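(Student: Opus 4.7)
The plan is to deduce the corollary directly from Proposition \ref{prop: degree is locally constant on families}, which provides all the substance needed; what remains is essentially a formal bookkeeping argument.

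First, I would define $\Bun_{\rho}(X)_{\vartheta}$ as the subprestack of $\Bun_{\rho}(X)$ whose $T$-points, for any $T \in \Aff_k$, consist of those $\rho$-sheaves $(\mathcal{F}, \sigma)$ on $X_T$ such that the degree function $\deg: T \to \Lambda(G)$ sending $t \mapsto \deg(\mathcal{F}|_{X_t}, \sigma|_{X_t})$ is identically equal to $\vartheta$. This is clearly a full subgroupoid (isomorphisms do not alter pointwise degree), so $\Bun_{\rho}(X)_{\vartheta}$ is a substack in the fpqc topology. To see that the inclusion is represented by open and closed immersions, let $T \to \Bun_{\rho}(X)$ be a morphism from an affine scheme, represented by a $\rho$-sheaf $(\mathcal{F}, \sigma)$ on $X_T$. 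By Proposition \ref{prop: degree is locally constant on families}, the function $\deg: T \to \Lambda(G)$ is locally constant, so the preimage of $\{\vartheta\}$ is an open and closed subscheme $T_{\vartheta} \subset T$. By construction, the fiber product $\Bun_{\rho}(X)_{\vartheta} \times_{\Bun_{\rho}(X)} T$ is represented by $T_{\vartheta}$, which proves that $\Bun_{\rho}(X)_{\vartheta} \hookrightarrow \Bun_{\rho}(X)$ is an open and closed substack.

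For the disjoint union statement, observe that Definition \ref{defn: degree} assigns a unique element of $\Lambda(G)$ to every field-valued $\rho$-sheaf, and the construction is manifestly invariant under algebraically closed field extensions, so the definition is unambiguous. Hence for distinct $\vartheta \neq \vartheta'$ the substacks $\Bun_{\rho}(X)_{\vartheta}$ and $\Bun_{\rho}(X)_{\vartheta'}$ share no field-valued points, and their intersection is empty. Every $T$-point of $\Bun_{\rho}(X)$ lies in exactly one $\Bun_{\rho}(X)_{\vartheta}$ on each connected component of $T$, by the local constancy of the degree. The decomposition $\Bun_{\rho}(X) = \bigsqcup_{\vartheta \in \Lambda(G)} \Bun_{\rho}(X)_{\vartheta}$ then follows.

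There is no real obstacle here once Proposition \ref{prop: degree is locally constant on families} is in hand; the corollary is a purely formal consequence of the local constancy of the degree function together with the fact that the degree is pointwise well defined.
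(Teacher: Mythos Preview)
Your proof is correct and follows exactly the approach the paper intends: the paper states the corollary immediately after Proposition \ref{prop: degree is locally constant on families} with only the sentence ``As a consequence of Proposition \ref{prop: degree is locally constant on families}, we have the following,'' leaving the formal verification implicit. You have simply spelled out the routine bookkeeping that makes this deduction precise.
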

\end{subsection}

\begin{subsection}{Boundedness and moduli space for the semistable locus} \label{subsection: boundedness of the semistable locus}
Let $P \in \mathbb{Q}[n]$ be a Hilbert polynomial and let $\vartheta \in \Lambda(G)$. We define $\Bun_{\rho}(X)^{ss, P}_{\vartheta} \vcentcolon = \Bun_{\rho}(X)^{ss, P} \cap \Bun_{\rho}(X)_{\vartheta}$. By Corollary \ref{coroll: stratification_by_degree} we get a decomposition of the semistable locus
\[ \Bun_{\rho}(X)^{ss, P}  = \bigsqcup_{\vartheta \in \Lambda(G)} \Bun_{\rho}(X)^{ss, P}_{\vartheta}  \]
The next proposition shows that each piece $\Bun_{\rho}(X)^{ss, P}_{\vartheta}$ is bounded.
\begin{prop} \label{prop: boundedness of the semistable locus}
For all $\vartheta \in \Lambda(G)$ and all $P \in \mathbb{Q}[n]$, the stack $\Bun_{\rho}(X)^{ss, P}_{\vartheta}$ is quasicompact.
\end{prop}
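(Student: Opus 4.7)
The plan is to reduce boundedness of the semistable locus to boundedness of the underlying torsion-free sheaves via the forgetful morphism $\Bun_{\rho}(X) \to \Coh_r^{tf}(X)$. By Proposition \ref{prop: forgetful morphism is affine}, this morphism is affine and of finite type, so the preimage of any quasicompact substack is quasicompact. Therefore it suffices to show that the set of torsion-free sheaves $\mathcal{F}$ of Hilbert polynomial $P$ arising as the underlying sheaf of a Gieseker semistable $\rho$-sheaf $(\mathcal{F}, \sigma)$ of degree $\vartheta$ is bounded inside $\Coh_r^{tf}(X)$. Before proceeding, I would reduce to the case where $k$ is algebraically closed (and hence $G$ is split, so the explicit description of filtrations in Proposition \ref{prop: explicit description of filtrations singular G bundles} applies), which is harmless since quasicompactness descends along faithfully flat base change.

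The core of the argument is establishing a uniform upper bound, depending only on $P$, $\vartheta$, $G$, and $\rho$, on the maximal slope $\widehat{\mu}_{d-1}^{\max}(\mathcal{F})$ of subsheaves of $\mathcal{F}$ whenever $(\mathcal{F}, \sigma)$ is semistable. Once such a bound is available, Maruyama's theorem (\cite[Thm. 3.3.7]{huybrechts.lehn}) implies that the corresponding family of torsion-free sheaves is bounded, and boundedness of $\Bun_{\rho}(X)^{ss, P}_{\vartheta}$ follows. To produce the slope bound, I would adapt the strategy of Ramanathan and its higher-dimensional elaborations by Schmitt \cite{schmitt.singular} and G\'omez-Langer-Schmitt-Sols \cite{glss.singular.char}: for each maximal parabolic subgroup $Q \subset G$ with associated fundamental coweight $\omega_i^\vee$, analyze all reductions of the principal $G$-bundle $\mathcal{G}$ on a big open $U$ to $Q$. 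The semistability inequality of Definition \ref{defn: Gieseker semistable}, evaluated on the weighted parabolic reduction $(\omega_i^\vee, \mathcal{G}_Q)$ using the explicit formula of Proposition \ref{prop: value numerical invariant}, translates into a constraint on the degrees of the line bundles obtained by pushing out $\mathcal{G}_Q$ along characters of $Q$. Using the weight decomposition of $V$ under $\omega_i^\vee$ and the linear-algebraic dictionary in Lemma \ref{lemma: construction of linear functional}, these constraints yield the desired uniform upper bound on the slopes of subsheaves of $\mathcal{F}$ that arise from $Q$-reductions.

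The main obstacle is bridging the gap between arbitrary subsheaves $\mathcal{F}' \subset \mathcal{F}$ and those which literally come from a weighted parabolic reduction of $(\mathcal{F}, \sigma)$. In principle a destabilizing subsheaf of $\mathcal{F}$ need not respect the $G$-structure; one must instead produce, from $\mathcal{F}'$, an auxiliary parabolic reduction of $\mathcal{G}$ which controls $\widehat{\mu}_{d-1}(\mathcal{F}')$. A workable strategy is to consider, on a big open $U$ where $\mathcal{F}$ is locally free, the group-theoretic envelope of $\mathcal{F}'|_U$ inside $\mathcal{F}|_U$: the largest parabolic subgroup $P_{\mathcal{F}'} \subset G$ whose natural representation-theoretic filtration on $V$ pulls back (via $\mathcal{G}$) to a filtration containing $\mathcal{F}'|_U$. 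Since the set of conjugacy classes of parabolics in $G$ is finite, the resulting reductions can be organized into finitely many families to which the numerical inequality is applied, producing a bound on $\widehat{\mu}_{d-1}(\mathcal{F}')$ that is uniform over the semistable locus. Making this construction precise, and verifying the comparison with the explicit formula for $\nu$, will be the most delicate technical point.
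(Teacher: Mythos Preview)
Your initial reduction is exactly what the paper does: pass to an algebraically closed ground field, use the affine finite-type forgetful morphism to $\Coh_r^{tf}(X)$, and reduce to showing a uniform upper bound on $\widehat{\mu}_{d-1}^{\max}(\mathcal{F})$ so that \cite[Thm.~3.3.7]{huybrechts.lehn} applies. So the architecture is right.

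The divergence is in how to obtain the slope bound, and here there is a genuine gap. Your ``group-theoretic envelope'' construction---assigning to an arbitrary subsheaf $\mathcal{F}' \subset \mathcal{F}$ a parabolic $P_{\mathcal{F}'} \subset G$ whose associated filtration on $V$ controls $\mathcal{F}'|_U$---is not well defined in general. A generic subspace $W \subset V$ has trivial stabilizer in $G$ (or stabilizer not contained in any proper parabolic), so there is no reason a $G$-parabolic reduction should see $\mathcal{F}'|_U$ at all, let alone control its slope. Even organizing reductions by conjugacy classes of parabolics does not produce inequalities that dominate $\widehat{\mu}_{d-1}(\mathcal{F}')$ for an arbitrary $\mathcal{F}'$.

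The paper avoids this problem entirely by a different route. It first invokes Proposition~\ref{prop: gieseker semistable implies slope semistable} (proved later): Gieseker semistability of the $\rho$-sheaf forces the underlying rational $G$-bundle $\mathcal{G}$ to be slope semistable in Ramanathan's sense. Then, since $\operatorname{char} k = 0$, one decomposes $V = \bigoplus_{i=1}^q V_i$ into irreducibles, so $\rho$ factors through $\varphi: G \to H = \prod_i \GL(V_i)$ with image not contained in any proper parabolic of $H$. By restriction to a general complete intersection curve and \cite[Prop.~5.3]{biswas-holla-hnreduction}, the associated $H$-bundle $\mathcal{H} = \varphi_*\mathcal{G}$ is slope semistable; hence each reflexive sheaf $\mathcal{E}_i = j_*(\mathcal{G} \times^G V_i)$ is slope semistable. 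The slope of each $\mathcal{E}_i$ is computed directly from $\vartheta(\det V_i)$, so it depends only on $\vartheta$. Since $\mathcal{F}^{\vee\vee} \cong \bigoplus_i \mathcal{E}_i$, intersecting with $\mathcal{F}$ produces a filtration of $\mathcal{F}$ with slope semistable subquotients of slope $\leq \mu_{\max} := \max_i \widehat{\mu}_{d-1}(\mathcal{E}_i)$, and this bounds every subsheaf of $\mathcal{F}$.

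The point is that the paper never tries to fit an arbitrary subsheaf into a $G$-parabolic reduction; it instead uses the irreducible decomposition of $\rho$, which is a $\GL(V)$-level structure (a Levi of $\GL(V)$ containing $\rho(G)$), together with the transfer of slope semistability along $\varphi$. Your proposal is missing both the passage through Ramanathan slope semistability and this representation-theoretic decomposition.
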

\begin{proof}
After replacing $k$ with a field extension, we can assume that $k$ is algebraically closed. Since the forgetful morphism $\Forget: \Bun_{\rho}(X) \rightarrow \Coh_{r}^{tf}(X)$ is of finite type, it is sufficient to show that the restriction $\Forget|_{\Bun_{\rho}(X)^{ss, P}_{\vartheta}}$ factors through a quasicompact open substack of $\Coh_{r}^{tf}(X)$. We shall show that the following set:
\[ \mathfrak{S} = \left\{\; \Forget\left( (\mathcal{F}, \sigma) \right) \; \; \mid \; \; (\mathcal{F}, \sigma) \in \Bun_{\rho}(X)^{ss, P}_{\vartheta}(k) \; \right\} \] 
is a bounded family of torsion-free sheaves.

Let $(\mathcal{F}, \sigma)\in \Bun_{\rho}(X)^{ss, P}_{\vartheta}(k)$. We claim that there exists a uniform upper bound $\mu_{max}$ (only depending on $\vartheta$) such that for all torsion-free subsheaves $\mathcal{V} \subset \mathcal{F}$ we have $\widehat{\mu}_{d-1}(\mathcal{V}) \leq \mu_{max}$. Since the Hilbert polynomial $P$ of $\mathcal{F}$ is fixed, it will follow that the family $\mathfrak{S}$ is bounded by \cite[Thm. 3.3.7]{huybrechts.lehn}.

We are left to show the claim. Since $G$ is a reductive group over $k$ of characteristic $0$, $G$ is linearly reductive and the representation $\rho: G \rightarrow \GL(V)$ decomposes into a direct sum of irreducible representations $V = \bigoplus_{i=1}^q V_i$. If we denote $H \vcentcolon = \prod_{i=1}^q \GL(V_i)$, then $\rho$ factors through a faithful homomorphism $\varphi: G \rightarrow H$. Since each representation $V_i$ is irreducible, it follows that the image of $\varphi$ is not contained in any proper parabolic subgroup of $H$.

Let $U$ be a big open subset where  $\mathcal{F}$ is locally-free and let $\mathcal{G}$ be the corresponding $G$-bundle on $U$ which, by Proposition \ref{prop: gieseker semistable implies slope semistable}, is slope semistable in the sense of Ramanathan \cite{Ram79}. By the analogue of \cite[Thm. 5.1]{biswas-gomez-restriction} for rational principal bundles (as explained in \cite[\S7]{biswas-gomez-restriction}), the restriction of $\mathcal{G}$ to a general smooth complete intersection curve $C \hookrightarrow X$ of degree sufficiently large is a (slope) semistable $G$-bundle as in \cite{ramanathan-stable}. Let $\mathcal{H} \vcentcolon = \varphi_*(\mathcal{G})$ be the associated $H$-bundle on $U$. Notice that $\varphi_*(\mathcal{G}|_{C})$ is the restriction $\mathcal{H}|_{C}$. By \cite[Prop. 5.3]{biswas-holla-hnreduction} $\varphi_*(\mathcal{G}|_{C})$ is (slope) semistable on $C$, so the restriction of $\mathcal{H}$ to a general smooth complete intersection curve $C$ of degree sufficiently large is slope semistable in the sense of Ramanathan. By the argument in \cite[Remark 5.2]{biswas-gomez-restriction}, it follows that, since $(\mathcal{F},\sigma)$ is slope semistable, then $\mathcal{H}$ is also slope semistable.

This $H$-bundle $\mathcal{H}$ amounts to the data of a $q$-tuple $(\mathcal{E}_{U, i})_{i=1}^q$ of vector bundles on $U$. Let $j$ denote the open inclusion $j:U \hookrightarrow X$. For each $i$, define $\mathcal{E}_i \vcentcolon = j_*(\mathcal{E}_{U,i})$. Note that $\mathcal{E}_i$ is torsion free because it is the push-forward of a torsion free sheaf. The fact that $\mathcal{H}$ is semistable implies that each $\mathcal{E}_i$ is a slope semistable sheaf: otherwise the slope filtration of $\mathcal{E}_i$ would restrict over $U$ to a destabilizing filtration of the $\mathcal{E}_{U,i}$, thus contradicting the fact that $\mathcal{H}$ is semistable. 

For each $i$, $\det(V_i)$ is a character of $G$ and, by the definition of degree, $\vartheta(\det(V_{i})) = c_1(\mathcal{E}_i) \cdot H^{d-1}$. By Hizebruch-Riemann-Roch, the slope $\widehat{\mu}_{d-1}(\mathcal{E}_i)$ is given by
\begin{gather*} \widehat{\mu}_{d-1}(\mathcal{E}_i) = \frac{1}{A_{d} \, \rk(\mathcal{E}_i)}c_1(\mathcal{E}_{i}) \cdot H^{d-1} + \frac{1}{A_d}t_1 \cdot H^{d-1} =  \frac{1}{A_{d} \, \dim(V_i)}\vartheta(\det(V_i)) + \frac{1}{A_d}t_1 \cdot H^{d-1}\end{gather*}
Therefore $\widehat{\mu}_{d-1}(\mathcal{E}_i)$ is completely determined by $\vartheta$, and does not depend on $(\mathcal{F}, \sigma)$. Denote by $\mu_{max}$ the maximum of the slopes $\widehat{\mu}_{d-1}(\mathcal{E}_1), \widehat{\mu}_{d-1}(\mathcal{E}_2), \ldots , \widehat{\mu}_{d-1}(\mathcal{E}_q)$.

We have $\bigoplus_{i=1}^q \mathcal{E}_i = \mathcal{F}^{\vee \vee}$, where  $\mathcal{F}^{\vee \vee}$ denotes the reflexive hull of $\mathcal{F}$, 
because both sides restrict to the same associated vector bundle $\rho_*(\mathcal{G})$ on $U$. If we set $\mathcal{F}_j \vcentcolon = \left(\bigoplus_{i=1}^j\mathcal{E}_i\right) \cap \mathcal{F}\subset \mathcal{F}^{\vee \vee}$, this defines a filtration of $\mathcal{F}$ by saturated subsheaves:
\[0 = \mathcal{F}_0 \subset \mathcal{F}_1 \subset \cdots \subset \mathcal{F}_{q-1} \subset \mathcal{F}_q = \mathcal{F}\]
For each $j$ we have $(\mathcal{F}_j/\mathcal{F}_{j-1})|_{U} = \mathcal{E}_{U,j}$, and therefore $(\mathcal{F}_j/\mathcal{F}_{j-1})^{\vee \vee} = \mathcal{E}_j$. This implies that $\widehat{\mu}_{d-1}(\mathcal{F}_j/\mathcal{F}_{j-1}) = \widehat{\mu}_{d-1}(\mathcal{E}_j)$. Since $\mathcal{E}_j$ is slope semistable, it also follows that $\mathcal{F}_j/\mathcal{F}_{j-1}$ is a slope semistable sheaf.
We have therefore found a filtration of $\mathcal{F}$ such that each graded piece $\mathcal{F}_j/\mathcal{F}_{j-1}$ is a slope semistable sheaf such that $\widehat{\mu}_{d-1}(\mathcal{F}_j/\mathcal{F}_{j-1}) \leq \mu_{max}$. This implies that subsheaves $\mathcal{V} \subset \mathcal{F}$ satisfy $\widehat{\mu}_{d-1}(\mathcal{V}) \leq \mu_{max}$, as desired.
\end{proof}

\begin{thm} \label{thm: moduli space single general linear}
For each choice of Hilbert polynomial $P \in \mathbb{Q}[n]$ and degree $\vartheta \in \Lambda(G)$, the stack $\Bun_{\rho}(X)^{ss, P}_{\vartheta}$ admits a proper good moduli space $M_{\rho}(X)^{ss, P}_{\vartheta}$ in the sense of \cite{alper-good-moduli}.
\end{thm}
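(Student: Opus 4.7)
The plan is to deduce the theorem from the general existence theorem for good moduli spaces of Alper--Halpern-Leistner--Heinloth \cite[Thm. A]{alper2019existence}, which says that an algebraic stack locally of finite type over $k$ with affine diagonal admits a separated good moduli space provided it is quasicompact, $\Theta$-reductive, and $S$-complete. The hypotheses are essentially a repackaging of the structural results already established for $\Bun_{\rho}(X)^{ss}$ earlier in the paper.

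First I would record that the preliminary conditions hold for $\Bun_{\rho}(X)^{ss,P}_{\vartheta}$. It is an open substack of $\Bun_{\rho}(X)^{ss}$ (in fact open-and-closed inside the $P$-fixed semistable locus, by Corollary \ref{coroll: stratification_by_degree}), which is itself open in $\Bun_{\rho}(X)$ by the $\Theta$-stratification of Theorem \ref{thm: theta stratification}. By Proposition \ref{prop: forgetful morphism is affine}, $\Bun_{\rho}(X)$ is an algebraic stack locally of finite type over $k$ with affine diagonal, and these properties pass to open substacks. Quasicompactness of $\Bun_{\rho}(X)^{ss,P}_{\vartheta}$ is Proposition \ref{prop: boundedness of the semistable locus}.

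Next I would verify $\Theta$-reductivity and $S$-completeness. Both follow formally from the fact that $\nu$ induces a $\Theta$-stratification whose open stratum is the semistable locus, combined with the strict $\Theta$-monotonicity and strict $S$-monotonicity of $\nu$ established in Proposition \ref{prop: invariant is monotone}. Concretely, given a map $[(Y_{\mathfrak{X}}\setminus(0,0))/\mathbb{G}_m] \to \Bun_{\rho}(X)^{ss,P}_{\vartheta}$ where $\mathfrak{X}$ is $\Theta_R$ or $\overline{ST}_R$, the monotonicity property provides, after a DVR extension, a proper modification $\Sigma\to Y_{\mathfrak{X}}$ filling in a family over $(0,0)$; the content to check is that the resulting central fiber is still semistable. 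This is the standard argument from \cite[§5]{halpernleistner2021structure} (also carried out in \cite[§5]{torsion-freepaper} for pure sheaves): if the extension at $(0,0)$ were unstable, its leading term HN filtration would lift to $\mathfrak{X}\setminus(0,0)$ and, together with the inequality (M3), contradict semistability of the original family. Degree and Hilbert polynomial are locally constant in families (Proposition \ref{prop: degree is locally constant on families}), so the extension automatically lands in the same connected piece $\Bun_{\rho}(X)^{ss,P}_{\vartheta}$.

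With all hypotheses verified, \cite[Thm. A]{alper2019existence} produces a separated good moduli space $M_{\rho}(X)^{ss,P}_{\vartheta}$, which is of finite type over $k$ because its source is quasicompact. For properness, one invokes the valuative criterion: by the semistable reduction theorem stated right after Theorem \ref{thm: valuative criterion for properness rho sheaves} (itself a formal consequence of the $\Theta$-stratification together with Theorem \ref{thm: valuative criterion for properness rho sheaves}, as in \cite[§6]{alper2019existence}), $\Bun_{\rho}(X)^{ss}$ satisfies the existence part of the valuative criterion for properness; this property descends to the good moduli space because any map $\Spec K \to M_{\rho}(X)^{ss,P}_{\vartheta}$ lifts, after a finite extension, to the stack. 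Combined with finite type and separatedness, this yields properness. No step presents a genuine obstacle: the real work (monotonicity, HN boundedness, semistable reduction, and boundedness of the semistable locus) has already been done, and what remains is a black-box invocation of \cite{alper2019existence}.
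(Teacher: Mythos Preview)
Your proposal is correct and follows essentially the same route as the paper. The only difference is one of packaging: the paper invokes the black-box \cite[Thm.~2.26]{torsion-freepaper} (equivalently \cite[Thm.~B]{halpernleistner2021structure}), whose hypotheses are precisely strict $\Theta$- and $S$-monotonicity of $\nu$, boundedness of the semistable locus, and the existence part of the valuative criterion for $\Bun_{\rho}(X)$, whereas you go one level deeper to \cite[Thm.~A]{alper2019existence} and spell out how monotonicity yields $\Theta$-reductivity and $S$-completeness and how semistable reduction gives properness; these implications are exactly what is packaged inside the theorem the paper cites.
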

\begin{proof}
    It suffices to check the hypotheses of \cite[Thm. 2.26]{torsion-freepaper} (see also \cite[Thm. B]{halpernleistner2021structure}). Proposition \ref{prop: invariant is monotone} implies that the numerical invariant $\nu$ is strictly $\Theta$-monotone and strictly $S$-monotone. By Proposition \ref{prop: boundedness of the semistable locus}, the stack $\Bun_{\rho}(X)^{ss, P}_{\vartheta}$ is bounded. By Theorem \ref{thm: valuative criterion for properness rho sheaves}, the stack $\Bun_{\rho}(X)$ satisfies the existence part of the valuative criterion of properness.
\end{proof}
If the group $G$ is semisimple, then the representation $\rho$ factors through the subgroup $\SL(V) \subset \GL(V)$. In this case the moduli space of semistable $\rho$-sheaves was already constructed in \cite{schmitt.singular, glss.singular.char} using GIT. For semisimple groups the proof above provides an alternative stack-theoretic approach which is intrinsic, in the sense that it does not appeal to an embedding into an external GIT problem.

On the other hand, suppose that the representation $\rho$ is central as in \Cref{defn: central representation}. Then, the degree $\vartheta$ is determined by the Hilbert polynomial $P$ and each $\Bun_{\rho}(X)^{ss,P}$ admits a proper good moduli space $M_{\rho}(X)^{ss, P}$. Let $\Bun_{G}(X)^s$ denote the stack of slope stable $G$-bundles on $X$. By Corollary \ref{coroll: semistability comparison central reps}, the stack $\Bun_{\rho}(X)^{ss}$ of Gieseker semistable $\rho$-sheaves contains $\Bun_{G}(X)^s$ as an open substack, which is saturated in the sense of \cite[Defn. 6.1]{alper-good-moduli} by a standard argument as in the case when $X$ is a curve. Therefore, the separated moduli space $M_{\rho}(X)^{ss} = \bigsqcup_{P\in \mathbb{Q}[n]} M_{\rho}(X)^{ss, P}$ contains an open subspace $M_{G}(X)^s$ which is a good moduli space for the stack $\Bun_{G}(X)^s$.
\end{subsection}
\end{section}

\begin{section}{Generalization to products of general linear groups and comparison with previous work}\label{section: generalization}

\begin{subsection}{Generalization: $\rho$-sheaves for products of general linear groups}
In order to deal with reductive groups $G$ with large center, it is convenient to allow the codomain of $\rho$ to be a product of general linear groups. All of the previous results extend to this more general context.

Fix a tuple $V^{\bullet} = (V^i)_{i=1}^b$ of finite-dimensional vector spaces over $k$ with $r_i \vcentcolon = \dim \, V^i$. Define $\GL(V^{\bullet}) \vcentcolon = \prod_{i=1}^b \GL(V^i)$. We fix once and for all a faithful representation $\rho: G \rightarrow \GL(V^{\bullet})$, which induces representations $\rho_i: G \rightarrow \GL(V^i)$ by composing with the natural projections $\GL(V^{\bullet}) \rightarrow \GL(V^i)$.

Let $S$ be a $k$-scheme. Suppose that we are given a tuple $\mathcal{F}^{\bullet} = (\mathcal{F}^i)_{i=1}^b \in \prod_{i=1}^b \Coh_{r_i}^{tf}(X)\,(S)$ of families of torsion-free sheaves on $X_{S}$. We set
\[ E_{V^i}(\mathcal{F}^i) \vcentcolon = \Sym^{\bullet}(\mathcal{F}^i \otimes_k (V^i)^{\vee}) \otimes_{\mathcal{O}_{X_{S}}} \Sym^{\bullet}(\det(\mathcal{F}^i)^{\vee} \otimes_k \det(V^i)) \]
We also define $H(\mathcal{F}^i, V^i) \vcentcolon = \underline{\Spec}_{X_S}(E_{V^i}(\mathcal{F}^i))$
and $H(\mathcal{F}^{\bullet}, V^{\bullet}) \vcentcolon = \prod_{i=1}^b H(\mathcal{F}^i, \, V^i) \;$.
Each scheme $H(\mathcal{F}^i, V^i)$ admits a right action of the group $G$ via the representation $\rho_i$, as described in Subsection \ref{subsection: scheme of G-reductions}. Equip $H(\mathcal{F}^{\bullet}, V^{\bullet})$ with the corresponding diagonal action of $G$ and form the relatively affine GIT quotient $H(\mathcal{F}^{\bullet}, V^{\bullet})/ \! / \! \, G$ by taking the ring of $G$-invariants. Let $U$ be a big open subscheme such that $\mathcal{F}^i|_{U}$ is locally-free for all $i$. The same construction as in Subsection \ref{subsection: scheme of G-reductions} applies to yield a divisor $\delta^i_{U} -1$ on $H(\mathcal{F}^i, V^i)|_{U}$. This divisor is stable under the right action of $G$. We descend these divisors to get a closed subscheme $\prod_{i=1}^{b} \left(H(\mathcal{F}^i, V^i)|_{U} \right)_{\alpha^i_{U} = 0}\hookrightarrow \left(H(\mathcal{F}^{\bullet}, V^{\bullet})|_{U} \right)/ \! / \! \, G$. Define the scheme of $G$-reductions $\Red_{G}(\mathcal{F}^{\bullet})$ to be the scheme-theoretic closure of this subset in $H(\mathcal{F}^{\bullet}, V^{\bullet})/ \! / \! \, G$.
\begin{defn}
\label{defn:principalsheaf2}
A $\rho$-sheaf on $X_{S}$ is a pair $\left( \mathcal{F}^{\bullet}, \sigma \right)$, where $\mathcal{F}^{\bullet} \in \prod_{i=1}^b \Coh_{r_i}^{tf}(X)\, (S)$ and $\sigma$ is a section $\sigma: X_{S} \rightarrow \Red_{G}(\mathcal{F}^{\bullet})$.
\end{defn}
The tuple $\mathcal{F}^{\bullet}|_{U}$ of locally-free sheaves defines a $\GL(V^{\bullet})$-bundle $\mathcal{P}$ on $U$. The same argument as in Subsection \ref{subsection: scheme of G-reductions} shows that the data of a section $\sigma: X_{S} \rightarrow \Red_{G}(\mathcal{F}^{\bullet})$ is equivalent to a $G$-reduction of structure group for the $\GL(V^{\bullet})$-bundle $\mathcal{P}$. In particular, we obtain a rational $G$-bundle $\mathcal{G}$ defined on $U$. This allows to define the notion of degree $\text{deg}(\mathcal{F}^{\bullet}, \sigma)$ of a $\rho$-sheaf as in Definition \ref{defn: degree}.

We denote by $\Bun_{\rho}(X)$ the stack that takes an affine $k$-scheme $T$ to the groupoid of $\rho$-sheaves on $X_{T}$. There is a natural forgetful morphism $\Forget: \Bun_{\rho}(X) \rightarrow \prod_{i=1}^b \Coh_{r_i}^{tf}(X)$ defined by $\Forget(\mathcal{F}^{\bullet}, \sigma) = \mathcal{F}^{\bullet}$, which is affine and of finite type by the same reasoning as in Proposition \ref{prop: forgetful morphism is affine}. Therefore, $\Bun_{\rho}(X)$ is an algebraic stack with affine diagonal and locally of finite type over $k$. All of the discussion for filtrations in Subsection \ref{subsection: filtrations} applies in this more general context, the only difference being that the underlying associated filtration is now a tuple of filtrations $(\mathcal{F}^{\bullet}_m)_{m \in \mathbb{Z}}$, where each $(\mathcal{F}^i_m)_{m \in \mathbb{Z}}$ is a filtration of the torsion-free sheaf $\mathcal{F}^i$.

We define a family of line bundles $L_n$ on $\Bun_{\rho}(X)$ as follows. For each $i$, we have a family of line bundles $L_n^i$ on $\Coh^{tf}_{r_i}(X)$ defined by the same formulas as in Definition \ref{defn: line bundles}. Then, we define the box product $L_n := \boxtimes_{i=1}^b L^i_n$ on the stack $\prod_{i=1}^b \Coh^{tf}_{r_i}(X)$, and use the morphism $\Forget$ to pullback $L_n$ to a line bundle on $\Bun_{\rho}(X)$, also denoted by $L_n$. 

More explicitly, for every morphism $f: T \rightarrow \Bun_{\rho}(X)$ represented by a $\rho$-sheaf $(\mathcal{F}^{\bullet}, \sigma)$ on $X_{T}$, we have
\[ f^{*}\, L_n  = \left(\bigotimes_{i=1}^b f^* \, M^i_n \right) \otimes \left(\bigotimes_{i=1}^b (f^*\, b^i_d)^{-\otimes \, \overline{p}_{\mathcal{F}^i}(n)} \right) \]
where $f^{*} M^i_n \vcentcolon = \det \, \left(R\pi_{T \, *}\left(\mathcal{F}^i(n) \right) \right)$
and $b^i_d \vcentcolon = \bigotimes_{j = 0}^d (M^i_{j})^{\otimes (-1)^{d+j} \binom{d}{j}}$.

We will use this family of line bundles $L_n$ to define a polynomial numerical invariant on $\Bun_{\rho}(X)$. Let $(\mathcal{F}^{\bullet}, \sigma)$ be a $\rho$-sheaf on $X_{K}$ for some field extension $K \supset k$. For every nondegenerate filtration $f$ with underlying associated filtration $(\mathcal{F}^{\bullet}_m)_{m\in \mathbb{Z}}$, we set $\nu(f)$ to be the polynomial
\[ \nu(f) = \frac{\wgt(L_n|_{0})}{\sqrt{\sum_{i=1}^b \sum_{m \in \mathbb{Z}} m^2 \cdot \rk(\mathcal{F}^i_m / \mathcal{F}^i_{m+1})}}\]
The same computation as in Proposition \ref{prop: value numerical invariant} shows that this can be rewritten in terms of the reduced Hilbert polynomials as follows:
\[ \nu(f) = \sqrt{A_d} \cdot \frac{\sum_{i=1}^b \sum_{m \in \mathbb{Z}} m \cdot  (\overline{p}_{\mathcal{F}^i_m/ \mathcal{F}^i_{m+1}} - \overline{p}_{\mathcal{F}^i}) \cdot \rk(\mathcal{F}^i_m/\mathcal{F}^i_{m+1})}{\sqrt{\sum_{i=1}^b \sum_{m \in \mathbb{Z}} m^2 \cdot \rk(\mathcal{F}^i_m / \mathcal{F}^i_{m+1})}}\]

One can define a version of the affine Grassmannian in this more general context. Let $S$ be a quasicompact $k$-scheme and let $\mathcal{F^{\bullet}}$ be a tuple of torsion-free sheaves in $\prod_{i=1}^b \Coh_{r_i}^{tf}(X)\, (S)$. Let $D \hookrightarrow X_S$ be an effective Cartier divisor that is flat over $S$ and set $Q = X_S \setminus D$. Assume that we have a section $\sigma : Q \rightarrow \Red_{G}(\mathcal{F}^{\bullet})$ defined over $Q$.

The affine Grassmannian $\Gr_{X_S, D, \mathcal{F}^{\bullet}, \sigma}$ is the functor from $\left(\Aff_S\right)^{op}$ into sets defined as follows. For each affine scheme $T$ in $\Aff_S$, $\Gr_{X_S, D, \mathcal{F}^{\bullet}, \sigma}(T)$ is the set of equivalence classes of the following data
\begin{enumerate}[(1)]
    \item A $\rho$-sheaf $(\mathcal{E}^{\bullet}, \zeta)$ on $X_T$.
    \item A morphism $\theta^{\bullet}: \mathcal{F}^{\bullet}|_{X_T} \to \mathcal{E}^{\bullet}$ of tuples such that the restriction $\theta^{\bullet}|_{Q_{T}}$ is an isomorphism. Moreover, we require that the following diagram induced by the isomorphism $\theta^{\bullet}|_{Q_T}$ commutes.
\[
\begin{tikzcd}
  Q_T \ar[r, "\zeta|_{Q_T}"] \ar[rd, "\sigma|_{Q_T}", labels=below left] & \Red_{G}(\mathcal{E}^{\bullet}|_{Q_T}) \\   & \Red_{G}(\mathcal{F}^{\bullet}|_{Q_T}) \ar[u, symbol = \xrightarrow{\; \; \sim \; \;} ]
\end{tikzcd}
\]
\end{enumerate}

There is a forgetful morphism $\Gr_{{X_S, D, \mathcal{F}^{\bullet}, \sigma}} \rightarrow \prod_{i=1}^b \Gr_{X_S, D, \mathcal{F}^i}$ that takes a point $(\mathcal{E}^{\bullet}, \zeta, \theta^{\bullet})$ to the tuple $(\mathcal{E}^i, \theta^i)_{i=1}^{b}$.
The same proof as in Proposition \ref{prop: the affine grassmannian is indproper} shows that this forgetful morphism exhibits $\Gr_{{X_S, D, \mathcal{F}^{\bullet}, \sigma}}$ as a closed ind-subscheme of $\prod_{i=1}^b \Gr_{X_S, D, \mathcal{F}^i}$.

 Let $P^{\bullet} = (P^i)_{i=1}^b$ be a tuple of rational polynomials. For each $N \geq 0$ we define the subfunctor 
 \[\Gr_{X_S, D, \mathcal{F}^{\bullet}, \sigma}^{ \leq N, P^{\bullet}} \vcentcolon = \Gr_{X_S, D, \mathcal{F}^{\bullet}, \sigma} \cap \prod_{i=1}^b \Gr_{X_S, D, \mathcal{F}^i}^{\leq N, P^i} \]
 which is a closed subscheme of $\Gr_{X_S, D, \mathcal{F}^{\bullet}, \sigma}$ and it is projective over $S$. There exists an integer $m\gg0$ such that, for all $n \geq m$, the restriction of the dual $L_n^{\vee}$ is $S$-ample on the projective stratum $\Gr_{X_S, D, \mathcal{F}^{\bullet}, \sigma}^{ \leq N, P^{\bullet}}$.
 
The proof of Proposition \ref{prop: invariant is monotone} applies without change in this more general context. The argument of HN boundedness in Proposition \ref{prop: HN boundedness} also applies, one only needs to keep track of one extra index $1 \leq i \leq b$ in all the formulas. In particular, Theorem \ref{thm: theta stratification} holds and the numerical invariant $\nu$ induces a $\Theta$-stratification on the stack $\Bun_{\rho}(X)$. All of the arguments in Subsection \ref{subsection: theta stratification} and Section \ref{section: properness} apply without change. 

The tuple of rational polynomials $P^{\bullet} = (P^i)_{i=1}^b$ defines an open substack $\Bun_{\rho}(X)^{ss, P^{\bullet}} \subset \Bun_{\rho}(X)$ parametrizing Gieseker semistable $\rho$-sheaves $(\mathcal{F}^{\bullet}, \sigma)$ such that the Hilbert polynomial of $\mathcal{F}^i$ is $P^i$. If we fix the degree $\vartheta \in \Lambda(G)$ of the $\rho$-sheaf, we obtain an open and closed substack $\Bun_{\rho}(X)^{ss, P^{\bullet}}_{\vartheta} \subset \Bun_{\rho}(X)^{ss, P^{\bullet}}$, which is quasicompact by the same proof as in Proposition \ref{prop: boundedness of the semistable locus}. It follows by \cite[Thm. 2.26]{torsion-freepaper} that $\Bun_{\rho}(X)^{ss, P^{\bullet}}_{\vartheta}$ admits a proper good moduli space $M_{\rho}(X)^{ss, P^{\bullet}}_{\vartheta}$. 

\begin{thm} \label{thm: main theorem products gen linear groups}
The polynomial numerical invariant $\nu$ induces a $\Theta$-stratification on the stack $\Bun_{\rho}(X)$. In particular the leading term HN filtration induces a stratification of $\Bun_{\rho}(X)$ by locally closed susbtacks.

For each choice of degree $\vartheta \in \Lambda(G)$ and tuple of Hilbert polynomials $P^{\bullet}$, the open substack $\Bun_{\rho}(X)^{ss, P^{\bullet}}_{\vartheta}$ admits a proper good moduli space $M_{\rho}(X)^{ss, P^{\bullet}}_{\vartheta}$. \qed
\end{thm}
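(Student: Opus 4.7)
The plan is to verify that the general machinery of \cite[Thm.~2.26]{torsion-freepaper} (equivalently \cite[Thm.~B]{halpernleistner2021structure}) applies verbatim in this more general setting, using the constructions and results that have been extended just before the statement. Concretely, I would first check the two monotonicity conditions and the HN boundedness condition for the numerical invariant $\nu$ on $\Bun_{\rho}(X)$, which together imply the existence of the $\Theta$-stratification by \cite[Thm.~2.26]{torsion-freepaper}; the stratification of $\Bun_{\rho}(X)$ by the leading term HN filtration is then a formal consequence, exactly as in Subsection~\ref{subsection: theta stratification}. For the moduli space assertion I would combine this with boundedness of the semistable locus $\Bun_{\rho}(X)^{ss, P^{\bullet}}_{\vartheta}$ and the existence part of the valuative criterion of properness for $\Bun_{\rho}(X)$.

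For strict $\Theta$- and $S$-monotonicity, I would run the argument of Proposition~\ref{prop: invariant is monotone} word for word, replacing the single torsion-free sheaf $\mathcal{F}$ by the tuple $\mathcal{F}^{\bullet}=(\mathcal{F}^i)_{i=1}^{b}$. The Hartogs-type extension of Proposition~\ref{prop: hartog for singular Gbundles} applies coordinatewise to produce a $\mathbb{G}_m$-equivariant tuple $\mathcal{F}^{\bullet}$ on $X_Y$, a common $\mathbb{G}_m$-stable divisor $D$ and a $\mathbb{G}_m$-equivariant injection $\theta^{\bullet}\colon \mathcal{F}^{\bullet}|_{X_{Y\setminus(0,0)}}\to \mathcal{E}^{\bullet}$, which is an isomorphism away from $D$; the induced section of $\Red_{G}(\mathcal{F}^{\bullet})$ on $Q\setminus(0,0)$ extends (uniquely) across the codepth-two locus by the analogue of Lemma~\ref{lemma: lemma on very big open subsets}(c). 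One then invokes the generalized affine Grassmannian $\Gr_{X_S,D,\mathcal{F}^{\bullet},\sigma}$ sitting as a closed ind-subscheme of $\prod_{i}\Gr_{X_S,D,\mathcal{F}^i}$, uses ind-projectivity of each stratum $\Gr_{X_S,D,\mathcal{F}^{\bullet},\sigma}^{\leq N,P^{\bullet}}$ and the asymptotic $S$-ampleness of $L_n^{\vee}$ on these strata (both of which were established in the paragraph immediately preceding the theorem), and concludes by the same weight-comparison argument as in the single-factor case, noting that the invariant $b$ only depends on the ranks of the graded pieces and that these ranks are constant along a $\mathbb{P}^{1}_{\kappa}[a]$ mapping finitely into a fibre over $(0,0)$.

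For HN boundedness I would follow Proposition~\ref{prop: HN boundedness}, using the reduction provided by the natural analogue of Lemma~\ref{lemma: equivalent HN boundedness condition}. Since the representation $\rho$ now takes values in $\prod_i \GL(V^i)$, the weight decomposition of each $V^i$ produces an enlarged matrix $M$ of size $(h+l)\times (r_1+\cdots+r_b)$, and the numerical invariant $\nu_{d-1}$ on the cone $\mathcal{C}_\lambda$ is expressed by exactly the formula of Corollary~\ref{coroll: second formula nu_(d-1)} with $\vec{a}$ and $\vec{1}$ of length $\sum_i r_i$ (but with $c$ replaced by a vector of slopes $c^{(i)}$, one per factor, coming from the Hirzebruch--Riemann--Roch computation applied to each $\mathcal{F}^i$ separately). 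The linear functional of Lemma~\ref{lemma: construction of linear functional} still exists because $c_i=\psi(\chi_i^{\lambda})$ depends only on the $L_{\lambda}$-equivariant line bundle $\det V_{\chi_i^{\lambda}}$, and the optimization argument goes through unchanged; the uniform lower bound on $\widehat{\mu}_{d-1}(\mathcal{E}_{\chi^{\lambda}_{i_{\min}}})$ is controlled by the smallest slope in the Gieseker--HN filtration of the \emph{unique} factor $\mathcal{F}^{i_0}$ where $\chi^{\lambda}_{i_{\min}}$ sits, and by constructibility of HN types this is uniformly bounded in the family. Combining monotonicity and HN boundedness with \cite[Thm.~2.26]{torsion-freepaper} gives the $\Theta$-stratification, and the leading term HN stratification statement follows from the standard dictionary of Subsection~\ref{subsection: theta stratification}.

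Finally, for the moduli space assertion I would apply the second half of \cite[Thm.~2.26]{torsion-freepaper}: the semistable locus is quasicompact by (the obvious extension of) Proposition~\ref{prop: boundedness of the semistable locus}, where the same Ramanathan-semistability-plus-restriction-to-curves argument yields, factor by factor, a uniform upper bound on slopes of subsheaves of each $\mathcal{F}^i$; and $\Bun_{\rho}(X)\to \Spec k$ satisfies the existence part of the valuative criterion for properness by the exact same proof as Theorem~\ref{thm: valuative criterion for properness rho sheaves}, since Lemma~\ref{lemma: equivalence valuative criteria properness} and the formal-gluing/Bruhat--Tits input of Proposition~\ref{prop: extension after ramified cover} concern only the underlying $G$-bundle on $\Spec(\mathcal{O}_{X_R,\eta_{\kappa}})$ and are independent of the choice of $\rho$. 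I expect the main obstacle to be purely notational: bookkeeping the tuple indices in the HN boundedness argument so that the optimization on $\mathcal{C}_\lambda$ and the Hirzebruch--Riemann--Roch computation combine correctly across the factors. No genuinely new idea beyond those already developed in Sections~\ref{section: numerical invariant}--\ref{section: moduli for the semistable locus} should be required.
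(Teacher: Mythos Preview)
Your proposal is correct and follows essentially the same approach as the paper: the theorem is stated with a \qed and its proof is the surrounding discussion in Section~\ref{section: generalization}, which simply asserts that Propositions~\ref{prop: invariant is monotone}, \ref{prop: HN boundedness}, \ref{prop: boundedness of the semistable locus} and Theorem~\ref{thm: valuative criterion for properness rho sheaves} go through verbatim (with an extra index $1\leq i\leq b$) and then invokes \cite[Thm.~2.26]{torsion-freepaper}. Your sketch in fact supplies more detail than the paper does, correctly identifying the only real change as bookkeeping the tuple indices in the HN boundedness optimization.
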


If the representation $\rho$ is central, then the degree $\vartheta$ is determined by the tuple of Hilbert polynomials $P^{\bullet}$ and each $\Bun_{\rho}(X)^{ss,P^{\bullet}}$ admits a proper good moduli space $M_{\rho}(X)^{ss, P^{\bullet}}$. In this case, the stack $\Bun_{\rho}(X)^{ss}$ of Gieseker semistable $\rho$-sheaves contains the stack $\Bun_{G}(X)^s$ of slope stable $G$-bundles as an open substack, by Corollary \ref{coroll: semistability comparison central reps}. Therefore the separated moduli space $M_{\rho}(X)^{ss} = \bigsqcup_{P^{\bullet}\in (\mathbb{Q}[n])^b} M_{\rho}(X)^{ss, P^{\bullet}}$ contains an open subspace $M_{G}(X)^s$ which is a good moduli space for the stack $\Bun_{G}(X)^s$. Hence $M_{\rho}(X)^{ss}$ is a locally proper compactification of the moduli of slope stable $G$-bundles on $X$ (cf.  \cite[\S7]{biswas-gomez-restriction}). 
\end{subsection}

\begin{subsection}{Comparison with previous work} \label{subsection: comparison previous work}

The problem of finding a compactification of the moduli space by
introducing singular degenerations of principal bundles has already been
considered in the literature
(\cite{schmitt.singular, gomezsols.tensors, gomezsols.principalsheaves,
glss.singular.char, glss.large})\footnote{We warn the reader that, in \cite{schmitt.singular, glss.singular.char, glss.large},
a principal $\rho$-sheaf was defined using the
dual representation.}. 
All notions fit the same framework:
we fix a representation $\rho$ of $G$ (in \cite{gomezsols.principalsheaves,glss.large} $\rho$ is not required to
be injective) and the objects are triples
consisting of a principal $G$-bundle $P$ on a big open subset $U\subset X$, a
torsion free sheaf $\mathcal{E}$ on $X$, 
and an isomorphism $\psi$ on $U$ between the vector bundle associated to $P$
using the representation $\rho$ and the restriction $\mathcal{E}|_U$.
In this subsection we will see that our definition of Gieseker
semistability for principal $\rho$-sheaves coincides with the
definitions that have already appeared.

By \Cref{prop: relation scheme and G-reductions}, the following is equivalent to \Cref{defn:principalsheaf2}
\begin{defn}
\label{defn:principalsheaf3}
Let $G$ be a connected reductive group.
Let
$$
\rho=(\rho_1,\ldots,\rho_b):G \longrightarrow \GL(V^1)\times \cdots\times \GL(V^b)
$$
be an injective homomorphism.
Let $S$ be a $k$-scheme.
A principal $\rho$-sheaf on $X_S$ is a triple
$(\mathcal{G},\mathcal{F}^\bullet,\psi^\bullet)$
where $\mathcal{G}$ is a principal $G$-bundle on a big open set $U\subset X_S$,
$\mathcal{F}^\bullet=(\mathcal{F}^i)_{i=1}^{b}$ is a tuple of 
torsion free sheaves on $X_S$, and
$\psi^\bullet=(\psi^i)_{i=1}^b$ is a tuple of isomorphisms
$(\rho_i)_*\mathcal{G}\cong \mathcal{F}^i|_{U}$.
\end{defn}

We generalize Definition \ref{defn: Gieseker semistable} in the following way, using the summation by parts argument as in Equation \eqref{eqn: ss1}.
\begin{defn}\label{defn: Gieseker stable}
Let $K\supset k$ be an algebraically closed field.
A principal $\rho$-sheaf $(\mathcal{G}, \mathcal{F}^{\bullet}, \psi^{\bullet})$ on $X_K$ is 
Gieseker semistable
if for all cocharacters $\lambda: (\mathbb{G}_m)_{K} \longrightarrow G_{K}$ and all weighted parabolic reductions $(\lambda,
\mathcal{G}_{\lambda})$ 
with associated underlying filtrations $(\mathcal{F}^i_{m})_{m \in
  \mathbb{Z}}$ of $\mathcal{F}^i$, 
(as in Definition \ref{defn: weighted parabolic reduction singular G bundles}) we have
\begin{equation}
\label{eq:semistabilitycondition}
  \sum_{i=1}^b
  \sum_{m \in \mathbb{Z}} 
  \left(\rk(\mathcal{F}^i) \cdot P_{\mathcal{F}^i_{m}} -
  \rk(\mathcal{F}^{i}_m)\cdot P_{\mathcal{F}^i}
  \right) \leq 0 
\end{equation}
\end{defn}

We start by comparing our notion of Gieseker semistability with Ramanathan's notion of slope semistability for the underlying rational $G$-bundle.
\begin{defn}[\cite{Ram79}]
\label{defn: ramanathan rational}
A rational $G$-bundle on a polarized
smooth projective variety $X$ is a $G$-bundle $\mathcal{G}$ 
on a big open set $U\subset X$.
It is called slope stable (respectively, semistable) if for all
reductions of structure group
$\mathcal{G}_P\to U'\subset U$ to a proper parabolic subgroup $P\subset G$ on 
a big open subset $U'$ and
all dominant characters $\chi$ of $P$, the associated line bundle $\chi_*(\mathcal{G}_P)$ satisfies
$$
\deg \chi_*(\mathcal{G}_P) <  0\quad \text{(respectively, $\leq$)}
$$
Here the degree is defined with respect to the fixed polarization. A rational $G$-bundle is called slope unstable if it is not slope semistable.
\end{defn}

\begin{prop} \label{prop: gieseker semistable implies slope semistable}
Let $K \supset k$ be a field extension. Let $(\mathcal{F}^{\bullet}, \sigma)$ be a $\rho$-sheaf on $X_{K}$. Suppose that $U$ is a big open subset of $X_{K}$ such that the tuple $\mathcal{F}^{\bullet}|_{U}$ is locally-free. Let $\mathcal{G}$ denote the $G$-bundle on $U$ corresponding to $(\mathcal{F}^{\bullet}, \sigma)$. If $(\mathcal{F}^{\bullet}, \sigma)$ is Gieseker semistable, then the rational $G$-bundle $\mathcal{G}$ is slope semistable in the sense of \cite{Ram79}.
\end{prop}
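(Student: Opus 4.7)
The plan is to prove the contrapositive: if $\mathcal{G}$ is Ramanathan slope unstable, I will exhibit a weighted parabolic reduction of $(\mathcal{F}^\bullet, \sigma)$ whose Gieseker numerical invariant is strictly positive for $n \gg 0$, violating \eqref{eq:semistabilitycondition}. The key observation is that, by formula \eqref{eqn_4} combined with Lemma \ref{lemma: construction of linear functional}, the leading ($n^{d-1}$) coefficient $\nu_{d-1}(\lambda)$ of the polynomial numerical invariant $\nu(\lambda, \mathcal{G}_\lambda)$ from Section \ref{section: numerical invariant} (generalized to tuples in Section \ref{section: generalization}) equals, up to a positive constant, Ramanathan's degree of the line bundle associated to a suitable dominant character. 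Gieseker semistability forces $\nu(\lambda, \mathcal{G}_\lambda) \leq 0$ as a polynomial for every weighted parabolic reduction, hence forces $\nu_{d-1}(\lambda) \leq 0$, which will translate directly into Ramanathan slope semistability.

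Concretely, suppose that on some big open $U' \subset U$ there is a parabolic reduction $\mathcal{G}_P \to U'$ together with a dominant character $\chi$ of $P$ satisfying $\deg \chi_*(\mathcal{G}_P) > 0$. After passing to a finite extension of $K$ (which preserves both Ramanathan's condition and Gieseker semistability, the latter by definition), we may assume that $G_K$ is split and write $P = P_\lambda$ for some cocharacter $\lambda : \mathbb{G}_m \to G_K$ chosen so that $\chi$ pairs positively with $\lambda$. This produces a weighted parabolic reduction $(\lambda, \mathcal{G}_P)$ of $(\mathcal{F}^\bullet, \sigma)$ in the sense of Definition \ref{defn: weighted parabolic reduction singular G bundles}, with underlying saturated filtrations $(\mathcal{F}^i_m)_{m \in \mathbb{Z}}$ of each $\mathcal{F}^i$. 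Applying Lemma \ref{lemma: construction of linear functional} to each factor $\GL(V^i)$ of $\GL(V^\bullet)$ and substituting into formula \eqref{eqn_4} of Lemma \ref{lemma: first formula for nu_(d-1)} in its obvious tuple generalization, the slope sum defining $\nu_{d-1}(\lambda)$ becomes a positive scalar multiple of $\deg \chi_*(\mathcal{G}_P) > 0$. By the summation-by-parts identity \eqref{eqn: ss1}, the leading coefficient of the polynomial on the left-hand side of \eqref{eq:semistabilitycondition} is a positive multiple of $\nu_{d-1}(\lambda)$, so this polynomial is strictly positive for $n \gg 0$, contradicting Gieseker semistability.

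The main obstacle is the concrete matching between $\deg \chi_*(\mathcal{G}_P)$ and the slope sum $\sum_i \langle \lambda, \chi_i^\lambda \rangle (c_i - c)$ defining $\nu_{d-1}(\lambda)$. Lemma \ref{lemma: construction of linear functional} supplies the identification of the linear functional $\psi: X^*(L_\lambda)_\mathbb{R} \to \mathbb{R}$, $\chi' \mapsto \deg \chi'_*(\mathcal{G}_{L_\lambda})$, with the vector of slopes $(c_i)$ evaluated on the $Z_\lambda$-weights $\chi_i^\lambda$ of $V^\bullet$. The remaining technical step is to verify that, for an appropriate choice of $\lambda$ in the ray of the dominant cone $\mathcal{C}_\lambda$ determined by $\chi$, the element $\sum_i \langle \lambda, \chi_i^\lambda \rangle \chi_i^\lambda \in X^*(Z_\lambda)_\mathbb{R}$ is a positive scalar multiple of $\chi|_{Z_\lambda}$. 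This in turn amounts to the fact that the quadratic form on $X_*(Z_\lambda)_\mathbb{R}$ built from the weights of $\rho$ is positive definite on the ray of $\lambda$ (a consequence of faithfulness of $\rho$) and couples $\lambda$ to $\chi$ in the expected way via the root datum of $L_\lambda$. Once this identification is carried out, the leading coefficient computation immediately yields Gieseker instability and completes the contrapositive.
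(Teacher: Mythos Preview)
Your proposal has the right contrapositive shape but leaves open exactly the step where the argument lives, and your sketch of that step is not quite right. Substituting Lemma \ref{lemma: construction of linear functional} into the tuple version of \eqref{eqn_4}, the numerator of $\nu_{d-1}(\lambda)$ is
\[
\psi\Bigl(\sum_{j,i}\langle\lambda,\chi_{i,j}^\lambda\rangle\,\chi_{i,j}^\lambda\Bigr)\;-\;\sum_j c_j\,\langle\lambda,{\textstyle\det_j}\rangle,
\]
so arranging $\sum_{j,i}\langle\lambda,\chi_{i,j}^\lambda\rangle\chi_{i,j}^\lambda$ to be a positive multiple of $\chi|_{Z_\lambda}$ is not enough: you also need the second sum to vanish, which you never mention. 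That happens precisely when $\lambda$ factors through $[G_K,G_K]$, and for that you must take the destabilizing Ramanathan character $\chi$ to be trivial on the center of $G$ and invert the $\rho$-form restricted to $X_*(T')_{\mathbb R}$. Even granting this, you still owe the verification that the resulting $\lambda$ lies in the $P$-dominant cone so that the given $P$-reduction extends to one for $P_\lambda$; this does follow from Weyl-invariance of the $\rho$-form (which forces $(\lambda,\alpha_i^\vee)$ and $\langle\alpha_i,\lambda\rangle$ to have the same sign for each simple root), but it is not a consequence of positive definiteness alone, contrary to what your last paragraph suggests.

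The paper takes a quite different route that avoids the character-to-cocharacter matching altogether. Rather than starting from an arbitrary destabilizing $(\chi,P)$, it invokes the \emph{canonical} slope reduction of \cite{anchouche-hassan-biswas}, whose defining property guarantees that the degree functional $\psi$ itself lies in $\Int(\mathcal C_\lambda)$. Setting $\lambda=N\psi$, the numerator of $\nu_{d-1}$ becomes the Cauchy--Schwarz defect $\sum_j\frac{1}{r_j}\bigl(\lVert\vec 1_j\rVert^2\lVert\vec v_j\rVert^2-(\vec 1_j\!\cdot\!\vec v_j)^2\bigr)$ with $\vec v_j=(c_{i,j})_i$, automatically nonnegative; strict positivity then comes from a short Weyl-group argument using only faithfulness of $\rho$. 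This is cleaner and more robust: it works uniformly without assuming $\rho$ central and requires no analysis of how the $\rho$-form interacts with the dominant chambers.
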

\begin{proof}
We freely use the notation from the proof of Proposition \ref{prop: HN boundedness} and assume, after replacing $K$ with a field extension, that $K$ is algebraically closed. Suppose that the rational $G$-bundle $\mathcal{G}$ is not slope semistable. 
Then the Harder-Narasimhan reduction \cite[Theorem 6 (1)]{anchouche-hassan-biswas} gives a proper parabolic subgroup $P_{\lambda}\subset G$ and a $P_{\lambda}$-reduction of structure group $\mathcal{G}_{P_{\lambda}}$ defined over a big open subset $Q$ contained in $U$. After replacing $U$ with the smaller subset $Q$, we can assume without loss of generality that $\mathcal{G}_{P_{\lambda}}$ is defined over $U$. 
The rational bundle $\mathcal{G}_{P_{\lambda}}$  determines a linear functional $\psi: X^*(Z_{\lambda})_{\mathbb{R}} \rightarrow \mathbb{R}$, given by $\psi(\chi):=\deg(\mathcal{G}_{P_{\lambda}}(\chi))$ (as in the proof of Lemma \ref{lemma: construction of linear functional}). Since $\psi$ actually lies in $X_*(Z_{\lambda})_{\mathbb{Q}}$, there is a multiple $N \psi$ that lies in the lattice of cocharacters $X_*(Z_{\lambda})$. By the second property of the Harder-Narasimhan reduction \cite[Theorem 6 (2)]{anchouche-hassan-biswas}, $\deg(\mathcal{G}_{P_{\lambda}}(\chi))>0$ whenever $\chi$ is a non-trivial non-negative linear combination of simple roots, therefore $N\psi$ lies in the interior of the cone of $P_{\lambda}$-dominant coweights. It follows that $P_{\lambda}= P_{(N \psi)}$. Therefore $f = (N\psi, \mathcal{G}_{P_{\lambda}})$ is a nondegenerate weighted parabolic reduction of $(\mathcal{F}, \sigma)$. 

The group $G$ acts on each vector space $V^j$ via the homomorphism $\rho_j$. For each $j$, we denote by $(\chi_{i,j})_{i=1}^{r_j}$ the tuple of weights of the split maximal torus $T$ acting on $V^j$ (with possible repetitions). Recall the function $\nu_{d-1}$ on the dominant cone $\mathcal{C}_{\lambda}$ described in Lemma \ref{lemma: first formula for nu_(d-1)} (\ref{eqn_4}). In this case the formula (\ref{eqn_4}) includes an extra summation
    \begin{equation} \label{eqn_new}
    \nu_{d-1}(\delta) = \frac{1}{\sqrt{A_{d}}(d-1)!} \, \cdot \, \frac{\sum_{j=1}^b \sum_{i=1}^{r_j} \langle \delta , \chi^{\lambda}_{i,j} \rangle \cdot (c_{i,j} - c_j) }{\sqrt{\sum_{j=1}^{b}\sum_{i=1}^{r_j} \left(\langle \delta, \chi^{\lambda}_{i,j} \rangle\right)^2}}
    \end{equation} 
    where we define $c_{i,j} = \langle \psi, \chi_{i,j}^{\lambda}\rangle$ and $c_j = \frac{1}{r_j} \langle \psi, \sum_{i=1}^{r_j} \chi_{i,j}^{\lambda}\rangle$. Expression (\ref{eqn_new}) for $\delta=N\psi$ yields
\begin{gather*} \nu_{d-1}(N\psi) = \frac{1}{\sqrt{A_{d}}(d-1)!} \, \cdot \, \frac{\sum_{j=1}^b \sum_{i=1}^{r_j} \langle N \psi , \chi^{\lambda}_{i,j} \rangle \cdot \left(\langle \psi, \chi_{i,j}^{\lambda} \rangle - \frac{1}{r_j}\sum_{l=1}^{r_j} \langle \psi, \chi_{i,l}^{\lambda}\rangle\right) }{\sqrt{\sum_{j=1}^b\sum_{i=1}^{r_j} \left(\langle N \psi, \chi^{\lambda}_{i,j} \rangle\right)^2}}= \end{gather*}
\begin{equation}\label{eqn_10}
\frac{1}{\sqrt{A_{d}}(d-1)!} \, \cdot \, \frac{\sum_{j=1}^b \left(\sum_{i=1}^{r_j} \left(\langle \psi , \chi^{\lambda}_{i,j} \rangle\right)^2 \right)-  \frac{1}{r_j}\left(\sum_{i=1}^{r_j}\langle \psi, \chi_{i,j}^{\lambda} \rangle \right) \cdot \left(\sum_{l=1}^{r_j} \langle \psi, \chi_{l,j}^{\lambda}\rangle\right) }{\sqrt{\sum_{j=1}^b\sum_{i=1}^{r_j} \left(\langle N \psi, \chi^{\lambda}_{i,j} \rangle\right)^2}}
\end{equation}

We shall show that the numerator of (\ref{eqn_10}) is positive. This will contradict Gieseker semistability of $(\mathcal{F}, \sigma)$. 
For each index $j$, let $\vec{v}_j \vcentcolon =[\langle \psi, \chi_i^{\lambda} \rangle]_{i=1}^{r_j} \in \mathbb{R}^{r_j}$ and set $\vec{1}_j \in \mathbb{R}^{r_j}$ to be the vector with all coordinates equal to $1$. The numerator in (\ref{eqn_10}) is
\begin{gather*}\sum_{j=1}^b\left(\sum_{i=1}^{r_j} \left(\langle \psi , \chi^{\lambda}_{i,j} \rangle\right)^2 \right) - \frac{1}{r_j} \left(\sum_{i=1}^{r_j}\langle \psi, \chi_{i,j}^{\lambda} \rangle \right)^2= \sum_{j=1}^b \frac{1}{r_j}\left[\left(\vec{1}_j \cdot_{\mathbb{R}} \vec{1}_j\right) \left(\vec{v}_j \cdot_{\mathbb{R}} \vec{v}_j\right) - \left(\vec{1}_j \cdot_{\mathbb{R}} \vec{v}_j \right)^2 \right]\end{gather*}
where $\cdot_{\mathbb{R}}$ denotes the standard inner product on $\mathbb{R}^{r_j}$. The Cauchy-Schwarz inequality implies that the quantity above is always nonnegative and, moreover, it is strictly positive if there exists some index $j$ such that $\vec{1}_j$ and $\vec{v}_j$ are not scalar multiples of each other. Therefore, $\nu_{d-1}(N\psi)>0$  whenever $\langle \psi, \chi_{i,j}^{\lambda}\rangle \neq \langle \psi, \chi_{l,j}^{\lambda}\rangle$ for some $j$ and  $i \neq l$. We are left to show that there are $j$ and two indexes $i \neq l$ satisfying this condition.

Let $\{\zeta_{l,j}\}$ denote the subset of $\{\chi_{i,j}\}$ consisting of $B$-highest weights for an irreducible component of the representation $\rho_j$. Each $\zeta_{l,j}$ can be uniquely expressed as a linear combination $\sum_{i=1}^h x_{z_{i}} z_i^{\vee} + \sum_{\alpha \in \Delta} x_{\alpha} \alpha$, where all coefficients $x_{\alpha}$ are nonnegative. Since the parabolic $P_{\lambda}$ is proper, we can choose some $\alpha \in I_{P_{\lambda}}$. Since $\bigoplus \rho_j$ is faithful, at least one of the $\zeta_{l,j}$ must have a nonzero coefficient $x_{\alpha} >0$; we fix such $l,j$. The restriction $\chi_{l,j}^{\lambda} \in X^*(Z_{\lambda})$ is a $P_{\lambda}$-dominant character such that the coefficient of $\alpha$ is not $0$. Since $\psi$ is in the interior of the the $P_{\lambda}$-dominant cone in $X_*(Z_{\lambda})_{\mathbb{R}}$, it follows that $\langle \psi, \chi_{l,j}^{\lambda} \rangle >0$.

Let $W$ denote the Weyl group of $G$ permutting the set of characters $\{\chi_{i,j}\}_{i=1}^{r_j}$ of the representation $\rho_j$. Let $W_{P_{\lambda}}$ denote the subgroup of $W$ generated by the reflections of the simple roots in $I_{P_{\lambda}}$. $W_{P_{\lambda}}$ acts on the characters $X^*(Z_{\lambda})$, and the action is transitive on the set of Weyl chambers in $X^*(Z_{\lambda})_{\mathbb{R}}$. The set of restrictions $\{ \chi_{i,j}^{\lambda}\}_{i=1}^{r_j} \subset X^*(Z_{\lambda})$ is preserved by $W_{P_{\lambda}}$. The character $\chi_{l,j}^{\lambda}$ is not fixed by the action of $W_{P_{\lambda}}$, because the coefficient of $\alpha$ is not $0$. Therefore, there exists some $w \in W_{P_{\lambda}}$ such that $\chi_{i,j}^{\lambda} \vcentcolon = w \cdot \chi_{l,j}^{\lambda}$ is contained in the $P_{\lambda}$-anti-dominant cone of $X^*(Z_{\lambda})_{\mathbb{R}}$. This implies that $\langle \psi, \chi_{i,j}^{\lambda}\rangle < 0$. Since $\langle \psi, \chi_{i,j}^{\lambda}\rangle < 0 < \langle \psi, \chi_{l,j}^{\lambda}\rangle$, we conclude that $\langle \psi, \chi_i^{\lambda} \rangle \neq \langle \psi, \chi_j^{\lambda} \rangle$, as desired.
\end{proof}

The following example shows that the converse is not true. 
\begin{example} \label{example: noncentral rho stable rational bundle}
We construct a principal $\rho$-sheaf which is Gieseker unstable, but the underlying rational
principal bundle is stable in the sense of Ramanathan (Definition \ref{defn: ramanathan
  rational}). Let $W$ be a vector space over $k$. Set $V = W \oplus W$
and $G = \GL(W) \times \GL(W)$. 
We define $\rho: G \rightarrow \GL(V)$ to be the faithful 
representation given by $
\rho(A,B)=
\left(
  \begin{array}{cc}
    A & 0\\
    0 & B\\
  \end{array}
\right)
$. Let $(\mathcal{E},\mathcal{E}')$ be a pair of slope stable vector
bundles of rank $\dim W$ on $X$ and 
with $d=\deg \mathcal{E} > \deg \mathcal{E}' =d'$.
This pair defines a principal $G$-bundle $P$ on $X$. 
In particular, it defines a principal $\rho$-sheaf.  
It is  stable in the sense of Ramanathan for the identity representation of $\GL(W) \times \GL(W)$).
However it is not Gieseker semistable in the sense of Definition
\ref{defn: Gieseker stable}.
\end{example}

In order to obtain a reasonable notion of stability, it is useful to impose the condition that $\rho$ is central in the sense of \Cref{defn: central representation}. First, we note that under this additional condition we can check semistability only using cocharacters of the derived subgroup of $G$.
\begin{prop}
\label{prop: lambda to derived}
Let $K \supset k$ be an algebraically closed field, and suppose that $\rho$ is central. Then in the semistability condition of Definition \ref{defn: Gieseker stable} it is enough
to look at cocharacters $ \lambda: (\mathbb{G}_m)_{K} \longrightarrow [G_{K},G_{K}]$ whose image is in the derived subgroup $[G_{K},G_{K}]\subset G_K.$
\end{prop}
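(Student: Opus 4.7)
The plan is to reduce the semistability check for an arbitrary cocharacter $\lambda$ to a check involving only cocharacters whose image lies in $[G_K, G_K]$. Only the nontrivial implication requires argument: given a weighted parabolic reduction $(\lambda, \mathcal{G}_\lambda)$ of $(\mathcal{F}^\bullet, \sigma)$, the idea is to build a weighted parabolic reduction in the derived subgroup for which the left-hand side of \eqref{eq:semistabilitycondition} agrees up to a positive scalar.

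The first step is the decomposition of $\lambda$. The multiplication map $Z(G_K)^0 \times [G_K, G_K] \to G_K$ is an isogeny with finite kernel, so it induces an isomorphism of cocharacter lattices after tensoring with $\mathbb{Q}$. Hence there exist an integer $N \geq 1$ and cocharacters $\lambda_{\mathrm{cen}} \in X_{*}(Z(G_K)^0)$ and $\lambda_{\mathrm{der}} \in X_{*}([G_K, G_K])$ with $\lambda^N = \lambda_{\mathrm{cen}} \cdot \lambda_{\mathrm{der}}$ inside a common maximal torus. Since $\lambda_{\mathrm{cen}}$ is central we have $P_{\lambda_{\mathrm{cen}} \cdot \lambda_{\mathrm{der}}} = P_{\lambda_{\mathrm{der}}}$, and since raising to a positive integer power preserves limits as $t \to 0$, also $P_\lambda = P_{\lambda^N}$. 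Thus $P_\lambda = P_{\lambda_{\mathrm{der}}}$, and any $P_\lambda$-reduction $\mathcal{G}_\lambda$ is automatically a $P_{\lambda_{\mathrm{der}}}$-reduction, which I denote $\mathcal{G}_{\lambda_{\mathrm{der}}}$.

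Next I track how the underlying filtrations transform. Replacing $\lambda$ by $\lambda^N$ multiplies all weights of $\lambda$ on each $V^i$ by $N$, so the underlying filtration gets stretched by $N$; using the form \eqref{eqn: ss1}, a direct computation shows that the sum in \eqref{eq:semistabilitycondition} for $(\lambda^N, \mathcal{G}_\lambda)$ equals $N$ times that for $(\lambda, \mathcal{G}_\lambda)$. Meanwhile, the centrality of $\rho$ forces $\rho \circ \lambda_{\mathrm{cen}}$ to land in the center of $\prod_i \GL(V^i)$, and hence to act on each $V^i$ by a single character $t \mapsto t^{n_i}$ for some $n_i \in \mathbb{Z}$. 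Therefore the weights of $\rho \circ \lambda^N$ on $V^i$ differ from those of $\rho \circ \lambda_{\mathrm{der}}$ by the constant $n_i$, so the underlying filtration on $\mathcal{F}^i$ for $(\lambda^N, \mathcal{G}_{\lambda_{\mathrm{der}}})$ is the filtration for $(\lambda_{\mathrm{der}}, \mathcal{G}_{\lambda_{\mathrm{der}}})$ with indices translated by $n_i$.

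The hard part, which is essentially bookkeeping, is to verify that such an index translation does not change the contribution of the $i$-th summand to \eqref{eq:semistabilitycondition}. The shift contributes $n_i \sum_m \left( P_{\mathcal{F}^i_m/\mathcal{F}^i_{m+1}} - \overline{p}_{\mathcal{F}^i} \cdot \rk(\mathcal{F}^i_m/\mathcal{F}^i_{m+1}) \right)$, which vanishes because $\sum_m P_{\mathcal{F}^i_m/\mathcal{F}^i_{m+1}} = P_{\mathcal{F}^i}$ and $\overline{p}_{\mathcal{F}^i} \cdot \rk(\mathcal{F}^i) = P_{\mathcal{F}^i}$. Combining the two previous steps, the left-hand side of \eqref{eq:semistabilitycondition} for $(\lambda, \mathcal{G}_\lambda)$ equals $\frac{1}{N}$ times that for $(\lambda_{\mathrm{der}}, \mathcal{G}_{\lambda_{\mathrm{der}}})$, which is non-positive by the assumption that the inequality holds against derived cocharacters.
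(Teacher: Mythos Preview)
Your proof is correct and follows essentially the same approach as the paper: lift $\lambda^N$ to $Z \times [G_K,G_K]$, observe that the associated parabolic depends only on the derived component, and use centrality of $\rho$ to see that the central component merely shifts each filtration by a constant $n_i$, which leaves the sum in \eqref{eq:semistabilitycondition} unchanged. One minor normalization slip: in the paper's conventions $\overline{p}_{\mathcal{F}^i}\cdot\rk(\mathcal{F}^i)=P_{\mathcal{F}^i}/A_d$ rather than $P_{\mathcal{F}^i}$, but this does not affect the vanishing since both terms in your shift contribution pick up the same factor.
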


\begin{proof}
Let $Z\subset G_{K}$ be the center and $[G_{K},G_{K}]\subset G_{K}$ be
the
commutator subgroup. Consider the exact sequence
$$
1 \longrightarrow
F \longrightarrow
Z\times [G_{K},G_{K}] \longrightarrow
G_{K} \longrightarrow 1
$$
where $F=Z\cap [G_{K},G_{K}]$ is a finite group (it is the
center of the semisimple group $[G_{K},G_{K}]$). 
Let $\lambda:(\mathbb{G}_m)_K \to G_K$ be a cocharacter. Since $F$ is
finite, there is an integer $N$ such that
$\lambda^N$ lifts to $Z\times [G_K,G_K]$. 
If we replace $\lambda$ by $\lambda^N$ then the parabolic
subgroup $\mathcal{G}_{\lambda}$ associated to $\lambda$ does not change. The only change in
the associated filtration
$(\mathcal{F}^i_{m})_{m \in
  \mathbb{Z}}$ is that the indexes are multiplied by $N$, and 
the expression \eqref{eq:semistabilitycondition} does not change.
Therefore, to check (semi)stability we can assume that the
cocharacter $\lambda$ lifts to the product 
$Z\times [G_{K},G_{K}]$. 
Assume $\lambda$ lifts, and 
write $\lambda=(\lambda_1,\lambda_2)$ where
$\lambda_1$ is the lift to $Z$ and $\lambda_2$ is the lift to 
$[G_{K},G_{K}]$.

We claim that the filtrations associated to $(\lambda_1,\lambda_2)$ 
and $(1,\lambda_2)$ on each $V^i$ only differ by a constant
shift in the indexes.
Indeed, since $\rho$ is central, the image of the composition 
$\rho_i\circ\lambda_1$ lies in the center of $\GL(V^i)$, so
$(\mathbb{G}_m)_K$ acts on $V^i$, via $\rho_i\circ(\lambda_1,1)$, with a fixed
weight $m_i$.  If $v\in V^i$ is a vector where $(\mathbb{G}_m)_K$ acts
with weight $m$ via $\rho_i \circ (\lambda_1,\lambda_2)$, 
then the action, via
$\rho_i \circ (1,\lambda_2)$, has weight $m-m_i$.
Therefore, the filtrations $(\mathcal{F}^i_{m})_{m \in
  \mathbb{Z}}$ induced by $\rho_i\circ (1,\lambda_2)$ and
$\rho_i\circ(\lambda_1,\lambda_2)$ just differ by a shift by $m_i$, as
claimed. 

Finally, since the sum in
\eqref{eq:semistabilitycondition} does not change by a shift by $m_i$
on each $V^i$, we may assume that $\lambda_1$ is equal to the
constant $1$. In other words, we may assume that $\lambda$ factors
through $[G_K,G_K]$.
\end{proof}

Proposition \ref{prop: lambda to derived} shows that the following definition is consistent with our earlier notions.
\begin{defn}\label{defn: slope stable}
Suppose that $\rho$ is central. Let $K\supset k$ be an algebraically closed field.
A principal $\rho$-sheaf on $X_K$ is 
Gieseker stable (respectively, Gieseker semistable)
if for all nontrivial cocharacters $\lambda: (\mathbb{G}_m)_{K} \longrightarrow [G_{K}, G_{K}]$ and all weighted parabolic reductions $(\lambda,
\mathcal{G}_{\lambda})$ 
with associated underlying filtrations $(\mathcal{F}^i_{m})_{m \in
  \mathbb{Z}}$ of $\mathcal{F}^i$, 
(as in Definition \ref{defn: weighted parabolic reduction singular G bundles}) we have
\[
  \sum_{i=1}^b
  \sum_{m \in \mathbb{Z}} 
  \left(\rk(\mathcal{F}^i) \cdot P_{\mathcal{F}^i_{m}} -
  \rk(\mathcal{F}^{i}_m)\cdot P_{\mathcal{F}^i}
  \right) < 0  \quad \quad (\text{respectively, $\leq$})
\]
A principal $\rho$-sheaf is slope stable (respectively, semistable)
if we replace the numerical condition
with
\[ 
  \sum_{i=1}^b
  \sum_{m \in \mathbb{Z}} 
  \left(\rk(\mathcal{F}^i) \cdot \deg{\mathcal{F}^i_{m}} -
  \rk(\mathcal{F}^{i}_m) \cdot \deg{\mathcal{F}^i}
  \right) < 0 \quad \quad (\text{respectively, $\leq$}).
\]
We say that a principal  $\rho$-sheaf is Gieseker unstable (resp. slope unstable) if it is not Gieseker semistable (resp. it is not slope semistable).
\end{defn}

We immediately obtain that slope stable implies Gieseker stable, and Gieseker semistable implies slope semistable.

In \cite[\S 1.1, p. 282]{glss.large}
it is assumed that the field is algebraically
closed, $G$ is an arbitrary nonconnected reductive group, and the homomorphism $\rho$ is assumed to be central. 
Furthermore, \cite{glss.large} requires that the image of $\rho$ lies in the
special linear group $\left(\prod_{i=1}^b\GL(V^i)\right) \cap
\SL(\bigoplus_{i=1}^bV^i)$.
But this is not an essential condition. If we are given a
representation $\rho=(\rho_1,\ldots,\rho_b)$ we can add one more
summand $V^{b+1}=k$ and the representation
$$
\rho'=(\rho_1,\ldots,\rho_b,(\det(\rho_1)\cdots\det( \rho_b))^{-1})
$$
will automatically land in the special linear group. A $\rho'$-sheaf is 
$\rho$-sheaf together with an isomorphism
$\det(\mathcal{F}^1) \otimes \cdots \otimes\det(\mathcal{F}^b)\cong (\mathcal{F}^{b+1})^{\vee}$. The $\rho'$-sheaf
is (semi)stable in the sense of \cite{glss.large}
if and only if the corresponding $\rho$-sheaf is
(semi)stable. Therefore, all the comparisons that are made in \cite[\S 6]{glss.large}
also hold for our notion of semistability in the case when $\rho$ is central. The following corollary follows from \cite[Lemma 2.5.4 and Remark 2.5.5]{glss.large}.

\begin{coroll} \label{coroll: semistability comparison central reps}
If $\rho$ is central, then a principal $\rho$-sheaf
is slope (semi)stable if and only if the underlying rational principal
bundle is slope (semi)stable in the sense of Ramanathan (Definition \ref{defn: ramanathan rational}). \qed
\end{coroll}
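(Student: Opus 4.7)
The plan is to directly match the two numerical criteria via Lemma \ref{lem: Ramanathan with 1PS}. First I would unravel Definition \ref{defn: weighted parabolic reduction singular G bundles}: a weighted parabolic reduction of the $\rho$-sheaf $(\mathcal{F}^\bullet, \sigma)$ over $X_K$ consists of the choice of a big open subset $U \subset X_K$ on which $\mathcal{F}^\bullet$ is locally free, a cocharacter $\lambda: \mathbb{G}_m \to G_K$, and a reduction $\mathcal{G}_\lambda$ of the associated rational principal $G_K$-bundle $\mathcal{G}$ on $U$ to the parabolic $P_\lambda$. This is precisely the kind of data appearing in Lemma \ref{lem: Ramanathan with 1PS}, modulo the mild flexibility that the lemma allows the reduction to live on an arbitrary big open subset $U' \subset U$ rather than on a fixed one; since any big open of $X_K$ contained in $U$ still carries a locally free $\mathcal{F}^\bullet$, the two classes of data are cofinal.

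Next I would observe that by Proposition \ref{prop: lambda to derived} we may restrict to cocharacters factoring through $[G_K,G_K]$ on the $\rho$-sheaf side, which matches the restriction already made in Lemma \ref{lem: Ramanathan with 1PS}. For any such $(\lambda, \mathcal{G}_\lambda)$ defined on a big open $U$, the induced filtration $(\mathcal{F}^i_m)_{m\in\mathbb{Z}}$ of $\mathcal{F}^i$ (obtained by restricting to $U$, using the Rees construction on the vector bundle $(\rho_i)_*\mathcal{G} \cong \mathcal{F}^i|_U$, pushing forward, and intersecting with $\mathcal{F}^i$ inside its reflexive hull) has the same ranks and degrees as the filtration of $\mathcal{F}^i|_U$ computed in Lemma \ref{lem: Ramanathan with 1PS}. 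Indeed, rank is determined at the generic point and degree is computed via the first Chern class on any big open subset, since the complement has codimension at least two in the smooth variety $X_K$.

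Consequently, for each weighted parabolic reduction, the expression
\[
\sum_{i=1}^b\sum_{m\in\mathbb{Z}}\bigl(\rk(\mathcal{F}^i)\cdot \deg(\mathcal{F}^i_m) - \rk(\mathcal{F}^i_m)\cdot \deg(\mathcal{F}^i)\bigr)
\]
appearing in Definition \ref{defn: slope stable} agrees with the corresponding expression in Lemma \ref{lem: Ramanathan with 1PS}. Quantifying over the identified set of weighted parabolic reductions on both sides, the $\rho$-sheaf is slope (semi)stable in the sense of Definition \ref{defn: slope stable} if and only if the underlying rational $G$-bundle $\mathcal{G}$ satisfies Ramanathan's slope (semi)stability, as characterized by Lemma \ref{lem: Ramanathan with 1PS}.

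The only point requiring care — and the mildly subtle step — is the cofinality argument in the first paragraph: one must check that enlarging or shrinking the big open subset on which the reduction is defined does not affect either the set of weighted parabolic reductions being quantified over (up to the equivalence implicit in Definition \ref{defn: weighted parabolic reduction singular G bundles}) or the numerical quantities computed. This reduces to Lemma \ref{lemma: lemma on very big open subsets}(c), which guarantees unique extension of sections across codimension-$2$ loci in the smooth scheme $X_K$.
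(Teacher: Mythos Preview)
Your proof is correct and takes essentially the same approach as the paper, which simply states that the corollary ``follows immediately'' from Lemma~\ref{lem: Ramanathan with 1PS}; you have spelled out the details of that immediate deduction. One minor redundancy: you invoke Proposition~\ref{prop: lambda to derived} to restrict to cocharacters of $[G_K,G_K]$, but Definition~\ref{defn: slope stable} already builds in that restriction, so no further reduction is needed on the $\rho$-sheaf side.
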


We end this section by comparing our notion of leading term filtration with previous work in the literature. We return to our interpretation of $\rho$-sheaves as pairs $(\mathcal{F}, \sigma)$ as in \Cref{defn:principalsheaf2}.
\begin{prop} \label{prop: comparison mixed gieseker filtration vs slope filtration}
Suppose that the representation $\rho$ is central. Let $K \supset k$ be a field extension such that $G_{K}$ is split. Let $(\mathcal{F}^{\bullet}, \sigma)$ be a $\rho$-sheaf on $X_{K}$ and choose a big open subset $U \subset X_{K}$ such that $\mathcal{F}|_{U}$ is locally-free. Let $\mathcal{G}$ denote the rational $G$-bundle on $U$ corresponding to $(\mathcal{F}^{\bullet}, \sigma)$ and suppose that $\mathcal{G}$ is (slope) unstable in the sense of Ramanathan. If  $f= (\lambda, \mathcal{G}_{\lambda})$ is the leading term HN weighted parabolic reduction of $(\mathcal{F}^{\bullet}, \sigma)$, then $\mathcal{G}_{\lambda}$ is the canonical (slope) parabolic reduction of $\mathcal{G}$ as defined in \cite{anchouche-hassan-biswas}.
\end{prop}
\begin{proof}
We will use the setup and notation from Propositions \ref{prop: HN boundedness} and \ref{prop: gieseker semistable implies slope semistable}. The group $G$ acts on each vector space $V^j$ via the homomorphism $\rho_j$. For each $j$, we denote by $(\chi_{i,j})_{i=1}^{r_j}$ the tuple of weights of the split maximal torus $T$ acting on $V^j$ (with possible repetitions). Since the representation $\rho$ is faithful, the collection of all the weights $(\chi_{i,j})$ determines a positive definite inner product $\cdot_{\chi}$ on $X_*(T)_{\mathbb{R}}$ given by
    \[ \delta \cdot_{\chi} \gamma \vcentcolon = \sum_{j=1}^b \sum_{i=1}^{r_j} \langle \delta, \chi_{i,j} \rangle \cdot \langle \gamma, \chi_{i,j} \rangle \]
    which is invariant under the action of the Weyl group $W$. We can restrict this inner product to $X_{*}(Z_{\lambda})_{\mathbb{R}}$ in order to obtain a $W_{P_{\lambda}}$-invariant positive definite inner product.
    
    Recall that we have
    \begin{equation} \label{eqn_new2}
    \nu_{d-1}(\delta) = \frac{1}{\sqrt{A_{d}}(d-1)!} \, \cdot \, \frac{\sum_{j=1}^b \sum_{i=1}^{r_j} \langle \delta , \chi^{\lambda}_{i,j} \rangle \cdot (c_{i,j} - c_j) }{\sqrt{\sum_{j=1}^{b}\sum_{i=1}^{r_j} \left(\langle \delta, \chi^{\lambda}_{i,j} \rangle\right)^2}}
    \end{equation} 
    where we define $c_{i,j} = \langle \psi, \chi_{i,j}^{\lambda}\rangle$ and $c_j = \frac{1}{r_j} \langle \psi, \sum_{i=1}^{r_j} \chi_{i,j}^{\lambda}\rangle$. Set $\det_j = \det(V^j) \circ \rho_j = \sum_{i=1}^{r_j} \chi_{i,j}$. Rewrite (\ref{eqn_new2}) as
    \begin{gather*} \nu_{d-1}(\delta) = \frac{1}{\sqrt{A_{d}}(d-1)!} \, \cdot \, \frac{\sum_{j=1}^b \sum_{i=1}^{r_j} \langle \delta , \chi^{\lambda}_{i,j} \rangle \cdot \langle \psi, \chi_{i,j}^{\lambda} \rangle  - \frac{1}{r_j}\langle \delta , \det_j \rangle \cdot \langle \psi, \det_j \rangle }{\sqrt{\sum_{j=1}^b\sum_{i=1}^{r_j} \left(\langle \delta, \chi^{\lambda}_{i,j} \rangle\right)^2}} =
    \end{gather*}
    \begin{equation}\label{eqn_11}
    \frac{1}{\sqrt{A_{d}}(d-1)!} \, \cdot \, \frac{\delta \cdot_{\chi} \psi  - \sum_{j=1}^b \frac{1}{r_j} \langle \delta , \det_j \rangle \cdot \langle \psi, \det_j \rangle }{\sqrt{\delta \cdot_{\chi} \delta}} 
    \end{equation}
    Recall that
    \[ X_{*}(Z_{\lambda})_{\mathbb{R}} = \bigoplus_{l=1}^h \mathbb{R} z_l \oplus \bigoplus_{n \in J_{P_{\lambda}}} \mathbb{R} \omega_n^{\vee}\]
    Choose a pair $z_l$ and $\omega_n^{\vee}$. Let $s_n$ denote the simple reflection of $X_{*}(T)_{\mathbb{R}}$ determined by the root $\alpha_n$. Since $\cdot_{\chi}$ is $W_{P_{\lambda}}$-invariant, we have
    \[ z_l \cdot_{\chi} \omega_n^{\vee} = (s_n z_l) \cdot_{\chi} (s_n \omega_n^{\vee}) = z_l \cdot_{\chi}(-\omega_n^{\vee}).\]
    This shows that $z_l \cdot_{\chi} \omega_n^{\vee} = 0$, and so the central cocharacters $X_{*}(Z)_{\mathbb{R}} = \bigoplus_{i=1}^h \mathbb{R}z_i$ are orthogonal to the fundamental coweights. Therefore, the projection $\pi_{Z}: X_{*}(Z_{\lambda})_{\mathbb{R}} \rightarrow \bigoplus_{n \in J_{P_{\lambda}}} \mathbb{R} \omega_n^{\vee}$ is an orthogonal projection with respect to $\cdot_{\chi}$.
    
    By assumption, the central torus $Z$ maps to the center of $\prod_{j=1}^b\GL(V^j)$. For every index $l$, since $\rho_j \circ z_l$ lands in the center of $\GL(V^j)$, we must have $\langle z_l, \chi_{i,j} \rangle =\langle z_l, \chi_{p,j} \rangle $ for any $i,p$. It follows that $\langle z_l, \det_j \rangle = r_j \langle z_l, \chi_{1,j} \rangle$ for the fixed index $i=1$. This implies that
    \[ \delta \cdot_{\chi} \psi  - \sum_{j=1}^b\frac{1}{r_j} \langle \delta , {\det}_{j} \rangle \cdot \langle \psi, {\det}_j \rangle = \pi_{Z}(\delta) \cdot_{\chi} \pi_{Z}(\psi)\]
    Therefore, (\ref{eqn_11}) turns out to be:
    \begin{equation}\label{eqn_12} \nu_{d-1}(\delta) = \frac{1}{\sqrt{A_{d}}(d-1)!} \, \cdot \, \frac{\pi_{Z}(\delta) \cdot_{\chi} \pi_{Z}(\psi)}{\sqrt{\delta \cdot_{\chi} \delta}}
    \end{equation}

By Corollary \ref{coroll: semistability comparison central reps}, the $\rho$-sheaf $(\mathcal{F}, \sigma)$ is unstable. Using the proof of Proposition \ref{prop: lambda to derived} we can conclude that the image of $\lambda$ is not contained in the center of $G$. Therefore the parabolic $P_{\lambda}$ is a proper subgroup of $G$. We denote by $f = (\lambda, \mathcal{G}_{\lambda})$ the leading term HN weighted parabolic reduction of $(\mathcal{F}, \sigma)$. To prove that $\mathcal{G}_{\lambda}$ is the canonical parabolic reduction of the rational $G$-bundle $\mathcal{G}$, we need to show the following (cf. \cite[Thm. 6]{anchouche-hassan-biswas}):
\begin{enumerate}[(i)]
    \item The linear functional $\psi \in X_*(Z_{\lambda})_{\mathbb{R}}$ lies in the interior $\Int(\mathcal{C}_{\lambda})$ of the dominant cone $\mathcal{C}_{\lambda}$.
    \item The rational $L_{\lambda}$-bundle $\mathcal{G}_{L_{\lambda}}$ is slope semistable.
\end{enumerate}
We proceed with the proof of these statements.

(i) Since $f = (\lambda, \mathcal{G}_{\lambda})$ maximizes $\nu(f)$ in the interior of $\mathcal{C}_{\lambda}$, we know that $\lambda$ is a critical point of $\nu_{d-1}$. It follows from Cauchy-Schwarz that this critical point is a scalar multiple of $\pi_{Z}(\psi)$. We must have $\lambda = Q \cdot \pi_{Z}(\psi)$ for some constant $Q>0$, because $\nu(f)>0$ and then $\nu_{d-1}(\lambda) \geq 0$. Since $\lambda \in \Int(\mathcal{C}_{\lambda})$ by definition, it follows that $\pi_{Z}(\psi) \in \Int(\mathcal{C}_{\lambda})$. Therefore $\psi \in \Int(\mathcal{C}_{\lambda})$, as desired.

(ii) Suppose for the sake of contradiction that $\mathcal{G}_{L_{\lambda}}$ is not slope semistable. Let $\mathcal{G}_{\overline{P}}$ be the canonical slope parabolic reduction of $\mathcal{G}_{L_{\lambda}}$ for some proper parabolic subgroup $\overline{P} \subset L_{\lambda}$, as in \cite{anchouche-hassan-biswas}. This reduction is defined over a big open subset. After shrinking the original open subset $U$, we can assume that everything is defined over $U$.

Let $P$ denote the inverse image of $\overline{P}$ under the canonical quotient map $P_{\lambda} \twoheadrightarrow L_{\lambda}$. Note that $P$ is a parabolic subgroup of $G$ (cf. second paragraph of the proof of \cite[Prop. 3.1]{biswas-holla-hnreduction}). Let $\iota: P \hookrightarrow P_{\lambda}$ denote the inclusion and let $q: P \twoheadrightarrow \overline{P}$ denote the quotient map. By \cite[Lem. 2.1]{biswas-holla-hnreduction}, there exists a unique $P$-parabolic reduction $\mathcal{G}_{P}$ of $\mathcal{G}$ such that $\iota_*(\mathcal{G}_{P}) = \mathcal{G}_{\lambda}$ and $q_*(\mathcal{G}_{P}) = \mathcal{G}_{\overline{P}}$. Let $L$ be the unique Levi subgroup of $P$ containing the maximal torus $T \subset P_{\lambda}$, and let $Z_{L}$ denote the maximal central torus of $L$. The canonical inclusion $Z_{\lambda} \subset Z_{L}$ induces an inclusion $X_{*}(Z_{\lambda})_{\mathbb{R}} \subset X_{*}(Z_{L})_{\mathbb{R}}$.

Since $\lambda = Q \cdot \pi_{Z}(\psi)$ for $Q >0$, (\ref{eqn_12}) is given by \[\nu_{d-1}(\lambda)=\frac{1}{\sqrt{A_{d}}(d-1)!} \cdot\sqrt{\pi_{Z}(\psi)\cdot_{\chi} \pi_{Z}(\psi)}.\]

On the other hand, let
\[ \psi_{L}=z + \sum_{n \in J_{P}} a_n \omega_n^{\vee} \in X_{*}(Z_{L})_{\mathbb{R}} =  X_{*}(Z)_{\mathbb{R}} \oplus \bigoplus_{n \in J_{P}} \mathbb{R} \omega_n^{\vee}\]
be the linear functional corresponding to the $L$-bundle $\mathcal{G}_{L}$ defined as in the proof of Lemma \ref{lemma: construction of linear functional}. Here $z \in X_{*}(Z)_{\mathbb{R}}$ and $a_n \in \mathbb{R}$.

By construction, $\mathcal{G}_{L_{\lambda}}$ is isomorphic to the $L_{\lambda}$-bundle obtained from $\mathcal{G}_{L}$ by extension of structure group via the inclusion $L \subset L_{\lambda}$. This implies that $\psi = z + \sum_{n \in J_{P_{\lambda}}} a_n \omega_n^{\vee}$, hence \begin{equation}\label{eqn_13}
\psi_{L} = \psi + \sum_{n \in J_{P} \setminus J_{P_{\lambda}}} a_n \omega_n^{\vee}
\end{equation}
In particular we have $a_n >0$ for all $n \in J_{P_{\lambda}}$, because $\psi \in \Int(\mathcal{C}_{\lambda})$.

We know that the bundle $q_*(\mathcal{G}_{P})$ is the canonical slope parabolic reduction of $\mathcal{G}_{L_{\lambda}}$. This implies that $a_n>0$ for all the other indexes $n \in J_{P} \setminus J_{P_{\lambda}}$. We conclude that $\psi_{L}$ is in the interior of the $P$-dominant cone $\mathcal{C}_{P}$. This shows that there exists some positive integer $N >0$ such that the tuple $f' = (N \cdot  \pi_{Z}(\psi_{L}),\mathcal{G}_{P})$ is a well-defined weighted parabolic reduction of $(\mathcal{F}^{\bullet}, \sigma)$. The version of (\ref{eqn_12}) for this parabolic reduction yields 
\[\nu_{d-1}(f')=\frac{1}{\sqrt{A_{d}}(d-1)!} \cdot\sqrt{\pi_{Z}(\psi_{L})\cdot_{\chi} \pi_{Z}(\psi_{L})}.\]

Equation (\ref{eqn_13})  implies
$\pi_{Z}(\psi_{L}) = \pi_{Z}(\psi) + \sum_{n \in J_{P} \setminus J_{P_{\lambda}}} a_n \omega_n^{\vee}$, therefore:
\begin{gather}
    \pi_{Z}(\psi_{L})\cdot_{\chi} \pi_{Z}(\psi_{L}) = \pi_{Z}(\psi) \cdot_{\chi} \pi_{Z}(\psi) + 2 \pi_{Z}(\psi) \cdot_{\chi}\left( \sum_{n \in J_{P} \setminus J_{P_{\lambda}}} a_n \omega_n^{\vee} \right) + \nonumber\\
    \left(\sum_{n \in J_{P} \setminus J_{P_{\lambda}}} a_n \omega_n^{\vee}\right) \cdot_{\chi} \left(\sum_{n \in J_{P} \setminus J_{P_{\lambda}}} a_n \omega_n^{\vee} \right)\label{eqn_14}
\end{gather}
Note that $\cdot_{\chi}$ is a Weyl group invariant positive definite inner product on $X_{*}(T)_{\mathbb{R}}$. In particular we must have $\omega_l^{\vee} \cdot_{\chi} \omega_n^{\vee} \geq 0$ for all $l,n$. Since $a_n>0$ for all $n \in J_{P}$, it follows that
\begin{gather*} \pi_{Z}(\psi) \cdot_{\chi}\left( \sum_{n \in J_{P} \setminus J_{P_{\lambda}}} a_n \omega_n^{\vee} \right) = \left( \sum_{l \in J_{P_{\lambda}}} a_l \omega_l^{\vee} \right) \cdot_{\chi}\left( \sum_{n \in  J_{P} \setminus J_{P_{\lambda}}} a_n \omega_n^{\vee} \right) \geq 0
\end{gather*}
On the other hand, since $\cdot_{\chi}$ is positive definite we have
\[ \left(\sum_{n \in J_{P} \setminus J_{P_{\lambda}}} a_n \omega_n^{\vee}\right) \cdot_{\chi} \left(\sum_{n \in J_{P} \setminus J_{P_{\lambda}}} a_n \omega_n^{\vee} \right) >0.  \]
Plugging the last two inequalities in (\ref{eqn_14}) we get $\pi_{Z}(\psi_{L})\cdot_{\chi} \pi_{Z}(\psi_{L}) >  \pi_{Z}(\psi)\cdot_{\chi} \pi_{Z}(\psi)$. Therefore $\nu_{d-1}(f')>\nu_{d-1}(\lambda) = \nu_{d-1}(f)$.
Hence, the leading coefficient of $\nu(f')$ is strictly bigger than the leading coefficient of $\nu(f)$, thus contradicting the maximality of $\nu(f)$.
\end{proof}

\begin{remark} \label{rem: comparison with nitsure slope parabolic stratifications}
When the representation $\rho$ is central, Proposition \ref{prop: comparison mixed gieseker filtration vs slope filtration} shows that the leading term HN $\Theta$-stratification of $\Bun_{\rho}(X)$ is a refinement of the slope stratification for the stack of $G$-bundles defined in \cite{nitsuregurjar2, gurjar2020hardernarasimhan} in the split case.
\end{remark}

\end{subsection}

\end{section}

\begin{section}{Gieseker-Harder-Narasimhan filtrations for $\rho$-sheaves} \label{section: GHN filtrations}
In this section we describe a procedure that iterates the leading term HN filtration in order to construct a finer filtration for $\rho$-sheaves, called the Gieseker-Harder-Narasimhan filtration (GHN filtration). We prove that this filtration induces a stratification of $\Bun_{\rho}(X)$ by locally closed substacks. To understand the notation and constructions in this section, it might help the reader to look beforehand at the examples at the end of Subsection \ref{subsection: pointwise Gieseker filtration} and in Subsection \ref{subsection: example HN filtration}.

\begin{subsection}{Lexicographic filtrations}
Fix a positive integer $q$. We equip the group $\mathbb{Z}^q$ with the lexicographic order where two tuples $\vec{m} = (m_i)_{i=1}^q, \vec{n} = (n_i)_{i=1}^q \in \mathbb{Z}^q$ satisfy $\vec{m} > \vec{n}$ if and only if there exists an index $j$ such that $m_j > n_j$ and $m_i = n_i$ for all $i < j$. The lexicographic order endows $\mathbb{Z}^q$ with the structure of a totally well-ordered abelian group. For $\vec{m}=(m_1, m_2,\ldots,m_{q-1},m_q)\in \mathbb{Z}^{q}$, we define the successor by $\vec{m}+1:=(m_1, m_2,\ldots,m_{q-1}, m_q+1)$.

Let $K \supset k$ be a field extension and let $\mathcal{F}$ be a torsion-free sheaf on $X_{K}$.
\begin{defn}\label{defn: lexicographic filtration}
A lexicographic $\mathbb{Z}^q$-filtration of $\mathcal{F}$ is a sequence of subsheaves $f = (\mathcal{F}_{\vec{m}})_{\vec{m} \in \mathbb{Z}^q}$ indexed by $\mathbb{Z}^q$ such that the following are satisfied.
\begin{enumerate}[(1)]
    \item $\mathcal{F}_{\vec{m}} \subset \mathcal{F}_{\vec{n}}$ whenever $\vec{m} > \vec{n}$.
    \item There exists an $M_{max} \in \mathbb{Z}^q$ such that $\mathcal{F}_{\vec{m}} = 0$ for all $\vec{m} > M_{max}$, and there exists an $M_{min} \in \mathbb{Z}^q$ such that $\mathcal{F}_{\vec{m}} = \mathcal{F}$ for all $\vec{m} < M_{min}$.
    \item For all $\vec{m}$, we have that $\mathcal{F}_{\vec{m}}/\mathcal{F}_{\vec{m}+1}$ is torsion-free.
\end{enumerate}
\end{defn}

The associated graded of a lexicographic $\mathbb{Z}^q$-filtration $f = (\mathcal{F}_{\vec{m}})_{\vec{m}\in \mathbb{Z}^{q}}$ is defined as $\text{gr}(f) \vcentcolon =\bigoplus_{\vec{m}\in\mathbb{Z}^{q}} \mathcal{F}_{\vec{m}}/\mathcal{F}_{\vec{m}+1}$. Conditions (2) and (3) in Definition \ref{defn: lexicographic filtration} ensure that $\text{gr}(f)$ is a torsion-free sheaf with the same rank as $\mathcal{F}$. Note that $\text{gr}(f)$ is naturally equipped with a $\mathbb{Z}^q$-grading. We say that $\vec{m} \in \mathbb{Z}^q$ is a jumping point of $f$ if the quotient $\mathcal{F}_{\vec{m}}/\mathcal{F}_{\vec{m}+1}$ is not the zero sheaf. There are finitely many jumping points.
\begin{defn}
Let $f$ be a lexicographic $\mathbb{Z}^q$-filtration. Let $\vec{p} = (p_i)_{i=1}^q$ be a tuple of positive rational numbers. For all coordinates $i$, we can scale the weights of $f$ at the $i^{th}$ coordinate by the rational number $p_i$. Doing this for all $i$ yields a lexicographic multi-weighted filtration denoted by $\vec{p} \cdot f$ that could possibly have rational weights. If $\vec{p}\cdot f$ has integer weights, then we say that the lexicographic $\mathbb{Z}^q$-filtration $\vec{p}\cdot f$ is obtained by scaling the weights of $f$ by $\vec{p}$.
\end{defn}
The scheme $\mathbb{A}_{k}^q= \Spec(k[t_1, t_2, \ldots, t_q])$ admits an action of $\mathbb{G}_m^q$ over $k$, where the $i^{th}$ coordinate of $\mathbb{G}_m^q$ acts with weight $-1$ on $t_i$ and weight $0$ on the other coordinates. Set $\Theta^q _{k} \vcentcolon = \left[ \, \mathbb{A}^q_{k} / \, \mathbb{G}_m^q \right]$ and denote by $\Theta^q _{K} \vcentcolon = \left[ \, \mathbb{A}^q_{K} / \, (\mathbb{G}_m^q)_{K} \right]$ the base change. A multi-graded version of the Rees construction in Proposition \ref{prop: filtrations of torsion free sheaves} can be applied to associate to any lexicographic $\mathbb{Z}^q$-filtration a morphism of stacks $\Theta^q _{K} \rightarrow \Coh^{tf}(X)$ in the following way. 
\begin{defn}
Given a lexicographic $\mathbb{Z}^q$-filtration $f = (\mathcal{F}_{\vec{m}})_{\vec{m} \in \mathbb{Z}^q}$ of $\mathcal{F}$, we set multi-graded Rees construction $\widetilde{\mathcal{F}}_f$ to be the $\mathbb{G}_m^q$-equivariant $\mathbb{A}^q_{K}$-flat sheaf on $\mathbb{A}^q_{K} \times X$ defined by
\[\widetilde{\mathcal{F}}_f = \bigoplus_{\vec{m} \in \mathbb{Z}^q} \left(\mathcal{F}_{\vec{m}} \cdot \prod_{i=1}^q t_i^{m_i}\right)\]
\end{defn}
Note that this is a coherent sheaf by condition (2) in Definition \ref{defn: lexicographic filtration}. If $\vec{1}_{K}$ denotes the point $(1,1,\ldots ,1) \in \mathbb{A}^q_{K}$,  the fiber at $\vec{1}_{K} \times X$ is isomorphic to the original sheaf $\mathcal{F}$ and the fiber at $0_K \times X$ is the $\mathbb{Z}^q$-graded sheaf $\text{gr}(f)$. The locus of points in $\mathbb{A}^q_{K}$ such that the fiber of $\widetilde{\mathcal{F}}_f$ is torsion-free is open by the slicing criterion for flatness \cite[\href{https://stacks.math.columbia.edu/tag/046Y}{Tag 046Y}]{stacks-project}, and the fiber at the origin $0_{K}$ is torsion-free by condition (3) in Definition \ref{defn: lexicographic filtration}. By $\mathbb{G}^q_m$-equivariance, it follows that $\widetilde{\mathcal{F}}_f$ is a family of torsion-free sheaves. So indeed $\widetilde{\mathcal{F}}_f$ represents a morphism $\Theta^q _{K} \rightarrow \Coh^{tf}(X)$.

More generally, a lexicographic $\mathbb{Z}^q$-filtration $f$ of a tuple $\mathcal{F}^{\bullet} = (\mathcal{F}_i)_{i=1}^b$ of torsion-free sheaves on $X_K$ is defined to be a tuple $f = (f_i)_{i=1}^b$ where $f_i$ is a lexicographic $\mathbb{Z}^q$-filtration of $\mathcal{F}^{i}$. Here it also makes sense to scale the weights by a tuple $\vec{p}= (p_j)_{j=1}^q$ of rational numbers; we just scale the weights of each $f_i$ by $\vec{p}$. If $\mathcal{F}^{\bullet}$ is a point in $\prod_{i=1}^b \Coh^{tf}_{r_i}(X)$, then the multi-graded Rees construction applied to each component yields a morphism $\Theta^q_{K} \rightarrow \prod_{i=1}^b \Coh^{tf}_{r_i}(X)$ represented by a tuple $\widetilde{\mathcal{F}^{\bullet}}_f$ such that the fiber at $\vec{1}_{K}\times X$ recovers $\mathcal{F}^{\bullet}$.

Fix a tuple $V^{\bullet}$ of $k$-vector spaces and let $\rho: G \rightarrow \GL(V^{\bullet})$ be faithful representation, as in Section \ref{section: generalization}. Let $(\mathcal{F}^{\bullet}, \sigma)$ be a $\rho$-sheaf on $X_{K}$ and suppose that we are given a lexicographic $\mathbb{Z}^q$-filtration $f$ of $ \mathcal{F}^{\bullet}$. Applying the multi-graded Rees construction, we obtain a homomorphism $\Theta^q_{K} \rightarrow \prod_{i=1}^b \Coh^{tf}_{r_i}(X)$ represented by a tuple $\widetilde{\mathcal{F}^{\bullet}}_f$. If $U$ is a big open subset of $X_{K}$ such that $\text{gr}(f)$ is locally-free, the restriction $\widetilde{\mathcal{F}^{\bullet}}_f|_{\Theta^q_{K} \times U}$ is a tuple of locally-free sheaves, which we can view as a $\GL(V^{\bullet})$-bundle on $\Theta^q_{K} \times_{K} U$. The section $\sigma$ induces a $G$-reduction of structure group of the restriction $\widetilde{\mathcal{F}^{\bullet}}_f|_{\vec{1}_{K} \times_{K} U}$. 

\begin{defn}
We say that $f$ is a lexicographic $\mathbb{Z}^q$-filtration of $(\mathcal{F}^{\bullet}, \sigma)$ if the $G$-reduction of $\widetilde{\mathcal{F}^{\bullet}}_f|_{\vec{1}_{K} \times_{K} U}$ induced by $\sigma$ extends to a $G$-reduction defined over $\Theta^q_{K} \times U$.
\end{defn} 
Since the $(\mathbb{G}_m^q)_{K}$-orbit of $\vec{1}_{K}$ is scheme-theoretically dense in $\mathbb{A}^q_{K}$, if such extension exists then it must be unique. So, again, being a filtration of $(\mathcal{F}^{\bullet}, \sigma)$ is a property, not a structure.
\end{subsection}

\begin{subsection}{Multi-weighted parabolic reductions}
For this subsection we fix a field extension $K \supset k$. 

Let $H$ be a reductive group over $K$ and let $\vec{\lambda} = (\lambda_i)_{i=1}^q$ be a tuple of $q$ cocharacters of $H$. The first cocharacter $\lambda_1$ defines a parabolic subgroup $P_{\lambda_1}$ with corresponding Levi subgroup $L_{\lambda_1}$. We denote by $q_{\lambda_1}: P_{\lambda_1} \rightarrow L_{\lambda_1}$ the quotient morphism. Suppose that the cocharacter $\lambda_2: \mathbb{G}_m \rightarrow H$ factors through the subgroup $L_{\lambda_1}$ and let $\overline{\lambda_2}: \mathbb{G}_m \rightarrow L_{\lambda_1}$ be the corresponding cocharacter. This induces a parabolic subgroup $P_{\overline{\lambda_2}} \subset L_{\lambda_1}$ with corresponding Levi subgroup $L_{\overline{\lambda_2}}$. The preimage $P_{(\lambda_1, \lambda_2)} := (q_{\lambda_1})^{-1}(P_{\overline{\lambda_2}})$ is a parabolic subgroup of $H$ contained in $P_{\lambda_1}$ (cf. second paragraph of the proof of \cite[Prop. 3.1]{biswas-holla-hnreduction}). We set $L_{(\lambda_1,\lambda_2)} \vcentcolon = L_{\overline{\lambda_2}}$, the joint centralizer of $\lambda_1$ and $\lambda_2$. By definition, $L_{(\lambda_1, \lambda_2)}$ is a Levi subgroup of $P_{(\lambda_1, \lambda_2)}$; we denote by $q_{(\lambda_1, \lambda_2)}: P_{(\lambda_1, \lambda_2)} \rightarrow L_{(\lambda_1, \lambda_2)}$ the quotient morphism. If the cocharacter $\lambda_3$ factors through the $L_{(\lambda_1, \lambda_2)}$, then we can similarly use $q_{(\lambda_1, \lambda_2)}$ to define a smaller parabolic subgroup $P_{(\lambda_1, \lambda_2, \lambda_3)}$ with Levi subgroup $L_{(\lambda_1, \lambda_2, \lambda_3)}$ given by the joint centralizer of the cocharacters $\lambda_1$, $\lambda_2$ and $\lambda_3$, and iterate this process. 
\begin{defn}
We say that the tuple $\vec{\lambda}$ is nested if at the $i^{th}$ step of the procedure described above we have that $\lambda_i$ factors through the subgroup $L_{(\lambda_1, \lambda_2, \ldots, \lambda_{i-1})}$.
\end{defn}
For a nested tuple $\vec{\lambda}= (\lambda_1,\ldots,\lambda_q)$, this procedure terminates at the $q^{th}$ step and yields a parabolic subgroup $P_{\vec{\lambda}}$ of $H$ with Levi subgroup $L_{\vec{\lambda}}$. Furthermore these groups fit into nested chains of subgroups:
\[  P_{\vec{\lambda}} \subset P_{(\lambda_1, \lambda_2, \ldots, \lambda_{q-1})} \subset \cdots \subset P_{(\lambda_1, \lambda_2)} \subset P_{\lambda_1} \subset H\]
\[ L_{\vec{\lambda}} \subset L_{(\lambda_1, \lambda_2, \ldots, \lambda_{q-1})} \subset \cdots \subset L_{(\lambda_1, \lambda_2)} \subset L_{\lambda_1} \subset H \]
\begin{defn}
Let $Y$ be a $K$-scheme. Let $\mathcal{H}$ be an $H$-bundle on $Y$. A $q$-weighted parabolic reduction of $\mathcal{H}$ consists of a pair $(\vec{\lambda}, \mathcal{H}_{\vec{\lambda}})$, where $\vec{\lambda}$ is a tuple of nested cocharacters of $H$ and $\mathcal{H}_{\vec{\lambda}}$ is a reduction of structure group of $\mathcal{H}$ to the subgroup $P_{\vec{\lambda}}$.
\end{defn}
Let $\theta: H \rightarrow F$ be a homomorphism of reductive groups over $K$. Let $\vec{\lambda} = (\lambda_i)_{i=1}^q$ be a nested tuple of cocharacters of $H$. We set $\theta_*(\vec{\lambda}) = (\theta \circ \lambda_i)_{i=1}^q$, which can be checked to be a nested tuple of cocharacters of $F$. The homomorphism $\theta$ restricts to a well-defined homomorphism $\theta : P_{\vec{\lambda}} \rightarrow P_{\theta_*(\vec{\lambda})}$. Let $Y$ be a $K$-scheme and let $\mathcal{H}$ be a $H$-bundle on $Y$. Given a $q$-weighted parabolic reduction $(\vec{\lambda}, \mathcal{H}_{\vec{\lambda}})$ of $\mathcal{H}$, we can associate a $q$-weighted parabolic reduction $(\theta_{*}(\vec{\lambda}), \theta_{*} \mathcal{H}_{\vec{\lambda}})$ of $\theta_{*}\mathcal{H}$. Here $\theta_{*}\mathcal{H}_{\vec{\lambda}}$ is the parabolic reduction obtained from $\mathcal{H}_{\vec{\lambda}}$ via the restriction $\theta: P_{\vec{\lambda}} \rightarrow P_{\theta_*(\vec{\lambda})}$.

We now return to our original group $G_{K}$. Suppose that we are given a $G_{K}$-bundle $\mathcal{G}$ on $Y$. Given a $q$-weighted parabolic reduction $(\vec{\lambda}, \mathcal{G}_{\vec{\lambda}})$ as above, we shall construct a $G_{K}$-bundle on $\Theta^q_{K} \times_{K} Y$ by iterating the Rees construction. 

Take the last cocharacter $\lambda_{q}$ of $\vec{\lambda}$ and consider the $\mathbb{G}_m$-equivariant $P_{\vec{\lambda}}$-bundle $\widetilde{\mathcal{G}}^{1}$ on $\mathbb{A}^1_{K} \times_{K} Y$ given by the Rees construction $\Rees(\mathcal{G}_{\vec{\lambda}}, \lambda_q)$. Using $P_{\vec{\lambda}} \subset P_{(\lambda_1, \lambda_2, \ldots, \lambda_{q-1})}$, we extend the structure group of $\widetilde{\mathcal{G}}^{1}$ to the bigger parabolic $P_{(\lambda_1, \lambda_2, \ldots, \lambda_{q-1})}$. Given that the subsequent cocharacter $\lambda_{q-1}$ factors through $P_{(\lambda_1, \lambda_2, \ldots, \lambda_{q-1})}$, we apply the Rees construction to the $P_{(\lambda_1, \lambda_2, \ldots, \lambda_{q-1})}$-bundle $\widetilde{\mathcal{G}}^{1}$ on $\mathbb{A}^1_{K} \times_{K} Y$ using the cocharacter $\lambda_{q-1}$. This way we obtain the $\mathbb{G}_m^2$-equivariant $P_{(\lambda_1, \lambda_2, \ldots, \lambda_{q-1})}$-bundle $\widetilde{\mathcal{G}}^2 \vcentcolon= \Rees(\widetilde{\mathcal{G}}^1, \lambda_{q-1})$ on $\mathbb{A}^2_{K} \times_{K} Y$. We iterate this procedure: at the $i^{th}$ step we have a $P_{(\lambda_1, \lambda_2, \ldots, \lambda_{q-i})}$-bundle $\widetilde{\mathcal{G}}^{i}$ on $\mathbb{A}^{i}_{K} \times_{K} Y$ and we apply the Rees construction using the cocharacter $\lambda_{q-i}$ to get the $\mathbb{G}_m^{i+1}$-equivariant $P_{(\lambda_1, \lambda_2, \ldots, \lambda_{q-i-1})}$-bundle $\widetilde{\mathcal{G}}^{i+1} \vcentcolon= \Rees(\widetilde{\mathcal{G}}^i, \lambda_{q-i})$ on $\mathbb{A}^{i+1}_{K} \times_{K} Y$.
\begin{defn}
We denote by $\Rees(\mathcal{G}_{\vec{\lambda}}, \vec{\lambda})$ the $P_{\lambda_1}$-bundle on $\Theta^q_{K} \times_{K} Y$ obtained at the $q^{th}$ step of the procedure described above.
\end{defn}
By construction, the restriction $\Rees(\mathcal{G}_{\vec{\lambda}}, \vec{\lambda})|_{\vec{1}_{K} \times_{K} Y}$ is canonically isomorphic to the $P_{\lambda_1}$-bundle obtained via $\mathcal{G}_{\vec{\lambda}}$ using the inclusion $P_{\vec{\lambda}} \subset P_{\lambda_1}$. We can extend the structure group from $P_{\lambda_1}$ to $G_{K}$ in order to obtain a $G_{K}$-bundle $\Rees(\mathcal{G}_{\vec{\lambda}}, \vec{\lambda}) \times^{P_{\lambda_1}} G_{K}$ on $\Theta^q_{K} \times_{K} Y$. The restriction to $\vec{1}_{K} \times_{K} Y$ is canonically isomorphic to the original $G_{K}$-bundle $\mathcal{G}$.
\begin{example}
If the group is a general linear group $\GL(V)$, this process can be described more concretely as follows. Let $\mathcal{E}$ be a $\GL(V)_{K}$-bundle on $Y$. We can think of $\mathcal{E}$ as a vector bundle of constant rank. Let $(\vec{\lambda}, \mathcal{E}_{\vec{\lambda}})$ be a $q$-weighted parabolic reduction of $\mathcal{E}$. For each index $i$, we set $\mathcal{E}_{(\lambda_1, \lambda_2, \ldots, \lambda_i)}$ to be the $P_{(\lambda_1,\lambda_2, \ldots, \lambda_{i})}$-parabolic reduction of $\mathcal{E}$ obtained from $\mathcal{E}_{\vec{\lambda}}$ by extending the structure group $P_{\vec{\lambda}} \subset P_{(\lambda_1, \lambda_2, \ldots, \lambda_i)}$. The pair $\left(\lambda_i, \mathcal{E}_{(\lambda_1, \lambda_2, \ldots, \lambda_i)}\right)$ amounts to $\mathbb{Z}$-weighted filtration $(\mathcal{E}^{(i)}_m)_{m \in \mathbb{Z}}$ of $\mathcal{E}$. We use this to define a lexicographic $\mathbb{Z}^q$-filtration $f = (\mathcal{E}_{\vec{m}})_{\vec{m} \in \mathbb{Z}^q}$ of $\mathcal{E}$ by vector subbundles where for a tuple $\vec{m} = (m_i)_{i=1}^q$, we set $\mathcal{E}_{\vec{m}} = \bigcap_{i=1}^q \mathcal{E}^{(i)}_{m_i}$. Then we apply the multi-graded Rees construction to $f$ in order to obtain a vector bundle $\widetilde{\mathcal{E}}_{f}$ on $\Theta^q_{K} \times_{K} Y$. The $\GL(V)_{K}$-bundle $\widetilde{\mathcal{E}}_{f}$ is precisely $\Rees(\mathcal{G}_{\vec{\lambda}}, \vec{\lambda}) \times^{P_{\lambda_1}} \GL(V)_{K}$. A similar description holds for $\GL(V^{\bullet})$-bundles, by applying this construction to each component of a tuple $\mathcal{E}^{\bullet}$.
\end{example}

Now let $(\mathcal{F}^{\bullet}, \sigma)$ be a $\rho$-sheaf over $X_{K}$ and let $f=(\mathcal{F}^{\bullet}_{\vec{m}})_{\vec{m} \in \mathbb{Z}^q}$ be a lexicographic $\mathbb{Z}^q$-filtration of $\mathcal{F}^{\bullet}$. Let $U$ be a big open subset of $X_{K}$ where the tuple $\text{gr}(f)$ is locally-free. Then the restriction $(\widetilde{\mathcal{F}^{\bullet}}_{f})|_{\Theta^q_{K} \times_{K} U}$ is locally-free, so we can view it as a $\GL(V^{\bullet})$-bundle. Since $(\mathcal{F}^{\bullet}_{\vec{m}})_{\vec{m} \in \mathbb{Z}^q}$ is a filtration of the $\rho$-sheaf, this $\GL(V^{\bullet})$-bundle comes equipped with a canonical reduction of structure group $\widetilde{\mathcal{G}}$ to $G_{K}$. Let $\mathcal{G}$ be the $G_{K}$-bundle on $U$ corresponding to the $\rho$-sheaf $(\mathcal{F}^{\bullet}, \sigma)$ (equivalently $\mathcal{G} = \widetilde{\mathcal{G}}|_{\vec{1}_{K} \times_{K}U}$). 

\begin{defn}
We say that the lexicographic filtration $f=(\mathcal{F}^{\bullet}_{\vec{m}})_{\vec{m} \in \mathbb{Z}^q}$ comes from a $q$-weighted parabolic reduction $(\vec{\lambda}, \mathcal{G}_{\vec{\lambda}})$ of $\mathcal{G}$ if $\widetilde{\mathcal{G}} = \Rees(\mathcal{G}_{\vec{\lambda}}, \vec{\lambda}) \times^{P_{\lambda_1}} G_{K}$.
\end{defn}

More explicitly, if $j: U \hookrightarrow X_{K}$ is a big open subset where the tuple $\mathcal{F}^{\bullet}$ is locally-free, a $q$-weighted parabolic reduction $(\vec{\lambda}, \mathcal{G}_{\vec{\lambda}})$ induces a $q$-weighted parabolic reduction $(\rho_{*}(\vec{\lambda}), \rho_{*}\mathcal{G}_{\vec{\lambda}})$ of the $\GL(V^{\bullet})_{K}$-bundle $\mathcal{F}^{\bullet}|_{U}=:\mathcal{E}^{\bullet}$. We define a lexicographic $\mathbb{Z}^q$-filtration $(\mathcal{E}^{\bullet}_{\vec{m}})_{\vec{m} \in \mathbb{Z}^q}$ of $\mathcal{E}^{\bullet}$ associated to the $q$-weighted parabolic reduction $(\rho_{*}(\vec{\lambda}), \rho_{*}\mathcal{G}_{\vec{\lambda}})$ and, for each $\vec{m} \in \mathbb{Z}^q$, set $\widetilde{\mathcal{E}}^{\bullet}_{\vec{m}} \vcentcolon = j_{*} (\mathcal{E}^{\bullet}_{\vec{m}}) \cap \mathcal{F}^{\bullet}$ (the intersection being taken for each coordinate of the tuple). This defines a lexicographic $\mathbb{Z}^q$-filtration of $\mathcal{F}^{\bullet}$, and $(\mathcal{F}^{\bullet}_{\vec{m}})_{\vec{m}\in \mathbb{Z}^q} = (\widetilde{\mathcal{E}}^{\bullet}_{\vec{m}})_{\vec{m} \in \mathbb{Z}^q}$ exactly when $(\mathcal{F}^{\bullet}_{\vec{m}})_{\vec{m}\in \mathbb{Z}^q}$ comes from the $q$-weighted parabolic reduction $(\vec{\lambda}, \mathcal{G}_{\vec{\lambda}})$.
\end{subsection}

\begin{subsection}{The GHN filtration for $\rho$-sheaves} \label{subsection: pointwise Gieseker filtration}
Fix a tuple $V^{\bullet} = (V^i)_{i=1}^b$ of vector spaces over $k$ and let $\rho: G \rightarrow \GL(V^{\bullet})$ be a faithful representation. In this subsection, we recursively apply the leading term HN filtration to the associated graded Levi sheaves in order to obtain a sequence of multi-weighted parabolic reductions of $(\mathcal{F}^{\bullet}, \sigma)$.

Let $K \supset k$ be a field extension such that $G_{K}$ is split and let $(\mathcal{F}^{\bullet}, \sigma)$ be an unstable $\rho$-sheaf on $X_{K}$. Let us proceed to describe the recursive construction of the GHN filtration of $(\mathcal{F}^{\bullet}, \sigma)$.

 \underline{First Step}: Let $f_{can, 1} :=(\mathcal{F}^{\bullet}_m)_{m \in \mathbb{Z}}$ be the leading term HN filtration of $(\mathcal{F}^{\bullet}, \sigma)$ and let $(\lambda_1, \mathcal{G}_{\lambda_1})$ be the corresponding weighted parabolic reduction. The cocharacter $\rho \circ \lambda_1$ induces a grading on each vector space $V^i$. This induces a nontrivial direct sum decomposition of the tuple $V^{\bullet}$. Indeed, if the decomposition was trivial then $\rho \circ \lambda_1$ would be a cocharacter of the center of $\GL(V^{\bullet})$ and using the same reasoning as in Proposition \ref{prop: lambda to derived} we would get $\nu(f_{can,1}) =0$, contradicting the fact that $f_{can,1}$ is destabilizing.

We can think of this direct sum decomposition as a finer tuple $V_{\lambda_1}^{\bullet}$ of direct summands, where we have increased the number of indexes. The group $\GL(V^{\bullet}_{\lambda_1})$ is the Levi subgroup of $\GL(V^{\bullet})$ determined by the cocharacter $\rho \circ \lambda_1$ which, by definition, contains the image of the Levi $L_{\lambda_1} \subset G$ under $\rho$. We denote by $\rho_{\lambda_1} : L_{\lambda_1} \rightarrow \GL(V^{\bullet}_{\lambda_1})$ the restriction to $L_{\lambda_1}$ of the homomorphism $\rho$. 

The description of the associated graded $f_{can,1}|_{0} = \left(\bigoplus_{m \in \mathbb{Z}} \mathcal{F}^{\bullet}_m / \mathcal{F}^{\bullet}_{m+1}, \text{gr}(\sigma)\right)$ given at the end of Subsection \ref{subsection: filtrations} shows that we can canonically view $f_{can,1}|_{0}$ as a $\rho_{\lambda_1}$-sheaf for the Levi $L_{\lambda_{1}}$. We call this the $1^{st}$ associated graded Levi sheaf and denote it by $\text{gr}_1(\mathcal{F}^{\bullet}, \sigma) := (\text{gr}_1(\mathcal{F}^{\bullet}), \text{gr}_1(\sigma))$. 

\underline{Second Step}: It might happen that the $\rho_{\lambda_1}$-sheaf $\text{gr}_1(\mathcal{F}^{\bullet}, \sigma)$ is unstable (see \Cref{example: torsion free sheaves}). If that is the case, we can consider the leading term HN filtration $f_{can,2}=(\text{gr}_1(\mathcal{F}^{\bullet})_m)_{m \in \mathbb{Z}}$ of the $\rho_{\lambda_1}$-sheaf $\text{gr}_1(\mathcal{F}^{\bullet}, \sigma)$. This is by definition a $\mathbb{Z}$-filtration of $\text{gr}_1(\mathcal{F}^{\bullet})$. Note that $(L_{\lambda_1})_{K}$ must also be split, and therefore there exists a weighted parabolic reduction $(\lambda_2, \mathcal{G}_{\lambda_2})$ of $\text{gr}_1(\mathcal{F}^{\bullet}, \sigma)$ that induces the leading term HN filtration $(\text{gr}_1(\mathcal{F}^{\bullet})_m)_{m \in \mathbb{Z}}$.

By construction we can think of $(\lambda_1, \lambda_2)$ as a nested pair. We set $L_{(\lambda_1, \lambda_2)}$ to be the Levi subgroup of $L_{\lambda_1}$ centralizing $\lambda_2$ and denote by $P_{(\lambda_1, \lambda_2)}$ the preimage under $q_{\lambda_1}: P_{\lambda_1} \rightarrow L_{\lambda_1}$ of $P_{\lambda_{2}}$. By \cite[Lem. 2.1]{biswas-holla-hnreduction}, the $P_{\lambda_2}$-parabolic reduction $\mathcal{G}_{\lambda_2}$ lifts to a unique $P_{(\lambda_1, \lambda_2)}$-parabolic reduction $\mathcal{G}_{(\lambda_1, \lambda_2)}$ that recovers the $P_{\lambda_1}$-reduction $\mathcal{G}_{\lambda_1}$ under the extension of groups $P_{(\lambda_1, \lambda_2)} \subset P_{\lambda_1}$.

Each piece in the $\mathbb{Z}$-filtration $(\text{gr}_1(\mathcal{F}^{\bullet}_m))_{m \in \mathbb{Z}}$ of the associated graded $\text{gr}_1(\mathcal{F}^{\bullet}) = \bigoplus_{m \in \mathbb{Z}} \mathcal{F}^{\bullet}_m / \mathcal{F}_{m+1}^{\bullet}$ can be lifted to the original $\mathcal{F}^{\bullet}$ by taking the preimages in each subquotient. This way we obtain a lexicographic $\mathbb{Z}^2$-filtration $(\mathcal{F}^{\bullet}_{(m_1, m_2)})_{(m_1, m_2) \in \mathbb{Z}^2}$ of the original tuple $\mathcal{F}^{\bullet}$. This is the lexicographic filtration coming from the 2-weighted parabolic reduction $\left((\lambda_1, \lambda_2), \mathcal{G}_{(\lambda_1, \lambda_2)}\right)$. In particular $(\mathcal{F}^{\bullet}_{(m_1, m_2)})_{(m_1, m_2) \in \mathbb{Z}^2}$ is actually a lexicographic $\mathbb{Z}^{2}$-filtration of $(\mathcal{F}^{\bullet}, \sigma)$.

The cocharacter $\lambda_2$ induces a nontrivial direct sum decomposition of the tuple $V^{\bullet}_{\lambda_1}$. We can think of this decomposition as a (strictly) finer tuple $V^{\bullet}_{(\lambda_1, \lambda_2)}$ of direct summands. The restriction of $\rho$ to the Levi subgroup $L_{(\lambda_1, \lambda_2)}$ factors through $\GL(V^{\bullet}_{(\lambda_1, \lambda_2)})$, which is a smaller Levi subgroup of $\GL(V^{\bullet})$ contained in the Levi $\GL(V^{\bullet}_{\lambda_1})$. We denote this restriction by $\rho_{(\lambda_1, \lambda_2)}: L_{(\lambda_1, \lambda_2)} \rightarrow \GL(V^{\bullet}_{(\lambda_1, \lambda_2)})$. The associated graded $f_{can,2}|_{0}$ acquires canonically the structure of a $\rho_{(\lambda_1, \lambda_2)}$-sheaf, denoted by $\text{gr}_2(\mathcal{F}^{\bullet}, \sigma) := (\text{gr}_2(\mathcal{F}^{\bullet}), \text{gr}_2(\sigma))$. We call this the $2^{nd}$ associated graded Levi sheaf. We have that $\text{gr}_2(\mathcal{F}^{\bullet})$ is the associated graded for the lexicographic $\mathbb{Z}^2$-filtration $(\mathcal{F}^{\bullet}_{(m_1, m_2)})_{(m_1, m_2) \in \mathbb{Z}^2}$ that we have defined. Moreover, its corresponding $L_{(\lambda_1, \lambda_2)}$-rational bundle is the same as the one obtained from the $P_{(\lambda_1, \lambda_2)}$-parabolic reduction $\mathcal{G}_{(\lambda_1, \lambda_2)}$ via the Levi quotient map $P_{(\lambda_1, \lambda_2)} \rightarrow L_{(\lambda_1, \lambda_2)}$. So we can view $\text{gr}_2(\mathcal{F}^{\bullet})$ as the associated graded of the 2-weighted parabolic reduction $\left((\lambda_1, \lambda_2), \mathcal{G}_{(\lambda_1, \lambda_2)}\right)$ \footnote{We entrust the reader with the precise formulation of what this means.}.

\underline{Recursion}: If $\text{gr}_2(\mathcal{F}^{\bullet}, \sigma)$ is not Gieseker semistable, then we can iterate this construction and consider the leading term HN filtration $f_{can, 3}$, and so on. At the $j^{th}$ step of this iteration we have a $j$-weighted parabolic reduction $((\lambda_1, \ldots, \lambda_j), \mathcal{G}_{(\lambda_1, \ldots, \lambda_j)})$ of $(\mathcal{F}^{\bullet}, \sigma)$. The associated underlying lexicographic $\mathbb{Z}^j$-filtration $(\mathcal{F}^{\bullet}_{\vec{m}})_{\vec{m} \in \mathbb{Z}^j}$ coming from $((\lambda_1, \ldots, \lambda_j), \mathcal{G}_{(\lambda_1, \ldots, \lambda_j}))$ is uniquely determined up to scaling the weights. Recall that $(\lambda_1, \ldots, \lambda_j)$ determines a Levi subgroup $L_{(\lambda_1, \ldots, \lambda_j)}$ that centralizes all the cocharacters in the tuple. We have an induced direct sum decomposition $V^{\bullet}_{(\lambda_1, \ldots, \lambda_j)}$, so that the representation $\rho$ restricted to $L_{(\lambda_1, \ldots, \lambda_j)}$ factors as $\rho_{(\lambda_1, \ldots, \lambda_j)} : L_{(\lambda_1, \ldots, \lambda_j)} \rightarrow \GL(V^{\bullet}_{(\lambda_1, \ldots, \lambda_j)})$. We also have a $j^{th}$ associated graded Levi $\rho_{(\lambda_1, \ldots, \lambda_j)}$-sheaf $(\text{gr}_j(\mathcal{F}^{\bullet}), \text{gr}_j(\sigma))$.

\underline{Termination}: At each step of the process, the nontrivial direct sum decomposition of $V^{\bullet}$ is finer than in the previous step. Since the vector spaces $V^i$ are finite dimensional, this process must terminate eventually. Suppose that it terminates at the $q^{th}$ step to yield a $q$-weighted parabolic reduction $\left(\vec{\lambda}, \mathcal{G}_{\vec{\lambda}}\right)$. We call this a Gieseker-Harder-Narasimhan (multi-weighted) parabolic reduction of $(\mathcal{F}^{\bullet}, \sigma)$. The corresponding $q^{th}$ associated graded Levi $\rho_{\vec{\lambda}}$-sheaf $(\text{gr}_q(\mathcal{F}^{\bullet}), \text{gr}_q(\sigma))$ is called the Gieseker-Harder-Narasimhan associated Levi of $(\mathcal{F}^{\bullet}, \sigma)$. By construction, $(\text{gr}_q(\mathcal{F}^{\bullet}), \text{gr}_q(\sigma))$ is Gieseker semistable. We call the lexicographic $\mathbb{Z}^q$-filtration $(\mathcal{F}^{\bullet}_{\vec{m}})_{\vec{m} \in \mathbb{Z}^q}$ the (multi-weighted) Gieseker-Harder-Narasimhan filtration of the $\rho$-sheaf $(\mathcal{F}^{\bullet}, \sigma)$. This filtration is uniquely determined up to scaling the weights by a rational tuple $\vec{p}$, because of the uniqueness of the leading term HN filtration up to scaling at each step of the process.

This construction is recorded in the following definition.
\begin{defn} \label{defn: GHN filtration}
Suppose that the field extension $K \supset k$ is algebraically closed and let $(\mathcal{F}^{\bullet}, \sigma)$ be a $\rho$-sheaf on $X_{K}$. The Gieseker-Harder-Narasimhan filtration of $(\mathcal{F}^{\bullet}, \sigma)$ is the unique (up to scaling of the weights) lexicographic $\mathbb{Z}^q$-filtration $(\mathcal{F}^{\bullet}_{\vec{m}})_{\vec{m} \in \mathbb{Z}^q}$ of $(\mathcal{F}^{\bullet}, \sigma)$ constructed above. Its Gieseker-Harder-Narasimhan associated Levi $\rho_{\vec{\lambda}}$-sheaf, as described above, is Gieseker semistable.

If $K \supset k$ is not algebraically closed, then by uniqueness we can descend the GHN filtration of the base change to any algebraic closure of $K$. Therefore we obtain a unique (up to scaling the weights) lexicographic $\mathbb{Z}^q$-filtration $(\mathcal{F}^{\bullet}_{\vec{m}})_{\vec{m} \in \mathbb{Z}^q}$, which we call the GHN filtration of $(\mathcal{F}^{\bullet}, \sigma)$.
\end{defn}

\begin{remark}
In particular, when the $\rho$-sheaf $(\mathcal{F}^{\bullet}, \sigma)$ is Gieseker semistable, the GHN filtration constructed above produces a trivial filtration as in Definition \ref{defn: leading term filtration} .
\end{remark}

\begin{remark}If $G_{K}$ is split then, by construction, the GHN filtration $(\mathcal{F}^{\bullet}_{\vec{m}})_{\vec{m} \in \mathbb{Z}^q}$ comes from a $q$-weighted parabolic reduction $(\vec{\lambda}, \mathcal{G}_{\vec{\lambda}})$. 
\end{remark}

We end this subsection by explicitly describing the procedure for the construction of the GHN filtration in this case of torsion-free sheaves (as in \Cref{example: torsion free sheaves}). Let $0 = \mathcal{F}^{\text{l-term}}_{0} \subset \mathcal{F}^{\text{l-term}}_{1} \subset \mathcal{F}^{\text{l-term}}_{2} \subset \cdots \subset \mathcal{F}^{\text{l-term}}_{j_1} = \mathcal{F}$
be the unweighted leading term filtration, with leading term index $i_1$. By assigning weights as above, this yields the leading term HN filtration $(\, ^{(1)}\mathcal{F}_{m})_{m \in \mathbb{Z}}$ of $\mathcal{F}$, whose associated cocharacter is $\lambda_1$. The $1^{st}$ associated graded Levi $\rho_{\lambda_1}$-sheaf consists of the tuple
\[ \text{gr}_1(\mathcal{F}) = \left( \mathcal{F}^{\text{l-term}}_{i}/ \mathcal{F}^{\text{l-term}}_{i-1}\right)_{i=1}^{j_1} \]
The leading term HN filtration of $\text{gr}_1(\mathcal{F})$ is obtained by taking the leading term filtration of each element $\mathcal{F}^{\text{l-term}}_{i}/ \mathcal{F}^{\text{l-term}}_{i-1}$ in the tuple. Each corresponding leading term index will be strictly smaller than $i_1$. The sheaves appearing in the leading term filtration of  $\mathcal{F}^{\text{l-term}}_{i}/ \mathcal{F}^{\text{l-term}}_{i-1}$ are of the form $\mathcal{F}^{\text{HN}}_{h}/\mathcal{F}^{\text{l-term}}_{i-1}$ for some index $1\leq h\leq n$. We denote the corresponding lexicographic $\mathbb{Z}^2$-filtration of $\mathcal{F}$ by $(\, ^{(2)}\mathcal{F}_{\vec{m}})_{\vec{m} \in \mathbb{Z}^2}$. Let $\vec{m}_1 > \vec{m}_2 > \cdots > \vec{m}_{j_2}$ denote the jumping points of $(\, ^{(2)}\mathcal{F}_{\vec{m}})_{\vec{m} \in \mathbb{Z}^2}$. The unweighted filtration
\[ 0 \subset \, ^{(2)}\mathcal{F}_{\vec{m}_1} \subset \,  ^{(2)}\mathcal{F}_{\vec{m}_2} \subset \cdots \subset \, ^{(2)}\mathcal{F}_{\vec{m}_{j_2}} = \mathcal{F}\]
is coarser than the Gieseker-Harder-Narasimhan filtration, but finer than the leading term filtration. The reduced Hilbert polynomials of the graded pieces are strictly increasing, and each graded piece $^{(2)}\mathcal{F}_{\vec{m}_i} / \, ^{(2)}\mathcal{F}_{\vec{m}_{i-1}}$ is semistable up to a degree $i_2$ in the Hilbert polynomial that is strictly smaller than the degree $i_1$ in the previous step. The $2^{nd}$ associated graded Levi sheaf is $\text{gr}_2(\mathcal{F})=\left(^{(2)}\mathcal{F}_{\vec{m}_i}/ \, ^{(2)}\mathcal{F}_{\vec{m}_{i-1}}\right)_{i=1}^{j_2}$. We iterate this procedure, at each step we take the leading term filtration of the graded pieces. 

Suppose that this process terminates at the $q^{th}$ step. Let $(\, ^{(q)}\mathcal{F}_{\vec{m}})_{\vec{m}\in \mathbb{Z}^q}$ be the GHN filtration obtained at this $q^{th}$ step. Denote by $\vec{m}_1> \vec{m}_2 > \cdots > \vec{m}_{j_q}$ the jumping points of the GHN filtration. Consider the unweighted filtration
\begin{equation}
\label{eqn: unweighted GHN torsion-free}
0 \subset \, ^{(q)}\mathcal{F}_{\vec{m}_1} \subset \,  ^{(q)}\mathcal{F}_{\vec{m}_2} \subset \cdots \subset \, ^{(q)}\mathcal{F}_{\vec{m}_{j_q}} =\mathcal{F} .\end{equation}
The reduced Hilbert polynomials of the graded pieces of this filtration are strictly increasing by construction. Moreover, each graded piece must be Gieseker semistable, because the $q^{th}$ step is the last one of the procedure. Hence, this filtration is the classical Gieseker-Harder-Narasimhan filtration of $\mathcal{F}$. The associated $q$-weights in the GHN filtration encode information about how high the degree of the difference of reduced Hilbert polynomials is between two consecutive graded pieces.

We have therefore shown the following.
\begin{prop}
\label{prop:GHNcomparison}
Let $\mathcal{F}$ be a torsion-free sheaf. Let $(\, ^{(q)}\mathcal{F}_{\vec{m}})_{\vec{m}\in \mathbb{Z}^q}$ be the GHN filtration. The corresponding unweighted filtration \eqref{eqn: unweighted GHN torsion-free} coincides with the classical Gieseker-Harder-Narasimhan filtration of $\mathcal{F}$. \qed
\end{prop}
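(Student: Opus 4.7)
The plan is to formalize the outline already sketched in the paragraph preceding the proposition. The argument proceeds in three steps.

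First, I would show by induction on $i$ that in the unweighted filtration associated to $(\, ^{(i)}\mathcal{F}_{\vec{m}})_{\vec{m} \in \mathbb{Z}^i}$, the reduced Hilbert polynomials of the graded pieces are strictly decreasing when the jumping indices $\vec{m}$ are enumerated in decreasing lexicographic order. For the base case $i = 1$, this follows from the construction of the weighted leading term filtration: its graded pieces $\mathcal{F}^{\text{l-term}}_{k}/\mathcal{F}^{\text{l-term}}_{k-1}$ have strictly decreasing leading coefficient at degree $i_1$ and agree in all higher-degree coefficients, so their reduced Hilbert polynomials are strictly decreasing. For the inductive step, at step $i+1$ one applies the leading term HN construction to each component of $\text{gr}_i(\mathcal{F})$; since the leading indices of the new filtrations are strictly less than those already used (as each component of $\text{gr}_i(\mathcal{F})$ is already semistable at the higher degrees), refining the existing lexicographic filtration preserves the strict inequalities among reduced Hilbert polynomials of the previous step and only introduces new strict inequalities between pieces that previously had equal reduced Hilbert polynomials above degree $i_{i+1}$.

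Second, I would observe that the recursive procedure terminates at some step $q$ exactly when each component of the $q$-th associated graded Levi sheaf $\text{gr}_q(\mathcal{F})$ is Gieseker semistable. Because the components of $\text{gr}_q(\mathcal{F})$ coincide with the graded pieces of the unweighted filtration $0 \subset \, ^{(q)}\mathcal{F}_{\vec{m}_1} \subset \cdots \subset \, ^{(q)}\mathcal{F}_{\vec{m}_{j_q}} = \mathcal{F}$, this yields Gieseker semistability of each of these graded pieces. Termination itself is guaranteed by the fact that the leading index strictly decreases at every step and is bounded below by $0$, forcing the procedure to stop after finitely many iterations.

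Finally, I would invoke the uniqueness of the classical Gieseker-Harder-Narasimhan filtration \cite[Thm. 1.3.4]{huybrechts.lehn}, which characterizes it as the unique filtration of $\mathcal{F}$ by saturated subsheaves whose graded pieces are Gieseker semistable with strictly decreasing reduced Hilbert polynomials. Combining the two previous steps, the unweighted filtration \eqref{eqn: unweighted GHN torsion-free} satisfies both properties and must therefore coincide with the classical Gieseker-Harder-Narasimhan filtration. The most delicate part of the argument is the inductive step in the first part, where one must keep careful track of the leading indices and verify that the lexicographic refinement does not spoil the strict monotonicity of reduced Hilbert polynomials established in earlier steps; the second and third steps are immediate from the construction and from the uniqueness statement in \cite{huybrechts.lehn}, respectively.
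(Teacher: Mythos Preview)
Your proposal is correct and follows essentially the same route as the paper: both verify that the unweighted filtration has Gieseker semistable graded pieces with strictly monotone reduced Hilbert polynomials, then invoke the uniqueness of the classical Harder--Narasimhan filtration. The paper's discussion streamlines your inductive step by observing directly that every subsheaf arising in the iteration is already one of the $\mathcal{F}^{\text{HN}}_h$, which makes the strict monotonicity immediate without the bookkeeping on leading indices.
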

\end{subsection}

\begin{subsection}{Example of Gieseker-Harder-Narasimhan filtration} \label{subsection: example HN filtration}
We compare the different filtrations that we have
considered in an example. Let $G=\SO(7)$ and let $\rho:\SO(7)\to \GL(7)$ be
the standard representation. Set $X=\mathbb{P}^3$, 
and let $L\cong \mathbb{P}^1\subset \mathbb{P}^3$ be a fixed line. Choose 
two different points $ Z=\{p,p'\}\subset \mathbb{P}^3$. We denote by $\mathcal{I}_L$ the ideal
sheaf of $L$, $\mathcal{I}_p$ the ideal sheaf of $p$, and
$\mathcal{I}_Z$ the ideal sheaf of $Z$. Set
\[
\mathcal{F}=
\mathcal{I}_L \oplus \mathcal{I}_Z \oplus \mathcal{I}_p 
\oplus 
\mathcal{O} \oplus \mathcal{O} \oplus \mathcal{O} \oplus \mathcal{O}
.
\]
Let $\varphi:\mathcal{F}\otimes \mathcal{F} \to \mathcal{O}$ be the symmetric
bilinear form defined by the matrix with $1$'s in the antidiagonal and zeros everywhere else.
Set $U=\mathbb{P}^3\setminus (L\cup Z)$. The sheaf $\mathcal{F}|_{U}$ is locally free, and in fact trivial.
Let $\varphi'$ be the canonical trivialization of $\det\mathcal{F}$. Clearly $\det\varphi=\varphi'{}^2$,
hence the pair $(\varphi,\varphi')$ induces a reduction of the trivial $\GL(7)$-bundle
to $\SO(7)$.
The rational $\SO(7)$-bundle
$\mathcal{G}\to U$
which it defines is trivial. Let $\psi$ denote the canonical isomorphism between
$\rho_*(\mathcal{G})$ and $\mathcal{F}|_U$. 

The principal $\rho$-sheaf
$(\mathcal{G},\mathcal{F},\psi)$ is slope semistable 
(Definition \ref{defn: slope stable}). The rational principal
$\SO(7)$-bundle $\mathcal{G}$ is trivial, hence also slope semistable 
(Definition \ref{defn: ramanathan rational}). Hence the
Harder-Narasimhan canonical reduction defined in 
\cite{anchouche-hassan-biswas} is trivial.

The Hilbert polynomials are
  \begin{align*}
P_{\mathcal{O}}(n)=\frac{1}{6}(n^3+6n^2+10n+3)
&\quad\quad& P_{\mathcal{I}_p}(n)=\frac{1}{6}(n^3+6n^2+10n-3)\\
P_{\mathcal{I}_Z}(n)=\frac{1}{6}(n^3+6n^2+10n-9)&\quad\quad& 
P_{\mathcal{I}_L}(n)=\frac{1}{6}(n^3+6n^2  -\; 6n-3)
  \end{align*}
Note that the three first polynomials only differ in the last term, and
the last polynomial differs in the last two terms.

The principal $\rho$-sheaf
$(\mathcal{G},\mathcal{F},\psi)$ is Gieseker unstable
(Definition \ref{defn: Gieseker stable}). 
We will now calculate the leading term HN filtration. We work in the standard basis where the bilinear form is given by the matrix above. We will use cocharacters
of $\SO(7)$ of the form
$$
\operatorname{diag}\operatorname(t^a,t^b,t^c,1,t^{-c},t^{-b},t^{-a})
$$
with $a\geq b\geq c\geq 0$. The associated
filtration $\mathcal{F}_m$ satisfies
$\mathcal{F}_m^\perp=\mathcal{F}_{-m+1}^{}$, where we recall that 
$\mathcal{F}_i{}^\perp :=\ker  \big(\mathcal{F}\stackrel{\varphi}{\longrightarrow}\mathcal{F}^\vee \longrightarrow \mathcal{F}_i{}^\vee\big ).$ The leading term HN
filtration is
\[
\overbrace{\mathcal{F}}^{\mathcal{F}_{-1}}  \quad\supset\quad
\overbrace
{\mathcal{I}_Z \oplus \mathcal{I}_p \oplus 
\mathcal{O} \oplus \mathcal{O} \oplus \mathcal{O} \oplus \mathcal{O}}
^{\mathcal{F}_0}
\quad\supset\quad
\overbrace{\mathcal{O}}^{\mathcal{F}_{1}}
\]
where $\mathcal{F}_1$ is the last summand in $\mathcal{F}$,
and $\mathcal{F}_1^\perp=\mathcal{F}_0^{}$.
By definition of leading term HN filtration, this is the unique
filtration, with $\mathcal{F}_m^\perp=\mathcal{F}_{-m+1}^{}$, which
gives a maximum of the numerical function \eqref{eqn_1}.

This filtration has been obtained by the cocharacter
$\lambda_1=\operatorname{diag} ( t,1,1,1,1,1,t^{-1})$. The associated
Levi is $L_{\lambda_1}=\mathbb{G}_m\times \SO(5)$,
the representation $\rho_{\lambda_1}$ is
$$
\rho_{\lambda_1}:\mathbb{G}_m\times \SO(5) \longrightarrow \GL(1)\times \GL(5) \times \GL(1), \; \; (t,g) \mapsto  (t,g,t^{-1})
$$
and the associated graded
sheaf is
$$
\overbrace{\mathcal{I}_L}^{\mathcal{F}^{-1}:= \mathcal{F}_{-1}/\mathcal{F}_{0}}
\quad\oplus\quad 
\overbrace
{\mathcal{I}_Z \oplus \mathcal{I}_p\oplus\mathcal{O} \oplus \mathcal{O} \oplus \mathcal{O} }
^{\mathcal{F}^0 := \mathcal{F}_{0}/\mathcal{F}_{1}}
\quad\oplus\quad 
\overbrace{\mathcal{O}}^{\mathcal{F}^{1} := \mathcal{F}_{1}}.
$$
The $\SO(5)$-sheaf defined by $\mathcal{F}^0$ is not Gieseker stable.
Its leading term HN filtration is given by the cocharacter
$\lambda_2=\operatorname{diag}(t^2,t,1,t^{-1},t^{-2})$ and it is
\begin{gather*}
\overbrace{\mathcal{F}^0}^{\mathcal{F}^0_{-2}} 
\quad\supset\quad
\overbrace{
\mathcal{I}_p\oplus \mathcal{O} \oplus \mathcal{O} \oplus \mathcal{O}}
^{\mathcal{F}^0_{-1}}
\quad\supset\quad
\overbrace{
\mathcal{O} \oplus \mathcal{O} \oplus \mathcal{O}}
^{\mathcal{F}^0_0}
\quad\supset\quad
\overbrace{\mathcal{O} \oplus \mathcal{O}}^{\mathcal{F}^0_1}
\quad\supset\quad
\overbrace{\mathcal{O}}^{\mathcal{F}^0_2}
\end{gather*}

Finally, the (multi-weighted) Gieseker-Harder-Narasimhan filtration is
\begin{gather*}
\overbrace{\mathcal{F}}^{\mathcal{F}_{(-1,0)}}
\quad\supset\quad
\overbrace{\mathcal{I}_Z\oplus \mathcal{I}_p\oplus\mathcal{O}^{\oplus 4}}
^{\mathcal{F}_{(0,-2)}}
\quad\supset\quad
\overbrace{\mathcal{I}_p\oplus\mathcal{O}^{\oplus 4}}
^{\mathcal{F}_{(0,-1)}}
\quad\supset\quad
\overbrace{\mathcal{O}^{\oplus 4}}
^{\mathcal{F}_{(0,0)}}
\quad\supset\quad
\overbrace{\mathcal{O}^{\oplus 3}}
^{\mathcal{F}_{(0,1)}}
\quad\supset\quad
\overbrace{\mathcal{O}^{\oplus 2}}
^{\mathcal{F}_{(0,2)}}
\quad\supset\quad
\overbrace{\mathcal{O}}
^{\mathcal{F}_{(1,0)}}
\end{gather*}
The process stops here because the associated Levi principal sheaf is Gieseker semistable. Indeed, the Levi is 
$L_{\lambda_2}=\mathbb{G}_m\times (\mathbb{G}_m\times \mathbb{G}_m)\subset \mathbb{G}_m\times \SO(5)$, the representation is
$$
\rho_{\lambda_2}: \mathbb{G}_m \times (\mathbb{G}_m\times
\mathbb{G}_m) \longrightarrow \GL(1)^{\times 7},  \; \; \;
 (a,b,c)  \mapsto  (a,b,c,1,c^{-1},b^{-1},a^{-1})
$$
and the associated graded is isomorphic to $\mathcal{F}$.

\end{subsection}

\begin{subsection}{The GHN filtration in families}
In this subsection we define a notion of relative GHN filtrations. We prove that relative GHN filtrations induce a stratification of $\Bun_{\rho}(X)$ by locally closed substacks.

Let $S$ be a $k$-scheme. Let $(\mathcal{F}^{\bullet}, \sigma)$ be a $\rho$-sheaf on $X_{S}$. 
\begin{defn}
A relative lexicographic $\mathbb{Z}^q$-filtration of the underlying tuple of sheaves $\mathcal{F}^{\bullet} = (\mathcal{F}_i)_{i=1}^b$ is a sequence of tuples of subsheaves $(\mathcal{F}^{\bullet}_{\vec{m}})_{\vec{m} \in \mathbb{Z}^q}$ satisfying the following conditions
\begin{enumerate}[(1)]
    \item $\mathcal{F}^{\bullet}_{\vec{m}} \subset \mathcal{F}^{\bullet}_{\vec{n}}$ whenever $\vec{m} > \vec{n}$.
    \item There exists some $M_{max} \in \mathbb{Z}^q$ such that $\mathcal{F}^{\bullet}_{\vec{m}} = 0$ for all $\vec{m} > M_{max}$. Similarly, there exists some $M_{min} \in \mathbb{Z}^q$ such that $\mathcal{F}^{\bullet}_{\vec{m}} = \mathcal{F}^{\bullet}$ for all $\vec{m} < M_{min}$.
    \item For all $\vec{m}$, we have that $\mathcal{F}^{\bullet}_{\vec{m}}/\mathcal{F}^{\bullet}_{\vec{m}+1}$ is a tuple of families of torsion-free sheaves on $X_{S}$.
\end{enumerate}
\end{defn}
For any such filtration $f= (\mathcal{F}^{\bullet}_{\vec{m}})_{\vec{m} \in \mathbb{Z}^q}$, we can apply the multi-graded Rees construction in order to obtain a tuple $\widetilde{\mathcal{F}^{\bullet}}_{f}$ of families of torsion-free sheaves on $\Theta^q \times X_{S}$. Let $U$ be a big open subset of $X_{S}$ where the associated graded $\text{gr}(f)$ is locally-free. One can use $\mathbb{G}_m^q$-equivariance to conclude that the restriction $(\widetilde{\mathcal{F}^{\bullet}}_{f})|_{\Theta^q \times U}$ is locally-free. In particular, it can be viewed as a $\GL(V^{\bullet})$-bundle. The section $\sigma$ gives a $G$-reduction of structure group of $(\widetilde{\mathcal{F}^{\bullet}}_{f})|_{\vec{1} \times U}$. 

\begin{defn}
We say that $f= (\mathcal{F}^{\bullet}_{\vec{m}})_{\vec{m} \in \mathbb{Z}^q}$ is a relative lexicographic $\mathbb{Z}^q$-filtration of the $\rho$-sheaf $(\mathcal{F}^{\bullet}, \sigma)$ if the $G$-reduction induced by $\sigma$ extends (uniquely) to $\Theta_{k}^{q} \times U$.
\end{defn}

    We can also define relative $q$-weighted reductions for $(\mathcal{F}^{\bullet}, \sigma)$ in a similar way as in the case when $S = \Spec(K)$. One can say as well when a relative lexicographic $\mathbb{Z}^q$-filtration of $(\mathcal{F}^{\bullet}, \sigma)$ comes from a $q$-weighted parabolic reduction $(\vec{\lambda}, \mathcal{G}_{\vec{\lambda}})$, we omit the details.

\begin{defn}
Let $S$ and $(\mathcal{F}^{\bullet}, \sigma)$ be as above and let $(\mathcal{F}^{\bullet}_{\vec{m}})_{\vec{m} \in \mathbb{Z}^q}$ be a lexicographic $\mathbb{Z}^q$-filtration of $(\mathcal{F}^{\bullet}, \sigma)$, where we might allow a different choice of $q$ over each connected component of $S$. We say that $(\mathcal{F}^{\bullet}_{\vec{m}})_{\vec{m} \in \mathbb{Z}^q}$ is a GHN filtration for $(\mathcal{F}^{\bullet}, \sigma)$ if for all points $s \in S$ the restriction $(\mathcal{F}^{\bullet}_{\vec{m}}|_{X_{s}})_{\vec{m} \in \mathbb{Z}^q}$ is the GHN filtration of $(\mathcal{F}^{\bullet}|_{X_{s}}, \sigma|_{X_{s}})$.
\end{defn}

\begin{defn}
Let $H\rightarrow S$ be a locally closed stratification of $S$. We say that the $S$-scheme $H$ is universal for GHN filtrations of $(\mathcal{F}^{\bullet}, \sigma)$ if it represents the functor that classifies pairs $\left(T, \left[(\mathcal{F}^{\bullet}_{\vec{m}})_{\vec{m} \in \mathbb{Z}^q}]\right]\right)$, where $T$ is a $S$-scheme and $\left[(\mathcal{F}^{\bullet}_{\vec{m}})_{\vec{m} \in \mathbb{Z}^q}\right]$ is an equivalence class of relative GHN filtrations of $(\mathcal{F}^{\bullet}|_{X_{T}}, \sigma|_{X_{T}})$, up to scaling the weights.
\end{defn}

\begin{thm} \label{thm: relative Gieseker filtrations}
The relative GHN filtrations induce a stratification of $\Bun_{\rho}(X)$ by locally closed substacks. Equivalently, for any $k$-scheme $S$ and every $\rho$-sheaf $(\mathcal{F}^{\bullet}, \sigma)$ there exists a unique locally closed stratification $H$ of $S$ that is universal for GHN filtrations of $(\mathcal{F}^{\bullet}, \sigma)$. 

If, in addition, $S$ is locally of finite type over a field extension $K \supset k$ such that $G_{K}$ is split, then the restriction of the universal relative GHN filtration\footnote{
More precisely, we choose a representative up to scaling. 
} to each connected component of the stratification comes from a relative multi-weighted parabolic reduction.
\end{thm}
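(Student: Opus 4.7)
The strategy is to prove the theorem by induction on the recursion depth $q$ appearing in the construction of the GHN filtration in Subsection \ref{subsection: pointwise Gieseker filtration}. The base case $q=1$ is precisely Proposition \ref{prop: relative mixed Gieseker filtrations}, which shows that relative leading term HN filtrations induce a locally closed stratification $H_1 \to S$ of any scheme $S$ equipped with a $\rho$-sheaf $(\mathcal{F}^{\bullet}, \sigma)$. For the inductive step, I will need to apply this result iteratively to the associated graded Levi sheaves, and show that the iteration terminates uniformly.

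First, I would pass to the stratification $H_1 \to S$ given by the leading term HN filtration, and restrict to a connected component $S_1 \subset H_1$. On $S_1$ the numerical type of the pointwise leading term HN filtration is constant: the ranks of the graded pieces are locally constant functions, hence constant on $S_1$, and the conjugacy class of the cocharacter $\lambda_1$ is determined by these ranks together with the induced grading of $V^{\bullet}$. This implies that the tuple $V^{\bullet}_{\lambda_1}$ and the Levi subgroup $\GL(V^{\bullet}_{\lambda_1}) \subset \GL(V^{\bullet})$ are constant on $S_1$. The associated graded tuple of sheaves $\mathrm{gr}_1(\mathcal{F}^{\bullet})$ together with the induced section $\mathrm{gr}_1(\sigma)$ then define, fiberwise, a $\rho_{\lambda_1}$-sheaf over $X_{S_1}$ valued in a Levi subgroup $L_{\lambda_1} \subset G$; by the fppf descent underlying the Rees construction applied in families, this structure assembles into a morphism $S_1 \to \Bun_{\rho_{\lambda_1}}(X)$, where $\Bun_{\rho_{\lambda_1}}(X)$ is the stack of $\rho_{\lambda_1}$-sheaves (defined as in Section \ref{section: generalization} with $G$ replaced by $L_{\lambda_1}$).

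Now I would apply Proposition \ref{prop: relative mixed Gieseker filtrations}, or rather the analogue of Theorem \ref{thm: main theorem products gen linear groups} for $\Bun_{\rho_{\lambda_1}}(X)$, to the relative $\rho_{\lambda_1}$-sheaf $(\mathrm{gr}_1(\mathcal{F}^{\bullet}), \mathrm{gr}_1(\sigma))$ over $S_1$. This produces a locally closed stratification $H_2 \to S_1$ universal for relative leading term HN filtrations of the graded Levi sheaf. Lifting subquotients through the canonical surjections and intersecting with the reflexive hull exactly as in Subsection \ref{subsection: pointwise Gieseker filtration}, this stratification records the $2^{nd}$ step of the recursion and produces a relative lexicographic $\mathbb{Z}^2$-filtration of $(\mathcal{F}^{\bullet}, \sigma)$ over $H_2$. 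Iterating on each connected component yields a tower of locally closed stratifications $\cdots \to H_j \to H_{j-1} \to \cdots \to S$, and each composite $H_j \to S$ is still a locally closed stratification (since locally closed immersions are stable under composition and the relevant strata are disjoint). The key finiteness observation is that the Levi subgroups $\GL(V^{\bullet}_{(\lambda_1,\ldots,\lambda_j)})$ strictly decrease in dimension at each step, because the new cocharacter $\lambda_j$ refines the direct sum decomposition of $V^{\bullet}$ nontrivially; since $\dim V^{\bullet}$ is finite, the recursion terminates after a uniformly bounded number of steps on each connected component. Taking the disjoint union over all connected components and all terminal depths $q$ produces the universal locally closed stratification $H \to S$, and its universal property follows from the universal property at each step combined with the uniqueness (up to scaling) of the leading term HN filtration at each stage.

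The main obstacle I anticipate is the verification that on each stratum the associated graded Levi sheaf genuinely assembles into a relative $\rho_{\vec{\lambda}}$-sheaf valued in a fixed Levi subgroup, and that the Rees-type construction lifting a relative filtration of the graded Levi through the quotient $P_{\vec{\lambda}} \twoheadrightarrow L_{\vec{\lambda}}$ is well behaved in families; this uses the analogue of \cite[Lem. 2.1]{biswas-holla-hnreduction} in families together with the big open extension statements of Lemma \ref{lemma: lemma on very big open subsets} to ensure the parabolic reduction is defined canonically on the whole $X_S$ and not just on a big open. For the final assertion, once $G_K$ is split we may choose representatives of each pointwise leading term HN filtration that come from a relative weighted parabolic reduction by the split case of Proposition \ref{prop: relative mixed Gieseker filtrations}; iterating and invoking \cite[Lem. 2.1]{biswas-holla-hnreduction} fiberwise to lift each $\lambda_j$-parabolic reduction of the Levi back to a $P_{(\lambda_1,\ldots,\lambda_j)}$-reduction of the original $G$-bundle yields the desired relative multi-weighted parabolic reduction on each connected component of $H$.
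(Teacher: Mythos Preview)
Your proposal follows essentially the same recursive strategy as the paper: apply Proposition \ref{prop: relative mixed Gieseker filtrations} to get $H_1$, form the associated graded Levi $\rho_{\lambda_1}$-sheaf on each connected component, reapply the proposition for the Levi to get $H_2$, and iterate until the process terminates by the finite-dimensionality of $V^{\bullet}$. The lifting of filtrations via \cite[Lem.~2.1]{biswas-holla-hnreduction} and the assembly of the multi-weighted parabolic reduction in the split case are also handled the same way.

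The one point that deserves tightening is your treatment of the non-split case for the \emph{first} assertion. Your recursion already invokes a cocharacter $\lambda_1$ of $G$ and a Levi subgroup $L_{\lambda_1}$ over the ground field on each component of $H_1$; these are only available when $G_K$ is split. The paper resolves this up front by observing that locally closed immersions satisfy \'etale descent, so one may base-change to a field extension splitting $G$, run the recursion there, and descend the stratification. You implicitly use the split structure throughout but only acknowledge it for the second assertion; inserting the \'etale-descent reduction at the outset (and the reduction to $S$ of finite type) makes the argument complete.
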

\begin{proof}
Let $S$ be a $k$-scheme and let $(\mathcal{F}^{\bullet}, \sigma)$ be a $\rho$-sheaf on $X_{S}$. We will denote by $\RelGies(\mathcal{F}^{\bullet}, \sigma)$ the functor that parametrizes pairs $\left(T, [(\mathcal{F}^{\bullet}_{\vec{m}})_{\vec{m} \in \mathbb{Z}^q}]\right)$, where $T$ is a $S$-scheme and $[(\mathcal{F}^{\bullet}_{\vec{m}})_{\vec{m} \in \mathbb{Z}^q}]$ is an equivalence class of relative GHN filtrations of $(\mathcal{F}^{\bullet}|_{X_{T}}, \sigma|_{X_{T}})$ up to scaling the weights. We would like to show that $\RelGies(\mathcal{F}^{\bullet}, \sigma)$ is represented by a locally closed stratification of $S$. Since locally closed immersions satisfy \'etale descent, we can check this after base-changing to a field extension $K\supset k$ such that $G_{K}$ is split. After replacing $k$ with $K$, we can assume without loss of generality that $G$ is split. Moreover, since $\Bun_{\rho}(X)$ is locally of finite type over $k$, we can reduce to the case when $S$ is of finite type over $k$. We will construct $H$ recursively.

\underline{First Step}: We denote by $H_1$ the locally closed stratification of $S$ that is universal for leading term HN filtrations of $(\mathcal{F}^{\bullet}, \sigma)$ (Proposition \ref{prop: relative mixed Gieseker filtrations}). There is a natural forgetful functor $\RelGies(\mathcal{F}^{\bullet}, \sigma) \rightarrow H_1$ given by sending a relative GHN filtration $(\mathcal{F}^{\bullet}_{\vec{m}})_{\vec{m}\in \mathbb{Z}^q}$ to the relative leading term HN filtration $(\mathcal{F}^{\bullet}_m)_{m \in \mathbb{Z}}$ defined by
\[ \mathcal{F}^{\bullet}_m \vcentcolon = \bigcup_{(m_2, m_3, \ldots, m_q) \in \mathbb{Z}^{q-1}} \mathcal{F}_{(m,m_2, \ldots, m_q)} \]
The fact that this is a relative leading term HN filtration follows from the pointwise construction of the GHN filtration and the fact that pulling back to a fiber commutes with taking unions.

Choose a representative $f_1$ for the universal leading term HN filtration of $(\mathcal{F}^{\bullet}|_{X_{H_1}}, \sigma|_{X_{H_1}})$. By the last statement in Proposition \ref{prop: relative mixed Gieseker filtrations}, the restriction of $f_1$ to each component of $H_1$ comes from a relative weighted parabolic reduction $(\lambda_1, \mathcal{G}_{\lambda_1})$. We can use this to define, on each component of $H_1$, a relative $1^{st}$ associated graded Levi $\rho_{\lambda_1}$-sheaf, in exactly the same way as in the construction of the GHN filtration in Subsection \ref{subsection: pointwise Gieseker filtration}. Putting all of these together for all connected components, we obtain what we can call a relative $1^{st}$ associated graded Levi $\rho_{\lambda_1}$-sheaf $\text{gr}_1 \vcentcolon = \text{gr}_1(\mathcal{F}^{\bullet}|_{X_{H_1}}, \sigma|_{X_{H_1}})$ defined on $H_1$, where we allow a different choice of the cocharacter $\lambda_1$ over each connected component of $H_1$. 

\underline{Second Step}: We can apply Proposition \ref{prop: relative mixed Gieseker filtrations} to the restriction of $\text{gr}_1$ on each connected component in order to obtain a refinement $H_2$ of the stratification $H_1$. The scheme $H_2$ is universal for the leading term HN filtration of $\text{gr}_1$, hence we can choose a representative $f_2$ for the universal filtration. By the last statement of Proposition \ref{prop: relative mixed Gieseker filtrations}, the restriction of $f_2$ to each connected component of $H_2$ comes from a weighted reduction $(\lambda_2, \mathcal{G}_{\lambda_2})$ of $\text{gr}_1$. The procedure to construct the GHN filtration can be applied verbatim to this relative situation. In particular, we can construct a relative lexicographic $\mathbb{Z}^2$-filtration of $(\mathcal{F}^{\bullet}|_{X_{H_2}}, \sigma|_{X_{H_2}})$. This lexicographic filtration comes from a 2-weighted parabolic reduction $((\lambda_1, \lambda_2), \mathcal{G}_{(\lambda_1, \lambda_2)})$. By construction, $H_2$ is universal for lexicographic $\mathbb{Z}^2$-filtrations obtained in the second step of the construction of the GHN filtration. We can define a forgetful morphism $\RelGies(\mathcal{F}^{\bullet}, \sigma) \rightarrow H_2$ in the same way as in the case of the first step $H_1$. In this case we remember the first two coordinates of the $\mathbb{Z}^q$-filtration and take the union over the other coordinates. 

\underline{Recursion}: Now we can iterate as in the construction of the (pointwise) GHN filtration. This process must terminate by the same reasoning as in Subsection \ref{subsection: pointwise Gieseker filtration}, namely, the dimensions of the vector spaces in the tuple $V^{\bullet}$ are finite. If it terminates at the $q^{th}$ step, then the corresponding locally closed stratification $H_q$ is universal for GHN filtrations of $(\mathcal{F}^{\bullet}, \sigma)$. Furthermore, on each connected component of $H_q$ the restriction of (a representative up to scaling of) the universal GHN filtration comes from a relative multi-weighted parabolic reduction.
\end{proof}

\end{subsection}
\end{section}

\appendix

\begin{section}{Gieseker-Harder-Narasimhan filtrations in positive characteristic} \label{appendix: positive char}
Many of the arguments in this article do not use the hypothesis that the characteristic of $k$ is $0$. In this appendix we explain what needs to be modified in order to define Gieseker-Harder-Narasimhan filtrations in positive characteristic. We can show that $\nu$ defines a weak $\Theta$-stratification on $\text{Bun}_{\rho}(X)$ (as in \cite[Defn. 2.23]{torsion-freepaper}). This allows us to define the GHN filtration of a $\rho$-sheaf after maybe passing to a purely inseparable extension of the ground field.

Fix an arbitrary field $k$. We use the same setup and notation as in the characteristic $0$ case ($X$ is a polarized smooth projective geometrically irreducible $k$-variety, $G$ is a connected reductive group over $k$, and $\rho$ is a faithful representation of $G$). Note that everything in Subsection \ref{subsection: big open and torsion-free} applies to general $k$.

The construction of $\text{Red}_{G}(\mathcal{F})$ in Subsection \ref{subsection: scheme of G-reductions} can be done in a similar way in arbitrary characteristic. The proofs of all the results in Subsection \ref{subsection: scheme of G-reductions} remain valid, with the only exception that in Lemma \ref{lemma: reduction scheme and base-change} it is no longer clear that the natural base-change morphism $\text{Red}_{G}(\mathcal{F}) \times_{X_S} X_T \to \text{Red}_{G}(\mathcal{F}|_{X_T})$ is necessarily an isomorphism. This is because it is no longer true that formation of $G$-invariants commutes with arbitrary base-change. We note, however, that for any big open $U \subset X_{S}$ such that $\mathcal{F}$ is locally free, the restriction $\text{Red}_{G}(\mathcal{F}) \times_{X_S} U_{T} \to \text{Red}_{G}(\mathcal{F}|_{X_T})_{U_{T}}$ is an isomorphism. This is because both sides are naturally identified with the quotient $\mathcal{P}|_{U_{T}}/G$ by Proposition \ref{prop: scheme of reductions as fiber bundle}. This is used to show the following lemma.
\begin{lemma} \label{lemma: key lemma positive char}
Let $S$ be a $k$-scheme, and let $\mathcal{F}$ be a relative torsion-free sheaf of rank $r$ on $X_{S}$. For any $T \to S$, there is a natural bijection between the following two sets
\begin{enumerate}[(A)]
    \item Sections of the structure morphism $\text{Red}_{G}(\mathcal{F}|_{X_T}) \to X_{T}$.
    \item Sections of the structure morphism $\text{Red}_{G}(\mathcal{F}) \times_{X_S} X_T \to X_{T}$.
\end{enumerate}
\end{lemma}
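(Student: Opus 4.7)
The plan is to reduce everything to the complement of a sufficiently large open subscheme where the two schemes are canonically isomorphic, and then use the depth argument of Lemma \ref{lemma: lemma on very big open subsets}(c) to extend sections.

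First, I will choose a big open subscheme $U \subset X_S$ (relative to $S$) on which $\mathcal{F}$ is locally free. I claim that the pullback $U_T \subset X_T$ is a big open subscheme relative to $T$. Indeed, for any point $t \in T$ mapping to $s \in S$, the fiber $(U_T)_t$ is the base change $U_s \times_{\kappa(s)} \kappa(t)$, and since $X_s$ is a smooth geometrically irreducible variety and the complement $X_s \setminus U_s$ has codimension at least $2$ in $X_s$, the same holds after the flat base change to $\kappa(t)$. In particular $\mathcal{F}|_{X_T}$ is locally free on $U_T$.

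Next, I will invoke the remark preceding the lemma: over $U_T$, both $\text{Red}_G(\mathcal{F}) \times_{X_S} X_T$ and $\text{Red}_G(\mathcal{F}|_{X_T})$ are canonically identified with the quotient $\mathcal{P}|_{U_T}/G$, where $\mathcal{P}$ denotes the $\GL(V)$-bundle corresponding to $\mathcal{F}|_{U}$, via Proposition \ref{prop: scheme of reductions as fiber bundle}. Consequently the natural base-change morphism restricts to an isomorphism over $U_T$. This yields a canonical bijection between sections of $\text{Red}_G(\mathcal{F}|_{X_T})|_{U_T} \to U_T$ and sections of $\left(\text{Red}_G(\mathcal{F}) \times_{X_S} X_T\right)|_{U_T} \to U_T$.

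Both $\text{Red}_G(\mathcal{F}|_{X_T}) \to X_T$ and $\text{Red}_G(\mathcal{F}) \times_{X_S} X_T \to X_T$ are affine morphisms of finite presentation (the first by the analogue of Lemma \ref{lemma: reduction scheme and base-change} for the construction of the reduction scheme, and the second by base change). Since $U_T$ is big in $X_T$, Lemma \ref{lemma: lemma on very big open subsets}(c) applies to each of them and shows that restriction induces a bijection between sections over $X_T$ and sections over $U_T$. Combining this with the bijection of the previous paragraph yields the desired bijection between $(A)$ and $(B)$, and naturality with respect to the choice of $U$ (and independence from it) follows from the uniqueness of the extension in Lemma \ref{lemma: lemma on very big open subsets}(c). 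The main thing to verify carefully is that $U_T$ is big relative to $T$ in the sense of Definition \ref{defn: big open subset}; everything else is a direct application of tools already established earlier in the paper.
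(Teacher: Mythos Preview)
Your proposal is correct and follows essentially the same approach as the paper's proof: restrict to a big open $U_T$ where the base-change morphism is an isomorphism (via Proposition \ref{prop: scheme of reductions as fiber bundle}), then apply Lemma \ref{lemma: lemma on very big open subsets}(c) twice to extend sections uniquely from $U_T$ to $X_T$. You include a bit more detail than the paper (the verification that $U_T$ is big relative to $T$, and the explicit check that both structure morphisms are affine of finite presentation), but the logic is identical; note only the minor slip in your opening sentence, where you wrote ``complement of a sufficiently large open subscheme'' but clearly meant the big open itself.
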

\begin{proof}
Let $U \subset X_S$ denote a big open subset such that $\mathcal{F}|_{U}$ is locally-free. By Lemma \ref{lemma: lemma on very big open subsets} (c), sections in the set (A) are in correspondence with sections $U_{T} \to \text{Red}_{G}(\mathcal{F}|_{X_T})_{U_{T}}$ defined on the big open $U_{T} \subset X_{T}$. Since the base-change morphism $\text{Red}_{G}(\mathcal{F}) \times_{X_S} X_{T} \to \text{Red}_{G}(\mathcal{F}|_{X_T})$ restricts to an isomorphism over the open set $U_{T}$, these sections are in bijection with sections $U_{T} \to \text{Red}_{G}(\mathcal{F}) \times_{X_S} U_T$. Another application of Lemma \ref{lemma: lemma on very big open subsets} (c) shows that all such sections extend uniquely to a morphism $X_{T} \to \text{Red}_{G}(\mathcal{F}) \times_{X_S} X_T$ as in (B).
\end{proof}
In other words, even though the schemes $\text{Red}_{G}(\mathcal{F}) \times_{X_S} X_T$ and $ \text{Red}_{G}(\mathcal{F}|_{X_T})$ are not necessarily isomorphic, they have the same sections over $X_{T}$ because they are isomorphic over a big open subset. 

\begin{defn} A $\rho$-sheaf is a pair $(\mathcal{F}, \sigma)$ where
\begin{enumerate}[(1)]
    \item $\mathcal{F}$ is a family of torsion-free sheaves of rank $r$ on $X_S$.
    \item $\sigma$ is a section of the structure morphism $\Red_{G}(\mathcal{F}) \rightarrow X_S$.
\end{enumerate}
The stack $\Bun_{\rho}(X)$ is the pseudofunctor from $\left(\Aff_{k}\right)^{op}$ into groupoids that sends an affine scheme $T \in \Aff_{k}$ to $\Bun_{\rho}(X)\, (T) = \left[ \; \; \text{groupoid of $\rho$-sheaves $(\mathcal{F}, \sigma)$ on $X_S$} \; \; \; \right]$.
\end{defn}
Once we know Lemma \ref{lemma: key lemma positive char}, everything in Section \ref{section: rho-sheaves} applies verbatim. Similarly, the definitions and arguments in Subsections \ref{subsection: numerical invariant}, \ref{subsection: affine grassmannians}, \ref{subsection: monotonicity properties of the numerical invariant} and \ref{subsection: hn boundedness} apply without a change. Hence we have the following result in arbitrary characteristic
\begin{prop} \label{prop: monotonicity and hn boundedness positive char}
The polynomial numerical invariant $\nu$ on the stack $\text{Bun}_{\rho}(X)$ is strictly $\Theta$-monotone, strictly $S$-monotone, and satisfies the HN boundedness condition. \qed
\end{prop}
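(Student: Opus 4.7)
The plan is to verify that each step of the characteristic zero argument in Section \ref{section: numerical invariant} transports to arbitrary characteristic, using Lemma \ref{lemma: key lemma positive char} as the crucial bridge to replace the (now failing) base-change compatibility of $\Red_G(\mathcal{F})$. Concretely, since $\Red_G(\mathcal{F})|_U \xrightarrow{\sim} \mathcal{P}/G$ over any big open $U$ where $\mathcal{F}$ is locally free, and since sections over $X_T$ are controlled by sections over $U_T$ via Lemma \ref{lemma: lemma on very big open subsets} (c), the moduli interpretation of $\Bun_\rho(X)$ as a presheaf of groupoids is the same in arbitrary characteristic. In particular the definitions of the line bundles $L_n$ and the quadratic norm $b$ in Subsection \ref{subsection: numerical invariant} are characteristic-free, and the computation leading to Proposition \ref{prop: value numerical invariant} (expressing $\nu$ in terms of reduced Hilbert polynomials) is a purely formal weight calculation that works in any characteristic.

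For strict $\Theta$- and $S$-monotonicity, I would reconstruct the affine Grassmannians $\Gr_{X_S, D, \mathcal{F}}$ and $\Gr_{X_S, D, \mathcal{F}, \sigma}$ as in Subsection \ref{subsection: affine grassmannians}. The ind-representability of $\Gr_{X_S, D, \mathcal{F}}$ (Proposition \ref{prop: ind-representability of step 1 grassmannian}) is taken from \cite{torsion-freepaper} and stated there in arbitrary characteristic. The fact that $\Gr_{X_S, D, \mathcal{F}, \sigma}$ is closed inside $\Gr_{X_S, D, \mathcal{F}}$ (Proposition \ref{prop: the affine grassmannian is indproper}) reduces via Lemma \ref{lemma: representability of scheme of section outside of a divisor} to showing that the functor of sections of an affine morphism outside a divisor is representable by a closed subscheme; the proof given is characteristic-free, using only Serre vanishing, the slicing criterion for flatness, and Noetherian approximation. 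The asymptotic ampleness of $L_n^{\vee}$ on bounded strata (Corollary \ref{coroll: ampleness of line bundle on affine grassmannian}) again rests on the characteristic-independent \cite[Prop. 3.19]{torsion-freepaper}. With these ingredients in place, the proof of Proposition \ref{prop: invariant is monotone} proceeds verbatim: one uses the rational filling Proposition \ref{prop: hartog for singular Gbundles}, produces a $\mathbb{G}_m$-equivariant section of the affine Grassmannian away from $(0,0)$, extends it via depth-$2$ arguments, takes the scheme-theoretic image inside a bounded projective stratum, and finally compares weights at $0$ and $\infty$ using ampleness. The rank-equality check $\rk(\mathcal{F}_{m,0}) = \rk(\mathcal{F}_{m,\infty})$ that ensures the denominator of $\nu$ is unchanged is pure linear algebra at the generic point, independent of characteristic.

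For HN boundedness (Proposition \ref{prop: HN boundedness}), I would repeat the reduction of Lemma \ref{lemma: equivalent HN boundedness condition}, which after base-change to an algebraically closed field $K$ and after possibly passing to a splitting extension of $G$ lets us replace filtrations by weighted parabolic reductions via Proposition \ref{prop: explicit description of filtrations singular G bundles}. The description of filtrations of $G$-bundles on $U\times_K\Theta_K$ in Subsection \ref{subsection: filtrations} follows Heinloth \cite{heinloth2018hilbertmumford} and holds in arbitrary characteristic for split reductive groups. The optimization argument on the dominant cone $\mathcal{C}_\lambda$ and the construction of the linear functional $\psi$ from first Chern classes (Lemma \ref{lemma: construction of linear functional}) depend only on the existence of a Levi decomposition $P_\lambda = L_\lambda \ltimes U_\lambda$ and the usual Hirzebruch–Riemann–Roch, both of which hold in arbitrary characteristic. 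The uniform slope bound at the end of the argument invokes constructibility of the Harder–Narasimhan type for torsion-free sheaves, which is due to Nitsure \cite{nitsure-schematicsheaves} and holds in arbitrary characteristic.

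The main obstacle is psychological rather than mathematical: one must be careful, throughout, that every invocation of ``base-change of $G$-invariants'' in the characteristic zero text is silently replaced by the appropriate consequence of Lemma \ref{lemma: key lemma positive char}. The only place where characteristic zero was genuinely used in the body of the paper is in the passage from pointwise to uniquely-defined GHN filtrations over imperfect fields, which is why Theorem \ref{thm: char p main theorem} only asserts the existence of such filtrations after a finite purely inseparable extension; that subtlety is orthogonal to the present proposition, whose content is only the monotonicity and HN boundedness of the numerical invariant $\nu$.
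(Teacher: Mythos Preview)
Your proposal is correct and follows exactly the paper's approach: the paper's proof is nothing more than the \qed symbol, preceded by the sentence ``the definitions and arguments in Subsections \ref{subsection: numerical invariant}, \ref{subsection: affine grassmannians}, \ref{subsection: monotonicity properties of the numerical invariant} and \ref{subsection: hn boundedness} apply without a change'' once Lemma \ref{lemma: key lemma positive char} is in hand. You have simply unpacked that sentence in detail, correctly identifying at each step why the characteristic-zero argument goes through and where Lemma \ref{lemma: key lemma positive char} substitutes for the failed base-change of $G$-invariants.
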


Proposition \ref{prop: monotonicity and hn boundedness positive char} yields the necessary hypotheses to apply \cite[Thm. 2.26]{torsion-freepaper} (or \cite[Thm. B]{halpernleistner2021structure}). This yields the following theorem (see \cite[Defn. 2.23]{torsion-freepaper} for a definition of weak $\Theta$-stratification).
\begin{thm} \label{thm: weak theta stratification positive characteristic}
The polynomial numerical invariant $\nu$ defines a weak $\Theta$-stratification on the stack $\text{Bun}_{\rho}(X)$. \qed
\end{thm}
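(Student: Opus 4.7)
The plan is to deduce this theorem as a direct application of the general existence criterion for weak $\Theta$-stratifications on algebraic stacks. Specifically, I would invoke \cite[Thm. 2.26]{torsion-freepaper} (or equivalently \cite[Thm. B]{halpernleistner2021structure}), which asserts that if $\mathcal{M}$ is an algebraic stack locally of finite type over a field $k$ with affine diagonal, and $\nu$ is a polynomial numerical invariant on $\mathcal{M}$ satisfying strict $\Theta$-monotonicity, strict $S$-monotonicity, and the HN boundedness condition, then $\nu$ induces a weak $\Theta$-stratification of $\mathcal{M}$. Recall that we know from the arguments ported over from Section \ref{section: preliminaries} and Section \ref{section: rho-sheaves} that $\Bun_{\rho}(X)$ is an algebraic stack with affine diagonal and locally of finite type over $k$, so the structural hypotheses are already in place.

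Next, I would verify that all the numerical hypotheses needed as input are provided by the combined content of Proposition \ref{prop: monotonicity and hn boundedness positive char}. That proposition packages together the three key properties: (i) strict $\Theta$-monotonicity, (ii) strict $S$-monotonicity, and (iii) HN boundedness for the polynomial numerical invariant $\nu$ on $\Bun_{\rho}(X)$. As explicitly noted in the discussion preceding Proposition \ref{prop: monotonicity and hn boundedness positive char}, each of the proofs in Subsections \ref{subsection: numerical invariant}, \ref{subsection: affine grassmannians}, \ref{subsection: monotonicity properties of the numerical invariant}, and \ref{subsection: hn boundedness} carries over verbatim from the characteristic zero setting, because the affine Grassmannian construction, the rational filling property for $\rho$-sheaves, and the optimization argument on cones of cocharacters all remain valid once Lemma \ref{lemma: key lemma positive char} is in place.

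With these ingredients identified, the proof is essentially a citation: I would apply \cite[Thm. 2.26]{torsion-freepaper} to the pair $(\Bun_{\rho}(X), \nu)$, using Proposition \ref{prop: monotonicity and hn boundedness positive char} to discharge all three hypotheses simultaneously. The resulting stratification is only \emph{weak} rather than a genuine $\Theta$-stratification in the sense of \cite[Defn. 2.1.2]{halpernleistner2021structure} because, in positive characteristic, the $\nu$-maximizing filtrations associated to unstable points may only become canonical after passing to a purely inseparable extension of the residue field; this is precisely the phenomenon that motivates the definition of weak $\Theta$-stratification in \cite[Defn. 2.23]{torsion-freepaper}.

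Since all the technical work has been carried out before the statement, I do not expect any substantive obstacle. The only subtlety worth flagging is conceptual rather than technical: one should keep track of the difference between the output here (a weak $\Theta$-stratification, yielding the radicial stratification by locally closed substacks and the need to pass to a finite purely inseparable extension when extracting the GHN filtration of an individual point) and the stronger output of Theorem \ref{thm: theta stratification} in characteristic zero. This distinction is what ultimately forces the characteristic-$p$ version of the main theorem (Theorem \ref{thm: char p main theorem}) to allow such inseparable base changes in its statement.
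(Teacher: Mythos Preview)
Your proposal is correct and matches the paper's approach exactly: the theorem is stated with a \qed and its justification is the sentence immediately preceding it, namely that Proposition \ref{prop: monotonicity and hn boundedness positive char} supplies the hypotheses needed to invoke \cite[Thm. 2.26]{torsion-freepaper} (or \cite[Thm. B]{halpernleistner2021structure}). Your additional commentary about why only a weak $\Theta$-stratification results is accurate and anticipates the paper's own discussion following the theorem.
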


Note that in this case we only get a weak $\Theta$-stratification. In particular, the corresponding notion of leading term HN filtration of a $\rho$-sheaf $(\mathcal{F}, \sigma)$ is not necessarily defined over the field $K$ of definition of $(\mathcal{F},\sigma)$; one might need to allow to pass to a finite purely inseparable extension of $K$. This is to be expected for small characteristics, because of the failure of Behrend's conjecture even in the case when $X$ is a curve \cite{heinloth-behrends-conjecture}. We note that it is still the case that the leading HN filtration is unique up to scaling, even though it might not be defined over the ground field.

The analogous definition of relative leading HN filtrations (Definition \ref{defn: leading term filtration}) applies in arbitrary characteristic. However, since we are working with a weak $\Theta$-stratification, the universal stratification $H \to S$ is no longer a locally closed immersion. Instead, we get that the universal morphism $H \to S$ parametrizing relative leading term HN filtrations is surjective, radicial and of finite type. Each connected component of $H$ is finite over its image.

As long as the characteristic of $k$ is not too small, the proofs in Section \ref{section: properness} can still be applied to show Theorem \ref{thm: valuative criterion for properness rho sheaves}. More precisely, we just need that condition (2) in Proposition \ref{prop: extension after ramified cover} holds.
\begin{thm}
Choose an algebraic closure $\overline{k}$ of $k$. Let $Z \subset G_{\overline{k}}$ denote the maximal central torus of the base-change $G_{\overline{k}}$. Suppose that the characteristic of $k$ does not divide the order of the kernel of the simply-connected cover $\widetilde{G_{\overline{k}}/Z} \to G_{\overline{k}}/Z$ of the semisimple group $G_{\overline{k}}/Z$. Then the morphism $\text{Bun}_{\rho}(X) \to \Spec(k)$ satisfies the existence part of the valuative criterion for properness \cite[\href{https://stacks.math.columbia.edu/tag/0CLK}{Tag 0CLK}]{stacks-project} in the case of complete discrete valuation rings (as in Theorem \ref{thm: valuative criterion for properness rho sheaves}). \qed
\end{thm}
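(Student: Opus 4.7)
The approach is to adapt the proof of Theorem \ref{thm: valuative criterion for properness rho sheaves} to positive characteristic, using the hypothesis on $\text{char}(k)$ to fulfill condition (2) of Proposition \ref{prop: extension after ramified cover}. Starting from a diagram $\Spec(K) \to \Bun_{\rho}(X)$ over $\Spec(R)$, I would first invoke Lemma \ref{lemma: equivalence valuative criteria properness} to reduce the extension problem for the $\rho$-sheaf to the problem of extending the underlying rational $G$-bundle $\mathcal{P}$ from the generic point $\eta_K$ to the local scheme $\Spec(\mathcal{O}_{X_R, \eta_\kappa})$. The valuative criterion allows us to enlarge $R$ by an arbitrary finite DVR extension at any step.

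After replacing $R$ by a finite unramified extension corresponding to a finite separable extension of the residue field $\kappa$, I can assume that $G$ is split over $\kappa$, since the continuous action of the absolute Galois group of $k$ on the root datum of $G_{\overline{k}}$ factors through a finite quotient. In particular $G_\kappa$ is quasi-split with split central torus, and these properties persist under any further field extension of $\kappa$. Set $S = \mathcal{O}_{X_R, \eta_\kappa}$ and let $\widehat{S}$ be its $\pi$-adic completion. Applying Beauville--Laszlo formal gluing (Lemma \ref{lemma: formal gluing}) to the faithfully flat map $S \to \widehat{S}$ with nonzerodivisor $\pi$ reduces the extension problem to extending a $G$-bundle from $\Spec(\Frac(\widehat{S}))$ to $\Spec(\widehat{S})$. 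By the Cohen structure theorem, $\widehat{S} = \kappa_S \bseries{\pi}$, where $\kappa_S$ is the function field of the (geometrically integral) special fiber $X_\kappa$.

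At this point one would like to invoke Proposition \ref{prop: extension after ramified cover} over $\kappa_S$: the characteristic hypothesis in our theorem is precisely condition (2) of that proposition, condition (1) follows because $G_{\kappa_S}$ is split, and quasi-splitness is likewise immediate. The main technical obstacle is that Proposition \ref{prop: extension after ramified cover} assumes that the residue field is perfect, but $\kappa_S$ is not perfect in positive characteristic whenever $\dim X > 0$. Revisiting the proof, the perfectness hypothesis is used only to invoke \cite[3.14(3)]{bruhat-tits-iii} (every $G$-torsor on $\Spec(\kappa_S\pseries{\pi})$ is unramified), the residually quasi-split statement of \cite[2.2]{bruhat-tits-iii}, and \cite[3.15]{bruhat-tits-iii}. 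When $G$ is split, the rest of the argument (Kummer theory applied to the finite group $H$, Hilbert 90 for the split center, and conjugation of parahorics by $\lambda(\pi^{1/m})$ for a rational cocharacter of a split maximal torus) proceeds unchanged; so I expect one can either verify directly that the unramified reduction step extends to the split case over a non-perfect residue field, or sidestep this by first applying Proposition \ref{prop: extension after ramified cover} to the perfect closure $\kappa_S^{\text{perf}}$ and then descending.

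The descent route, which is likely the cleanest, goes as follows. Apply Proposition \ref{prop: extension after ramified cover} to the $G_{\kappa_S^{\text{perf}}}$-bundle obtained by base change to $\kappa_S^{\text{perf}}\pseries{\pi}$; this yields an integer $m \geq 1$ and an extension of the pulled-back bundle to $\Spec(\kappa_S^{\text{perf}}\bseries{\pi^{1/m}})$. Since $G$-bundles are of finite presentation and $\kappa_S^{\text{perf}}$ is a filtered colimit of finite purely inseparable extensions of $\kappa_S$, a standard limit argument descends the extension to $\Spec(\kappa_S'\bseries{\pi^{1/m}})$ for some such finite purely inseparable $\kappa_S' \supset \kappa_S$. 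Undoing the formal gluing and spreading this out, one obtains an extension of the $G$-bundle on $X_R$ after a finite (possibly inseparable-on-residue-fields and ramified) extension of $R$, which is permitted by the valuative criterion. Applying Lemma \ref{lemma: equivalence valuative criteria properness} in the reverse direction then produces the desired extension of the $\rho$-sheaf.
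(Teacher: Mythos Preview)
Your approach is exactly the paper's: it simply asserts that the proofs of Section~\ref{section: properness} go through once condition~(2) of Proposition~\ref{prop: extension after ramified cover} holds, and you have faithfully unpacked that. You have also spotted something the paper glosses over: Proposition~\ref{prop: extension after ramified cover} is stated for a \emph{perfect} residue field $\kappa$, but in the proof of Theorem~\ref{thm: valuative criterion for properness rho sheaves} the role of $\kappa$ is played by the function field $\kappa_S$ of the special fiber $X_\kappa$, which is never perfect in positive characteristic when $\dim X>0$.

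Your preferred fix, however, has a gap. After passing to the perfect closure, applying Proposition~\ref{prop: extension after ramified cover}, and descending, you obtain an extension of the $G$-bundle over $\Spec\bigl(\kappa_S'\bseries{\pi^{1/m}}\bigr)$ for some finite purely inseparable $\kappa_S'\supset\kappa_S$. You then claim one can ``undo the formal gluing'' and realize this as coming from a finite DVR extension $R'\supset R$. But a finite extension of the complete equicharacteristic DVR $R=\kappa\bseries{\pi}$ has residue field some finite $\kappa'\supset\kappa$, and the corresponding completed local ring at $\eta_{\kappa'}$ is $\bigl(k(X)\otimes_k\kappa'\bigr)\bseries{\varpi}$. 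An arbitrary purely inseparable extension $\kappa_S'$ of $\kappa_S=k(X)\otimes_k\kappa$ is \emph{not} of the form $k(X)\otimes_k\kappa'$ in general: for instance, if $\kappa$ is already perfect (e.g.\ finite), no nontrivial purely inseparable extension of $\kappa_S$ arises this way, since adjoining $p$-th roots of elements of $k(X)$ cannot be achieved by enlarging $\kappa$. So the descended extension does not live on $\Spec(\mathcal{O}_{X_{R'},\eta_{\kappa'}})$ for any admissible $R'$, and Lemma~\ref{lemma: equivalence valuative criteria properness} cannot be invoked.

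The alternative you mention---checking that the proof of Proposition~\ref{prop: extension after ramified cover} survives when $\kappa$ is not perfect but $G$ is split and condition~(2) holds---is the route that actually needs to be carried out (and is presumably what the paper has in mind). Concretely, one must argue that the Bruhat--Tits inputs \cite[3.14(3), 3.15]{bruhat-tits-iii} remain valid in this setting, or replace them by direct arguments using the split hypothesis.
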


The definitions and constructions in Sections \ref{section: generalization} and \ref{section: GHN filtrations} apply similarly. The only change is that the GHN filtration of a $\rho$-sheaf $(\mathcal{F}, \sigma)$ need not be defined over the field $K$ of definition, but again one might need to pass to a finite purely inseparable extension. The analogue of Theorem \ref{thm: relative Gieseker filtrations} still holds; the only thing that needs to be modified is to replace the locally closed stratification $H \to S$ with a surjective radicial morphism of finite type.

Finally, we note that the proof of Proposition \ref{prop: comparison mixed gieseker filtration vs slope filtration} applies to arbitrary characteristic. This means that if the representation $\rho$ is central, then the stratification of $\text{Bun}_{\rho}(X)$ by radicial morphisms induced by our Gieseker-Harder-Narasimhan filtrations refines the theory developed in \cite{gurjar2020hardernarasimhan} in arbitrary characteristic.

\begin{remark}
Using the same techniques, one could generalize the results in this appendix to the case when $X \to S$ is a smooth projective morphism with geometrically connected fibers over an arbitrary base-scheme $S$, $G \to S$ is a reductive group scheme with connected fibers, and $\rho$ is a faithful representation $\rho: G \hookrightarrow \prod_{i} GL(V^i)$ for some vector bundles $V^i$ on $S$. We have chosen not to pursue this level of generality in this paper.
\end{remark}
\end{section}

\footnotesize
\bibliography{moduli_singular_bundles.bib}
\bibliographystyle{alpha}

\end{document}